\newtheorem{theorem}{Theorem}[section]
\newtheorem{corollary}[theorem]{Corollary}
\newtheorem{definition}[theorem]{Definition}
\newtheorem{example}[theorem]{Example}
\newtheorem{lemma}[theorem]{Lemma}
\newtheorem{proposition}[theorem]{Proposition}
\newtheorem{remark}[theorem]{Remark}
\newenvironment{proof}[1][Proof]{\emph{#1.} }{\ \rule{0.5em}{0.5em}}
\newcommand{\K}{\mathbb K}
\newcommand{\ben}{\begin{enumerate}}
\newcommand{\een}{\end{enumerate}}
\begin{document}

\title{ Gerstenhaber-Schack Cohomology for Hom-bialgebras  and Deformations }
\author{Khadra  Dekkar and Abdenacer  Makhlouf}
\date{}

\maketitle

\begin{abstract}
  Hom-bialgebras and Hom-Hopf algebras are generalizations of bialgebra and Hopf algebra structures, where  associativity and coassociativity conditions are twisted by a homomorphism. The purpose of this paper is to define  a Gerstenhaber-Schack  cohomology complex for
Hom-bialgebras and then study one parameter formal  deformations. 
\end{abstract}

\section*{Introduction}

The first instance of Hom-type algebras appeared in various papers dealing with $q$-deformations of algebras of vector fields, mainly Witt and Virasoro algebras, see for example \cite{AizawaSaito}. Then  Hartwig, Larsson and  Silvestrov introduced and studied the concept of Hom-Lie algebra, which is a generalization of Lie algebra where the Jacobi condition is twisted by a homomorphism, see \cite{HJ
2006,D L 2005}. Hom-associative algebras  were introduced
and studied by the second author  and Silvestrov in \cite{AM}, where it is shown that usual functors between associative algebras and Lie algebras extend to Hom-type algebras. 
Moreover,  Hom-analogues of coalgebras, bialgebras and Hopf algebras have been introduced in
\cite{AMS 2007Hbial,AM
2008HBial}. The original definition of a Hom-bialgebra involved two linear maps, one twisting the
associativity condition and the other one the coassociativity condition. Later, two directions of
study on Hom-bialgebras were developed, one in which the two maps coincide,  they are still
called Hom-bialgebras,  and another one, started in \cite{SC 2009}, where the two maps are assumed to be
inverse to each other,  they are called monoidal Hom-bialgebras.
In the last years, many concepts and properties from classical algebraic theories have been
extended to the framework of Hom-structures, see for instance  \cite{F A 2010,Chen2013,Elhamdadi-Makhlouf,Y F 2009,A G 2010,Mak:Almeria,A M 2014,A M 2015,Sheng,DY2009-1,yauhomyb1,DY 2010,DY 2012,DYa 2009,D Y 2009bis,yauhomyb2,Zhao}.
The cohomology and deformations of Hom-associative algebra were initiated in 
\cite{AM 2007} and then completed in \cite{F. A 2010}.

%The authors introduce the deformation theory of bialgebras and give some of its appli- cations to quantum groups. One of these applications gives the homological conditions for equivalence of every deformation of a bialgebra to one with unchanged comultiplication.

In this paper,  we  define a cohomology complex for Hom-bialgebras, generalizing Gerstenhaber-Schack  cohomology in \cite{G.S90,G.S92}, 
and then study  one-parameter formal deformations. It is organized as
follows. In Section 1, we recall the basics on Hom-bialgebras and Hom-Hopf algebras,
and provide some concrete examples. 
%and we prove some of their basic properties. A
%Hom-bialgebra $\left( V,\mu ,\Delta ,\eta ,\varepsilon ,\alpha \right) $
%consists of a Hom-associative algebra $\left( V,\mu ,\eta ,\alpha \right) $
%and a Hom-coassociative coalgebra $\left( V,\Delta ,\varepsilon ,\alpha
%\right) $ such that $\mu $ and $\Delta $ are compatibility and $\alpha $ is
%twisting map. A Hom-bialgebra with $\alpha =Id$ is exactly a bialgebras, In
Section 2 is dedicated to the study of Hom-type version of module over
algebras (resp. comodule over coalgebras). Moreover, we discuss their tensor
products.  In Section 3, we define a cohomology complex for Hom-bialgebras,
generalizing Gerstenhaber-Schack complex. Finally in Section 4, we study  Hom-bialgebras 
1-parameter formal deformation theory.

\section{Definitions and Preliminaries}

In this section, we first recall basics on  unital Hom-associative algebras, counital Hom-coalgebras and Hom-bialgebras.
We fix some conventions and notations. In this paper $\Bbbk $ denotes an
algebraically closed field of characteristic zero, even if the general
theory does not require it. Vector spaces, tensor products, and linearity
are all meant over $\Bbbk $, unless otherwise specified. We denote by $\tau
_{i,j}:V_{1}\otimes ...\otimes V_{i}\otimes ...\otimes V_{j}\otimes
...\otimes V_{n}\longrightarrow V_{1}\otimes ...\otimes V_{j}\otimes
...\otimes V_{i}\otimes ...\otimes V_{n}$ the flip isomorphism where $\tau
_{i,j}\left( x_{1}\otimes x_{2}\otimes ...\otimes x_{i}\otimes ...\otimes
x_{j}\otimes ...\otimes x_{n}\right) =\left( x_{1}\otimes x_{2}\otimes
...\otimes x_{j}\otimes ...\otimes x_{i}\otimes ...\otimes x_{n}\right) .$

We use  in the sequel Sweedler's notation for the comultiplication, $
\Delta \left( x\right) =\sum_{\left( x\right) }x_{\left( 1\right) }\otimes
x_{\left( 2\right) }
$,  and sometimes the multiplication is denoted by a dot for simplicity and  when there is no confusion.

\subsection{Unital Hom-associative algebras}

\begin{definition} A Hom-associative algebra is a triple $(A,\mu ,\alpha )$ consisting
of a $\Bbbk $-vector space $A$, a linear map $\mu :A\otimes A \rightarrow A
$ \ (multiplication) and a homomorphism $\alpha :A\rightarrow A$
 satisfying the
Hom-associativity condition 
\begin{equation}
\mu \circ \left( \alpha \otimes \mu \right) =\mu \circ \left( \mu \otimes
\alpha \right) .  \label{(1)}
\end{equation}
We assume moreover in this paper that   $\alpha \circ \mu =\mu \circ \alpha ^{\otimes 2}$. 

A Hom-associative algebra $A$ is called unital if there exists a
linear map $\eta :\Bbbk \rightarrow A$ $\ $ such that $\alpha \circ \eta
=\eta $ and%
\begin{equation}
\mathit{\ }\mu \circ \left( \eta \otimes id_{A}\right) =\mu \circ \left(
id_{A}\otimes \eta \right) =\alpha . \label{(2)}
\end{equation}
\end{definition}
The unit element is $1_{A}=\eta \left( 1_{\Bbbk }\right) .$

Hom-associativity and unitality conditions (\ref{(1)}) and (\ref{(2)})
may be expressed by the following commutative diagrams.

\begin{equation*}
\begin{array}{ccc}
A\otimes A\otimes A & \overset{\ \mu \otimes \ \alpha }{\longrightarrow } & 
\ A\otimes A \\ 
{\small \ \alpha \otimes \mu \ }\downarrow \qquad  &  & \downarrow {\small %
\mu } \\ 
\quad A\otimes A\mathit{\ } & \overset{\mu }{\longrightarrow } & A~~%
\end{array}%
\qquad 
\begin{array}{ccccc}
\ \Bbbk \otimes A & \overset{\eta \otimes id_{A}}{\longrightarrow } & 
A\otimes A & \ \overset{\ id_{A}\otimes \eta }{\longleftarrow } & \ A\otimes
\Bbbk  \\ 
\cong  &  & \downarrow \mathit{\ }\mu  &  & \cong  \\ 
A & \overset{\alpha }{\longrightarrow } & A\mathit{\ } & \overset{\ \alpha }{%
\longleftarrow } & A.%
\end{array}%
\end{equation*}

\begin{remark}
\

\begin{enumerate}
\item We recover the classical \ associative algebra when the twisting map $%
\alpha $ is the identity map.

\item We have $\alpha \circ \eta \left( 1_{\Bbbk }\right) =\eta \left(
1_{\Bbbk }\right) $ then$\ \alpha \left( 1_{A}\right) =1_{A}$ and $\mu
\left( 1_{A}\otimes 1_{A}\right) =1_{A}$.

\item We call Hom-associator the linear map $\mathfrak{as}_{A}$ defined on $A^{\otimes 3}$ by $%
\mu \circ \left( \alpha \otimes \mu -\mu \otimes \alpha \right) $.
\end{enumerate}
\end{remark}

\begin{example}
\label{A}
\

\begin{enumerate}
\item The tensor product of two unital Hom-associative algebras $%
(A_{1},\mu _{1},\eta _{1},\alpha _{1})$ and $\left( A_{2},\mu _{2},\eta
_{2},\alpha _{2}\right) $ is defined by $(A_{1}\otimes A_{2},\tilde{\mu},%
\tilde{\eta},\tilde{\alpha})$ such that 
$ 
\tilde{\mu}=\left( \mu _{1}\otimes \mu _{2}\right) \circ \tau _{2,3},\ 
\tilde{\eta}=\eta _{1}\otimes \eta _{2}, \ \tilde{\alpha}=\alpha
_{1}\otimes \alpha _{2},
$  where $\tau
_{23}=id_{A_{1}}\otimes \tau _{A_{2}\otimes A_{1}}\otimes id_{A_{2}}$ and $%
\tau _{A_{2}\otimes A_{1}}:A_{2}\otimes A_{1}\longrightarrow A_{1}\otimes
A_{2};$ $\tau _{A_{2}\otimes A_{1}}\left( x_{2}\otimes x_{1}\right)
=x_{1}\otimes x_{2},$ is the linear `flip' map.

\item Given a Hom-associative algebra $A=(A,\mu ,\alpha ),$ we define the 
opposite Hom-associative algebra $A^{op}=(A,\mu ^{op},\alpha )$ as
the Hom-associative algebra with the same underlying vector space  $A,$
but with a multiplication defined by 
$ 
\mu ^{op}=\mu \circ \tau _{A\otimes A}, \ \mu ^{op}\left( x\otimes
y\right) =\mu \left( y\otimes x\right) .
$ 

A Hom-associative algebra $(A,\mu ,\alpha )$ is commutative if and
only if $\ \mu ^{op}=\mu .$
\end{enumerate}
\end{example}

Let $(A,\mu ,\alpha )$ and $(A^{\prime },\mu ^{\prime },\alpha ^{\prime })$
be two Hom-associative algebras. A linear map $f:A\rightarrow A^{\prime }$
is said to be a \emph{ Hom-associative algebras morphism} if

\begin{equation}
\mu ^{\prime }\circ f^{\otimes 2}=f\circ \mu \text{ and \ }f\circ
\alpha =\alpha ^{\prime }\circ f.  \label{(3)}
\end{equation}

It is said to be a \emph{weak morphism}  
if holds only the first condition. If further  the Hom-associative algebras are unital with respect to  $\eta$ and $\eta ^{\prime }$,  then $f\circ
\eta =\eta ^{\prime }$.

If $A=A^{\prime },$ then the Hom-associative algebras (resp. unital
Hom-associative algebras) are \emph{isomorphic} if there exists a
bijective linear map $~f:A\rightarrow A$ such that%
\begin{equation}
\mu=f^{-1}\circ \mu ^{\prime }\circ f^{\otimes 2} ,\text{ \ \ }\alpha
=f^{-1}\circ \alpha ^{\prime }\circ f,\   \label{(5)}
\end{equation}
\begin{equation}
\text{(resp. }\mu=f^{-1}\circ \mu ^{\prime }\circ f^{\otimes 2},\text{ \ \ }%
\alpha =f^{-1}\circ \alpha \prime \circ f~\ \ and~\ \ \ \eta =f^{-1}\circ
\eta ^{\prime }\text{)}.
\end{equation}

\begin{proposition}
Let $\left( A_{1},\mu ,\eta ,\alpha \right) ,\left( A_{2},\mu ^{\prime
},\eta ^{\prime },\alpha ^{\prime }\right) $ be two unital Hom-associative
algebras.

The maps  $%
\begin{array}{cccc}
i_{1}: & A_{1} & \rightarrow  & A_{1}\otimes A_{2} \\ 
& x & \rightarrow  & i_{1}\left( x\right) =\alpha \left( x\right) \otimes
1_{A}
\end{array}
$  and 
$ 
\begin{array}{cccc}
i_{2}: & A_{2} & \rightarrow  & A_{1}\otimes A_{2} \\ 
& y & \rightarrow  & i_{2}\left( y\right) =1_{A}\otimes \alpha ^{\prime }(y)
\end{array}
$ 
are  morphisms of unital Hom-associative
algebras.

\end{proposition}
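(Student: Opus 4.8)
The plan is to check, for each of the two maps, the defining conditions of a morphism of unital Hom-associative algebras: multiplicativity and compatibility with the twisting maps, i.e.\ the two conditions in \eqref{(3)}, together with preservation of the unit, $f\circ\eta=\eta'$, which is required in the unital case. I will carry this out for $i_1$ in full; the verification for $i_2$ is entirely symmetric, obtained by interchanging the two tensor factors and replacing $(\mu,\eta,\alpha)$ by $(\mu',\eta',\alpha')$. The only inputs are the tensor-product structure $\tilde\mu=(\mu\otimes\mu')\circ\tau_{2,3}$, $\tilde\eta=\eta\otimes\eta'$, $\tilde\alpha=\alpha\otimes\alpha'$ from Example \ref{A}, the normalizations $\alpha(1_{A_1})=1_{A_1}$, $\alpha'(1_{A_2})=1_{A_2}$, $\mu'(1_{A_2}\otimes 1_{A_2})=1_{A_2}$ recorded in the Remark, and the multiplicativity hypothesis $\alpha\circ\mu=\mu\circ\alpha^{\otimes 2}$ from the Definition. (Here the unit appearing in $i_1(x)=\alpha(x)\otimes 1_A$ is understood as $1_{A_2}$.)

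The two normalization-type conditions are immediate. For compatibility with the twisting maps, $\tilde\alpha\circ i_1(x)=\alpha^{2}(x)\otimes\alpha'(1_{A_2})=\alpha^{2}(x)\otimes 1_{A_2}=i_1\circ\alpha(x)$, the middle step using $\alpha'(1_{A_2})=1_{A_2}$. For the unit, $i_1\circ\eta(1_\Bbbk)=\alpha(1_{A_1})\otimes 1_{A_2}=1_{A_1}\otimes 1_{A_2}=\tilde\eta(1_\Bbbk)$, using $\alpha(1_{A_1})=1_{A_1}$.

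The substantive step is multiplicativity, $\tilde\mu\circ i_1^{\otimes 2}=i_1\circ\mu$. Beginning from $i_1(x)\otimes i_1(y)=\alpha(x)\otimes 1_{A_2}\otimes\alpha(y)\otimes 1_{A_2}$ in $A_1\otimes A_2\otimes A_1\otimes A_2$, I would apply $\tau_{2,3}$ to collect the two $A_1$-entries, giving $\alpha(x)\otimes\alpha(y)\otimes 1_{A_2}\otimes 1_{A_2}$, and then apply $\mu\otimes\mu'$ to obtain $\mu(\alpha(x)\otimes\alpha(y))\otimes\mu'(1_{A_2}\otimes 1_{A_2})=\mu(\alpha(x)\otimes\alpha(y))\otimes 1_{A_2}$. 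The key input is then $\alpha\circ\mu=\mu\circ\alpha^{\otimes 2}$, which turns $\mu(\alpha(x)\otimes\alpha(y))$ into $\alpha(\mu(x\otimes y))$, so that the expression equals $\alpha(\mu(x\otimes y))\otimes 1_{A_2}=i_1(\mu(x\otimes y))$, as desired. I expect no genuine obstacle here: the whole proposition reduces to the hypothesis $\alpha\circ\mu=\mu\circ\alpha^{\otimes 2}$ and the unit normalizations, and the only point demanding care is the correct bookkeeping of the flip $\tau_{2,3}$ on $A_1\otimes A_2\otimes A_1\otimes A_2$ and the placement of the unit elements.
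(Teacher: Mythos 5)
Your proposal is correct and follows essentially the same route as the paper's own proof: the same three conditions (multiplicativity via $\tau_{2,3}$ and $\mu\otimes\mu'$, compatibility with $\tilde\alpha=\alpha\otimes\alpha'$, and unit preservation) are verified with the same key inputs, namely $\alpha\circ\mu=\mu\circ\alpha^{\otimes 2}$ and the normalizations $\alpha(1_{A_1})=1_{A_1}$, $\alpha'(1_{A_2})=1_{A_2}$, $\mu'(1_{A_2}\otimes 1_{A_2})=1_{A_2}$. Your treatment is in fact slightly more careful than the paper's in making the unit normalizations explicit at each step.
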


\begin{proof}
First we check condition (\ref{(3)}) for the map $i_{1}.$
It holds if and only if 
\begin{equation}
\tilde{\mu}\circ \left( i_{1}\otimes i_{1}\right) =i_{1}\circ \mu ,\text{ }%
i_{1}\circ \alpha =\left( \alpha \otimes \alpha ^{\prime }\right) \circ
i_{1},\text{ }and\text{ }i_{1}\circ \eta =\eta \otimes \eta ^{\prime },
\end{equation}%
where $\tilde{\mu}$ is defined as in Example \ref{A}.
 For all $x_{1},y_{1}\in A_{1},$ we have 
\begin{align*}
\tilde{\mu}\circ \left( i_{1}\otimes i_{1}\right) (x_{1}\otimes y_{1})&
=\left( \ \mu \otimes \mu ^{\prime }\right) \left( id_{A_{1}}\otimes \tau
_{A_{2}\otimes A_{1}}\otimes id_{A_{2}}\right) \left( i_{1}\left(
x_{1}\right) \otimes i_{1}(y_{1})\right)  \\
& =\left( \ \mu \otimes \mu ^{\prime }\right) \left( id_{A_{1}}\otimes \tau
_{A_{2}\otimes A_{1}}\otimes id_{A_{2}}\right) \left( \alpha \left(
x_{1}\right) \otimes 1_{A_{2}}\otimes \alpha \left( y_{1}\right) \otimes
1_{A_{2}}\right)  \\
& =\mu (\alpha \left( x_{1}\right) \otimes \alpha \left( y_{1}\right)
)\otimes \mu ^{\prime }(1_{A_{2}}\otimes 1_{A_{2}}) \\
& =\alpha \left( \mu (x_{1}\otimes y_{1})\right) \otimes 1_{A_{2}} \\
& =i_{1}\circ \mu (x_{1}\otimes y_{1}).
\end{align*}

So the first condition is satisfied.
 For all $x\in A_{1}$, we have%
\begin{equation*}
\begin{array}{c}
\tilde{\alpha}\circ i_{1}\left( x\right) =\left( \alpha \otimes \alpha
^{\prime }\right) \left( \alpha \left( x\right) \otimes 1_{A_{2}}\right) 
=\alpha \left( \alpha \left( x\right) \right) \otimes 1_{A_{2}} 
=i_{1}\circ \alpha \left( x\right) .%
\end{array}%
\end{equation*}

So the second condition is satisfied.

Finally%
\begin{equation*}
\begin{array}{c}
i_{1}\circ \eta \left( 1_{\Bbbk }\right) =1_{A_{1}}\otimes 1_{A_{2}} 
=\eta \left( 1_{\Bbbk }\right) \otimes \eta ^{\prime }\left( 1_{\Bbbk
}\right) ,%
\end{array}%
\end{equation*}

which shows that $i_{1}$ is a unital Hom-associative algebras morphism.
Proof for $i_{2}$ is similar.
\end{proof}

\begin{proposition}
\label{C}(\cite{DY2009-1}) Let $\left( A,\mu
,\eta ,\alpha \right) $ be a unital Hom-associative algebra and $\beta
:A\rightarrow A$ be a weak morphism of Hom-associative algebra, i.e. $%
\beta \circ \mu =\mu \circ \beta ^{\otimes 2},$ $\beta \circ \alpha =\alpha
\circ \beta $, and $\beta \circ \eta =\eta .$ Then $\left( A,\mu _{\beta
}=\beta \circ \mu ,\eta _{\beta }=\beta \circ \eta ,\alpha _{\beta }=\beta
\circ \alpha \right) $ is a unital Hom-associative algebra.

Hence, we denote by $\beta ^{n}$ the $n$-fold composition of $n
$ copies of $\beta $, with $\beta ^{0}=id_{A},$ $\beta ^{n}\circ \mu =\mu
\circ \left( \beta ^{\otimes 2}\right) ^{n}$, then $\left( A,\mu _{\beta
^{n}}=\beta ^{n}\circ \mu ,\eta _{\beta ^{n}}=\beta ^{n}\circ \eta ,\alpha
_{\beta ^{n}}=\beta ^{n}\circ \alpha \right) $ is a unital Hom-associative
algebra.
\end{proposition}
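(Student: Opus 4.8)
The plan is to verify directly each axiom of a unital Hom-associative algebra for the twisted triple $(A,\mu_{\beta},\eta_{\beta},\alpha_{\beta})$, exploiting that $\beta$ commutes with $\mu$, $\alpha$ and $\eta$ so that every occurrence of $\beta$ can be pushed to the outside of a composite; then I would deduce the general $\beta^{n}$ case by observing that $\beta^{n}$ is again a weak morphism satisfying the same three compatibilities.

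First I would check Hom-associativity. Writing $\mu_{\beta}=\beta\circ\mu$ and $\alpha_{\beta}=\beta\circ\alpha$, I expand $\mu_{\beta}\circ(\alpha_{\beta}\otimes\mu_{\beta})=\beta\circ\mu\circ(\beta\otimes\beta)\circ(\alpha\otimes\mu)$ and use $\mu\circ\beta^{\otimes 2}=\beta\circ\mu$ to pull the inner $\beta^{\otimes 2}$ outward, obtaining $\beta^{2}\circ\mu\circ(\alpha\otimes\mu)$. The identical computation on the other side yields $\beta^{2}\circ\mu\circ(\mu\otimes\alpha)$, and the original Hom-associativity (\ref{(1)}) equates the two. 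Next I verify the structural compatibility $\alpha_{\beta}\circ\mu_{\beta}=\mu_{\beta}\circ\alpha_{\beta}^{\otimes 2}$ in the same style: on the left, commuting $\alpha$ past $\beta$ and using $\alpha\circ\mu=\mu\circ\alpha^{\otimes 2}$ gives $\beta^{2}\circ\mu\circ\alpha^{\otimes 2}$, while on the right, pulling $\beta^{\otimes 2}$ out with $\mu\circ\beta^{\otimes 2}=\beta\circ\mu$ gives the same expression.

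For unitality I first note $\eta_{\beta}=\beta\circ\eta=\eta$, whence $\alpha_{\beta}\circ\eta_{\beta}=\beta\circ\alpha\circ\eta=\beta\circ\eta=\eta_{\beta}$, using $\alpha\circ\eta=\eta$. The unit axiom follows from (\ref{(2)}): $\mu_{\beta}\circ(\eta_{\beta}\otimes id_{A})=\beta\circ\mu\circ(\eta\otimes id_{A})=\beta\circ\alpha=\alpha_{\beta}$, and the symmetric identity $\mu_{\beta}\circ(id_{A}\otimes\eta_{\beta})=\alpha_{\beta}$ is handled identically. This establishes the $n=1$ assertion.

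Finally, for the iterated statement I would argue by induction that $\beta^{n}$ is itself a weak morphism obeying the three hypotheses: $\beta^{n}\circ\mu=\mu\circ(\beta^{n})^{\otimes 2}$, $\beta^{n}\circ\alpha=\alpha\circ\beta^{n}$ and $\beta^{n}\circ\eta=\eta$ all propagate from $n$ to $n+1$ by composing with the $n=1$ relations. Applying the first part with $\beta^{n}$ in the role of $\beta$ then shows that $(A,\beta^{n}\circ\mu,\beta^{n}\circ\eta,\beta^{n}\circ\alpha)$ is a unital Hom-associative algebra. There is no serious obstacle here; the only point to watch is the bookkeeping of the twisting maps, since each axiom acquires a factor $\beta^{2}$ on both sides, and one must apply the commutation relations in the correct order so that these factors coincide before the original axioms (\ref{(1)}) and (\ref{(2)}) are invoked.
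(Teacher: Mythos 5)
Your proof is correct and follows essentially the same route as the paper's: both verify the compatibility $\alpha_{\beta}\circ\mu_{\beta}=\mu_{\beta}\circ\alpha_{\beta}^{\otimes 2}$ and the Hom-associativity of $\mu_{\beta}$ by pulling the $\beta$ factors to the outside via $\beta\circ\mu=\mu\circ\beta^{\otimes 2}$ and then invoking the original axiom (\ref{(1)}). If anything, your write-up is slightly more complete, since you also check the unitality conditions explicitly and carry out the $\beta^{n}$ case by induction, whereas the paper dispatches both with ``proved similarly.''
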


\begin{proof}
We have 
\begin{eqnarray*}
\mu _{\beta }\circ \alpha _{\beta }^{\otimes 2} &=&\left( \beta \circ \mu
\right) \circ \left( \beta \circ \alpha \right) \otimes \left( \beta \circ
\alpha \right) =\left( \beta \circ \mu \right) \circ \left( \beta ^{\otimes
2}\circ \alpha ^{\otimes 2}\right)  \\
&=&\beta \circ \beta \circ \left( \mu \circ \alpha ^{\otimes 2}\right)
=\beta \circ \left( \beta \circ \alpha \right) \circ \mu =\left( \beta \circ
\alpha \right) \circ \left( \beta \circ \mu \right)  \\
&=&\alpha _{\beta }\circ \mu _{\beta }.
\end{eqnarray*}%
Since $\beta $ is a a weak morphism of Hom-associative algebra, so $\alpha
_{\beta }$ is a weak morphism of Hom-associative algebra.

We show that $\left( A,\mu _{\beta },\eta _{\beta },\alpha _{\beta }\right) $
satisfies the Hom-associativity. Indeed%
\begin{eqnarray*}
&& \mu _{\beta }\left( \mu _{\beta }\otimes \alpha _{\beta }\right)  =\left(
\beta \circ \mu \right) \circ \left( \beta \circ \mu \otimes \beta \circ
\alpha \right)  
=\beta \circ \left( \beta ^{\otimes 2}\circ \left( \mu \circ \left( \mu
\otimes \alpha \right) \right) \right)  \\
&&\overset{\eqref{(1)}}{=}\beta \circ \left( \beta ^{\otimes 2}\circ \left( \mu \circ \left( \alpha
\otimes \mu \right) \right) \right)  
=\beta \circ \left( \mu \circ \left( \beta \circ \alpha \otimes \beta
\circ \mu \right) \right)  
=\left( \beta \circ \mu \right) \circ \left( \beta \circ \alpha \otimes
\beta \circ \mu \right)  \\
&&=\mu _{\beta }\left( \alpha _{\beta }\otimes \mu _{\beta }\right) .
\end{eqnarray*}

The second assertion is proved similarly, so $\left( A,\mu _{\beta },\eta
_{\beta },\alpha _{\beta }\right) $ is a unital associative algebra.
\end{proof}
\begin{remark}
In particular, if $\alpha =id_{A},$ one can construct a Hom-associative
algebra starting from an associative algebra and an algebra endomorphism.
\end{remark}

\subsection{Counital Hom-coassociative coalgebras}

We define first the fundamental notion of  Hom-coalgebra, which is dual to
that of a Hom-associative algebra,  in the sense that if we reverse all the
arrows in the defining diagrams of a Hom-associative algebra, we get the concept of a
Hom-coalgebra.

\begin{definition} A Hom-coassociative coalgebra is a triple $\left( C,\Delta ,\beta
\right) $ where $C$ is a $\Bbbk $-vector space and $\Delta :C\longrightarrow
C\otimes C,$ is a linear map, and $\beta :C\longrightarrow C$ \ is a
homomorphism 
satisfying the Hom-coassociativity condition,
\begin{equation}
\left( \Delta \otimes \beta \right) \circ \Delta =\left( \beta \otimes
\Delta \right) \circ \Delta.  \label{(7)}
\end{equation}%
We assume moreover that  $\Delta \circ \beta =\beta ^{\otimes 2}\circ \Delta .$\\
A Hom-coassociative coalgebra is said to be counital if there
exists a linear map $\varepsilon :C\longrightarrow \Bbbk ~\ $ such that $%
\varepsilon \circ \beta =\beta $ and%
\begin{equation}
\left( \varepsilon \otimes id_{C}\right) \circ \Delta =\left( id_{C}\otimes
\varepsilon \right) \circ \Delta =\beta.  \label{(8)}
\end{equation}
\end{definition} 
Conditions (\ref{(7)}) and (\ref{(8)}) are respectively equivalent to
the following commutative diagrams

\begin{equation*}
\begin{array}{ccc}
C & \overset{\ \Delta }{\longrightarrow } & C\otimes C\  \\ 
{\small \ \ \Delta }\downarrow \qquad  &  & \downarrow \beta \otimes {\small %
\Delta } \\ 
\quad C\otimes C & \overset{\Delta \otimes \beta }{\longrightarrow } & 
C\otimes C\mathit{\ }~~%
\end{array}%
\qquad 
\begin{array}{ccccc}
\ \Bbbk \otimes C & \overset{\varepsilon \otimes id_{C}}{\longleftarrow } & 
C\otimes C & \ \overset{\ id_{C}\otimes \varepsilon }{\longrightarrow } & \
C\otimes \Bbbk  \\ 
\cong  &  & \uparrow \mathit{\ }\Delta  &  & \cong  \\ 
C & \overset{\beta }{\longleftarrow } & C\mathit{\ } & \overset{\ \beta }{%
\longrightarrow } & C%
\end{array}%
\end{equation*}

\begin{equation*}
\begin{array}{ccccc}
\ \Bbbk \otimes C & \overset{\varepsilon \otimes id_{C}}{\longleftarrow } & 
C\otimes C & \ \overset{\ id_{C}\otimes \varepsilon }{\longrightarrow } & \
C\otimes \Bbbk  \\ 
\cong  &  & \uparrow \mathit{\ }\Delta  &  & \cong  \\ 
C & \overset{\beta }{\longleftarrow } & C\mathit{\ } & \overset{\ \beta }{%
\longrightarrow } & C.%
\end{array}%
\end{equation*}

\begin{remark}

\begin{enumerate}
\item We recover the classical \ coassociative coalgebra when the twisting
map $\beta $ is the identity map.

\item Given a Hom-coassociative coalgebra $C=\left( C,\Delta ,\beta \right) ,
$ we define the \emph{coopposite} Hom-coassociative coalgebra $%
C^{cop}=(C,\Delta ^{cop},\beta )$ to be  the Hom-coassociative coalgebra with
the same underlying vector space as $C$ and with comultiplication defined
by 
$
\Delta ^{cop}=\tau _{C\otimes C}\circ \Delta .
$

\item A Hom-coassociative coalgebra $\left( C,\Delta ,\beta \right) $ is 
\emph{cocommutative} if and only if $\ \Delta ^{cop}=\Delta .$
\end{enumerate}
\end{remark}

Let $\left( C,\Delta ,\beta )\right) $and $(C^{\prime },\Delta ^{\prime
},\beta ^{\prime })$ be two Hom-coassociative coalgebras. A linear map $%
f:C\rightarrow C^{\prime }$ is a \emph{ Hom-coassociative
coalgebras morphism  }  if%
\begin{equation}
f^{\otimes 2}\circ \Delta =\Delta ^{\prime }\circ f\ \quad \ \ and\ \
f\circ \beta =\beta ^{\prime }\circ f.  \label{9}
\end{equation}

It is said to be a \emph{weak morphism}  if holds only
the first condition. If furthermore the Hom-coassociative coalgebras admit
counits $\varepsilon $ and $\varepsilon ^{\prime }$, we have moreover $%
\varepsilon =\varepsilon ^{\prime }\circ f$.

We say that a Hom-coassociative coalgebra $\left( C,\Delta ,\beta \right) $
is \emph{isomorphic} to a Hom-coassociative coalgebra $(C^{\prime },\Delta
^{\prime },\beta ^{\prime })$ if there exists a bijective Hom-coalgebra
morphism $f:C\longrightarrow C^{\prime }$, and we denote this by $C\cong
C^{\prime }\ $when the context is clear, such that 
\begin{equation*}
\Delta ^{\prime }=f^{\otimes 2}\circ \Delta \circ f^{-1},\ \quad \varepsilon
^{\prime }=\varepsilon \circ f^{-1}~\ and~\ \ \text{ }\beta ^{\prime }=\beta
\circ f^{-1}.
\end{equation*}

Next, we describe the tensor product Hom-coassociative coalgebra
construction.

\begin{proposition}
Let $(C_{1},\Delta _{1},\varepsilon _{1},\beta _{1})$ and $\left(
C_{2},\Delta _{2},\varepsilon _{2},\beta _{2}\right) $ be two counital
Hom-coassociative coalgebras. Then the composite map 
\begin{equation*}
C_{1}\otimes C_{2}\overset{\Delta _{1}\otimes \Delta _{2}}{\longrightarrow }%
\left( C_{1}\otimes C_{1}\right) \otimes \left( C_{2}\otimes C_{2}\right) 
\overset{id_{C_{1}}\otimes \tau _{C_{2}\otimes C_{1}}\otimes id_{C_{2}}}{%
\longrightarrow }\left( C_{1}\otimes C_{2}\right) \otimes \left(
C_{1}\otimes C_{2}\right),
\end{equation*}%
where : $\tau _{C_{2}\otimes
C_{1}}:C_{2}\otimes C_{1}\longrightarrow C_{1}\otimes C_{2}$ is the linear
`twist' map, defines a Hom-coassociative comultiplication $\tilde{\Delta}=\left(
id_{C_{1}}\otimes \tau _{C_{2}\otimes C_{1}}\otimes id_{C_{2}}\right) \circ
\Delta _{1}\otimes \Delta _{2}$ on $C_{1}\otimes C_{2},$ and with  counits $%
\varepsilon _{1}$ of $C_{1}$ and $\varepsilon _{2}$ of $C_{2}$. The map 
$
\varepsilon _{1}\otimes \varepsilon _{2}:C_{1}\otimes C_{2}\longrightarrow
\Bbbk 
$
is a counit of $C_{1}\otimes C_{2}$.
\end{proposition}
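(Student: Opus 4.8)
The plan is to equip $C_{1}\otimes C_{2}$ with the twisting map $\tilde{\beta}=\beta_{1}\otimes\beta_{2}$ — the arrow-reversal dual of the twist $\tilde{\alpha}=\alpha_{1}\otimes\alpha_{2}$ used for the tensor product of Hom-associative algebras in Example~\ref{A} — and to verify the axioms of a counital Hom-coassociative coalgebra directly. Writing $\Delta_{1}(x)=\sum_{(x)}x_{(1)}\otimes x_{(2)}$ and $\Delta_{2}(y)=\sum_{(y)}y_{(1)}\otimes y_{(2)}$ in Sweedler's notation, the central flip swaps the two middle tensor factors, so the defining composite unwinds to
\begin{equation*}
\tilde{\Delta}(x\otimes y)=\sum_{(x),(y)}\bigl(x_{(1)}\otimes y_{(1)}\bigr)\otimes\bigl(x_{(2)}\otimes y_{(2)}\bigr).
\end{equation*}
Hence every structural identity for $C_{1}\otimes C_{2}$ should factor into the separate identities for $C_{1}$ and $C_{2}$; concretely I would use the natural reshuffling $(C_{1}\otimes C_{2})^{\otimes 3}\cong C_{1}^{\otimes 3}\otimes C_{2}^{\otimes 3}$ to isolate the $C_{1}$- and $C_{2}$-Sweedler components.

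First I would check the compatibility $\tilde{\Delta}\circ\tilde{\beta}=\tilde{\beta}^{\otimes 2}\circ\tilde{\Delta}$: applying $\tilde{\Delta}$ to $\beta_{1}(x)\otimes\beta_{2}(y)$ and invoking $\Delta_{i}\circ\beta_{i}=\beta_{i}^{\otimes 2}\circ\Delta_{i}$ in each factor replaces $\beta_{i}(\cdot)_{(1)}\otimes\beta_{i}(\cdot)_{(2)}$ by $\beta_{i}(\cdot_{(1)})\otimes\beta_{i}(\cdot_{(2)})$, which is exactly $\tilde{\beta}^{\otimes 2}\circ\tilde{\Delta}$. For Hom-coassociativity I would evaluate both $(\tilde{\Delta}\otimes\tilde{\beta})\circ\tilde{\Delta}$ and $(\tilde{\beta}\otimes\tilde{\Delta})\circ\tilde{\Delta}$ on $x\otimes y$; after separating the $C_{1}$- and $C_{2}$-subscripts, the former yields $\sum x_{(1)(1)}\otimes x_{(1)(2)}\otimes\beta_{1}(x_{(2)})$ together with its $C_{2}$-analogue, the latter yields $\sum\beta_{1}(x_{(1)})\otimes x_{(2)(1)}\otimes x_{(2)(2)}$ together with its $C_{2}$-analogue, so equality is precisely \eqref{(7)} applied in $C_{1}$ and in $C_{2}$.

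It remains to treat the counit $\varepsilon_{1}\otimes\varepsilon_{2}$. The normalization $(\varepsilon_{1}\otimes\varepsilon_{2})\circ\tilde{\beta}=\varepsilon_{1}\otimes\varepsilon_{2}$ is immediate from $\varepsilon_{i}\circ\beta_{i}=\varepsilon_{i}$, while
\begin{equation*}
\bigl((\varepsilon_{1}\otimes\varepsilon_{2})\otimes id_{C_{1}\otimes C_{2}}\bigr)\circ\tilde{\Delta}(x\otimes y)=\Bigl(\sum_{(x)}\varepsilon_{1}(x_{(1)})\,x_{(2)}\Bigr)\otimes\Bigl(\sum_{(y)}\varepsilon_{2}(y_{(1)})\,y_{(2)}\Bigr)=\beta_{1}(x)\otimes\beta_{2}(y),
\end{equation*}
using \eqref{(8)} in each factor, and symmetrically on the other side, which gives both counit conditions with the required value $\tilde{\beta}$. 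I expect the only real obstacle to be bookkeeping rather than mathematics: the central flip $\tau_{C_{2}\otimes C_{1}}$ must be tracked carefully so that, after it acts, the $C_{1}$- and $C_{2}$-components land in the slots where the one-factor axioms apply. Once the Sweedler form of $\tilde{\Delta}$ above is fixed, each axiom collapses termwise onto the corresponding hypothesis on $C_{1}$ and $C_{2}$; alternatively, the entire statement follows formally by reversing all arrows in the proof of the tensor-product construction for unital Hom-associative algebras.
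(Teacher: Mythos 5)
Your proof is correct. The paper in fact states this proposition without any proof (it is immediately followed by Definition \ref{F}, which merely records the structure maps $\tilde{\Delta}$, $\tilde{\varepsilon}$, $\tilde{\beta}$), so your direct Sweedler-notation verification supplies exactly the routine computation the authors leave implicit: compatibility $\tilde{\Delta}\circ\tilde{\beta}=\tilde{\beta}^{\otimes 2}\circ\tilde{\Delta}$ from the corresponding assumption on each $\Delta_{i}$, Hom-coassociativity via the shuffle isomorphism $(C_{1}\otimes C_{2})^{\otimes 3}\cong C_{1}^{\otimes 3}\otimes C_{2}^{\otimes 3}$ reducing to \eqref{(7)} in each factor, and the counit identities reducing to \eqref{(8)} in each factor (the coupled Sweedler sums factor cleanly since the shuffle is a fixed linear isomorphism and scalars commute). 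One cosmetic point: the normalization you invoke, $\varepsilon_{i}\circ\beta_{i}=\varepsilon_{i}$, is indeed the intended axiom --- the paper's ``$\varepsilon\circ\beta=\beta$'' in the definition of counitality is a typo, as its restatement in Proposition \ref{D} confirms.
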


\begin{definition}
\label{F} \emph{Tensor product} $C_{1}\otimes C_{2}$ of two counital
Hom-coassociative coalgebras $(C_{1},\Delta _{1},\varepsilon _{1},\beta _{1})
$ and $\left( C_{2},\Delta _{2},\varepsilon _{2},\beta _{2}\right) $ is
defined by 
$
(C_{1}\otimes C_{2},\tilde{\Delta},\tilde{\varepsilon},\tilde{\beta})
$
such that $\ \tilde{\Delta}=\left( id_{C_{1}}\otimes \tau _{C_{2}\otimes
C_{1}}\otimes id_{C_{2}}\right) \circ \Delta \otimes \Delta ^{\prime },$ $
\tilde{\varepsilon}=\varepsilon _{1}\otimes \varepsilon _{2}$ and $\ \tilde{%
\beta}=\beta _{1}\otimes \beta _{2}.$
\end{definition}

\begin{proposition}
\label{D} Let $\left( C,\Delta ,\varepsilon
,\alpha \right) $ be a counital Hom-coassociative coalgebra and $\beta
:C\longrightarrow C$ be a weak morphism of Hom-coassociative coalgebra, i.e. $\Delta \circ \beta =\beta ^{\otimes 2}\circ \Delta $  and $ \varepsilon\circ \beta =\varepsilon $.
Then $\left( C,\Delta _{\beta }=\Delta \circ \beta ,\varepsilon _{\beta
}=\varepsilon \circ \beta ,\alpha _{\beta }=\alpha \circ \beta \right) $ is
a counital Hom-coassociative coalgebra.

Hence, we denote by $\beta ^{n}$ the $n$-fold composition of $n
$ copies of $\beta $, with $\beta ^{0}=id_{C},$ $\Delta \circ \beta
^{n}=\left( \beta ^{\otimes 2}\right) ^{n}\circ \Delta $. Then $\left(
C,\Delta _{\beta ^{n}}=\Delta \circ \beta ^{n},\varepsilon ,\alpha _{\beta ^{n}}=\alpha \circ \beta
^{n}\right) $ is a counital Hom-coassociative coalgebra.
\end{proposition}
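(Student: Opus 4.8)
The plan is to prove this statement by dualizing the argument used for Proposition \ref{C}: reversing all arrows turns the Hom-associative computation into the Hom-coassociative one, with $\mu$ replaced by $\Delta$, $\eta$ by $\varepsilon$, and the structure map (here $\alpha$) playing the role that the twist played there. The three relations I will use throughout are the intertwining identity $\beta^{\otimes 2}\circ\Delta=\Delta\circ\beta$ coming from $\beta$ being a weak morphism, the counit compatibility $\varepsilon\circ\beta=\varepsilon$, and --- exactly as in the algebra case --- the commutation $\beta\circ\alpha=\alpha\circ\beta$ of the weak morphism with the structure map. I note at the outset that $\varepsilon_\beta=\varepsilon\circ\beta=\varepsilon$, so the counit is in fact unchanged; this is why the iterated statement can keep the same $\varepsilon$.

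First I would check that $\alpha_\beta=\alpha\circ\beta$ is again a weak morphism for the new comultiplication, i.e. $\Delta_\beta\circ\alpha_\beta=\alpha_\beta^{\otimes 2}\circ\Delta_\beta$. Writing $\Delta_\beta\circ\alpha_\beta=\Delta\circ\beta\circ\alpha\circ\beta$, moving $\beta^{\otimes 2}$ across $\Delta$ via $\Delta\circ\beta=\beta^{\otimes 2}\circ\Delta$, then replacing $\Delta\circ\alpha$ by $\alpha^{\otimes 2}\circ\Delta$ (the structure condition $\Delta\circ\alpha=\alpha^{\otimes 2}\circ\Delta$), intertwining once more, and commuting $\beta$ past $\alpha$, both sides collapse to $\alpha^{\otimes 2}\circ\beta^{\otimes 2}\circ\beta^{\otimes 2}\circ\Delta$. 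The compatibility $\varepsilon_\beta\circ\alpha_\beta=\varepsilon_\beta$ then follows immediately from $\varepsilon\circ\alpha=\varepsilon$ and $\varepsilon\circ\beta=\varepsilon$.

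Next comes the Hom-coassociativity $(\Delta_\beta\otimes\alpha_\beta)\circ\Delta_\beta=(\alpha_\beta\otimes\Delta_\beta)\circ\Delta_\beta$. The key simplification is the factorization $\Delta_\beta\otimes\alpha_\beta=(\Delta\otimes\alpha)\circ\beta^{\otimes 2}$, and likewise $\alpha_\beta\otimes\Delta_\beta=(\alpha\otimes\Delta)\circ\beta^{\otimes 2}$. Composing with $\Delta_\beta=\Delta\circ\beta$ and pushing the inner $\beta^{\otimes 2}$ through $\Delta$, the left-hand side becomes $(\Delta\otimes\alpha)\circ\Delta\circ\beta^{2}$ and the right-hand side $(\alpha\otimes\Delta)\circ\Delta\circ\beta^{2}$; these are equal by the original Hom-coassociativity \eqref{(7)} applied after $\beta^{2}$. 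The counit axiom \eqref{(8)} for the new structure is then immediate: since $\varepsilon_\beta=\varepsilon$, we get $(\varepsilon_\beta\otimes id_C)\circ\Delta_\beta=(\varepsilon\otimes id_C)\circ\Delta\circ\beta=\alpha\circ\beta=\alpha_\beta$, and symmetrically for $(id_C\otimes\varepsilon_\beta)\circ\Delta_\beta$.

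Finally, the iterated assertion follows by applying the first part with $\beta^{n}$ in place of $\beta$, after observing that $\beta^{n}$ is itself a weak morphism: iterating $\Delta\circ\beta=\beta^{\otimes 2}\circ\Delta$ gives $\Delta\circ\beta^{n}=(\beta^{\otimes 2})^{n}\circ\Delta=(\beta^{n})^{\otimes 2}\circ\Delta$, iterating $\varepsilon\circ\beta=\varepsilon$ gives $\varepsilon\circ\beta^{n}=\varepsilon$, and $\beta^{n}\circ\alpha=\alpha\circ\beta^{n}$. I expect the only real difficulty to be bookkeeping: tracking which tensor factor each $\beta$ acts on and invoking the intertwining relation $\beta^{\otimes 2}\circ\Delta=\Delta\circ\beta$ in the correct direction, together with making explicit the commutation $\beta\circ\alpha=\alpha\circ\beta$, which is needed (as in Proposition \ref{C}) but only implicit in the hypotheses as stated.
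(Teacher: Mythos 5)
Your proof is correct and follows essentially the same route as the paper's: factor $\Delta_\beta\otimes\alpha_\beta=(\Delta\otimes\alpha)\circ\beta^{\otimes 2}$, push $\beta^{\otimes 2}$ through $\Delta$ via the weak-morphism identity, invoke the original Hom-coassociativity \eqref{(7)}, and iterate for $\beta^n$. Your remark that the commutation $\beta\circ\alpha=\alpha\circ\beta$ is genuinely needed (for the weak-morphism property of $\alpha_\beta$) yet left implicit in the statement is accurate --- the paper's own computation uses it silently, exactly as in Proposition \ref{C} where it is stated explicitly.
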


\begin{proof}
We have 
\begin{eqnarray*}
\alpha _{\beta }^{\otimes 2}\circ \Delta _{\beta } &=&\left( \beta ^{\otimes
2}\circ \alpha ^{\otimes 2}\right) \circ \left( \Delta \circ \beta \right) 
\\
&=&\beta ^{\otimes 2}\circ \left( \alpha ^{\otimes 2}\circ \Delta \right)
\circ \beta =\beta ^{\otimes 2}\circ \Delta \circ \alpha \circ \beta =\Delta
\circ \beta \circ \alpha \circ \beta  \\
&=&\Delta _{\beta }\circ \alpha _{\beta }.
\end{eqnarray*}%
Since $\beta $ is a weak morphism of Hom-coassociative coalgebra, so $\alpha
_{\beta }$ is a morphism of Hom-coassociative coalgebra.

We show that $\left( C,\Delta _{\beta },\varepsilon ,\alpha _{\beta
}\right) $ satisfies the Hom-coassociativity. Indeed%
\begin{eqnarray*}
&& \left( \Delta _{\beta }\otimes \alpha _{\beta }\right) \circ \Delta _{\beta
} =\left( \Delta \circ \beta \otimes \alpha \circ \beta \right) \circ
\left( \Delta \circ \beta \right)  
=\left( \Delta \otimes \alpha \right) \circ \left( \beta ^{\otimes 2}\circ
\Delta \right) \circ \beta  \\
&&=\left( \left( \Delta \otimes \alpha \right) \circ \Delta \right) \circ
\beta \circ \beta  
=\left( \alpha \otimes \Delta \right) \circ \Delta \circ \beta \circ \beta 
=\left( \alpha \circ \beta \otimes \Delta \circ \beta \right) \circ \Delta
\circ \beta  \\
&&=\left( \alpha _{\beta }\otimes \Delta _{\beta }\right) \circ \Delta
_{\beta }.
\end{eqnarray*}

The second assertion is proved similarly, so $\left( C,\Delta _{\beta
},\varepsilon ,\alpha _{\beta }\right) $ is a counital
Hom-coassociative coalgebra.
\end{proof}

If $\alpha =id_{C}$, this proposition shows how to construct a
Hom-coassociative coalgebra starting from a coalgebra and a coalgebra
morphism  (\cite{AM 2008HBial}). It is a coalgebra version of Proposition \ref{C}.
% We need only the coassociative comultiplication of the coalgebra [see \cite{D Y 2007}].

\begin{theorem}
\label{H}Let $\left( C,\Delta ,\varepsilon ,\beta \right) $ be a counital
Hom-coassociative coalgebra and $C^{\ast }$ be the linear dual of $C$. We
define the maps $\mu :C^{\ast }\otimes C^{\ast }\overset{\rho }{%
\longrightarrow }\left( C\otimes C\right) ^{\ast }\overset{\Delta ^{\ast }}{%
\longrightarrow }C^{\ast },$ $\mu =\Delta ^{\ast }\rho ,$ where $\rho $ is
defined by $\rho \left( f^{\ast }\otimes g^{\ast }\right) \left( m\otimes
n\right) =f^{\ast }\left( m\right) g^{\ast }\left( n\right) $, and $\eta
:\Bbbk \overset{\phi }{\longrightarrow }\Bbbk ^{\ast }\overset{\varepsilon
^{\ast }}{\longrightarrow }C^{\ast },$ $\eta =\varepsilon ^{\ast }\phi $
where $\phi :\Bbbk \longrightarrow \Bbbk ^{\ast }$ is the canonical
isomorphism, and $\eta \left( 1_{\Bbbk }\right) =1_{C^{\ast }}$ where $%
1_{C^{\ast }}\left( x\right) =\varepsilon \left( x\right) $.  The
homomorphism $\alpha :C^{\ast }\longrightarrow C^{\ast }$ is defined as $\alpha \left(
h\right) =h\circ \beta .$ Then $\left( C^{\ast },\mu ,\eta ,\alpha \right) $
is a unital Hom-associative algebra.
\end{theorem}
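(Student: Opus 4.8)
The plan is to verify the four defining requirements of a unital Hom-associative algebra for $(C^{\ast},\mu,\eta,\alpha)$ directly, by dualizing the corresponding coalgebra axioms and checking each identity of functionals on an arbitrary element $m\in C$. The one tool I need throughout is the Sweedler description of the product: for $f^{\ast},g^{\ast}\in C^{\ast}$ and $m\in C$ one has $\mu(f^{\ast}\otimes g^{\ast})(m)=\rho(f^{\ast}\otimes g^{\ast})(\Delta(m))=\sum_{(m)}f^{\ast}(m_{(1)})\,g^{\ast}(m_{(2)})$, together with $\alpha(h)=h\circ\beta$ and $\eta(1_{\Bbbk})=\varepsilon$. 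Since $\mu=\Delta^{\ast}\rho$ sends $f^{\ast}\otimes g^{\ast}$ to the functional $\rho(f^{\ast}\otimes g^{\ast})\circ\Delta$, every product expression is controlled by how $\Delta$ (and its iterates) evaluate against the given functionals.

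First I would establish Hom-associativity \eqref{(1)}. Evaluating $\mu\circ(\alpha\otimes\mu)$ on $f^{\ast}\otimes g^{\ast}\otimes h^{\ast}$ and then on $m$, and inserting $\alpha(f^{\ast})=f^{\ast}\circ\beta$, I obtain $\sum f^{\ast}(\beta(m_{(1)}))\,g^{\ast}(m_{(2)(1)})\,h^{\ast}(m_{(2)(2)})$, which is exactly the value of the functional $f^{\ast}\otimes g^{\ast}\otimes h^{\ast}$ on $C^{\otimes 3}$ applied to $(\beta\otimes\Delta)\circ\Delta(m)$. The identical computation for $\mu\circ(\mu\otimes\alpha)$ yields the value on $(\Delta\otimes\beta)\circ\Delta(m)$. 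Hence Hom-associativity on $C^{\ast}$ is precisely the transpose of the Hom-coassociativity condition \eqref{(7)}, so the two sides coincide.

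Next I would check the compatibility $\alpha\circ\mu=\mu\circ\alpha^{\otimes 2}$. Evaluating $\alpha(\mu(f^{\ast}\otimes g^{\ast}))$ at $m$ gives $\mu(f^{\ast}\otimes g^{\ast})(\beta(m))=\sum f^{\ast}\big((\beta(m))_{(1)}\big)\,g^{\ast}\big((\beta(m))_{(2)}\big)$; applying the multiplicativity relation $\Delta\circ\beta=\beta^{\otimes 2}\circ\Delta$ rewrites this as $\sum f^{\ast}(\beta(m_{(1)}))\,g^{\ast}(\beta(m_{(2)}))$, and the right-hand side $\mu(\alpha(f^{\ast})\otimes\alpha(g^{\ast}))(m)$ produces the same expression directly. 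For unitality I would use the counit axiom \eqref{(8)}: from $\varepsilon\circ\beta=\varepsilon$ one gets $\alpha(\eta(1_{\Bbbk}))=\varepsilon\circ\beta=\varepsilon=\eta(1_{\Bbbk})$, so $\alpha\circ\eta=\eta$; and for the left unit axiom, $\mu(\eta(1_{\Bbbk})\otimes g^{\ast})(m)=\sum\varepsilon(m_{(1)})\,g^{\ast}(m_{(2)})=g^{\ast}\Big(\sum\varepsilon(m_{(1)})\,m_{(2)}\Big)=g^{\ast}(\beta(m))=\alpha(g^{\ast})(m)$, using $(\varepsilon\otimes id_{C})\circ\Delta=\beta$, with the right unit axiom following symmetrically from $(id_{C}\otimes\varepsilon)\circ\Delta=\beta$.

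I do not anticipate a genuine obstacle, as the whole argument is a systematic dualization; the only point that requires care is the bookkeeping of Sweedler indices and, above all, ensuring that the twist $\beta$ lands on the correct tensor leg when passing from a coalgebra identity to its transpose — concretely, that $\alpha$, being precomposition with $\beta$, is matched against whichever factor of the iterated coproduct carries $\beta$ in \eqref{(7)}. I would also note that $\rho\colon C^{\ast}\otimes C^{\ast}\to(C\otimes C)^{\ast}$ need not be surjective when $C$ is infinite-dimensional, but this is harmless here: each axiom is verified as an equality of elements of $C^{\ast}$ tested on arbitrary $m\in C$, so no identification of $(C\otimes C)^{\ast}$ with $C^{\ast}\otimes C^{\ast}$ is ever invoked.
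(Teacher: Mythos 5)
Your proposal is correct, and it is essentially the argument the paper intends: the paper gives no proof of this theorem at all, deferring instead to \cite[Corollary 4.12]{AM 2008HBial}, and your direct Sweedler-style dualization — transposing \eqref{(7)} into Hom-associativity, $\Delta\circ\beta=\beta^{\otimes 2}\circ\Delta$ into $\alpha\circ\mu=\mu\circ\alpha^{\otimes 2}$, and \eqref{(8)} into the unit axioms (reading the paper's misprinted counit condition as $\varepsilon\circ\beta=\varepsilon$) — is exactly the standard verification carried out in that reference. Your closing remark about $\rho$ failing to be surjective in infinite dimensions is also well placed, since it explains why this direction of the duality needs no finiteness hypothesis, in contrast with Theorem \ref{E}.
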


This is checked in exactly the same way as for Hom-coassociative coalgebras,
as was done in \cite[Corollary 4.12]{AM 2008HBial}.

Conversely,  does a unital Hom-associative algebra $%
\left( A,\mu ,\alpha \right) $ lead to a 
counital Hom-coassociative coalgebra on $A^{\ast }?$ It turns out  that it is not
possible to perform a construction similar to the one of the dual unital
Hom-associative algebra, due to the inexistence of a canonical morphism $%
\left( A\otimes A\right) ^{\ast }\longrightarrow A^{\ast }\otimes A^{\ast }.$
However, if $A$ is finite-dimensional, the canonical morphism $\rho :\left(
A\otimes A\right) ^{\ast }\longrightarrow A^{\ast }\otimes A^{\ast }$ is
bijective.

\begin{theorem}\cite[Corollary 4.12]{AM 2008HBial}
\label{E}Let $\left( A,\mu ,\eta ,\alpha \right) $ be a  finite dimensional unital
Hom-associative algebra  and $A^{\ast }$ be the linear
dual of $A$. We define the comultiplication by the composition%
\begin{equation*}
\Delta :A^{\ast }\overset{\mu ^{\ast }}{\longrightarrow }\left( A\otimes
A\right) ^{\ast }\overset{\rho ^{-1}}{\longrightarrow }A^{\ast }\otimes
A^{\ast }\text{ }by\text{ }\Delta =\rho ^{-1}\mu ^{\ast },
\end{equation*}%
and%
\begin{equation*}
\varepsilon :A^{\ast }\overset{\eta ^{\ast }}{\longrightarrow }\Bbbk ^{\ast }%
\overset{\psi }{\longrightarrow }\Bbbk ,\text{ }\varepsilon =\psi \eta
^{\ast },
\end{equation*}%
where $\psi $ is the canonical isomorphism, $\varepsilon \left( f\right)
=f\left( 1_{A}\right) $ for $f\in A^{\ast },$ where $1_{A}=\eta \left(
1_{\Bbbk }\right) $ and the homomorphism 
\begin{equation*}
\beta :A^{\ast }\longrightarrow A^{\ast },\text{ }\beta \left( h\right)
=h\circ \alpha .
\end{equation*}%
Then $\left( A^{\ast },\Delta ,\varepsilon ,\beta \right) $ is a counital
Hom-coassociative coalgebra.
\end{theorem}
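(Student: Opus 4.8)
The plan is to prove this by dualizing the structure maps of the finite-dimensional unital Hom-associative algebra $(A,\mu,\eta,\alpha)$ and verifying that the resulting data satisfies the four axioms of a counital Hom-coassociative coalgebra: the compatibility $\Delta\circ\beta=\beta^{\otimes 2}\circ\Delta$, the Hom-coassociativity \eqref{(7)}, the counit compatibility $\varepsilon\circ\beta=\beta$ (more precisely the appropriate scalar identity), and the counit condition \eqref{(8)}. The key structural input is that finite-dimensionality makes $\rho:(A\otimes A)^{\ast}\to A^{\ast}\otimes A^{\ast}$ bijective, so $\Delta=\rho^{-1}\circ\mu^{\ast}$ is well defined; without this, $\mu^{\ast}$ would only land in $(A\otimes A)^{\ast}$ and could not be reinterpreted as a comultiplication. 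Throughout I would use the identity $(f\circ g)^{\ast}=g^{\ast}\circ f^{\ast}$ for transposes, and the naturality relation $\rho\circ(f^{\ast}\otimes g^{\ast})=(f\otimes g)^{\ast}\circ\rho$ which encodes how $\rho$ intertwines tensor products of transposes with transposes of tensor products.

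First I would verify the compatibility $\Delta\circ\beta=\beta^{\otimes 2}\circ\Delta$. Since $\beta(h)=h\circ\alpha$, we have $\beta=\alpha^{\ast}$, so this amounts to showing $\rho^{-1}\mu^{\ast}\alpha^{\ast}=(\alpha^{\ast}\otimes\alpha^{\ast})\rho^{-1}\mu^{\ast}$. Applying $\rho$ on the left and using naturality, the right-hand side becomes $(\alpha\otimes\alpha)^{\ast}\mu^{\ast}=(\mu\circ\alpha^{\otimes 2})^{\ast}$, while the left-hand side is $\mu^{\ast}\alpha^{\ast}=(\alpha\circ\mu)^{\ast}$; these agree precisely because of the algebra hypothesis $\alpha\circ\mu=\mu\circ\alpha^{\otimes 2}$. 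Next, for Hom-coassociativity \eqref{(7)}, I would dualize the Hom-associativity law \eqref{(1)}, namely $\mu\circ(\alpha\otimes\mu)=\mu\circ(\mu\otimes\alpha)$. Taking transposes and inserting appropriate factors of $\rho$ and $\rho^{-1}$, the transpose of $\mu\circ(\mu\otimes\alpha)$ should produce $(\Delta\otimes\beta)\circ\Delta$ and the transpose of $\mu\circ(\alpha\otimes\mu)$ should produce $(\beta\otimes\Delta)\circ\Delta$, after carefully threading the bijection $\rho$ through the triple tensor products using its associativity-compatible analogue on $A^{\otimes 3}$.

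For the counit, I would use $\varepsilon=\psi\circ\eta^{\ast}$, so that $\varepsilon(f)=f(1_A)$, and dualize the unitality diagram \eqref{(2)}, $\mu\circ(\eta\otimes id_A)=\mu\circ(id_A\otimes\eta)=\alpha$. Transposing gives $(\varepsilon\otimes id_{A^{\ast}})\circ\Delta=(id_{A^{\ast}}\otimes\varepsilon)\circ\Delta=\alpha^{\ast}=\beta$, which is exactly \eqref{(8)} for the dual; the identification of $\eta^{\ast}$ composed with the canonical isomorphisms as the evaluation-at-$1_A$ map is the routine bookkeeping here. The compatibility $\varepsilon\circ\beta=\beta$ (at the scalar level, the condition $\varepsilon\circ\alpha^{\ast}=\varepsilon$, i.e. $f(\alpha(1_A))=f(1_A)$ for all $f$) follows immediately from $\alpha(1_A)=1_A$, which was recorded in the earlier Remark. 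The main obstacle I anticipate is not any single axiom in isolation but the careful and consistent handling of the flip and rebracketing isomorphisms on $A^{\ast}\otimes A^{\ast}\otimes A^{\ast}$ versus $(A\otimes A\otimes A)^{\ast}$: one must fix a convention for $\rho$ on triple tensor products and check that $\rho$ genuinely intertwines the two sides of Hom-coassociativity, since a sign or ordering slip there is the one place where the argument can silently fail. As the excerpt notes, this is checked in essentially the same fashion as the dual construction in \cite[Corollary 4.12]{AM 2008HBial}, so I would lean on that reference for the diagram-chasing details while making the finite-dimensionality and the use of $\rho^{-1}$ explicit.
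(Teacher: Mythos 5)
Your proposal is correct: the paper gives no proof of Theorem \ref{E} at all, deferring entirely to \cite[Corollary 4.12]{AM 2008HBial}, and the dualization argument you sketch (using bijectivity of $\rho$ in finite dimension, the naturality $\rho\circ(f^{\ast}\otimes g^{\ast})=(f\otimes g)^{\ast}\circ\rho$, transposing Hom-associativity, unitality, and multiplicativity of $\alpha$, and $\alpha(1_A)=1_A$ for $\varepsilon\circ\beta=\varepsilon$) is exactly the standard argument that the cited reference carries out. So you have supplied, correctly, the very proof the paper omits, including the right reading of the misprinted counit condition.
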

Such a construction could be extended to a so called finite dual.
\begin{proposition}
Let $\left( C,\Delta ,\varepsilon ,\beta \right) $ and $\left( D,\Delta
^{\prime },\varepsilon ^{\prime },\beta ^{\prime }\right) $ be counital
Hom-coassociative coalgebras, and let $\left( A,\mu ,\eta ,\alpha \right) $
and $\left( B,\mu ^{\prime },\eta ^{\prime },\alpha ^{\prime }\right) $ be
finite dimensional unital Hom-associative algebras.

\begin{description}
\item[1)] If $f:C\longrightarrow D$ is a Hom-coassociative
coalgebras morphism, then $f^{\ast }:D^{\ast }\longrightarrow C^{\ast }$ is a  Hom-associative algebras morphism.

\item[2)] If $f:A\longrightarrow B$ is a Hom-associative
algebras morphism, then $f^{\ast }:B^{\ast }\longrightarrow A^{\ast }$ is a  Hom-coassociative coalgebras morphism.
\end{description}
\end{proposition}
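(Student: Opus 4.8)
The plan is to prove both statements by the same device: dualize each defining relation of $f$ and evaluate the resulting identities on elements, using the evaluation pairing $\langle h,x\rangle = h(x)$ together with Sweedler's notation. Recall from Theorem \ref{H} that the product on a coalgebra dual is given by $\langle \mu_{C^\ast}(\phi\otimes\psi),c\rangle = \sum_{(c)}\phi(c_{(1)})\psi(c_{(2)})$, the unit is $\varepsilon$, and the twist is $h\mapsto h\circ\beta$; and from Theorem \ref{E} that the coproduct on a finite-dimensional algebra dual is characterized by $\langle\Delta_{A^\ast}(\phi),a\otimes b\rangle = \phi(ab)$, the counit is $\phi\mapsto\phi(1_A)$, and the twist is $h\mapsto h\circ\alpha$. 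Throughout one has $\langle f^\ast h,x\rangle = \langle h,f(x)\rangle$.

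For part 1), I would verify the three requirements (\ref{(3)}) together with unitality for $f^\ast$. For multiplicativity, evaluate $\mu_{C^\ast}\circ(f^\ast\otimes f^\ast)(g\otimes h)$ and $f^\ast\circ\mu_{D^\ast}(g\otimes h)$ on an arbitrary $c\in C$: the left side gives $\sum_{(c)}g(f(c_{(1)}))\,h(f(c_{(2)}))$ while the right side gives $(g\otimes h)(\Delta'(f(c)))$, and these agree because $\Delta'\circ f = f^{\otimes 2}\circ\Delta$. Compatibility with the twists reduces to the chain $(h\circ\beta')\circ f = h\circ(\beta'\circ f) = h\circ(f\circ\beta)$, which is exactly $f\circ\beta = \beta'\circ f$; and $f^\ast(\varepsilon') = \varepsilon'\circ f = \varepsilon$ yields the unitality $f^\ast\circ\eta_{D^\ast} = \eta_{C^\ast}$, using the counit condition $\varepsilon = \varepsilon'\circ f$ from (\ref{9}).

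For part 2), I would dually check the three conditions (\ref{9}) for $f^\ast$, where finite-dimensionality is what makes the isomorphism $\rho$, and hence $\Delta_{A^\ast}$, well defined. For comultiplicativity, pair $(\Delta_{A^\ast}\circ f^\ast)(g)$ and $((f^\ast\otimes f^\ast)\circ\Delta_{B^\ast})(g)$ with an arbitrary $a\otimes b\in A\otimes A$: the former equals $g(f(\mu_A(a\otimes b)))$ and the latter equals $g(\mu_B(f(a)\otimes f(b)))$, which coincide since $f\circ\mu_A = \mu_B\circ f^{\otimes 2}$. Compatibility with the twists again collapses to $f\circ\alpha = \alpha'\circ f$, and the counit condition $\varepsilon_{B^\ast} = \varepsilon_{A^\ast}\circ f^\ast$ evaluates to $g(f(1_A)) = g(1_B)$, which holds because $f\circ\eta = \eta'$ forces $f(1_A) = 1_B$.

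The computations are routine once the pairings are in place; the only genuine subtlety is in part 2), where the coproduct $\Delta_{A^\ast} = \rho^{-1}\mu^\ast$ exists only because $A$ and $B$ are finite-dimensional. I would therefore work throughout with the characterization $\langle\Delta_{A^\ast}(\phi),a\otimes b\rangle = \phi(ab)$ rather than with $\rho^{-1}$ explicitly, which avoids inverting $\rho$ and keeps every identity as a statement about evaluations on pairs of elements.
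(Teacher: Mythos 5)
Your proposal is correct and follows essentially the same route as the paper's own proof: both parts are verified by dualizing each defining relation of $f$ and evaluating pointwise, using exactly the identities $\Delta'\circ f = f^{\otimes 2}\circ\Delta$, $\beta'\circ f = f\circ\beta$, $\varepsilon = \varepsilon'\circ f$ for part 1), and $f\circ\mu_A = \mu_B\circ f^{\otimes 2}$, $f\circ\alpha = \alpha'\circ f$, $f(1_A)=1_B$ for part 2). The paper writes the evaluations as composites of maps rather than with an explicit pairing, and it likewise sidesteps $\rho^{-1}$ by evaluating $\Delta_{A^\ast}(\phi)$ against elements of $A\otimes A$, so the two arguments coincide in substance.
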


\begin{proof}

1) We verify that $f^{\ast }$\ is a Hom-associative algebra morphism. 
Let $d^{\ast },e^{\ast }\in D^{\ast }$ and $c\in C.$ where $\mu _{D^{\ast }},
$ $\eta _{D^{\ast }},$ and $\alpha _{D^{\ast }}$\ are defined as in Theorem %
\ref{H}. We have 
\begin{eqnarray*}
&& f^{\ast }\circ \mu _{D^{\ast }}\left( d^{\ast }\otimes e^{\ast }\right)
\left( c\right)  =\mu _{D^{\ast }}\left( d^{\ast }\otimes e^{\ast }\right)
\left( f\left( c\right) \right) \text{ } 
=\rho \left( d^{\ast }\otimes e^{\ast }\right) \circ \Delta _{D}\circ
f\left( c\right)  \\
&&=\rho \left( d^{\ast }\otimes e^{\ast }\right) \circ \left( f\otimes
f\right) \circ \Delta _{C}\left( c\right) \begin{small}\text{ (}f\text{ is a
Hom-coassociative coalgebra morphism)} \end{small}\\
&&=\rho \left( d^{\ast }\left( f\right) \otimes e^{\ast }\left( f\right)
\right) \Delta _{C}\left( c\right)  
=\mu _{C^{\ast }}\left( f^{\ast }\left( d^{\ast }\right) \otimes f^{\ast
}\left( e^{\ast }\right) \right) \left( c\right)  
=\mu _{C^{\ast }}\left( \left( f^{\ast }\otimes f^{\ast }\right) \left(
d^{\ast }\otimes e^{\ast }\right) \right) \left( c\right) .
\end{eqnarray*}%
Furthermore 
\begin{equation*}
f^{\ast }\circ \eta _{D^{\ast }}\left( 1_{\Bbbk }\right) =f^{\ast }\left(
\varepsilon _{D}\right) =\varepsilon _{D}\left( f\right) =\varepsilon
_{C}=\eta _{C^{\ast }}\left( 1_{\Bbbk }\right) .
\end{equation*}%
For $d^{\ast }\in D^{\ast }$ we have 
\begin{eqnarray*}
f^{\ast }\circ \alpha _{D^{\ast }}\left( d^{\ast }\right)  &=&\alpha
_{D^{\ast }}\left( d^{\ast }\right) \left( f\right) =d^{\ast }\circ \beta
_{D}\left( f\right)  \\
&=&d^{\ast }\circ f\circ \beta _{C} \begin{small}\text{ (}f\text{ is a Hom-coalgebra
morphism)} \end{small} \\
&=&f^{\ast }\left( d^{\ast }\right) \circ \beta _{C}=\alpha _{C^{\ast
}}\circ f^{\ast }\left( d^{\ast }\right) .
\end{eqnarray*}%
Then $f^{\ast }$\ is a Hom-associative algebra morphism.

2) We have to show that the following diagram is commutative%
\begin{equation*}
\begin{array}{ccc}
B^{\ast } & \overset{f^{\ast }}{\longrightarrow } & A^{\ast } \\ 
\Delta _{B^{\ast }}\downarrow  &  & \downarrow \Delta _{A^{\ast }} \\ 
B^{\ast }\otimes B^{\ast } & \overset{f^{\ast }\otimes f^{\ast }}{%
\longrightarrow } & A^{\ast }\otimes A^{\ast }.%
\end{array}%
\end{equation*}%
Let $b^{\ast }\in B^{\ast },$ where $\Delta _{A^{\ast }},$ $\varepsilon
_{A^{\ast }},$ and $\beta _{A^{\ast }}$\ are defined as in Theorem \ref{E}.
We have 
\begin{eqnarray*}
\left( \Delta _{A^{\ast }}\circ f^{\ast }\right) \left( b^{\ast }\right) 
&=&\Delta _{A^{\ast }}\circ \left( b^{\ast }\left( f\right) \right) \text{ }
\\
&=&b^{\ast }\circ f\circ \mu _{A}=b^{\ast }\circ \mu _{B}\left( f\otimes
f\right)  \begin{small}\text{ (}f\text{ is a Hom-associative algebra morphism)}  \end{small}\\
&=&\left( f^{\ast }\otimes f^{\ast }\right) \left( b^{\ast }\circ \mu
_{B}\right) =\left( f^{\ast }\otimes f^{\ast }\right) \Delta _{B^{\ast
}}\left( b^{\ast }\right) ,
\end{eqnarray*}%
which proves the commutativity of the diagram. Also%
\begin{eqnarray*}
\left( \varepsilon _{A^{\ast }}\circ f^{\ast }\right) \left( b^{\ast
}\right)  &=&\varepsilon _{A^{\ast }}\left( b^{\ast }\left( f\right) \right)
=b^{\ast }\left( f\right) \left( 1_{A}\right) =b^{\ast }\circ f\left( \eta
_{A}\left( 1_{\Bbbk }\right) \right)  \\
&=&b^{\ast }\circ \left( \eta _{B}\left( 1_{\Bbbk }\right) \right) =b^{\ast
}\left( 1_{B}\right) =\varepsilon _{B^{\ast }}\left( b^{\ast }\right) ,
\end{eqnarray*}%
and%
\begin{eqnarray*}
f^{\ast }\circ \beta _{B^{\ast }}\left( b^{\ast }\right)  &=&\beta _{B^{\ast
}}\left( b^{\ast }\right) \left( f\right) =b^{\ast }\circ \alpha _{B}\left(
f\right)  \\
&=&b^{\ast }\circ f\circ \alpha _{A}\text{ (}f\text{ is a Hom-coassociative
coalgebra morphism)} \\
&=&f^{\ast }\left( b^{\ast }\right) \circ \alpha _{A}=\beta _{A^{\ast
}}\circ f^{\ast }\left( b^{\ast }\right) .
\end{eqnarray*}%
So $f^{\ast }$\ is a Hom-coassociative coalgebra morphism.
\end{proof}

\begin{proposition}
\label{6}Let $\left( C,\mu ,\eta ,\alpha \right)$ and  $\left( C,\Delta
,\varepsilon ,\alpha \right) $ be respectively  unital Hom-associative algebra and
counital Hom-coassociative coalgebra. The following statements
are equivalent

\begin{enumerate}
\item The maps $\mu $ and $\eta $ are  morphisms of counital
Hom-coassociative coalgebras.

\item The maps $\Delta $ and $\varepsilon $ are  morphisms of unital
Hom-associative algebras.
\end{enumerate}
\end{proposition}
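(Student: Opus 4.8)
The plan is to show that conditions (1) and (2) are each equivalent to one and the same system of four compatibility identities, after which their mutual equivalence is immediate. Writing $\tilde{\mu}=(\mu\otimes\mu)\circ\tau_{2,3}$ and $\tilde{\eta}=\eta\otimes\eta$ for the tensor-product unital Hom-associative algebra structure on $C\otimes C$ (Example \ref{A}), and $\tilde{\Delta}=(id_C\otimes\tau_{C\otimes C}\otimes id_C)\circ(\Delta\otimes\Delta)$, $\tilde{\varepsilon}=\varepsilon\otimes\varepsilon$ for the tensor-product counital Hom-coassociative coalgebra structure (Definition \ref{F}), the candidate identities are
\[
\Delta\circ\mu=\tilde{\mu}\circ(\Delta\otimes\Delta),\qquad \Delta\circ\eta=\eta\otimes\eta,\qquad \varepsilon\circ\mu=\varepsilon\otimes\varepsilon,\qquad \varepsilon\circ\eta=id_{\Bbbk}.
\]
The first preparatory step is to record that, since $C\otimes C$ in both constructions carries the \emph{same} flip $\tau_{2,3}=id_C\otimes\tau_{C\otimes C}\otimes id_C$, one has the bookkeeping identity $\tilde{\mu}\circ(\Delta\otimes\Delta)=(\mu\otimes\mu)\circ\tilde{\Delta}$; this is precisely what lets the first compatibility equation be read simultaneously from the algebra side and the coalgebra side.

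Next I would unpack (1). Requiring $\mu:C\otimes C\to C$ to be a morphism of counital Hom-coassociative coalgebras means $(\mu\otimes\mu)\circ\tilde{\Delta}=\Delta\circ\mu$, $\varepsilon\circ\mu=\tilde{\varepsilon}$, and the twist condition $\mu\circ(\alpha\otimes\alpha)=\alpha\circ\mu$. Via the bookkeeping identity the first two give exactly the first and third identities above, while the twist condition is automatic, being the standing hypothesis $\alpha\circ\mu=\mu\circ\alpha^{\otimes2}$ of the algebra. Treating $\Bbbk$ as the trivial coalgebra, requiring $\eta:\Bbbk\to C$ to be a coalgebra morphism yields $\Delta\circ\eta=\eta\otimes\eta$, $\varepsilon\circ\eta=id_{\Bbbk}$, and $\alpha\circ\eta=\eta$; the first two are the remaining identities and the last is the standing unit axiom. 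Hence (1) is equivalent to the four displayed identities.

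Then I would unpack (2) in exactly the same fashion and observe that it produces the identical list. Requiring $\Delta:C\to C\otimes C$ to be a morphism of unital Hom-associative algebras gives $\Delta\circ\mu=\tilde{\mu}\circ(\Delta\otimes\Delta)$, $\Delta\circ\eta=\tilde{\eta}$, and $\Delta\circ\alpha=(\alpha\otimes\alpha)\circ\Delta$ — the first two being the first and second identities, the third being the standing coalgebra hypothesis $\Delta\circ\alpha=\alpha^{\otimes2}\circ\Delta$. Requiring $\varepsilon:C\to\Bbbk$ to be an algebra morphism gives $\varepsilon\circ\mu=\varepsilon\otimes\varepsilon$, $\varepsilon\circ\eta=id_{\Bbbk}$, and $\varepsilon\circ\alpha=\varepsilon$, which reproduces the third and fourth identities together with the standing counit axiom. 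Thus (2) is also equivalent to the same four identities, and (1)$\Leftrightarrow$(2) follows.

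The only genuinely non-formal point — and the step I would verify most carefully — is the bookkeeping identity $\tilde{\mu}\circ(\Delta\otimes\Delta)=(\mu\otimes\mu)\circ\tilde{\Delta}$, namely that the flip $\tau_{2,3}$ built into the tensor-product multiplication is literally the same permutation of $C^{\otimes4}$ as the one built into the tensor-product comultiplication. Everything else reduces to matching each morphism axiom against the standing compatibility of $\mu,\eta,\Delta,\varepsilon$ with the common twisting map $\alpha$, which shows that these twist conditions impose nothing new. (I note that the counit-twist axiom is printed in the excerpt as $\varepsilon\circ\beta=\beta$; the dimensionally consistent reading $\varepsilon\circ\alpha=\varepsilon$ is the one invoked above.)
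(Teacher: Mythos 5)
Your proof is correct and takes essentially the same route as the paper: unpack both morphism conditions into one common list of compatibility identities (using that the two tensor-product structures involve the same flip $\tau_{2,3}$) and observe that all the twist conditions are automatic from the standing hypotheses $\alpha\circ\mu=\mu\circ\alpha^{\otimes 2}$, $\Delta\circ\alpha=\alpha^{\otimes 2}\circ\Delta$, $\alpha\circ\eta=\eta$, $\varepsilon\circ\alpha=\varepsilon$. If anything, your write-up is more careful than the paper's, which displays only the comultiplication-compatibility and twist relations, leaves the counit identities $\varepsilon\circ\mu=\varepsilon\otimes\varepsilon$ and $\varepsilon\circ\eta=id_{\Bbbk}$ implicit, and spells out only the direction (1)$\Rightarrow$(2).
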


\begin{proof}
Let $\mu $ be a morphism of the Hom-coassociative coalgebra 
\begin{equation*}
\mu :(C\otimes C,\tilde{\Delta},\tilde{\varepsilon},\tilde{\beta}%
)\longrightarrow \left( C,\Delta ,\varepsilon ,\alpha \right) 
\end{equation*}%
such that $\tilde{\Delta}=\tau _{2,3}\circ \Delta \otimes \Delta ,$ $\tilde{%
\varepsilon}=\varepsilon \otimes \varepsilon ,$ $\tilde{\beta}=\alpha
\otimes \alpha $, then the morphism $\mu $ satisfies  Axiom (\ref{9})%
\begin{equation}
(\mu \otimes \mu )\circ \tau _{2,3}\circ \Delta \otimes \Delta =\Delta \circ
\mu ,~and\ \mu \circ \left( \alpha \otimes \alpha \right) =\alpha \circ \mu .
\label{(11)}
\end{equation}%
And $\eta $ is a morphism of the Hom-coassociative coalgebra 
\begin{equation*}
\eta :\left( \Bbbk ,id_{\Bbbk ,\Bbbk ^{\otimes 2}},id_{\Bbbk ,\Bbbk
},id_{\Bbbk ,\Bbbk }\right) \longrightarrow \left( C,\Delta ,\varepsilon
,\alpha \right) ,
\end{equation*}%
then%
\begin{equation}
\left( \eta \otimes \eta \right) \circ id_{\Bbbk ,\Bbbk ^{\otimes 2}}=\Delta
\circ \eta ,\qquad \eta \circ id_{\Bbbk ,\Bbbk }=\alpha \circ \eta ,
\label{(12)}
\end{equation}

by  relations (\ref{(11)}), (\ref{(12)}) and$\ \alpha $ is a
homomorphism, the comultiplication 
\begin{equation*}
\Delta :\left( C,\mu ,\eta ,\alpha \right) \longrightarrow \left( C\otimes C,%
\tilde{\mu},\tilde{\eta},\tilde{\alpha}\right) 
\end{equation*}%
is a morphism of Hom-associative algebra, such that 
\begin{equation*}
\tilde{\mu}=(\mu \otimes \mu )\circ \tau _{2,3},\quad \tilde{\eta}=\eta
\otimes \eta \ and\ \tilde{\alpha}=\alpha \otimes \alpha .
\end{equation*}

By the relations (\ref{(11)}), (\ref{(12)}) and$\ \alpha $ is a
homomorphism, the counit%
\begin{equation*}
\varepsilon :\left( C,\mu ,\eta ,\alpha \right) \longrightarrow \left( \Bbbk
,id_{C,\Bbbk ^{\otimes 2}},id_{C,\Bbbk },id_{C,\Bbbk }\right) 
\end{equation*}%
is a morphism of Hom-associative algebra.
\end{proof}

\begin{lemma}
\label{lemma1}Let $\left( C,\Delta ,\varepsilon ,\beta \right) $ be a
counital Hom-coassociative coalgebra and  $f:C\rightarrow C$ be a linear map 
which commutes with $\beta $ and  satisfies $\beta \circ f=f\circ \beta .$ Then

\begin{enumerate}
\item $\left( \beta \otimes \left( id_{C}\otimes f\right) \circ \Delta
\right) \circ \Delta =(\Delta \otimes \left( \beta \circ f\right) )\circ
\Delta ;$

\item $(\beta \otimes \left( f\otimes id_{C}\right) \circ \Delta )\circ
\Delta =(\left( \left( id_{C}\otimes f\right) \circ \Delta \right) \otimes
\beta )\circ \Delta ;$

\item $((f\otimes id_{C})\circ \Delta \otimes \beta )\circ \Delta =(\left(
\beta \circ f\right) \otimes \Delta )\circ \Delta .$
\end{enumerate}
\end{lemma}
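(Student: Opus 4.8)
The plan is to derive all three identities from the single Hom-coassociativity relation \eqref{(7)}, namely $(\Delta \otimes \beta)\circ \Delta = (\beta \otimes \Delta)\circ \Delta$, by post-composing both sides with $f$ inserted into one suitable tensor slot. The guiding principle is that if two linear maps $C \to C\otimes C\otimes C$ agree, then composing on the left with any operator $g_1 \otimes g_2 \otimes g_3$ preserves the equality; taking $g_i = f$ for exactly one index $i$ and $g_j = id_C$ otherwise rewrites \eqref{(7)} into each of the three claimed forms. Throughout I would use $\Delta\circ\beta = \beta^{\otimes 2}\circ\Delta$ only implicitly and the hypothesis $\beta\circ f = f\circ\beta$ where the inserted $f$ meets a factor already carrying $\beta$.

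For part (1), I would apply $id_C \otimes id_C \otimes f$ to both sides of \eqref{(7)}. On the side $(\beta \otimes \Delta)\circ\Delta$, inserting $f$ in the last slot turns the $\Delta$ sitting in the second tensor factor into $(id_C \otimes f)\circ\Delta$, giving exactly $(\beta \otimes (id_C \otimes f)\circ\Delta)\circ\Delta$, the left-hand side of (1). On the side $(\Delta \otimes \beta)\circ\Delta$, the same operator replaces the $\beta$ in the last slot by $f \circ \beta$; here I invoke $\beta \circ f = f \circ \beta$ to rewrite $f\circ\beta$ as $\beta\circ f$, producing $(\Delta \otimes (\beta\circ f))\circ\Delta$, the right-hand side of (1).

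Part (2) is the cleanest case: I would apply $id_C \otimes f \otimes id_C$ to \eqref{(7)}. Since $f$ now occupies the middle slot while $\beta$ always sits in an outer slot, no commutation is needed, and the two sides of \eqref{(7)} become $(\beta \otimes (f\otimes id_C)\circ\Delta)\circ\Delta$ and $(((id_C \otimes f)\circ\Delta)\otimes\beta)\circ\Delta$ respectively, which is precisely (2). Part (3) is symmetric to (1): applying $f \otimes id_C \otimes id_C$ to \eqref{(7)} sends $(\Delta\otimes\beta)\circ\Delta$ to $(((f\otimes id_C)\circ\Delta)\otimes\beta)\circ\Delta$ with no commutation (the $f$ in slot one does not meet the $\beta$ in slot three), while $(\beta\otimes\Delta)\circ\Delta$ is sent to $((f\circ\beta)\otimes\Delta)\circ\Delta$; again using $\beta\circ f = f\circ\beta$ to trade $f\circ\beta$ for $\beta\circ f$ in the first slot yields the right-hand side of (3).

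There is no serious obstacle; the argument is essentially bookkeeping. The only point requiring care is tracking which tensor factor each map acts on—in particular keeping the grouping $(id_C \otimes f)\circ\Delta$ or $(f \otimes id_C)\circ\Delta$ attached to the correct slot after the insertion—and recognizing that the hypothesis $\beta \circ f = f \circ \beta$ is exactly what is consumed whenever the inserted $f$ lands on the factor already bearing $\beta$, i.e.\ in parts (1) and (3), whereas part (2) rests on Hom-coassociativity alone. Rendering each step in Sweedler notation, $\Delta(x)=\sum_{(x)} x_{(1)}\otimes x_{(2)}$, would reduce every verification to a single line.
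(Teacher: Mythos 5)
Your proof is correct and is essentially the paper's own argument: the paper likewise proves each identity by factoring the composite as $(g_{1}\otimes g_{2}\otimes g_{3})\circ(\beta\otimes\Delta)$ or $(g_{1}\otimes g_{2}\otimes g_{3})\circ(\Delta\otimes\beta)$ with $f$ in exactly one slot, applying Hom-coassociativity \eqref{(7)}, and invoking $\beta\circ f=f\circ\beta$ precisely in parts (1) and (3). The only difference is presentational --- you post-compose \eqref{(7)} with $id\otimes id\otimes f$, $id\otimes f\otimes id$, $f\otimes id\otimes id$, while the paper reads the same computation starting from the left-hand side of each identity --- so nothing further is needed.
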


\begin{proof}
We have 

\begin{enumerate}
\item The first equality%
\begin{align*}
& (\beta \otimes \left( id_{C}\otimes f\right) \circ \Delta )\circ \Delta 
=(id_{C}\otimes \left( id_{C}\otimes f\right) \circ (\beta \otimes \Delta
)\circ \Delta  \\
& \overset{\eqref{(7)}}{=} ((id_{C}\otimes id_{C})\otimes f)\circ (\Delta \otimes \beta )\circ
\Delta   \
 =((id_{C}\otimes id_{C})\circ \Delta \otimes f\circ \beta )\circ \Delta  \\
& =(\Delta \otimes (\beta \circ f))\circ \Delta .
\end{align*}

\item The second equality%
\begin{align*}
& (\beta \otimes \left( f\otimes id_{C}\right) \circ \Delta )\circ \Delta 
=(id_{C}\otimes \left( f\otimes id_{C}\right) (\beta \otimes \Delta )\circ
\Delta  \\
& \overset{\eqref{(7)}}{=} ((id_{C}\otimes f)\otimes id_{C})(\Delta \otimes \beta )\circ \Delta
 =((id_{C}\otimes f)\circ \Delta \otimes \beta )\circ \Delta .
\end{align*}

\item The third equality%
\begin{align*}
& ((f\otimes id_{C})\circ \Delta \otimes \beta )\circ \Delta  \overset{\eqref{(7)}}{=} ((f\otimes
id_{C})\otimes id_{C})(\Delta \otimes \beta )\circ \Delta   
 =(f\otimes (id_{C}\otimes id_{C}))(\beta \otimes \Delta )\circ \Delta  
\\
& =(f\circ \beta \otimes (id_{C}\otimes id_{C})\circ \Delta )\circ \Delta   =(f\circ \beta \otimes \Delta )\circ \Delta  
 =(\beta \circ f\otimes \Delta )\circ \Delta .
\end{align*}
\end{enumerate}

This finishes the proof
\end{proof}

%This Lemma will be used in the proof of  Proposition \ref{S}

\begin{lemma}\label{4} Let $\left( C,\Delta ,\varepsilon ,\beta )\right) $be a
Hom-counital coassociative coalgebra and $f$ be  a linear map $f:C\rightarrow
C^{\otimes m}$ satisfying $f\circ \beta =\beta ^{\otimes m}\circ f,$ then

\begin{enumerate}
\item $((\Delta \otimes \beta ^{\otimes n})(\Delta \otimes \beta ^{\otimes
(n-1)})\circ f=\left( \left( \beta \otimes \Delta \right) \circ \Delta
\otimes \left( \beta \circ \beta \right) ^{\otimes (n-1)}\right) \circ f;$

\item $\left( \beta ^{n}\otimes \left( \beta ^{n-1}\otimes f\right) \circ
\Delta \right) \circ \Delta =\left( \Delta \otimes \beta ^{\otimes m}\right)
\left( \beta ^{n-1}\otimes f\right) \circ \Delta ;$

\item $\left( \left( \left( f\otimes \beta ^{n-1}\right) \circ \Delta
\right) \otimes \beta ^{n}\right) \circ \Delta =\left( \left( \beta
^{\otimes m}\otimes \Delta \right) \left( f\otimes \beta ^{n-1}\right)
\right) \circ \Delta .$
\end{enumerate}
\end{lemma}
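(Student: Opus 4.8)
The plan is to prove the three identities by the same mechanism used in Lemma~\ref{lemma1}, now carrying along the $m$ output legs of $f$ and keeping careful track of the two flavours of twisting map that occur, namely the iterated composition $\beta^{n}$ and the tensor power $\beta^{\otimes m}$. Three ingredients are used throughout: the interchange (functoriality) law $(g_{1}\otimes g_{2})\circ(h_{1}\otimes h_{2})=(g_{1}\circ h_{1})\otimes(g_{2}\circ h_{2})$ for tensor products of composable linear maps; the Hom-coassociativity relation (\ref{(7)}); and the two compatibility relations $\Delta\circ\beta=\beta^{\otimes 2}\circ\Delta$ together with the hypothesis $f\circ\beta=\beta^{\otimes m}\circ f$.

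First I would record the higher compatibility consequences obtained by iterating the basic ones: the identity $\Delta\circ\beta^{n}=(\beta^{\otimes 2})^{n}\circ\Delta$ is already supplied by Proposition~\ref{D}, and analogously a tensor power of $\beta$ can be pushed through a $\Delta$ inserted in any slot. These let me move every twisting map to the outside of each expression, so that the only genuinely coalgebraic content remaining is a single interchange of the shape $(\Delta\otimes\beta)\circ\Delta=(\beta\otimes\Delta)\circ\Delta$.

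For identity (1) I would begin from the left-hand side, use the interchange law to peel off the rightmost $\beta^{\otimes(n-1)}$ and merge it with the outer $\beta^{\otimes n}$ into the factor $(\beta\circ\beta)^{\otimes(n-1)}$, leaving on the first legs a block of the form $(\Delta\otimes\beta)\circ\Delta$; applying (\ref{(7)}) to that block turns it into $(\beta\otimes\Delta)\circ\Delta$, which is precisely the head of the right-hand side. Identities (2) and (3) are structurally symmetric to one another, the position of $f$ being interchanged between the two tensor slots, so it suffices to carry out (2) in detail and obtain (3) by the analogous argument: one writes $\beta^{n}=\beta\circ\beta^{n-1}$, uses $f\circ\beta=\beta^{\otimes m}\circ f$ (resp.\ $\Delta\circ\beta=\beta^{\otimes 2}\circ\Delta$) to slide the innermost $\beta$ past $f$ (resp.\ past $\Delta$), and then applies (\ref{(7)}) once to swap the surviving $\Delta\otimes\beta$ with $\beta\otimes\Delta$.

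The main obstacle is purely bookkeeping. Because $f$ lands in $C^{\otimes m}$, every regrouping must respect $m$ additional slots, and one must scrupulously distinguish $\beta^{n}$ (composition) from $\beta^{\otimes n}$ (tensor power) when invoking the compatibility relations; it is easy to misalign a slot or to conflate the two meanings of $\beta^{n}$. I expect no conceptual difficulty beyond Lemma~\ref{lemma1}: the work is to execute the interchange bookkeeping correctly and, where the index $n$ appears, to close a short induction on $n$ whose inductive step is a single application of Hom-coassociativity.
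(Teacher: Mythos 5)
Your proposal is correct and follows essentially the same route as the paper's own proof: split the tensor powers via the interchange law so that the first legs carry a block $(\Delta\otimes\beta)\circ\Delta$ while the remaining legs collect into $(\beta\circ\beta)^{\otimes(n-1)}$, apply Hom-coassociativity (\ref{(7)}) once per identity, and for (2) and (3) decompose $\beta^{n}=\beta^{n-1}\circ\beta$ and slide the inner $\beta$ past $f$ and $\Delta$ using $f\circ\beta=\beta^{\otimes m}\circ f$ and $\Delta\circ\beta=\beta^{\otimes 2}\circ\Delta$ (the iterated form being Proposition~\ref{D}), exactly as in the paper. The paper writes out (3) explicitly rather than appealing to symmetry with (2), but the computation is the mirror image you describe, so nothing is missing.
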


\begin{proof}
This proof is completely analogous to that of Lemma \ref{lemma1}.

\begin{enumerate}
\item The first equality%
\begin{align*}
&((\Delta \otimes \beta ^{\otimes n})(\Delta \otimes \beta ^{\otimes
(n-1)})\circ f =\left( \Delta \otimes \beta \otimes \beta ^{\otimes
(n-1)}\right) (\Delta \otimes \beta ^{\otimes (n-1)})\circ f \\ &
 \overset{\eqref{(7)}}{=}\left( \left( \Delta \otimes \beta \right) \Delta \otimes \left( \beta
\circ \beta \right) ^{\otimes (n-1)}\right) \circ f  
 =\left( \left( \beta \otimes \Delta \right) \Delta \otimes \left( \beta
\circ \beta \right) ^{\otimes (n-1)}\right) \circ f.
\end{align*}

\item The second equality%
\begin{align*}
& \left( \beta ^{n}\otimes \left( \beta ^{n-1}\otimes f\right) \circ \Delta
\right) \circ \Delta  =(\left( \beta ^{n-1}\circ \beta )\otimes \left(
\beta ^{n-1}\otimes f\right) \circ \Delta \right) \circ \Delta  \\ &
 \overset{\eqref{(7)}}{=}\left( \beta ^{n-1}\otimes (\beta ^{n-1}\otimes f))\circ (\beta \otimes
\Delta \right) \circ \Delta   
 =\left( \beta ^{n-1}\otimes (\beta ^{n-1}\otimes f))\circ (\Delta \otimes
\beta \right) \circ \Delta  \\ &
 =((\beta ^{n-1}\otimes \beta ^{n-1})\circ \Delta \otimes f\circ \beta
)\circ \Delta  
 =(\Delta \circ \beta ^{n-1}\otimes \beta ^{\otimes m}\circ f)\circ \Delta 
\\
& =(\Delta \otimes \beta ^{\otimes m})\circ (\beta ^{n-1}\otimes f)\circ
\Delta .
\end{align*}

\item The third equality%
\begin{align*}
& \left( \left( \left( f\otimes \beta ^{n-1}\right) \circ \Delta \right)
\otimes \beta ^{n}\right) \circ \Delta  =\left( \left( \left( f\otimes
\beta ^{n-1}\right) \circ \Delta \right) \otimes \beta ^{n-1}\left( \beta
\right) \right) \circ \Delta  \\
& \overset{\eqref{(7)}}{=}\left( \left( f\otimes \beta ^{n-1}\right) \otimes \beta ^{n-1}\right)
\left( \Delta \otimes \beta \right) \circ \Delta  
 =\left( f\otimes \left( \beta ^{n-1}\otimes \beta ^{n-1}\right) \right)
\left( \beta \otimes \Delta \right) \circ \Delta  \\
& =\left( f\left( \beta \right) \otimes \left( \beta ^{n-1}\right) ^{\otimes
2}\Delta \right) \circ \Delta  
 =\left( \beta ^{\otimes m}\left( f\right) \otimes \Delta \left( \beta
^{n-1}\right) \right) \circ \Delta  \\
& =\left( \left( \beta ^{\otimes m}\otimes \Delta \right) \left( f\otimes
\beta ^{n-1}\right) \right) \circ \Delta .
\end{align*}
\end{enumerate}

This finishes the proof.
\end{proof}

This Lemma will be used in the proof of Proposition \ref{S}.

\begin{remark}
\label{5}The following identity is an immediate consequence of Lemma \ref{4} 
\begin{equation*}
\left( \left( \left( \beta ^{n-1}\otimes f\right) \circ \Delta \right)
\otimes \beta ^{n}\right) \circ \Delta =\left( \left( \beta ^{n}\otimes
\left( f\otimes \beta ^{n-1}\right) \right) \circ \Delta \right) \circ
\Delta .
\end{equation*}
\end{remark}

\subsection{Hom-bialgebras and Hom-Hopf algebras}

%The following notion is fundamental in quantum group theory.

The notion of  Hom-bialgebra was introduced in \cite{AMS 2007Hbial,AM 2008HBial}, see also \cite{DY 2010}. 
\begin{definition}
A \emph{Hom-bialgebra} is a tuple $\left( B,\mu ,\eta ,\alpha
,\Delta ,\varepsilon ,\beta \right) $ in which $\left( B,\mu ,\eta ,\alpha
\right) $ is a unital Hom-associative algebra, $\left( B,\Delta
,\varepsilon ,\beta \right) $ is a counital Hom-coassociative coalgebra and  the
linear maps $\Delta $ and $\varepsilon $ are morphisms of Hom-associative
algebras, that is%
\begin{equation}
\Delta \circ \mu =\mu ^{\otimes 2}\circ \tau _{2,3}\circ \Delta ^{\otimes 2}%
\text{ and }\varepsilon \otimes \varepsilon =\varepsilon \circ \mu .
\label{(14)}
\end{equation}
\end{definition}
\begin{remark}
\begin{enumerate}
\item (\cite{AMS 2007Hbial}) In terms of elements,  condition ($\ref{(14)}$)
could be expressed by the following identities :

$\left\{ 
\begin{array}{c}
\Delta \left( 1_{B}\right) =1_{B}\otimes 1_{B},\ \text{ }\alpha \left(
1_{B}\right) =1_{B},\text{ }and\text{ }\beta \left( 1_{B}\right) =1_{B},\
where\ 1_{B}=\eta \left( 1_{\Bbbk }\right),  \\ 
\Delta \left( \mu \left( x\otimes y\right) \right) =\Delta \left( x\right)
\cdot \Delta \left( y\right) =\sum\limits_{\left( x\right) \left( y\right)
}\mu \left( x_{1}\otimes y_{1}\right) \otimes \mu \left( x_{2}\otimes
y_{2}\right) ,\text{ \ \ \ \ \ \ \ \ \ \ \ } \\ 
\varepsilon \left( 1_{B}\right) =1_{\Bbbk },\quad  
\varepsilon \left( \mu \left( x\otimes y\right) \right) =\varepsilon \left(
x\right) \varepsilon \left( y\right),\quad  
\varepsilon \circ \alpha \left( x\right) =\varepsilon \left( x\right) .\text{
\ }%
\end{array}%
\right.  $

where  the dot "$\cdot $"
denotes the multiplication on tensor product.

\item If $\alpha =\beta $ the Hom-bialgebra is denoted $\left( B,\mu ,\eta
,\Delta ,\varepsilon ,\alpha \right) .$ 

\item Observe that a Hom-bialgebra is neither associative nor coassociative,
unless $\alpha =\beta =id_{B},$ in which case we have a bialgebra.
\end{enumerate}
\end{remark}

Compatibility conditions could be formulated in a different way according to 
Proposition \ref{6}.\\

A \emph{morphism of Hom-bialgebra} (resp. weak morphism of Hom-bialgebra)
which is either a morphisms (resp. weak morphism) of Hom-associative algebra
and Hom-coassociative coalgebra.

\begin{example}
Let $G$ be a group and $\Bbbk G$  the corresponding  group algebra over $\Bbbk$. As a vector space, $%
\Bbbk G$ is generated by $\left\{ e_{g}:g\in G\right\} $. If $\alpha
:G\longrightarrow G$ is a group homomorphism, then it can be extended to an
algebra endomorphism of $\Bbbk G$ by setting%
\begin{equation*}
\alpha \left( \sum_{g\in G}a_{g}e_{g}\right) =\sum_{g\in G}a_{g}\alpha
\left( e_{g}\right) =\sum_{g\in G}a_{g}e_{\alpha \left( g\right) }.
\end{equation*}%
Consider the usual bialgebra structure on $\Bbbk G$ and $\alpha $ a bialgebra
morphism. Then, we define a Hom-bialgebra $\left( \Bbbk G,\mu
,\Delta ,\alpha \right) $ over $\Bbbk G$ by setting:%
\begin{equation*}
\mu \left( e_{g}\otimes e_{g^{\prime }}\right) =\alpha \left( e_{gg^{\prime
}}\right) ,\text{ \ }\Delta \left( e_{g}\right) =\alpha \left( e_{g}\right)
\otimes \alpha \left( e_{g}\right) .
\end{equation*}
\end{example}

Combining Propositions \ref{C} and \ref{D}, we obtain the following
proposition:

\begin{proposition}
\label{TwistBialg} Let $B=\left( B,\mu ,\eta ,\Delta ,\varepsilon ,\alpha
\right) $ be a Hom-bialgebra and $\beta :B\longrightarrow B$ be  a Hom-bialgebra morphism,
 then $B_{\beta }=\left( B,\mu _{\beta },\eta _{\beta },\Delta
_{\beta },\varepsilon _{\beta },\alpha _{\beta }\right) $ is a Hom-bialgebra.

Hence $\left( B,\mu _{\beta ^{n}},\eta _{\beta ^{n}},\Delta
_{\beta ^{n}},\varepsilon _{\beta ^{n}},\alpha _{\beta ^{n}}\right) $ is a
Hom-bialgebra.
\end{proposition}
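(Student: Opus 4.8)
The plan is to assemble the algebra and coalgebra parts of $B_\beta$ from the two twisting principles already established, and then to verify by hand the single bialgebra compatibility that links them. First I would record precisely what the hypothesis ``$\beta$ is a Hom-bialgebra morphism'' provides: being simultaneously a morphism of the underlying unital Hom-associative algebra and of the counital Hom-coassociative coalgebra, $\beta$ satisfies $\beta\circ\mu=\mu\circ\beta^{\otimes2}$, $\beta^{\otimes2}\circ\Delta=\Delta\circ\beta$, $\beta\circ\alpha=\alpha\circ\beta$, $\beta\circ\eta=\eta$ and $\varepsilon\circ\beta=\varepsilon$. In particular $\beta$ is a weak morphism of both structures, so Proposition \ref{C} shows that $(B,\mu_\beta,\eta_\beta,\alpha_\beta)$ is a unital Hom-associative algebra, and Proposition \ref{D} shows that $(B,\Delta_\beta,\varepsilon_\beta,\alpha_\beta)$ is a counital Hom-coassociative coalgebra. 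I would emphasise that both constructions produce the \emph{same} twisting map: Proposition \ref{C} gives $\beta\circ\alpha$ and Proposition \ref{D} gives $\alpha\circ\beta$, and these coincide because $\beta$ commutes with $\alpha$; similarly $\eta_\beta=\beta\circ\eta=\eta$ and $\varepsilon_\beta=\varepsilon\circ\beta=\varepsilon$, so the unit and counit are in fact unchanged.

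The real content is to check that $\Delta_\beta$ and $\varepsilon_\beta$ are still algebra morphisms, i.e. the compatibility \eqref{(14)} for the twisted data. For the comultiplication I would expand
\[
\Delta_\beta\circ\mu_\beta=\Delta\circ\beta^2\circ\mu
=(\beta^2)^{\otimes2}\circ\Delta\circ\mu
=(\beta^2)^{\otimes2}\circ\mu^{\otimes2}\circ\tau_{2,3}\circ\Delta^{\otimes2},
\]
sliding the two copies of $\beta$ across $\Delta$ by $\beta^{\otimes2}\circ\Delta=\Delta\circ\beta$ and invoking the original axiom \eqref{(14)} in the last step. For the other side I would expand $\mu_\beta^{\otimes2}\circ\tau_{2,3}\circ\Delta_\beta^{\otimes2}=\beta^{\otimes2}\circ\mu^{\otimes2}\circ\tau_{2,3}\circ\beta^{\otimes4}\circ\Delta^{\otimes2}$, then commute $\beta^{\otimes4}$ through the flip $\tau_{2,3}$ (which only permutes tensor slots and so commutes with the diagonal action of $\beta$) and absorb it into $\mu^{\otimes2}$ via $\mu\circ\beta^{\otimes2}=\beta\circ\mu$, so that this side also collapses to $(\beta^2)^{\otimes2}\circ\mu^{\otimes2}\circ\tau_{2,3}\circ\Delta^{\otimes2}$. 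The two sides agree. The counit identity is immediate: $\varepsilon_\beta\circ\mu_\beta=\varepsilon\circ\beta^2\circ\mu=\varepsilon\circ\mu=\varepsilon\otimes\varepsilon=\varepsilon_\beta\otimes\varepsilon_\beta$, using $\varepsilon\circ\beta=\varepsilon$. I would add that the unit compatibilities $\Delta_\beta(1_B)=1_B\otimes1_B$ and $\varepsilon_\beta(1_B)=1_\Bbbk$ are automatic, since $1_B=\eta_\beta(1_\Bbbk)=\eta(1_\Bbbk)$ is unchanged and $\beta(1_B)=1_B$.

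For the iterated statement I would observe that $\beta^n$ is again a Hom-bialgebra morphism, each defining relation for $\beta$ propagating to its $n$-fold composite, and that the data $(\mu_{\beta^n},\eta_{\beta^n},\Delta_{\beta^n},\varepsilon_{\beta^n},\alpha_{\beta^n})$ is exactly what the one-step construction yields from the morphism $\beta^n$. Hence the first part applies verbatim with $\beta$ replaced by $\beta^n$; equivalently one argues by induction, using that $\beta$ remains a morphism of $B_\beta$. The main obstacle is purely bookkeeping: in the compatibility check one must track carefully how the powers of $\beta$ accumulate on each tensor factor and justify that $\tau_{2,3}$ commutes with $\beta^{\otimes4}$. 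Once those commutations are organised correctly, both sides reduce to the same expression and the original bialgebra axiom closes the argument.
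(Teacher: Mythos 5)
Your proof is correct and follows essentially the same route as the paper: invoke Propositions \ref{C} and \ref{D} for the twisted algebra and coalgebra structures, then verify the compatibility \eqref{(14)} by sliding $\beta$'s across $\mu$ and $\Delta$, commuting $\beta^{\otimes 4}$ with $\tau_{2,3}$, and applying the original axiom. Your additional checks (that the two twisting maps $\beta\circ\alpha$ and $\alpha\circ\beta$ coincide, the counit identity, and the unit compatibilities) are welcome details that the paper leaves implicit, but they do not change the argument.
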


\begin{proof}
According to  Propositions \ref{C} and \ref{D},  $\left( B,\mu _{\beta },\eta_{\beta } ,\alpha_{\beta }
\right) $ is a unital Hom-associative algebra, and $\left( B,\Delta_{\beta }
,\varepsilon_{\beta } ,\alpha_{\beta } \right) $ is a counital Hom-coassociative coalgebra. It
remains to establish condition (\ref{(14)}) for $B_{\beta }$. Using $\mu
_{\beta }=\beta \circ \mu =\mu \circ \beta ^{\otimes 2},$ $\Delta _{\beta
}=\Delta \circ \beta =\Delta \circ \beta ^{\otimes 2}$, $\tau \circ \beta
^{\otimes 2}=\beta ^{\otimes 2}\circ \tau ,$ and the condition (\ref{(14)})
for the Hom-bialgebra $B$, we compute as follows:%
\begin{eqnarray*}
&& \Delta _{\beta }\circ \mu _{\beta } =\Delta \circ \beta \circ \beta \circ
\mu  
=\beta ^{\otimes 2}\circ \beta ^{\otimes 2}\circ \Delta \circ \mu  
=\beta ^{\otimes 2}\circ \beta ^{\otimes 2}\circ \mu ^{\otimes 2}\circ
\tau _{2,3}\circ \Delta ^{\otimes 2} \\
&&= \beta ^{\otimes 2}\circ \left( \beta \circ \mu \right) ^{\otimes 2}\circ
\tau _{2,3}\circ \Delta ^{\otimes 2} 
=\left( \beta \circ \mu \circ \beta ^{\otimes 2}\right) ^{\otimes 2}\circ
\tau _{2,3}\circ \Delta ^{\otimes 2} \\
&&=\mu _{\beta }^{\otimes 2}\circ \beta ^{\otimes 4}\circ \tau _{2,3}\circ
\Delta ^{\otimes 2} 
=\mu _{\beta }^{\otimes 2}\circ \tau _{2,3}\circ \beta ^{\otimes 4}\circ
\Delta ^{\otimes 2} 
=\mu _{\beta }^{\otimes 2}\circ \tau _{2,3}\circ \Delta _{\beta }^{\otimes
2}.
\end{eqnarray*}

We have shown that $B_{\beta }$ is a Hom-bialgebra.
\end{proof}

\begin{example}
[Hom-Type Taft-Sweedler bialgebra] \label{Taft} We consider $T_{2}$, the
4-dimensional unital Taft-Sweedler algebra generated by $g,x$ and the
relations $(g^{2}=1,\ x^{2}=0,\ xg=-gx).$ The comultiplication is defined by 
$\Delta (g)=g\otimes g$ and $\Delta (x)=x\otimes 1+g\otimes x$, the counit
is given by $\varepsilon (g)=1,\ \varepsilon (x)=0.$ Set $\{e_{1}=1,\
e_{2}=g,\ e_{3}=x,\ e_{4}=gx\}$ be a basis.

%The only twisting element of $T_{2}$ of the form $F=u_{1}\otimes
%u_{1}+\sum_{i,j=1}^{4}\alpha _{i,j}u_{i}\otimes u_{j}$ are given by 
%\begin{equation}
%F=u_{1}\otimes u_{1}+p_{1}(u_{1}\otimes u_{3}-u_{2}\otimes
%u_{3})+p_{2}u_{4}\otimes u_{3}.  \label{twistingsTaft2}
%\end{equation}%
%where $p_{1},p_{2}$ are parameters in $\mathbb{K}$.

Pick any $\lambda \in \mathbb{K}$, the map $\alpha :T_{2}\rightarrow T_{2}$
defined by $\alpha (e_1)=e_1,\ \alpha (e_2)=e_2,\ \alpha (e_3)=\lambda e_3$, $\alpha
(e_4)=\lambda e_4$ is a bialgebra morphism. Therefore, we obtain a
Hom-bialgebra $(T_{2})_{\lambda }$ which is defined by the following table
that describes multiplying the $i$th row elements by the $j$th column
elements.

\begin{equation*}
\begin{array}{|c|c|c|c|c|}
\hline
\  & e_{1} & e_{2} & e_{3} & e_{4} \\ \hline
e_{1} & e_{1} & e_{2} & \lambda e_{3} & \lambda e_{4} \\ \hline
e_{2} & e_{2} & e_{1} & \lambda e_{4} & \lambda e_{3} \\ \hline
e_{3} & \lambda e_{3} & -\lambda e_{4} & 0 & 0 \\ \hline
e_{4} & \lambda e_{4} & -\lambda e_{3} & 0 & 0 \\ \hline
\end{array}%
\end{equation*}%
and 
\begin{equation*}
\Delta (e_{1})=e_{1}\otimes e_{1},\Delta (e_{2})=e_{2}\otimes e_{2},\Delta
(e_{3})=\lambda (e_{3}\otimes e_{1}+e_{2}\otimes e_{3}),\Delta
(e_{4})=\lambda (e_{4}\otimes e_{2}+e_{1}\otimes e_{4}).
\end{equation*}%
\begin{equation*}
\varepsilon (e_{1})=\varepsilon (e_{2})=1,\quad \varepsilon
(e_{3})=\varepsilon (e_{4})=0.
\end{equation*}%
%For $\lambda =-1$, the elements $F=u_{1}\otimes u_{1}+p_{2}u_{4}\otimes
%u_{3},$ where $p_{2}$ is a parameter in $\mathbb{K}$, are twisting elements
%of the Hom-bialgebra $(T_{2})_{\lambda =-1}$.
\end{example}

\begin{example}
\begin{enumerate}
\item A unital Hom-associative algebra $\left( A,\mu ,\eta ,\alpha \right) $
becomes a Hom-bialgebra when equipped with the trivial comultiplication $%
\Delta =0$. Likewise, a counital Hom-coassociative coalgebra $\left(
C,\Delta ,\varepsilon ,\beta \right) $ becomes a Hom-bialgebra when equipped
with the trivial multiplication $\mu =0$.

\item Let $\left( B,\mu ,\eta ,\Delta ,\varepsilon ,\alpha \right) $ be a
Hom-bialgebra.  Then so are\\  $\left( B,-\mu ,\eta ,-\Delta ,\varepsilon
,\alpha \right) ,$ and $\left( B,\mu ^{op},\eta ,\Delta ^{cop},\varepsilon
,\alpha \right) $ where $\mu ^{op}=\mu \circ \tau _{1,2}$ and $\Delta
^{cop}=\tau _{1,2}\circ \Delta $.
\end{enumerate}
\end{example}

\begin{proposition}
\label{J}Let $B=\left( B,\mu ,\eta ,\Delta ,\varepsilon ,\alpha \right) $ be
a finite dimensional Hom-bialgebra. \\ Then $B^{\ast }=\left( B^{\ast },\Delta
^{\ast },\varepsilon ^{\ast },\mu ^{\ast },\eta ^{\ast },\alpha ^{\ast
}\right) $ is a Hom-bialgebra, together with the Hom-associative algebra
structure which is dual to the Hom-coassociative coalgebra structure of $B,$
and with the Hom-coassociative coalgebra structure which is a dual to the
Hom-associative algebra structure of $B$, is a Hom-bialgebra called
 dual Hom-bialgebra of $B.$
\end{proposition}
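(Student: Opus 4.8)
The plan is to build the two halves of the Hom-bialgebra structure on $B^*$ from the dualization theorems already at hand, and then obtain the single remaining compatibility by transposing the corresponding property of $B$. First I would apply Theorem \ref{H} to the counital Hom-coassociative coalgebra $(B,\Delta,\varepsilon,\alpha)$, which yields that $(B^*,\Delta^*,\varepsilon^*,\alpha^*)$ is a unital Hom-associative algebra, with multiplication $\Delta^*\rho$, unit $\varepsilon^*\phi$, and twisting map $\alpha^*(h)=h\circ\alpha$. Dually, I would apply Theorem \ref{E} to the finite-dimensional unital Hom-associative algebra $(B,\mu,\eta,\alpha)$ to obtain that $(B^*,\mu^*,\eta^*,\alpha^*)$ is a counital Hom-coassociative coalgebra, with comultiplication $\rho^{-1}\mu^*$, counit $\psi\eta^*$, and twisting map $h\mapsto h\circ\alpha$. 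A small but essential check is that these two theorems endow $B^*$ with the \emph{same} twisting map, namely precomposition with $\alpha$; this is immediate from the two definitions and is exactly what is needed for the algebra and coalgebra data on $B^*$ to be compatible candidates for a Hom-bialgebra.

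It then remains to verify condition (\ref{(14)}) for $B^*$, i.e.\ that the comultiplication $\mu^*$ and counit $\eta^*$ are morphisms of the Hom-associative algebra $(B^*,\Delta^*,\varepsilon^*,\alpha^*)$. Rather than expand $\mu^*\circ\Delta^*$ against $(\Delta^*)^{\otimes2}\circ\tau_{2,3}\circ(\mu^*)^{\otimes2}$ by hand, I would argue by transposition. By Proposition \ref{6} applied to the Hom-bialgebra $B$, the structure maps $\mu:B\otimes B\to B$ and $\eta:\Bbbk\to B$ are morphisms of counital Hom-coassociative coalgebras. Part 1) of the morphism-dualization proposition then shows that their transposes $\mu^*$ and $\eta^*$ are morphisms of unital Hom-associative algebras. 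Under the canonical isomorphism $\rho$, the transpose $\mu^*$ corresponds to the comultiplication $\Delta_{B^*}=\rho^{-1}\mu^*$ and $\eta^*$ to the counit $\varepsilon_{B^*}=\psi\eta^*$; hence these are Hom-associative algebra morphisms, which is exactly the compatibility (\ref{(14)}) for $B^*$. This is the defining condition making $B^*$ a Hom-bialgebra.

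The step I expect to require the most care is the identification mediated by $\rho$. To invoke part 1) of the dualization proposition I must know that the \emph{dual algebra structure} placed on $(B\otimes B)^*$ via Theorem \ref{H}, applied to the tensor product coalgebra $B\otimes B$ of Definition \ref{F}, corresponds under $\rho$ to the \emph{tensor product algebra structure} on $B^*\otimes B^*$; and symmetrically for the counit side. Verifying that $\rho$ intertwines $\tau_{2,3}$, the two multiplications, and the two twisting maps is routine multilinear bookkeeping, but it is the precise point at which finite-dimensionality is indispensable, since only then is $\rho$ invertible and $\mu^*$ identifiable with a comultiplication $B^*\to B^*\otimes B^*$. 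Once these identifications are in place, the abstract principle ``the transpose of a coalgebra morphism is an algebra morphism'' delivers (\ref{(14)}) for $B^*$ with no further computation.
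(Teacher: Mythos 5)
Your proposal is correct, but note that the paper itself offers no proof of Proposition \ref{J} --- it is stated and immediately followed by the next proposition --- so there is no argument of the authors to compare yours against. What you give is the natural proof, and it is assembled entirely from the paper's own results: Theorem \ref{H} applied to $(B,\Delta,\varepsilon,\alpha)$ and Theorem \ref{E} applied to $(B,\mu,\eta,\alpha)$ produce the algebra and coalgebra structures on $B^{\ast}$, and your observation that both theorems install the \emph{same} twisting map $h\mapsto h\circ\alpha$ is exactly the point that makes the two structures candidates for a single Hom-bialgebra. The compatibility (\ref{(14)}) then follows by your transposition argument: Proposition \ref{6} converts the defining property of the Hom-bialgebra $B$ into the statement that $\mu$ and $\eta$ are morphisms of counital Hom-coassociative coalgebras, and part 1) of the proposition on duals of morphisms (the one following Theorem \ref{E}, which does not require finite dimensionality in that direction) makes $\mu^{\ast}$ and $\eta^{\ast}$ morphisms of unital Hom-associative algebras. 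The single ingredient you defer --- that $\rho$ is an isomorphism of unital Hom-associative algebras from the tensor-product algebra $B^{\ast}\otimes B^{\ast}$ of Example \ref{A} onto the dual algebra of the tensor-product coalgebra $(B\otimes B)^{\ast}$ of Definition \ref{F}, intertwining multiplications, units and twists --- is indeed true and checks out in a few lines on decomposable tensors, and you correctly locate it as the unique place where finite dimensionality (invertibility of $\rho$) is indispensable. For a self-contained write-up this intertwining lemma should be stated and verified explicitly, since it is the hinge on which the whole transposition argument turns; with it in place, your proof is complete.
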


\begin{proposition}
If $B$ is a finite dimensional Hom-bialgebra, then $B$ is cocommutative if
and only if $B^{\ast }$ is commutative, and $B$ is commutative if and only
if $B^{\ast }$ is cocommutative.
\end{proposition}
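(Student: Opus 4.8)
The plan is to make the duality between $B$ and $B^{\ast}$ explicit on the structure maps and then to recognise co/commutativity as invariance under the flip $\tau_{1,2}$, which transports across the pairing $\rho$. By Proposition \ref{J} the dual $B^{\ast}$ carries the multiplication $\mu_{B^{\ast}}=\Delta^{\ast}\circ\rho$ dual to $\Delta$ (Theorem \ref{H}) and the comultiplication $\Delta_{B^{\ast}}=\rho^{-1}\circ\mu^{\ast}$ dual to $\mu$ (Theorem \ref{E}); finite dimensionality enters precisely here, since it is what makes $\rho:B^{\ast}\otimes B^{\ast}\rightarrow(B\otimes B)^{\ast}$ bijective and hence $\Delta_{B^{\ast}}$ well defined. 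I will use two elementary facts: linear functionals separate points, so that $h(u)=h(v)$ for all $h$ forces $u=v$; and the flip-compatibility $\rho(g\otimes f)=\rho(f\otimes g)\circ\tau_{1,2}$, which is immediate from $\rho(f\otimes g)(x\otimes y)=f(x)g(y)$.

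For the first equivalence, I would compute the dual product as $\mu_{B^{\ast}}(f\otimes g)=\rho(f\otimes g)\circ\Delta$, i.e.
\[
\mu_{B^{\ast}}(f\otimes g)(x)=\sum_{(x)}f(x_{(1)})\,g(x_{(2)}).
\]
By the definition of commutativity (Example \ref{A}), $B^{\ast}$ is commutative iff $\mu_{B^{\ast}}=\mu_{B^{\ast}}\circ\tau_{1,2}$. Using the flip-compatibility, $\mu_{B^{\ast}}\circ\tau_{1,2}(f\otimes g)=\mu_{B^{\ast}}(g\otimes f)=\rho(f\otimes g)\circ\tau_{1,2}\circ\Delta$, so commutativity of $B^{\ast}$ amounts to $\rho(f\otimes g)\circ\Delta=\rho(f\otimes g)\circ\tau_{1,2}\circ\Delta$ for all $f,g$. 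Since $\rho$ is bijective, its image is all of $(B\otimes B)^{\ast}$, which separates points of $B\otimes B$; hence this is equivalent to $\Delta=\tau_{1,2}\circ\Delta=\Delta^{cop}$, which is exactly cocommutativity of $B$. Both implications are then obtained simultaneously.

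The second equivalence is the dual computation. Pairing $\Delta_{B^{\ast}}=\rho^{-1}\circ\mu^{\ast}$ against $x\otimes y$ gives the defining relation
\[
\rho\bigl(\Delta_{B^{\ast}}(f)\bigr)(x\otimes y)=f(\mu(x\otimes y))=f(xy),
\]
for $f\in B^{\ast}$ and $x,y\in B$. By the definition of cocommutativity, $B^{\ast}$ is cocommutative iff $\tau_{1,2}\circ\Delta_{B^{\ast}}=\Delta_{B^{\ast}}$; pairing this identity against $x\otimes y$ and using the relation above---the flip sends the pairing at $x\otimes y$ to the pairing at $y\otimes x$---turns it into $f(yx)=f(xy)$ for all $f,x,y$. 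As functionals separate points of $B$, this is equivalent to $\mu=\mu\circ\tau_{1,2}=\mu^{op}$, i.e. commutativity of $B$. Alternatively, once the first equivalence is in hand one may deduce the second by applying it to the Hom-bialgebra $B^{\ast}$ under the canonical identification $B^{\ast\ast}\cong B$.

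I do not anticipate a genuine obstacle; the content is bookkeeping with $\rho$ together with separation of points. The single point deserving care is the placement of the finite-dimensionality hypothesis: it is not needed for the separation-of-points arguments, which hold in any dimension, but only to guarantee, via the bijectivity of $\rho$, that the dual Hom-bialgebra structure of Proposition \ref{J} exists at all and that its operations are the honest transposes $\Delta^{\ast}\circ\rho$ and $\rho^{-1}\circ\mu^{\ast}$ used above.
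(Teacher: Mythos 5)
Your proof is correct. Note that the paper states this proposition without proof (it follows Proposition \ref{J}, which is likewise unproved), so there is nothing to compare against; your argument supplies the missing one in the natural way. The two identities you isolate, namely $\mu_{B^{\ast}}(f\otimes g)=\rho(f\otimes g)\circ\Delta$ for the dual product of Theorem \ref{H} and $\rho\bigl(\Delta_{B^{\ast}}(f)\bigr)=f\circ\mu$ for the dual coproduct of Theorem \ref{E}, together with the flip-compatibility of $\rho$ and separation of points, are exactly what is needed, and your placement of the finite-dimensionality hypothesis is accurate: the first equivalence only uses the algebra structure on $B^{\ast}$, which exists in any dimension, while the second needs $\rho^{-1}$, hence finite dimension. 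Two small refinements. First, in the separation step for the first equivalence you only have the \emph{pure-tensor} functionals $\rho(f\otimes g)$ at your disposal, not arbitrary elements of $(B\otimes B)^{\ast}$; this is harmless because these functionals span $(B\otimes B)^{\ast}$ in finite dimension (indeed they separate points of $B\otimes B$ in any dimension), but the sentence ``its image is all of $(B\otimes B)^{\ast}$'' should be read as a statement about the span of the values you actually use. Second, the alternative derivation of the second equivalence from the first via $B^{\ast\ast}\cong B$ is legitimate but not free: one must check that the canonical isomorphism is an isomorphism of Hom-bialgebras, in particular that the double transpose of $\alpha$ corresponds to $\alpha$ itself; since you give the direct computation as the main argument, this does not affect the proof.
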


In order to define Hom-Hopf algebras, we define first a convolution product.

\begin{proposition}
Let $(A,\mu ,\eta ,\alpha )$ be a unital Hom-associative algebra and $%
\left( C,\Delta ,\varepsilon ,\beta \right) $ be a counital
Hom-coassociative coalgebra. Then, the vector space $Hom(C,A)$ of  $\Bbbk $%
-linear mappings of $C$ to $A$ equipped with the \emph{convolution product}
defined by 
\begin{equation*}
\left( f\ast g\right) \left( x\right) =\mu \circ \left( f\otimes g\right)
\circ \Delta \left( x\right) \quad x\in C,
\end{equation*}%
and the unit being $\eta \circ \varepsilon $ is a unital Hom-associative
algebra with the homomorphism map defined by $\gamma \left( f\right) =\alpha
\circ f\circ \beta $.
\end{proposition}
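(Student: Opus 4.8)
The plan is to verify the four defining properties of a unital Hom-associative algebra for the triple $(Hom(C,A),\ast,\gamma)$ with unit $\eta\circ\varepsilon$: that $\gamma$ is multiplicative ($\gamma\circ\ast=\ast\circ\gamma^{\otimes 2}$), that $\ast$ is Hom-associative with respect to $\gamma$, that $\gamma$ fixes the proposed unit, and that the unit axiom \eqref{(2)} holds. The decisive move throughout is to rewrite each convolution expression in pure composition form, using $f\ast g=\mu\circ(f\otimes g)\circ\Delta$ and $\gamma(f)=\alpha\circ f\circ\beta$, so that the structure maps $\mu,\Delta,\alpha,\beta$ are exposed and their defining axioms can be applied directly.

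For Hom-associativity, I would expand both triple products. Using the interchange law $(\phi_1\circ\psi_1)\otimes(\phi_2\circ\psi_2)=(\phi_1\otimes\phi_2)\circ(\psi_1\otimes\psi_2)$ to regroup the tensor factors, one obtains
\[
\gamma(f)\ast(g\ast h)=\mu\circ(\alpha\otimes\mu)\circ(f\otimes g\otimes h)\circ(\beta\otimes\Delta)\circ\Delta
\]
and
\[
(f\ast g)\ast\gamma(h)=\mu\circ(\mu\otimes\alpha)\circ(f\otimes g\otimes h)\circ(\Delta\otimes\beta)\circ\Delta.
\]
The two right-hand sides differ only in their outermost factor, $\mu\circ(\alpha\otimes\mu)$ versus $\mu\circ(\mu\otimes\alpha)$, which agree by Hom-associativity \eqref{(1)}, and in their innermost factor, $(\beta\otimes\Delta)\circ\Delta$ versus $(\Delta\otimes\beta)\circ\Delta$, which agree by Hom-coassociativity \eqref{(7)}. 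Hence the two products coincide, giving $\ast\circ(\gamma\otimes\ast)=\ast\circ(\ast\otimes\gamma)$.

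For multiplicativity of $\gamma$, I would compute $\gamma(f\ast g)=\alpha\circ\mu\circ(f\otimes g)\circ\Delta\circ\beta$ and push $\alpha$ inward via $\alpha\circ\mu=\mu\circ\alpha^{\otimes 2}$ and $\beta$ outward via $\Delta\circ\beta=\beta^{\otimes 2}\circ\Delta$, regrouping to $\mu\circ(\gamma(f)\otimes\gamma(g))\circ\Delta=\gamma(f)\ast\gamma(g)$. For the unit, $\gamma(\eta\circ\varepsilon)=\alpha\circ\eta\circ\varepsilon\circ\beta=\eta\circ\varepsilon$ by the conditions $\alpha\circ\eta=\eta$ and $\varepsilon\circ\beta=\varepsilon$; and evaluating $\bigl((\eta\circ\varepsilon)\ast f\bigr)(x)=\sum\varepsilon(x_{(1)})\,\mu(1_A\otimes f(x_{(2)}))$, one applies the counit identity \eqref{(8)} in the form $\sum\varepsilon(x_{(1)})x_{(2)}=\beta(x)$ together with the unit identity \eqref{(2)} in the form $\mu(1_A\otimes a)=\alpha(a)$ to obtain $\alpha(f(\beta(x)))=\gamma(f)(x)$, and the identity $f\ast(\eta\circ\varepsilon)=\gamma(f)$ follows symmetrically from the other halves of \eqref{(8)} and \eqref{(2)}. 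I do not anticipate a genuine obstacle here: the whole argument is a bookkeeping reduction, and the only point requiring care is the tensor-factorization step that cleanly separates the ``output'' axiom \eqref{(1)} on $A$ from the ``input'' axiom \eqref{(7)} on $C$; once that separation is made, associativity follows without invoking any compatibility between $\alpha$ and $\beta$, which enter only through $\gamma$ in the multiplicativity and unit checks.
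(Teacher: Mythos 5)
Your proposal is correct and complete. Note, however, that the paper states this proposition without any proof (the subsequent Proposition \ref{B} on $Hom(C\otimes C,A)$ and $Hom(C,A\otimes A)$ simply invokes it), so there is no argument of the authors to compare yours against; what you have written supplies the missing verification. Your route is the natural one: rewrite each convolution in composition form and use the interchange law $(\phi_1\circ\psi_1)\otimes(\phi_2\circ\psi_2)=(\phi_1\otimes\phi_2)\circ(\psi_1\otimes\psi_2)$ to isolate $\mu\circ(\alpha\otimes\mu)$ versus $\mu\circ(\mu\otimes\alpha)$ on the output side and $(\beta\otimes\Delta)\circ\Delta$ versus $(\Delta\otimes\beta)\circ\Delta$ on the input side, so that \eqref{(1)} and \eqref{(7)} apply independently of each other. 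Your remaining checks are also right, including two points the paper leaves implicit: the multiplicativity $\gamma(f\ast g)=\gamma(f)\ast\gamma(g)$ genuinely requires the standing assumptions $\alpha\circ\mu=\mu\circ\alpha^{\otimes 2}$ and $\Delta\circ\beta=\beta^{\otimes 2}\circ\Delta$, which enter nowhere else; and the unit verification uses \eqref{(2)} and \eqref{(8)} exactly as you describe (observe that the paper's counitality condition ``$\varepsilon\circ\beta=\beta$'' is a typo for $\varepsilon\circ\beta=\varepsilon$, which is the identity your computation $\gamma(\eta\circ\varepsilon)=\alpha\circ\eta\circ\varepsilon\circ\beta=\eta\circ\varepsilon$ actually needs). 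Your closing observation that Hom-associativity of $\ast$ requires no compatibility between $\alpha$ and $\beta$ is accurate and worth retaining.
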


\begin{proposition}
\label{B}Let $(C,\Delta ,\varepsilon ,\beta )$ be a counital
Hom-coassociative coalgebra and $(A,\mu ,\eta ,\alpha )$ a unital
Hom-associative algebra. Then the vector space $Hom(C\otimes C,A)$ and $%
Hom(C,A\otimes A)$ are  unital Hom-associative algebras.
\end{proposition}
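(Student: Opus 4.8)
The plan is to reduce both assertions to the preceding proposition, which establishes that $Hom(C,A)$ is a unital Hom-associative algebra (with convolution product $f\ast g=\mu\circ(f\otimes g)\circ\Delta$, unit $\eta\circ\varepsilon$, and twisting map $\gamma(f)=\alpha\circ f\circ\beta$) whenever $C$ is a counital Hom-coassociative coalgebra and $A$ is a unital Hom-associative algebra. The only work is to feed that proposition the correct tensor-product Hom-structures already built in Section 1.

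First, for $Hom(C\otimes C,A)$, I would recall that $C\otimes C$ carries a counital Hom-coassociative coalgebra structure $(C\otimes C,\tilde\Delta,\tilde\varepsilon,\tilde\beta)$ with $\tilde\Delta=\tau_{2,3}\circ(\Delta\otimes\Delta)$, $\tilde\varepsilon=\varepsilon\otimes\varepsilon$ and $\tilde\beta=\beta\otimes\beta$, as provided by Definition \ref{F} and the tensor-product proposition preceding it. Since $A$ is a unital Hom-associative algebra by hypothesis, applying the convolution proposition to the pair $(C\otimes C,\,A)$ yields at once that $Hom(C\otimes C,A)$ is a unital Hom-associative algebra, with product $f\ast g=\mu\circ(f\otimes g)\circ\tilde\Delta$, unit $\eta\circ\tilde\varepsilon$ and twisting map $\gamma(f)=\alpha\circ f\circ\tilde\beta$. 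Symmetrically, for $Hom(C,A\otimes A)$ I would use that $A\otimes A$ is a unital Hom-associative algebra $(A\otimes A,\tilde\mu,\tilde\eta,\tilde\alpha)$ with $\tilde\mu=(\mu\otimes\mu)\circ\tau_{2,3}$, $\tilde\eta=\eta\otimes\eta$ and $\tilde\alpha=\alpha\otimes\alpha$, as in Example \ref{A}. As $C$ is a counital Hom-coassociative coalgebra, the convolution proposition applied to the pair $(C,\,A\otimes A)$ shows that $Hom(C,A\otimes A)$ is a unital Hom-associative algebra, with product $f\ast g=\tilde\mu\circ(f\otimes g)\circ\Delta$, unit $\tilde\eta\circ\varepsilon$ and twisting map $\gamma(f)=\tilde\alpha\circ f\circ\beta$.

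The one point to confirm — and it is the only conceivable obstacle — is that the two tensor-product objects are genuine Hom-structures, i.e. that they satisfy Hom-(co)associativity, (co)unitality, and the compatibility conditions $\tilde\alpha\circ\tilde\mu=\tilde\mu\circ\tilde\alpha^{\otimes 2}$ and $\tilde\Delta\circ\tilde\beta=\tilde\beta^{\otimes 2}\circ\tilde\Delta$. These are precisely the contents of Example \ref{A} and of the tensor-product proposition for coalgebras, so no new verification is required. Consequently both statements are immediate instances of the convolution proposition, and in particular no separate Hom-associativity computation needs to be carried out here.
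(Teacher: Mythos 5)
Your proposal is correct and follows essentially the same route as the paper: the paper's proof also equips $C\otimes C$ with the tensor-product counital Hom-coassociative coalgebra structure $\tilde{\Delta}$, $\tilde{\varepsilon}$ (resp. $A\otimes A$ with the tensor-product unital Hom-associative algebra structure $\tilde{\mu}$, $\tilde{\eta}$) and then invokes the convolution proposition, writing down the same product $\mu\circ(f\otimes g)\circ\tilde{\Delta}$ with unit $\eta\circ(\varepsilon\otimes\varepsilon)$ (resp. $\tilde{\mu}\circ(f\otimes g)\circ\Delta$ with unit $(\eta\otimes\eta)\circ\varepsilon$). No discrepancy to report.
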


\begin{proof}
Consider $C\otimes C$ with the tensor product of Hom-coassociative
coalgebras structure (resp. $A\otimes A$\ with the tensor product of
Hom-associative algebras structure)  and $A$ with the Hom-associative
algebra structure (resp. $C$ with the Hom-coassociative coalgebra
structure). Then it makes sense to speak about the unital Hom-associative
algebra $Hom(C\otimes C,A)$ (resp. unital Hom-associative algebra $%
Hom(C,A\otimes A)$)$,$ with the multiplication given by convolution, defined
by 
\begin{equation*}
\left( f\ast g\right) \left( x\otimes y\right) =\mu \circ \left( f\otimes
g\right) \circ \tilde{\Delta}\left( x\otimes y\right) \text{ where }\tilde{%
\Delta}=\left( id_{C}\otimes \tau \otimes id_{C}\right) \circ \Delta \otimes
\Delta ,\text{ }
\end{equation*}%
\begin{equation*}
\text{(resp.}\left( f\ast g\right) \left( x\right) =\tilde{\mu}\circ \left(
f\otimes g\right) \circ \Delta \left( x\right) \text{ where }\tilde{\mu}%
=\left( \mu \otimes \mu \right) \left( id_{A}\otimes \tau \otimes
id_{A}\right) .
\end{equation*}%
The identity element of the Hom-associative algebra $Hom(C\otimes C,A)$ is $%
\eta \circ \left( \varepsilon \otimes \varepsilon \right) :C\otimes
C\longrightarrow A$ (resp. the unit of  Hom-associative algebra $%
Hom(C,A\otimes A)$ is $\left( \eta \otimes \eta \right) \circ \varepsilon
:C\longrightarrow A\otimes A$).
\end{proof}\\

Now let $\left( B,\mu ,\eta ,\Delta ,\varepsilon ,\alpha \right) $ be a
Hom-bialgebra.

An endomorphism $S$ is an \emph{antipode} if it is the inverse of the
identity over $B$ for the unital Hom-associative algebra $Hom(B,B)$ with the
multiplication given by the convolution product. The unit being $\eta \circ
\varepsilon $, (recall that concatenation denotes composition of maps).

The conditions may be expressed by the identities :%
\begin{equation}
\mu \circ \left( id_{B}\otimes S\right) \circ \Delta =\mu \circ \left(
S\otimes id_{B}\right) \circ \Delta =\eta \circ \varepsilon .  \label{(15)}
\end{equation}

Condition  (\ref{(15)}) means that $S$ is the convolution
inverse of the identity mapping, that is, 
\begin{equation*}
S\ast id_{V}=id_{V}\ast S=\eta \circ \varepsilon .
\end{equation*}

\begin{definition} A \emph{Hom-Hopf algebra} is a Hom-bialgebra with an antipode. It is
denoted by the tuple $\left( B,\mu ,\eta ,\Delta ,\varepsilon ,\alpha
,S\right) .$

Let $H$ and $H^{\prime }$ be two Hom-Hopf algebras. A map $%
f:H\longrightarrow H^{\prime }$ is a called a \emph{ Hom-Hopf
algebras morphism} if it is a  Hom-bialgebras morphism.
\end{definition}
It is natural to ask whether a morphism of Hom-Hopf algebra should preserve
antipodes. The following result shows that this is indeed the case. Let $%
\left( H,\mu ,\eta ,\Delta ,\varepsilon ,\alpha ,S\right) $ be a Hom-Hopf
algebra.
For any element $x\in H$ , using the counity and Sweedler notation, one may
write%
\begin{equation}
\alpha \left( x\right) =\sum_{\left( x\right) }x_{\left( 1\right) }\otimes
\varepsilon \left( x_{\left( 2\right) }\right) =\sum_{\left( x\right)
}\varepsilon \left( x_{\left( 1\right) }\right) \otimes x_{\left( 2\right) }.
\label{101}
\end{equation}

Then, for any $f\in End_{\Bbbk }(H)$, we have%
\begin{equation}
f\circ \alpha \left( x\right) =\sum_{\left( x\right) }f\left( x_{\left(
1\right) }\right) \varepsilon \left( x_{\left( 2\right) }\right)
=\sum_{\left( x\right) }\varepsilon \left( x_{\left( 1\right) }\right)
f\left( x_{\left( 2\right) }\right) .  \label{102}
\end{equation}%
The convolution product of $f,g\in End_{\Bbbk }(H)$. One may write%
\begin{equation}
\left( f\ast g\right) \left( x\right) =\sum_{\left( x\right) }\mu \left(
f\left( x_{\left( 1\right) }\right) \otimes g\left( x_{\left( 2\right)
}\right) \right) \quad x\in H.  \label{103}
\end{equation}%
Since the antipode $S$ is the inverse of the identity for the convolution
product, then $S$ satisfies%
\begin{equation}
\sum \mu \left( S\left( x_{\left( 1\right) }\right) \otimes x_{\left(
2\right) }\right) =\sum \mu \left( x_{\left( 1\right) }\otimes S\left(
x_{\left( 2\right) }\right) \right) =\varepsilon \left( x\right) \eta \left(
1_{\Bbbk }\right) \text{ \ \ }\text{ for all }\text{ }x\in H.  \label{104}
\end{equation}

\begin{example} Let $\left( H,\mu ,\eta ,\Delta ,\varepsilon ,\alpha ,S\right) $ is a 
Hom-Hopf algebra and $\beta:H\rightarrow H$ be a Hom-bialgebra morphism.  Then, the antipode $S$ satisfies%
\begin{eqnarray*}
\mu \circ \left( id_{H}\otimes S\right) \circ \Delta  &=&\mu \circ \left(
S\otimes id_{H}\right) \circ \Delta =\eta \circ \varepsilon  \\
\beta \circ \mu \circ \left( id_{H}\otimes S\right) \circ \Delta \circ
\beta  &=&\beta \circ \mu \circ \left( S\otimes id_{H}\right) \circ \Delta
\circ \beta =\beta \circ \eta \circ \varepsilon \circ \beta  \\
\mu _{\beta }\circ \left( id_{H}\otimes S\right) \circ \Delta _{\beta }
&=&\mu _{\beta }\circ \left( S\otimes id_{H}\right) \circ \Delta _{\beta
}=\eta \circ \varepsilon .
\end{eqnarray*}%
Therefore,  $\left( H,\mu _{\beta },\eta ,\Delta _{\beta },\varepsilon ,\alpha
,S\right) $ is also a Hom-Hopf algebra.

%In the general case, let $\left( H,\mu ,\eta ,\Delta ,\varepsilon ,\alpha
%,S\right) $ be Hom-Hopf algebra and $\beta :H\longrightarrow H$ be a
%morphism of Hom-bialgebra, then $\left( H,\mu _{\beta ^{n}},\eta _{\beta
%^{n}},\Delta _{\beta ^{n}},\varepsilon _{\beta ^{n}},\alpha _{\beta
%^{n}},S\right) $ is a Hom-Hopf algebra.

\end{example}
\begin{proposition}
Let $\left( H,\mu ,\eta ,\Delta ,\varepsilon ,\alpha ,S\right) $ and $\left(
H^{\prime },\mu ^{\prime },\eta ^{\prime },\Delta ^{\prime },\varepsilon
^{\prime },\alpha ^{\prime },S^{\prime }\right) $ be two Hom-Hopf algebras
with antipodes $S$ and $S^{\prime }.$ If $f:H\longrightarrow H^{\prime }$ is
a morphism of Hom-bialgebras, then 
\begin{equation}
S^{\prime }\circ f=f\circ S.  \label{105}
\end{equation}
\end{proposition}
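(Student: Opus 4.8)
The plan is to realize both $S'\circ f$ and $f\circ S$ inside the convolution Hom-associative algebra $Hom(H,H')$ and identify them as a left, respectively right, convolution inverse of $f$. Viewing $H$ as a counital Hom-coassociative coalgebra and $H'$ as a unital Hom-associative algebra, the convolution proposition established above makes $Hom(H,H')$ a unital Hom-associative algebra for $(u\ast v)(x)=\mu'\circ(u\otimes v)\circ\Delta(x)$, with unit $e=\eta'\circ\varepsilon$ and twisting map $\gamma(u)=\alpha'\circ u\circ\alpha$; a direct check gives $u\ast e=e\ast u=\gamma(u)$. Since $f$, $f\circ S$ and $S'\circ f$ all lie in this algebra, the whole argument takes place there.

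First I would prove $f\ast(f\circ S)=e$. For $x\in H$ one has $(f\ast(f\circ S))(x)=\sum\mu'\big(f(x_{(1)})\otimes f(S(x_{(2)}))\big)$, and because $f$ is a morphism of Hom-associative algebras ($\mu'\circ f^{\otimes 2}=f\circ\mu$) this equals $f\big(\sum\mu(x_{(1)}\otimes S(x_{(2)}))\big)$. The antipode identity \eqref{104} for $S$ collapses the inner sum to $\varepsilon(x)\eta(1_{\Bbbk})$, and $f\circ\eta=\eta'$ (units are preserved) turns this into $\varepsilon(x)\eta'(1_{\Bbbk})=(\eta'\circ\varepsilon)(x)$. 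Dually I would prove $(S'\circ f)\ast f=e$: here $((S'\circ f)\ast f)(x)=\sum\mu'\big(S'(f(x_{(1)}))\otimes f(x_{(2)})\big)$, and since $f$ is a morphism of Hom-coassociative coalgebras ($f^{\otimes 2}\circ\Delta=\Delta'\circ f$) the argument of $\mu'$ is exactly $\Delta'(f(x))$. The analogue of \eqref{104} for $S'$, applied to $f(x)\in H'$, reduces the sum to $\varepsilon'(f(x))\eta'(1_{\Bbbk})$, and $\varepsilon'\circ f=\varepsilon$ (counits are preserved) again yields $(\eta'\circ\varepsilon)(x)$. Thus $f\circ S$ is a right and $S'\circ f$ a left convolution inverse of $f$.

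To conclude I would run the classical sandwich $(S'\circ f)\ast\big(f\ast(f\circ S)\big)$, feeding the triple $(S'\circ f,\,f,\,f\circ S)$ into the Hom-associativity relation $\gamma(u)\ast(v\ast w)=(u\ast v)\ast\gamma(w)$. Substituting the two identities just proved, the left side becomes $\gamma(S'\circ f)\ast e=\gamma^2(S'\circ f)$ and the right side becomes $e\ast\gamma(f\circ S)=\gamma^2(f\circ S)$, so $\gamma^2(S'\circ f)=\gamma^2(f\circ S)$, that is $(\alpha')^2\circ S'\circ f\circ\alpha^2=(\alpha')^2\circ f\circ S\circ\alpha^2$.

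The main obstacle is exactly this last passage. In an honestly associative convolution algebra a two-sided inverse is unique and one reads off $S'\circ f=f\circ S$ at once; but Hom-associativity forces the twist $\gamma$ in every time the unit is cancelled (recall $u\ast e=e\ast u=\gamma(u)$ rather than $u$), so the sandwich only returns the identity after applying $\gamma^2$. To strip the spurious $\alpha^2$ and $(\alpha')^2$ and obtain $S'\circ f=f\circ S$ verbatim, I would assume the structure maps $\alpha,\alpha'$ to be bijective, under which $\gamma$ is invertible on $Hom(H,H')$ and the outer factors cancel; this is also precisely the hypothesis guaranteeing uniqueness of the antipode itself, hence the natural setting for the statement. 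Without invertibility, the genuine content of the Hom-associative computation is the $\gamma^2$-twisted equality displayed above.
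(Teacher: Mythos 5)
Your proposal follows the paper's strategy exactly: work in the convolution Hom-associative algebra $Hom(H,H')$, show $f\ast(f\circ S)=\eta'\circ\varepsilon$ using that $f$ is a morphism of Hom-associative algebras together with the antipode identity for $S$, and show $(S'\circ f)\ast f=\eta'\circ\varepsilon$ using that $f$ is a morphism of Hom-coassociative coalgebras together with the antipode identity for $S'$. These two computations coincide, step for step, with the ones in the paper's proof. Where you diverge is the final step, and there your version is the more careful one. The paper concludes by invoking the classical fact that a left inverse and a right inverse must coincide; but that argument uses genuine associativity and a genuine unit, whereas $Hom(H,H')$ only satisfies the Hom-associativity $\gamma(u)\ast(v\ast w)=(u\ast v)\ast\gamma(w)$ and the twisted unitality $u\ast e=e\ast u=\gamma(u)$, with $\gamma(u)=\alpha'\circ u\circ\alpha$. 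As you show, feeding the triple $\left( S'\circ f,\,f,\,f\circ S\right) $ into Hom-associativity therefore yields only
\[
\left( \alpha'\right) ^{2}\circ S'\circ f\circ\alpha^{2}=\left( \alpha'\right) ^{2}\circ f\circ S\circ\alpha^{2},
\]
and the untwisted identity $S'\circ f=f\circ S$ follows only after cancelling $\left( \alpha'\right) ^{2}$ on the left and $\alpha^{2}$ on the right, i.e. under injectivity of $\alpha'$ and surjectivity of $\alpha$ (for instance, both structure maps bijective). So you have not missed a step that the paper supplies; you have exposed a step that the paper glosses over. The same twisting issue affects the paper's subsequent remark that the antipode of a Hom-Hopf algebra is unique, whose classical proof is the same sandwich argument. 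Your proposed fix, adding bijectivity of the structure maps (the standard hypothesis in the monoidal Hom-Hopf setting), is the natural way to close the gap; without it, the honest conclusion of this method is precisely the $\gamma^{2}$-twisted equality you derive.
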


\begin{proof}
Consider the Hom-algebra $Hom\left( H,H^{\prime }\right) $ with the
convolution product, and  elements $S^{\prime }\circ f$ and $f\circ S$
from this Hom-associative algebra. We show that they are
equal. Indeed%
\begin{eqnarray*}
&& \left( \left( S^{\prime }\circ f\right) \ast f\right) \left( x\right) 
=\mu ^{\prime }\circ \left( S^{\prime }\circ f\otimes f\right) \circ
\Delta \left( x\right)  
=\mu ^{\prime }\circ \left( S^{\prime }\otimes id_{H}\right) \left(
f\otimes f\right) \circ \Delta \left( x\right)  \\
&& =\mu ^{\prime }\circ \left( S^{\prime }\otimes id_{H}\right) \circ \Delta
^{\prime }\circ f\left( x\right) 
= \eta ^{\prime }\circ \varepsilon ^{\prime }\circ f\left( x\right)
=\varepsilon ^{\prime }\left( f\left( x\right) \right) \eta ^{\prime }\left(
1_{\Bbbk }\right)  
 =\varepsilon \left( x\right) \eta ^{\prime }\left( 1_{\Bbbk }\right) .
\end{eqnarray*}

So $S^{\prime }\circ f$ is a left inverse for $f$. Also 
\begin{eqnarray*}
&&\left( f\ast \left( f\circ S\right) \right) \left( x\right)  =\mu ^{\prime
}\circ \left( f\otimes f\circ S\right) \circ \Delta \left( x\right)  
=\mu ^{\prime }\circ \left( f\otimes f\right) \left( id_{H}\otimes
S\right) \circ \Delta \left( x\right)  \\
&&=f\circ \mu \circ \left( id_{H}\otimes S\right) \circ \Delta \left(
x\right)  
=f\circ \eta \circ \varepsilon \left( x\right) =\eta ^{\prime }\circ
\varepsilon \left( x\right)  
=\varepsilon \left( x\right) \eta ^{\prime }\left( 1_{\Bbbk }\right) .
\end{eqnarray*}

Hence $f\circ S$ is also a right inverse for $f$. It follows that $f$ is
(convolution) invertible, and that the left and right inverses are equal.
\end{proof}

\begin{remark}

\begin{enumerate}
\item Since $\alpha $ is a Homomorphism of Hom-bialgebra, so%
\begin{equation}
S\circ \alpha =\alpha \circ S.  \label{106}
\end{equation}

\item The antipode $S$ of a Hom-Hopf algebra is unique.
\end{enumerate}
\end{remark}

The next proposition gives some important properties of the antipode (see 
\cite{SC 2009}, \cite{AM 2008HBial}). We show  that the antipode of a Hom-Hopf algebra 
is an anti-morphism of Hom-algebras and 
anti-morphism of Hom-coalgebras. This means that $S:H\longrightarrow H^{op}$
is a Hom-algebra morphism and $S:H\longrightarrow H^{cop}$ is a
Hom-coalgebra morphism.

\begin{proposition}
Let $H=\left( H,\mu ,\eta ,\Delta ,\varepsilon ,\alpha ,S\right) $ be a
Hom-Hopf algebra with antipode $S$. Then the following identities hold

\begin{enumerate}
\item $S\left( x\cdot y\right) =S\left( y\right) \cdot S\left( x\right) \
\ \ \ or\ \ \ S\circ \mu =\mu \circ \left( S\otimes S\right) \circ \tau ;$

\item$S\circ \eta =\eta ;$

\item $\Delta \left( S\left( x\right) \right) =S\left( x_{\left(
2\right) }\right) \otimes S\left( x_{\left( 1\right) }\right) \ or\ \Delta
\circ S=\left( S\otimes S\right) \circ \tau \circ \Delta ;$

\item $\varepsilon \circ S=\varepsilon $.
\end{enumerate}
%The multiplication is denoted by a dot for simplicity. 
\end{proposition}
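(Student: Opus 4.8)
The plan is to treat the two ``anti-morphism'' statements (1) and (3) as instances of the uniqueness of convolution inverses in the unital Hom-associative algebras $Hom(H\otimes H,H)$ and $Hom(H,H\otimes H)$ provided by Proposition~\ref{B}, while (2) and (4) reduce to short computations with the unit and the counit.

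\emph{Identities (2) and (4).} For (2) I would evaluate the antipode axiom \eqref{(15)} at $1_{B}$: since $\Delta(1_{B})=1_{B}\otimes 1_{B}$ and $\mu(a\otimes 1_{B})=\alpha(a)$, one gets $\alpha\bigl(S(1_{B})\bigr)=\eta\circ\varepsilon(1_{B})=1_{B}$, and combining this with $S\circ\alpha=\alpha\circ S$ from \eqref{106} together with $\alpha(1_{B})=1_{B}$ yields $S(1_{B})=S(\alpha(1_{B}))=\alpha(S(1_{B}))=1_{B}$. For (4) I would apply $\varepsilon$ to \eqref{(15)}: using that $\varepsilon$ is an algebra morphism and the counit axiom \eqref{(8)} in the form $\sum x_{(1)}\varepsilon(x_{(2)})=\alpha(x)$, the left-hand side collapses to $\varepsilon(S(\alpha(x)))=\varepsilon(\alpha(S(x)))=\varepsilon(S(x))$ (the last equality by $\varepsilon\circ\alpha=\varepsilon$), while the right-hand side is $\varepsilon(x)$, giving $\varepsilon\circ S=\varepsilon$.

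\emph{Identity (1).} I would work in $Hom(H\otimes H,H)$, whose convolution unit is $\eta\circ(\varepsilon\otimes\varepsilon)$. Using that $\Delta$ is an algebra morphism \eqref{(14)} and then the antipode axiom, one computes
\[
\bigl((S\circ\mu)\ast\mu\bigr)(x\otimes y)=\sum\mu\bigl(S((xy)_{(1)})\otimes(xy)_{(2)}\bigr)=\eta\bigl(\varepsilon(x)\varepsilon(y)\bigr),
\]
so $S\circ\mu$ is a left convolution inverse of $\mu$. Next I would check that $\mu\circ(S\otimes S)\circ\tau$ is a right convolution inverse, the computation reducing after regrouping to the inner contraction $\sum y_{(1)}S(y_{(2)})=\varepsilon(y)1_{B}$ followed by $\sum x_{(1)}S(x_{(2)})=\varepsilon(x)1_{B}$. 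Since $\mu$ then possesses both a left and a right inverse in this Hom-associative algebra, the two coincide, whence $S\circ\mu=\mu\circ(S\otimes S)\circ\tau$. Identity (3) is the formal dual: I would run the same argument in $Hom(H,H\otimes H)$, whose unit is $(\eta\otimes\eta)\circ\varepsilon$, showing $\Delta\circ S$ is a left inverse and $(S\otimes S)\circ\tau\circ\Delta$ a right inverse of $\Delta$, and conclude by uniqueness.

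The main obstacle is the right-inverse computation in (1), and its dual in (3): unlike the classical situation one cannot freely reassociate, so every regrouping must be routed through the Hom-associativity relation \eqref{(1)} together with the compatibilities $\alpha\circ\mu=\mu\circ\alpha^{\otimes2}$, $\Delta\circ\alpha=\alpha^{\otimes2}\circ\Delta$ and $S\circ\alpha=\alpha\circ S$, producing stray factors of $\alpha$ that must be absorbed before the antipode contractions apply. A second delicate point is the step ``left inverse $=$ right inverse'': for the twisting $\gamma(f)=\alpha\circ f\circ\alpha^{\otimes 2}$ of the convolution algebra, Hom-associativity only gives $\gamma^{2}(L)=\gamma^{2}(R)$, so collapsing this to $L=R$ requires $\alpha$ to be injective, a hypothesis I would invoke at precisely this point.
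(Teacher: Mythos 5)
Your proposal takes essentially the same route as the paper. For (1) the paper also works in the convolution Hom-associative algebra $Hom(H\otimes H,H)$ --- built on the twisted comultiplication $\tilde{\Delta}_{\alpha}=\tilde{\Delta}\circ(\alpha\otimes\alpha)$, which is exactly the device you anticipate for absorbing the stray factors of $\alpha$ --- and shows that $F(x\otimes y)=S(y)\cdot S(x)$ and $G(x\otimes y)=S(x\cdot y)$ are respectively a right and a left convolution inverse of $\mu$, concluding $F=G$; for (3) it runs the dual argument with $Q=\Delta\circ S$ and $R=(S\otimes S)\circ\tau\circ\Delta$ in $Hom(H,H\otimes H)$, exactly as you propose. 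Two differences are worth recording. First, where you prove (2) and (4) by short direct computations (which are correct, using $S\circ\alpha=\alpha\circ S$, $\alpha(1_{B})=1_{B}$ and $\varepsilon\circ\alpha=\varepsilon$), the paper merely cites the literature. Second, the scruple you raise at the step ``one-sided inverses coincide'' is genuine: in a Hom-associative convolution algebra the unit acts by $e\ast f=f\ast e=\gamma(f)$ and associativity is twisted, so the classical cancellation only yields $\gamma^{2}(F)=\gamma^{2}(G)$; moreover, since $\gamma$ acts by pre- and post-composition with powers of $\alpha$, collapsing this to $F=G$ really wants $\alpha$ bijective rather than merely injective. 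The paper's own proof asserts ``therefore $F=G$'' with no comment, so your explicit hypothesis makes visible a gap that the paper passes over silently rather than introducing a defect absent from the original argument.
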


\begin{proof}
(1) Consider $H\otimes H$ with the tensor product of Hom-coassociative
coalgebra structure $\tilde{\Delta}_{\alpha }$, $H$ with the Hom-associative
algebra structure $\mu $ and a Hom-associative algebra $Hom\left( H\otimes
H,H\right) $ with the multiplication given by convolution, (see Proposition $%
\ref{B})$%
\begin{equation*}
f\ast g\left( x\otimes y\right) =\mu \circ \left( f\otimes g\right) \circ 
\tilde{\Delta}_{\alpha }\left( x\otimes y\right), 
\end{equation*}%
where $\tilde{\Delta}_{\alpha }=\tilde{\Delta}\circ \left( \alpha \otimes
\alpha \right) $ is defined in Definition \ref{F}.

Consider the maps $F,G:H\otimes H\longrightarrow H$ defined by 
\begin{equation*}
F\left( x\otimes y\right) =S\left( y\right) \cdot S\left( x\right) ,\text{ }%
G\left( x\otimes y\right) =S\left( x\cdot y\right) 
\end{equation*}
for all $x,$ $y\in H.$ We show that $\mu $ is a left inverse (with respect
to convolution) for $F$, and a right inverse for $G.$ Indeed, for $x,$ $y\in
H$ we have 
\begin{align*}
&\mu \ast F\left( x\otimes y\right)  =\mu \circ \left( \mu \otimes F\right)
\circ \tilde{\Delta}_{\alpha }\left( x\otimes y\right) 
=\sum_{\left( x\right) ,\left( y\right) }\mu \left( \left( \alpha \otimes
\alpha \right) \left( x\otimes y\right) _{\left( 1\right) }\right) \cdot
F\left( \left( \alpha \otimes \alpha \right) \left( x\otimes y\right)
_{\left( 2\right) }\right)  \\
&=\sum_{\left( x\right) ,\left( y\right) }\mu \left( \alpha \left(
x_{\left( 1\right) }\right) \otimes \alpha \left( y_{\left( 1\right)
}\right) \right) \cdot F\left( \alpha \left( x_{\left( 2\right) }\right)
\otimes \alpha \left( y_{\left( 2\right) }\right) \right)  
\\ &
\overset{\eqref{106}}{=}\sum_{\left( x\right) ,\left( y\right) }\left( \left( \alpha \left(
x_{\left( 1\right) }\right) \cdot \alpha \left( y_{\left( 1\right) }\right)
\right) \cdot \alpha \circ \left( S\left( y_{\left( 2\right) }\right) \cdot
S\left( x_{\left( 2\right) }\right) \right) \right) 
\\&
\overset{\eqref{(1)}}{=}\sum_{\left( x\right) ,\left( y\right) }\alpha ^{2}\circ \left( x_{\left(
1\right) }\right) \cdot \left( \alpha \left( y_{\left( 1\right) }\right)
\cdot \left( S\left( y_{\left( 2\right) }\right) \cdot S\left( x_{\left(
2\right) }\right) \right) \right)  \\
&=\sum_{\left( x\right) ,\left( y\right) }\alpha ^{2}\circ \left( x_{\left(
1\right) }\right) \cdot \left( \left( y_{\left( 1\right) }\cdot S\left(
y_{\left( 2\right) }\right) \right) \cdot \alpha \left( S\left( x_{\left(
2\right) }\right) \right) \right)  
=\sum_{\left( x\right) }\alpha ^{2}\circ \left( x_{\left( 1\right)
}\right) \cdot \left( \left( \eta \circ \varepsilon \left( y\right) \right)
\cdot \alpha \left( S\left( x_{\left( 2\right) }\right) \right) \right)  \\
&=\sum_{\left( x\right) }\alpha ^{2}\circ \left( x_{\left( 1\right)
}\right) \cdot \left( \eta \left( 1_{\Bbbk }\right) \cdot \alpha \left(
S\left( x_{\left( 2\right) }\right) \right) \right) \varepsilon \left(
y\right)  
=\sum_{\left( x\right) }\left( \alpha ^{2}\circ \left( x_{\left( 1\right)
}\right) \cdot \alpha ^{2}\circ \left( S\left( x_{\left( 2\right) }\right)
\right) \right) \varepsilon \left( y\right)  \\
&=\alpha ^{2}\circ \eta \circ \varepsilon \left( x\right) \varepsilon
\left( y\right)  
=\eta _{H}\circ \varepsilon _{H\otimes H}\left( x\otimes y\right) .
\end{align*}

It  shows that $\mu \ast F=\eta _{H}\circ \varepsilon _{H\otimes H},$%
\begin{align*}
&G\ast \mu \left( x\otimes y\right)  =\mu \circ \left( G\otimes \alpha
\circ \mu \right) \circ \tilde{\Delta}_{\alpha }\left( x\otimes y\right)  
=\sum_{x\otimes y}G\left( \left( \alpha \otimes \alpha \right) \left(
x\otimes y\right) _{\left( 1\right) }\right) \cdot \mu \left( \left( \alpha
\otimes \alpha \right) \left( x\otimes y\right) _{\left( 2\right) }\right) 
\\
&=\sum_{\left( x\right) ,\left( y\right) }G\left( \alpha \left( x_{\left(
1\right) }\right) \otimes \alpha \left( y_{\left( 1\right) }\right) \right)
\cdot \mu \left( \alpha \left( x_{\left( 2\right) }\right) \otimes \alpha
\left( y_{\left( 2\right) }\right) \right)  \\
&=\sum_{\left( x\right) ,\left( y\right) }S\left( \alpha \left( x_{\left(
1\right) }\right) \cdot \alpha \left( y_{\left( 1\right) }\right) \right)
\cdot \left( \alpha \left( x_{\left( 2\right) }\right) \cdot \alpha \left(
y_{\left( 2\right) }\right) \right)
%  \\&
  =\alpha \circ \left( \sum_{\left( x\right) ,\left( y\right) }S\left(
x_{\left( 1\right) }\cdot y_{\left( 1\right) }\right) \cdot \left( x_{\left(
2\right) }\cdot y_{\left( 2\right) }\right) \right)  \\
&=\alpha \circ \left( \sum_{\left( x\right) ,\left( y\right) }S\left(
\left( x\cdot y\right) _{\left( 1\right) }\right) \cdot \left( \left( x\cdot
y\right) _{\left( 2\right) }\right) \right) 
% \\ &
=\alpha \circ \left( \sum_{xy}S\left( \left( x\cdot y\right) _{\left(
1\right) }\right) \cdot \left( \left( x\cdot y\right) _{\left( 2\right)
}\right) \right)  \\
&=\alpha \circ \eta _{H}\circ \varepsilon _{H\otimes H}\left( x\otimes
y\right) =\eta _{H}\circ \varepsilon _{H\otimes H}\left( x\otimes y\right) ,
\end{align*}

and $G\ast \mu =\eta _{H}\circ \varepsilon _{H\otimes H}.$ Hence $\mu $ is a
left inverse for $F$ and a right inverse for $G$ in a Hom-associative
algebra, and therefore $F=G.$ This means that i) holds.

(2) See ( \cite{SC 2009}, \cite{AMS 2007Hbial}).

(3)We use the same technique that we applied in part (i). We consider the
linear maps%
\begin{equation*}
Q:H\longrightarrow H\otimes H\text{ \ and }R:H\longrightarrow H\otimes H
\end{equation*}%
in the convolution Hom-associative algebra $Hom\left( H,H\otimes H\right) $
(see Corollary $\ref{B})$ where $\tilde{\mu}_{\alpha }=\alpha \circ \tilde{%
\mu}$ is defined in Exemple \ref{A}. 
\begin{equation*}
f\ast g\left( x\right) =\tilde{\mu}_{\alpha }\circ \left( f\otimes g\right)
\circ \Delta \left( x\right) 
\end{equation*}%
given by 
\begin{equation*}
Q\left( x\right) =S\left( x\right) _{1}\otimes S\left( x\right) _{2}\text{
and }R\left( x\right) =S\left( x_{2}\right) \otimes S\left( x_{1}\right) 
\end{equation*}%
for all $x\in H$. We will again prove that $Q=R$ by showing that $Q$ and $R$
are both the convolution inverse of $\Delta :H\longrightarrow H\otimes H.$

%For all $x\in H$ we have that???%
%\begin{equation*}
%\end{equation*}

(4) See ( \cite{SC 2009}, \cite{AMS 2007Hbial}).
\end{proof}

\begin{proposition}
Let $H=\left( H,\mu ,\eta ,\Delta ,\varepsilon ,\alpha ,S\right) $ be a
Hom-Hopf algebra with antipode $S.$ Then the following assertions are
equivalent:

\begin{enumerate}
\item $\sum_{\left( x\right) }S\left( x_{\left( 2\right) }\right) x_{\left(
1\right) }=\varepsilon \left( x\right) 1_{H}$ for any $x\in H.$ Or $\mu
\circ \left( S\otimes id_{H}\right) \circ \tau \circ \Delta =\varepsilon
\circ \eta ;$

\item $\sum_{\left( x\right) }x_{\left( 2\right) }S\left( x_{\left( 1\right)
}\right) =\varepsilon \left( x\right) 1_{H}$ for any $x\in H.$ Or $\mu \circ
\left( id_{H}\otimes S\right) \circ \tau \circ \Delta =\varepsilon \circ
\eta ;$

\item $S^{2}=id_{H}$ (by $S^{2}$ we mean the composition of $S$ with itself).
\end{enumerate}
\end{proposition}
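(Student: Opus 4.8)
The plan is to work throughout in the unital Hom-associative convolution algebra $Hom(H,H)$, whose product is $f\ast g=\mu\circ(f\otimes g)\circ\Delta$, whose unit is $\mathbf{1}=\eta\circ\varepsilon$, and whose structure map is $\gamma(f)=\alpha\circ f\circ\alpha$. The antipode axiom $\eqref{(15)}$ says precisely that $id_H$ is a two-sided convolution inverse of $S$, that is $S\ast id_H=id_H\ast S=\mathbf{1}$. In addition I would use the anti-morphism identities $S\circ\mu=\mu\circ(S\otimes S)\circ\tau$ and $\Delta\circ S=(S\otimes S)\circ\tau\circ\Delta$ established just above, the normalisations $S\circ\eta=\eta$ and $\varepsilon\circ S=\varepsilon$, the commutation $S\circ\alpha=\alpha\circ S$ of $\eqref{106}$, and the two elementary flip identities $\tau\circ(id_H\otimes S)\circ\tau=S\otimes id_H$ and $\tau\circ(S\otimes id_H)\circ\tau=id_H\otimes S$. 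The skeleton is the cycle $(3)\Rightarrow(1)$, $(3)\Rightarrow(2)$, $(1)\Rightarrow(3)$, $(2)\Rightarrow(3)$.

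For $(3)\Rightarrow(1)$ I would substitute $id_H=S^2$ into the left-hand side of $(1)$ and pull the inner copy of $S$ out through the anti-algebra morphism property:
\begin{align*}
\mu\circ(S\otimes id_H)\circ\tau\circ\Delta
&=\mu\circ(S\otimes S)\circ(id_H\otimes S)\circ\tau\circ\Delta\\
&=S\circ\mu\circ\tau\circ(id_H\otimes S)\circ\tau\circ\Delta\\
&=S\circ\mu\circ(S\otimes id_H)\circ\Delta=S\circ(S\ast id_H)=S\circ\mathbf{1}=\mathbf{1},
\end{align*}
where the penultimate step uses $\eqref{(15)}$ and the last uses $S\circ\eta=\eta$. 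The implication $(3)\Rightarrow(2)$ is the mirror computation: substitute $id_H=S^2$ into the left-hand side of $(2)$ and apply the second flip identity to arrive at $S\circ\big(\mu\circ(id_H\otimes S)\circ\Delta\big)=S\circ(id_H\ast S)=\mathbf{1}$.

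For the converse directions the idea is to read exactly the same manipulations the other way around, so as to recognise $S^2$ as a \emph{one-sided} convolution inverse of $S$. Indeed $S\ast S^2=\mu\circ(S\otimes S^2)\circ\Delta=S\circ\big(\mu\circ(S\otimes id_H)\circ\tau\circ\Delta\big)$, so under hypothesis $(1)$ this equals $S\circ\mathbf{1}=\mathbf{1}$ and $S^2$ is a right convolution inverse of $S$; symmetrically $S^2\ast S=S\circ\big(\mu\circ(id_H\otimes S)\circ\tau\circ\Delta\big)$, which under $(2)$ equals $\mathbf{1}$, making $S^2$ a left convolution inverse of $S$. Since $\eqref{(15)}$ already exhibits $id_H$ as a two-sided convolution inverse of $S$, it then remains to conclude $S^2=id_H$ from the uniqueness of convolution inverses, which is the same mechanism that underlies the uniqueness of the antipode.

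The main obstacle is precisely this final cancellation. Because $Hom(H,H)$ is only Hom-associative, equating a left and a right inverse of $S$ is not immediate. Running the standard argument through Hom-associativity, $\gamma(id_H)\ast(S\ast S^2)=(id_H\ast S)\ast\gamma(S^2)$, and simplifying both sides by the unitality relation $f\ast\mathbf{1}=\mathbf{1}\ast f=\gamma(f)$, yields only $\gamma^2(id_H)=\gamma^2(S^2)$, that is $\alpha^2\circ S^2\circ\alpha^2=\alpha^4$. Rewriting with $S\circ\alpha=\alpha\circ S$ gives $S^2\circ\alpha^4=\alpha^4$, so the genuine identity $S^2=id_H$ drops out once $\alpha$ is invertible (the standing regularity assumption under which the antipode is unique). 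Controlling this $\alpha$-twist correctly, rather than the Sweedler-style symbol pushing of the forward directions, is the real content of the proof.
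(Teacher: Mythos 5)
Your computations are all correct, and your skeleton coincides with the paper's: the forward implications use the anti-morphism property $S\circ\mu=\mu\circ(S\otimes S)\circ\tau$ together with $S\circ\eta=\eta$ (the paper applies $S$ to the antipode identity and then uses $S^{2}=id_{H}$; your substitution of $S^{2}$ for $id_{H}$ followed by pulling $S$ out is the same computation read in the other direction), and the converses are handled exactly as in the paper, by showing that hypothesis (1) (resp.\ (2)) makes $S^{2}$ a right (resp.\ left) convolution inverse of $S$. Where you genuinely diverge is at the final cancellation. The paper simply writes that ``by the uniqueness of the inverse it will follow that $S^{2}=id_{H}$,'' i.e.\ it invokes the classical argument that a one-sided inverse must coincide with the two-sided inverse $id_{H}$. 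That argument needs genuine associativity and unitality, whereas in $Hom(H,H)$ one only has $(f\ast g)\ast\gamma(h)=\gamma(f)\ast(g\ast h)$ and $f\ast\mathbf{1}=\mathbf{1}\ast f=\gamma(f)$. Your explicit computation --- yielding $\gamma^{2}(S^{2})=\gamma^{2}(id_{H})$, hence $S^{2}\circ\alpha^{4}=\alpha^{4}$ --- is exactly what the cancellation argument really delivers in the Hom setting, so your treatment is more careful than the paper's at the one step that carries the real content.

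The caveat is your parenthetical claim that invertibility of $\alpha$ is ``the standing regularity assumption under which the antipode is unique.'' The paper makes no such assumption: its Hom-bialgebras and Hom-Hopf algebras carry an arbitrary structure map $\alpha$, and even the remark that the antipode is unique is stated without proof and is subject to the same twisting obstruction you identified. So, strictly under the paper's hypotheses, your proof of $(1)\Rightarrow(3)$ and $(2)\Rightarrow(3)$ is complete only when $\alpha$ is surjective (note that surjectivity, not full invertibility, is all that $S^{2}\circ\alpha^{4}=\alpha^{4}$ requires); the same unstated hypothesis is what the paper's appeal to ``uniqueness of the inverse'' silently uses. In other words, rather than containing a gap relative to the paper, your write-up makes visible a gap in the paper's own argument --- but you should state the surjectivity hypothesis as an added assumption instead of attributing it to the paper.
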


\begin{proof}
(1)$\Longrightarrow $(3) We know that $id_{H}$ is inverse of $S$ with
respect to convolution. We show that $S^{2}$ is a right convolution inverse
of $S,$ and by the uniqueness of the inverse it will follow that $%
S^{2}=id_{H}.$ We have 
\begin{eqnarray*}
\left( S\ast S^{2}\right) \left( x\right)  &=&\mu \circ \left( S\otimes
S^{2}\right) \circ \Delta \left( x\right) =\sum_{\left( x\right) }S\left(
x_{\left( 1\right) }\right) \cdot S^{2}\left( x_{\left( 2\right) }\right)  \\
&=&\sum_{\left( x\right) }S\left( S\left( x_{\left( 2\right) }\right) \cdot
\left( x_{\left( 1\right) }\right) \right) \text{ (}S\text{ is an
anti-morphism of Hom-associative algebras)} \\
&=&S\left( \varepsilon \left( x\right) 1_{H}\right) =\varepsilon \left(
x\right) \eta \left( 1_{\Bbbk }\right) =\eta \circ \varepsilon \left(
x\right) .
\end{eqnarray*}

This shows that indeed $S\ast S^{2}=\eta \circ \varepsilon .$

(3)$\Longrightarrow $(2) We know that $\sum_{\left( x\right) }x_{\left(
1\right) }\cdot S\left( x_{\left( 2\right) }\right) =\varepsilon \left(
x\right) 1_{H}$.  Applying the anti-morphism of Hom-associative algebra $S$, we
obtain $\sum_{\left( x\right) }S^{2}\left( x_{\left( 2\right) }\right) \cdot
S\left( x_{\left( 1\right) }\right) =\varepsilon \left( x\right) 1_{H}.$
Since $S^{2}=id_{H},$ this becomes $\sum_{\left( x\right) }x_{\left(
2\right) }\cdot S\left( x_{\left( 1\right) }\right) =\varepsilon \left(
x\right) 1_{H}.$

(2)$\Longrightarrow $(3) We proceed as in (1)$\Longrightarrow $(3)$,$ and 
show that $S^{2}=id_{H}$ is a left convolution inverse for $S.$ Indeed,%
\begin{eqnarray*}
\left( S^{2}\ast S\right) \left( x\right)  &=&\mu \circ \left( S^{2}\otimes
S\right) \circ \Delta \left( x\right) =\sum_{\left( x\right) }S^{2}\left(
x_{\left( 1\right) }\right) \cdot S\left( x_{\left( 2\right) }\right)  \\
&=&S\left( \sum_{\left( x\right) }x_{\left( 2\right) }\cdot S\left(
x_{\left( 1\right) }\right) \right) =S\left( \varepsilon \left( x\right)
1_{H}\right) =\eta \circ \varepsilon \left( x\right) .
\end{eqnarray*}

(3)$\Longrightarrow $(1) We apply $S$ to the equation $\sum_{\left( x\right)
}S\left( x_{\left( 1\right) }\right) \cdot x_{\left( 2\right) }=\varepsilon
\left( x\right) 1_{H},$ and using $S^{2}=id_{H}$ we obtain $\sum_{\left(
x\right) }S\left( x_{\left( 2\right) }\right) \cdot x_{\left( 1\right)
}=\varepsilon \left( x\right) 1_{H}$.
\end{proof}

\begin{corollary}
Let $H$ be a commutative or cocommutative Hom-Hopf algebra then $S^{2}=id_{H}.
$
\end{corollary}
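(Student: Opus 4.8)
The plan is to deduce, in each of the two cases, one of the equivalent conditions of the preceding Proposition, after which $S^{2}=id_{H}$ follows immediately from the equivalence proved there. The only tools required are the defining antipode identities together with the symmetry hypothesis; no further appeal to Hom-associativity or Hom-coassociativity is needed, so the twisting map $\alpha$ plays no active part.

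First I would record the two halves of the antipode axiom (\ref{104}):
\begin{equation*}
\sum_{\left( x\right) }S\left( x_{\left( 1\right) }\right) \cdot x_{\left( 2\right) }=\varepsilon \left( x\right) 1_{H}\qquad \text{and}\qquad \sum_{\left( x\right) }x_{\left( 1\right) }\cdot S\left( x_{\left( 2\right) }\right) =\varepsilon \left( x\right) 1_{H}.
\end{equation*}
Suppose $H$ is commutative, so that $\mu ^{op}=\mu$ and hence $x_{\left( 1\right) }\cdot S\left( x_{\left( 2\right) }\right) =S\left( x_{\left( 2\right) }\right) \cdot x_{\left( 1\right) }$ for each summand. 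Substituting into the second identity above gives $\sum_{\left( x\right) }S\left( x_{\left( 2\right) }\right) \cdot x_{\left( 1\right) }=\varepsilon \left( x\right) 1_{H}$, which is exactly condition (1) of the preceding Proposition; by the equivalence established there, $S^{2}=id_{H}$.

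Suppose instead $H$ is cocommutative, so that $\Delta ^{cop}=\Delta$, i.e. $\sum_{\left( x\right) }x_{\left( 1\right) }\otimes x_{\left( 2\right) }=\sum_{\left( x\right) }x_{\left( 2\right) }\otimes x_{\left( 1\right) }$. Relabelling the Sweedler legs in that same second identity yields $\sum_{\left( x\right) }x_{\left( 2\right) }\cdot S\left( x_{\left( 1\right) }\right) =\varepsilon \left( x\right) 1_{H}$, which is condition (2) of the preceding Proposition, and again $S^{2}=id_{H}$ follows.

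I do not expect a genuine obstacle here: the entire content lies in pairing the symmetry hypothesis with the correct one of the two equivalent forms (1) and (2). The single step warranting a moment's care is the termwise exchange of Sweedler components in the cocommutative case; but since cocommutativity is by definition the identity $\tau \circ \Delta =\Delta$, this relabelling is precisely licensed and requires no reference to the twisting map.
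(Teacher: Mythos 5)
Your proof is correct and follows essentially the same approach as the paper: pair the symmetry hypothesis with one half of the antipode axiom to obtain one of the two equivalent conditions of the preceding Proposition, then invoke that equivalence to conclude $S^{2}=id_{H}$. The only (immaterial) difference is that you start from $\sum_{(x)}x_{(1)}\cdot S(x_{(2)})=\varepsilon(x)1_{H}$ and land on condition (1) in the commutative case and condition (2) in the cocommutative case, whereas the paper starts from $\sum_{(x)}S(x_{(1)})\cdot x_{(2)}=\varepsilon(x)1_{H}$ and obtains the conditions in the opposite pairing.
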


\begin{proof}
If $H$ is commutative ( $\mu \left( x\otimes y\right) =y\otimes x)$, then by 
$\sum_{\left( x\right) }S\left( x_{\left( 1\right) }\right) \cdot x_{\left(
2\right) }=\varepsilon \left( x\right) 1_{H}$, it follows that $\sum_{\left(
x\right) }x_{\left( 2\right) }\cdot S\left( x_{\left( 1\right) }\right)
=\varepsilon \left( x\right) 1_{H},$ i.e. (2) from the preceding proposition.

If $H$ is cocommutative, then 
\begin{equation*}
\sum_{\left( x\right) }x_{\left( 1\right) }\otimes x_{\left( 2\right)
}=x_{\left( 2\right) }\otimes x_{\left( 1\right) },
\end{equation*}%
and then by $\sum_{\left( x\right) }S\left( x_{\left( 1\right) }\right)
\cdot x_{\left( 2\right) }=\varepsilon \left( x\right) 1_{H}$, it follows
that $\sum_{\left( x\right) }S\left( x_{\left( 2\right) }\right) \cdot
x_{\left( 1\right) }=\varepsilon \left( x\right) 1_{H},$ i.e. (1) from the
preceding proposition.
\end{proof}

\begin{remark}
Let $H=\left( H,\mu ,\eta ,\Delta ,\varepsilon ,\alpha \right) $ be a
Hom-Hopf algebra with antipode $S.$ Then the Hom-bialgebra $%
H^{op,cop}=\left( H,\mu ^{op},\eta ,\Delta ^{cop},\varepsilon ,\alpha
\right) $ is a Hom-Hopf algebra with the same antipode $S.$
\end{remark}

In Proposition \ref{J},  we claim  that if $H$ is a finite dimensional
Hom-bialgebra, then its dual is a Hom-bialgebra. The following result shows
that if $H$ is  a Hom-Hopf algebra, then its dual also has a Hom-Hopf
algebra structure.

\begin{proposition}
Let $H=$ be a finite dimensional Hom-Hopf algebra, with antipode $S.$ Then
the Hom-bialgebra $H^{\ast }$ is a Hom-Hopf algebra, with antipode $S^{\ast
}.$
\end{proposition}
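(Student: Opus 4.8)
The plan is to verify that $S^{\ast}$ is the convolution inverse of $id_{H^{\ast}}$ in the unital Hom-associative algebra $Hom(H^{\ast},H^{\ast})$, i.e.\ that it satisfies the antipode identity (\ref{(15)}) for the Hom-bialgebra $H^{\ast}$. Since $H$ is finite dimensional, Proposition \ref{J} already provides the Hom-bialgebra structure on $H^{\ast}=\left(H^{\ast},\Delta^{\ast},\varepsilon^{\ast},\mu^{\ast},\eta^{\ast},\alpha^{\ast}\right)$, whose multiplication is $\Delta^{\ast}\circ\rho$, whose comultiplication is $\rho^{-1}\circ\mu^{\ast}$, and whose unit and counit are $\varepsilon^{\ast}$ and $\eta^{\ast}$, where $\rho:H^{\ast}\otimes H^{\ast}\longrightarrow\left(H\otimes H\right)^{\ast}$ is the canonical isomorphism of Theorems \ref{H} and \ref{E}. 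It is here that finite-dimensionality is used, since $\rho^{-1}$ must exist for the dual comultiplication to be defined. Thus the statement reduces to checking
\begin{equation*}
\left(\Delta^{\ast}\circ\rho\right)\circ\left(id_{H^{\ast}}\otimes S^{\ast}\right)\circ\left(\rho^{-1}\circ\mu^{\ast}\right)=\left(\Delta^{\ast}\circ\rho\right)\circ\left(S^{\ast}\otimes id_{H^{\ast}}\right)\circ\left(\rho^{-1}\circ\mu^{\ast}\right)=\varepsilon^{\ast}\circ\eta^{\ast},
\end{equation*}
the right-hand side being exactly $\eta_{H^{\ast}}\circ\varepsilon_{H^{\ast}}$ for $H^{\ast}$.

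The key tool will be the compatibility of $\rho$ with transposition: for any linear maps $f,g:H\longrightarrow H$ one has $\rho\circ\left(f^{\ast}\otimes g^{\ast}\right)=\left(f\otimes g\right)^{\ast}\circ\rho$, hence $\rho\circ\left(f^{\ast}\otimes g^{\ast}\right)\circ\rho^{-1}=\left(f\otimes g\right)^{\ast}$. I would check this by evaluating both sides on $m\otimes n\in H\otimes H$ using the defining formula $\rho\left(\phi\otimes\psi\right)\left(m\otimes n\right)=\phi\left(m\right)\psi\left(n\right)$ from Theorem \ref{H}, which gives $\phi\left(f(m)\right)\psi\left(g(n)\right)$ on both sides. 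Applying this with $f=id_{H}$, $g=S$ and using contravariance $\left(g\circ f\right)^{\ast}=f^{\ast}\circ g^{\ast}$, the first composite above collapses to
\begin{equation*}
\Delta^{\ast}\circ\left(id_{H}\otimes S\right)^{\ast}\circ\mu^{\ast}=\left(\mu\circ\left(id_{H}\otimes S\right)\circ\Delta\right)^{\ast}.
\end{equation*}
By the antipode identity (\ref{(15)}) for $H$ we have $\mu\circ\left(id_{H}\otimes S\right)\circ\Delta=\eta\circ\varepsilon$, so the composite equals $\left(\eta\circ\varepsilon\right)^{\ast}=\varepsilon^{\ast}\circ\eta^{\ast}$, as required. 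The computation for $S^{\ast}\otimes id_{H^{\ast}}$ is word-for-word the same, invoking the other half of (\ref{(15)}). This shows $S^{\ast}$ is a two-sided convolution inverse of $id_{H^{\ast}}$, hence an antipode for $H^{\ast}$.

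I expect the only delicate point to be the bookkeeping of $\rho$ and $\rho^{-1}$: verifying the intertwining relation $\rho\circ\left(f^{\ast}\otimes g^{\ast}\right)=\left(f\otimes g\right)^{\ast}\circ\rho$ and keeping the order of the composites straight under the contravariant functor $(-)^{\ast}$; everything else is formal duality. As a complement one may observe that dualizing (\ref{106}) yields $S^{\ast}\circ\alpha^{\ast}=\alpha^{\ast}\circ S^{\ast}$, so $S^{\ast}$ is compatible with the twisting map of $H^{\ast}$, and uniqueness of $S^{\ast}$ is then automatic, since the convolution inverse in the unital Hom-associative algebra $Hom(H^{\ast},H^{\ast})$ is unique.
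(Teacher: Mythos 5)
Your proof is correct and follows essentially the same route as the paper: both reduce the antipode identity for $H^{\ast}$ to the dual of the antipode identity for $H$ via the intertwining relation between $\rho$ and transposes (the paper states it as $\rho^{-1}\left(S\otimes id_{H}\right)^{\ast}\rho=S^{\ast}\otimes id_{H^{\ast}}$, which is your relation $\rho\circ\left(f^{\ast}\otimes g^{\ast}\right)=\left(f\otimes g\right)^{\ast}\circ\rho$ specialized to $f=S$, $g=id_{H}$). The only cosmetic difference is that you argue point-free with compositions of maps while the paper evaluates on an element $f^{\ast}\in H^{\ast}$; the key lemma and the reduction are identical.
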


\begin{proof}
We know already that $H^{\ast }$ is a Hom-bialgebra. We therefore need only
show that $S^{\ast }$ is the antipode of $H^{\ast }$. To this end, we have
that, 
\begin{eqnarray*}
&& \mu _{H^{\ast }}\circ \left( S^{\ast }\otimes id_{H^{\ast }}\right) \circ
\Delta _{H^{\ast }}\left( f^{\ast }\right) =\left( \rho \circ \left(
S^{\ast }\otimes id_{H^{\ast }}\right) \circ \Delta _{H}\right) \left(
f^{\ast }\circ \mu _{H}\right) 
= \left( S\otimes id_{H}\right) ^{\ast }\Delta _{H}\left( f^{\ast }\circ
\mu _{H}\right) \\
&&=\left( f^{\ast }\circ \mu _{H}\right) \left( S\otimes id_{H}\right) \circ
\Delta _{H} 
=\left( \rho \circ f^{\ast }\circ \eta _{H}\circ \varepsilon _{H}\right)
=\eta _{H^{\ast }}\circ \varepsilon _{H^{\ast }}\left( f^{\ast }\right) ,
\end{eqnarray*}%
using $\rho ^{-1}\left( S\otimes id_{H}\right) ^{\ast }\rho =\left( S^{\ast
}\otimes id_{H^{\ast }}\right) $.

Similarly,%
\begin{equation*}
\mu _{H^{\ast }}\circ \left( id_{H^{\ast }}\otimes S^{\ast }\right) \circ
\Delta _{H^{\ast }}\left( f^{\ast }\right) =\eta _{H^{\ast }}\circ
\varepsilon _{H^{\ast }}\left( f^{\ast }\right) .
\end{equation*}%
For all $f^{\ast }\in H^{\ast },$ since $\varepsilon _{H^{\ast }}\left(
f^{\ast }\right) =f^{\ast }\left( 1_{H}\right) =f^{\ast }\circ \eta
_{H}\left( 1_{\Bbbk }\right) ,$ and $\eta _{H^{\ast }}\left( 1_{\Bbbk
}\right) =1_{H^{\ast }}=\varepsilon $, we have that $S^{\ast }$ is the
convolution inverse of $id_{H^{\ast }}$ and therefore $S^{\ast }$ is the
antipode of $H^{\ast }$.
\end{proof}

\section{Modules and Comodules of Hom-Hopf algebras}

In this section, we recall the definitions of modules and comodules over
Hom-associative algebras and study their tensor products.

The definitions of action and coaction are simply a polarisation of those
of Hom-algebras and Hom-coalgebras, so we include them now among the basic
definitions.

A Hom-module is a pair $(M,\alpha )$  in which $M$ is a vector
space and $\alpha :M$ $\longrightarrow M$ is a linear map \cite{DY 2010}. A morphism $%
(M,\alpha _{M})$ $\longrightarrow $ $(N,\alpha _{N})$ of Hom-modules is a
linear map $f:M\longrightarrow N$ such that $\alpha _{N}$ $\circ f=f\circ
\alpha _{M}$ . We will often abbreviate a Hom-module $(M,\alpha )$ to $M$ .
The tensor product of  Hom-modules $(M,\alpha _{M})$ and $(N,\alpha _{N})$
consists of the vector space $M\otimes N$ and the linear self-map $\alpha
_{M}\otimes \alpha _{N}.$

\subsection{Modules over Hom-associative algebras}

Let $A=(A,\mu _{A},\eta_A,\alpha _{A})$ be a unital Hom-associative algebra and $%
(M,\alpha _{M})$ be a Hom-module.

The vector space $M$ is called a \emph{left }$A$\emph{-module} (or left
module over Hom-algebra $A$) if there exists a morphism $\lambda _{l}$ $%
:A\otimes M\longrightarrow M$ of Hom-modules, written as $\lambda _{l}\left(
a\otimes m\right) =a\rhd m,$ called the structure map, such that%
\begin{equation}
\lambda _{l}\circ \left( \alpha _{A}\otimes \lambda _{l}\right) =\lambda
_{l}\circ \left( \mu _{A}\otimes \alpha _{M}\right) \quad and\quad \lambda
_{l}\circ \left( \eta _{A}\otimes id_{M}\right) =\alpha _{M} , \label{(16)}
\end{equation}

or, equivalently, that for all $a,b$ $\in A$ and $m\in M$%
\begin{equation*}
\alpha _{A}\left( a\right) \rhd \left( b\rhd m\right) =\left( ab\right) \rhd
\alpha _{M}\left( m\right) \quad and\quad 1\rhd m=\alpha _{M}\left( m\right),
\end{equation*}

where the first identity of ($\ref{(16)}$) acts on $A\otimes A\otimes M$ and
the second of ($\ref{(16)}$) on $\Bbbk \otimes M\simeq M$. The map $\lambda
_{l}$ is then called  left action of $A$ on $M.$

If $\left( M,\alpha _{M}\right) $ and $\left( M^{\prime },\alpha _{M^{\prime
}}\right) $ are left $A$-modules, then a \emph{morphism} \emph{of left }%
$A$\emph{-modules} $f:M\longrightarrow M^{\prime }$ is a morphism of the
underlying Hom-modules such that%
\begin{equation}
f\circ \lambda _{l}=\lambda _{l}^{\prime }\circ \left( id_{A}\otimes
f\right) \qquad or\qquad f\left( a\rhd m\right) =a\rhd f\left( m\right) .
\label{(17)}
\end{equation}

If $f$ is invertible, it is a  left $A$-modules \emph{isomorphism}.

A \emph{right }$A$\emph{-module }(or right module over Hom-associative
algebra $A$) is a vector space $M$ with a morphism $\lambda _{r}$ $:M\otimes
A\longrightarrow M$ of Hom-modules, right action of $A$ on $M$ and written
as $\lambda _{r}\left( m\otimes a\right) =m\lhd a$, such that%
\begin{equation}
\lambda _{r}\circ \left( \lambda _{r}\otimes \alpha _{A}\right) =\lambda
_{r}\circ \left( \alpha _{M}\otimes \mu _{A}\right) \quad and\quad \lambda
_{r}\circ \left( id_{M}\otimes \eta _{A}\right) =\alpha _{M}  \label{(18)}
\end{equation}

The two conditions on a right $A$-module ($\ref{(18)}$) can be expressed as
for all $a,b$ $\in A$ and $m\in M$

\begin{equation*}
\left( m\lhd a\right) \lhd \alpha _{A}\left( b\right) =\alpha _{M}\left(
m\right) \lhd \left( ab\right) \quad and\quad m\lhd 1=\alpha _{M}\left(
m\right)
\end{equation*}

or as the commutativity of the diagrams%
\begin{equation*}
\begin{array}{ccc}
M\otimes A\otimes A & \overset{\ \alpha _{M}\otimes \mu _{A}}{%
\longrightarrow } & \ M\otimes A \\ 
{\small \ \lambda _{r}\otimes \alpha _{A}}\downarrow \qquad  &  & \downarrow
\lambda _{r} \\ 
\quad M\otimes A\mathit{\ } & \overset{\lambda _{r}}{\longrightarrow } & M%
\end{array}%
\qquad 
\begin{tabular}{lll}
$M\otimes \Bbbk $ & $\overset{\alpha _{M}}{\longrightarrow }$ & $M\otimes
\Bbbk $ \\ 
$id_{M}\otimes \eta \downarrow $ &  & $\cong $ \\ 
$M\otimes A$ & $\overset{\lambda _{r}}{\longrightarrow }$ & $M$%
\end{tabular}%
\end{equation*}

The map $\lambda _{r}$ is then called  right action of $A$ on $M$. We will
often denote a left or right $A$-module by $\left( M,\lambda ,\alpha
_{M}\right) $ and refer to $\lambda $ as the left or right action of $A$ on $%
M.$

\begin{example}
If $A$ is a unital Hom-associative algebra, then we may consider $A$ as
either a left or right $A$-module where the action is given by the
multiplication $\mu .$
\end{example}

If $M$ and $M^{\prime }$ are \ right $A$-modules, then a morphism of right $%
A$-module $f:M\longrightarrow M^{\prime }$ if%
\begin{equation}
f\circ \lambda _{r}=\lambda _{r}^{\prime }\circ \left( f\otimes
id_{A}\right) \qquad or\qquad f\left( m\lhd a\right) =f\left( m\right) \lhd a.
\end{equation}

If $f$ is invertible, it is an \emph{isomorphism of right }$A$\emph{%
-modules}.

\begin{proposition}
A right $A$-module is nothing else than a left module over the opposite
unital Hom-associative algebra $A^{op}.$ Therefore we need only consider left
modules which shall for simplicity be called Hom-module in the sequel.
\end{proposition}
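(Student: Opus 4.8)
The plan is to exhibit the asserted identification as a relabeling under the flip map, using the opposite Hom-associative algebra $A^{op}=(A,\mu^{op},\alpha)$ from Example~\ref{A}, whose twisting map is again $\alpha$ and whose product is $\mu^{op}=\mu\circ\tau_{A\otimes A}$, so that $\mu^{op}(a\otimes b)=\mu(b\otimes a)=ba$. Given a right $A$-module $(M,\lambda_r,\alpha_M)$, I would define a candidate left action of $A^{op}$ on the same Hom-module $(M,\alpha_M)$ by precomposing with the flip, namely $\lambda_l=\lambda_r\circ\tau_{A\otimes M}$, that is $a\rhd m:=m\lhd a$. Conversely, from a left $A^{op}$-module one recovers a right action by $m\lhd a:=a\rhd m$; since $\tau_{A\otimes M}$ is involutive, these two assignments are mutually inverse, so it suffices to check that one of them sends modules to modules.

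The key step is to match the two associativity constraints. Writing the left-module axiom \eqref{(16)} for $A^{op}$ in elements gives $\alpha_A(a)\rhd(b\rhd m)=\mu^{op}(a\otimes b)\rhd\alpha_M(m)=(ba)\rhd\alpha_M(m)$, since the product of $A^{op}$ reverses the order. Translating through the defining relation $x\rhd y=y\lhd x$ turns the left-hand side into $(m\lhd b)\lhd\alpha_A(a)$ and the right-hand side into $\alpha_M(m)\lhd(ba)$. This is exactly the right-module associativity axiom \eqref{(18)}, namely $(m\lhd a)\lhd\alpha_A(b)=\alpha_M(m)\lhd(ab)$, read after the harmless relabeling of $a$ and $b$. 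The unit conditions match even more directly: $\lambda_l\circ(\eta_A\otimes id_M)=\alpha_M$ reads $1\rhd m=\alpha_M(m)$, which is identical to $m\lhd 1=\alpha_M(m)$, the right unit axiom. I emphasize that no twisting maps need to be moved, because $A$ and $A^{op}$ carry the same $\alpha$; this is precisely why the correspondence stays clean in the Hom-setting.

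Finally I would record compatibility with morphisms: a linear map $f:M\to M'$ satisfies the right-module condition $f\circ\lambda_r=\lambda_r'\circ(f\otimes id_A)$ if and only if it satisfies the left-module condition $f\circ\lambda_l=\lambda_l'\circ(id_A\otimes f)$, again by inserting the flip, and $f$ intertwines the structure maps $\alpha_M,\alpha_{M'}$ in either description. Hence $(M,\lambda_r,\alpha_M)\mapsto(M,\lambda_r\circ\tau_{A\otimes M},\alpha_M)$ is a bijective, morphism-preserving correspondence between right $A$-modules and left $A^{op}$-modules, which is the claim. The only thing demanding care is the bookkeeping of the flip $\tau_{A\otimes M}$ together with the order reversal in $\mu^{op}$; once these are tracked consistently every axiom becomes a direct transcription, so I expect no genuine obstacle beyond this indexing.
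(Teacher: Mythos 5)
Your proposal is correct and takes essentially the same route as the paper: define the left $A^{op}$-action as the flipped right action, $a\rhd m:=m\lhd a$, and check that the Hom-associativity and unit axioms of \eqref{(16)} for $A^{op}$ transcribe exactly into the right-module axioms \eqref{(18)}. The paper's own proof is just the single operator computation verifying the associativity identity $\lambda_{r}^{op}\circ\left(\alpha_{A}\otimes\lambda_{r}^{op}\right)=\lambda_{r}^{op}\circ\left(\mu^{op}\otimes\alpha_{M}\right)$; your extra checks (unit condition, bijectivity of the correspondence, compatibility with morphisms) only make the argument more complete.
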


\begin{proof}Indeed,
\begin{eqnarray*}
&& \lambda _{r}^{op}\circ \left( \alpha _{A}\otimes \lambda _{r}^{op}\right)
\left( x\otimes y\otimes m\right)  =\lambda _{r}^{op}\circ \left( \alpha
_{A}\left( x\right) \otimes \lambda _{r}^{op}\left( m\otimes y\right)
\right)  \\
&&= \lambda _{r}\circ \left( \lambda _{r}\left( m\otimes y\right) \otimes
\alpha _{A}\left( x\right) \right)  
=\lambda _{r}\circ \left( \alpha _{M}\left( m\right) \otimes \mu \left(
y\otimes x\right) \right)  \\
&&=\lambda _{r}^{op}\circ \left( \mu ^{op}\left( x\otimes y\right) \otimes
\alpha _{M}\left( m\right) \right)  
=\lambda _{r}^{op}\circ \left( \mu ^{op}\otimes \alpha _{M}\right) \left(
x\otimes y\otimes m\right) .
\end{eqnarray*}
\end{proof}

\begin{theorem}
Let $(A,\mu ,\eta ,\alpha _{A})$ be a unital Hom-associative algebra and $%
\left( M,\lambda _{l},\alpha _{M}\right) $ be a left $A$-module Then $\left(
M,\alpha _{M}\circ \lambda _{l},\alpha _{M}^{2}\right) $ is  another left $A
$-module over unital Hom-associative algebra $(A,\mu _{\alpha },\eta
_{\alpha },\alpha _{A}^{2})$ defined  in Proposition \ref{C}.
\end{theorem}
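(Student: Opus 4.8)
The plan is to write $\lambda_l' = \alpha_M \circ \lambda_l$ for the candidate action and to check the three requirements for $(M, \lambda_l', \alpha_M^2)$ to be a left module over the twisted algebra $(A, \mu_\alpha, \eta_\alpha, \alpha_A^2)$ furnished by Proposition \ref{C} with $\beta = \alpha_A$ (the hypotheses of that proposition hold for $\beta = \alpha_A$ precisely because $\alpha_A \circ \mu = \mu \circ \alpha_A^{\otimes 2}$ and $\alpha_A \circ \eta = \eta$): first that $\lambda_l'$ is a morphism of the underlying Hom-modules, i.e. $\alpha_M^2 \circ \lambda_l' = \lambda_l' \circ (\alpha_A^2 \otimes \alpha_M^2)$, and then the two identities of \eqref{(16)} for the twisted data. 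The one fact that powers every computation is that $\lambda_l$ is itself a Hom-module morphism, $\alpha_M \circ \lambda_l = \lambda_l \circ (\alpha_A \otimes \alpha_M)$; composing it with itself yields the transport rule $\lambda_l \circ (\alpha_A^k \otimes \alpha_M^k) = \alpha_M^k \circ \lambda_l$ for every $k$, which is what lets me slide twisting maps across the action.

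For the Hom-module compatibility I would evaluate both sides: $\alpha_M^2 \circ \lambda_l'$ is immediately $\alpha_M^3 \circ \lambda_l$, while $\lambda_l' \circ (\alpha_A^2 \otimes \alpha_M^2)$ becomes $\alpha_M \circ (\alpha_M^2 \circ \lambda_l) = \alpha_M^3 \circ \lambda_l$ after one use of the transport rule, so the two agree. For the first axiom of \eqref{(16)} I would expand the left side $\lambda_l' \circ (\alpha_A^2 \otimes \lambda_l')$ by factoring $\alpha_A^2 \otimes (\alpha_M \circ \lambda_l) = (\alpha_A \otimes \alpha_M) \circ (\alpha_A \otimes \lambda_l)$, transport the inner $(\alpha_A \otimes \alpha_M)$ across $\lambda_l$, and then invoke the original Hom-associativity $\lambda_l \circ (\alpha_A \otimes \lambda_l) = \lambda_l \circ (\mu \otimes \alpha_M)$; this reduces the left side to $\alpha_M^2 \circ \lambda_l \circ (\mu \otimes \alpha_M)$. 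The right side $\lambda_l' \circ (\mu_\alpha \otimes \alpha_M^2)$, after the interchange-law factoring $(\alpha_A \circ \mu) \otimes \alpha_M^2 = (\alpha_A \otimes \alpha_M) \circ (\mu \otimes \alpha_M)$ and one transport, reduces to exactly the same expression, so the two match.

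The unit axiom is the only place needing an ingredient beyond the transport rule: the twisted unit is $\eta_\alpha = \alpha_A \circ \eta$, but unitality of the original algebra gives $\alpha_A \circ \eta = \eta$, whence $\eta_\alpha = \eta$. Then $\lambda_l' \circ (\eta_\alpha \otimes id_M) = \alpha_M \circ \lambda_l \circ (\eta \otimes id_M)$, and the original unit axiom $\lambda_l \circ (\eta \otimes id_M) = \alpha_M$ yields $\alpha_M \circ \alpha_M = \alpha_M^2$, as required. I expect no genuine obstacle: the whole content is the transport rule together with the two module axioms, and the only care needed is the bookkeeping of which tensor leg each twisting map occupies and the two interchange-law factorizations used to rewrite $\alpha_A^2 \otimes (\alpha_M \circ \lambda_l)$ and $(\alpha_A \circ \mu) \otimes \alpha_M^2$.
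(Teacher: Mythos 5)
Your proposal is correct: the transport rule $\alpha_M^k\circ\lambda_l=\lambda_l\circ(\alpha_A^k\otimes\alpha_M^k)$, the two interchange factorizations, and the observation $\eta_\alpha=\alpha_A\circ\eta=\eta$ give a complete verification of all three module conditions for $\bigl(M,\alpha_M\circ\lambda_l,\alpha_M^2\bigr)$ over $(A,\mu_\alpha,\eta_\alpha,\alpha_A^2)$. The paper itself offers no argument (its proof reads only ``Straightforward''), and your computation is exactly the routine direct check the authors left to the reader.
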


\begin{proof}
Straightforward.
\end{proof}

A vector space $M$ is called an $A$\emph{-bimodule} if $M$ is both a
left $A$-module with action $a\rhd m$ and a right $A$-module with action $%
m\lhd a$ satisfying the compatibility condition%
\begin{equation*}
(a\rhd m)\lhd \alpha _{A}\left( b\right) =\alpha _{A}\left( a\right) \rhd
(m\lhd b),
\end{equation*}%
for $a,b\in A$ and $m\in M$. Or%
\begin{equation}
\lambda _{r}\circ \left( \lambda _{l}\otimes \alpha _{A}\right) =\lambda
_{l}\circ \left( \alpha _{A}\otimes \lambda _{r}\right).   \label{(20)}
\end{equation}%
Then, we refer to the tuple  $\left( M,\lambda _{l},\lambda _{r},\alpha _{M}\right) $ for  an 
$A$-bimodule $M$.

If $M$ and $M^{\prime }$ are $A$-bimodules, a map $f$ $:M\longrightarrow
M^{\prime }$ is a \emph{morphism of }$A$\emph{-bimodules} if it is a
morphism of both left $A$-module and right $A$-module.

Every Hom-associative algebra $(A,\mu _{A},\alpha _{A})$ is an  $A$-bimodule
with  $\lambda _{l}=\lambda _{r}=\mu _{A}$.

\begin{example}
\label{2}Let $M^{\prime }=V\otimes M\otimes V$, and consider structure maps $%
\lambda _{l}^{\mu }=\mu \otimes \alpha _{M}\otimes \alpha _{V}$ and $\lambda
_{r}^{\mu }=\alpha _{V}\otimes \alpha _{M}\otimes \mu $. Then,  $\left(
V\otimes M\otimes V,\lambda _{l}^{\mu },\lambda _{r}^{\mu }\right) $ is an
exterior $A$-bimodule; $\lambda _{l}^{\mu }$ and $\lambda _{r}^{\mu }$ are 
called exterior bimodule structure maps. 
%This is not to be confused with the notion of an exterior algebra.

Indeed, 
the left $A$-module axioms for $\lambda _{l}^{\mu }$ now follows from that
of $\lambda _{l}$ and the identities%
\begin{eqnarray*}
\lambda _{l}^{\mu }\circ \left( \alpha _{V}\otimes \lambda _{l}^{\mu
}\right)  &=&\left( \mu \otimes \alpha _{M}\otimes \alpha _{V}\right) \circ
\left( \alpha _{V}\otimes \mu \otimes \alpha _{M}\otimes \alpha _{V}\right) 
\\
&=&\mu \circ \left( \alpha _{V}\otimes \mu \right) \otimes \alpha
_{M}^{\otimes 2}\otimes \alpha _{V}^{\otimes 2} \\
&\overset{\eqref{(1)}}{=}&\mu \circ \left( \mu \otimes \alpha _{V}\right) \otimes \alpha
_{M}^{\otimes 2}\otimes \alpha _{V}^{\otimes 2}.
\end{eqnarray*}%
\begin{eqnarray*}
\lambda _{l}^{\mu }\circ \left( \mu \otimes \alpha _{V\otimes M\otimes
V}\right)  &=&\left( \mu \otimes \alpha _{M}\otimes \alpha _{V}\right) \circ
\left( \mu \otimes \alpha _{V}\otimes \alpha _{M}\otimes \alpha _{V}\right) 
\\
&=&\mu \circ \left( \mu \otimes \alpha _{V}\right) \otimes \alpha
_{M}^{\otimes 2}\otimes \alpha _{V}^{\otimes 2},
\end{eqnarray*}

and%
\begin{eqnarray*}
\lambda _{l}^{\mu }\circ \left( \eta \otimes id_{V\otimes M\otimes V}\right)
&=&\left( \mu \otimes \alpha _{M}\otimes \alpha _{V}\right) \circ \left(
\eta \otimes id_{V\otimes M\otimes V}\right)  \\
&\overset{\eqref{(2)}}{=}&\left( \mu \circ \left( \eta \otimes id_{V}\right) \otimes \alpha
_{M}\otimes \alpha _{V}\right) =\alpha _{V\otimes M\otimes V}.
\end{eqnarray*}

Likewise, the right $A$-module axioms for $\lambda _{r}^{\mu }$ follow from
that of $\lambda _{r}$ and the identity$\quad \lambda _{r}\circ
\left( id_{M}\otimes \eta \right) =\alpha _{M}$.%
\begin{eqnarray*}
\lambda _{r}^{\mu }\circ \left( \lambda _{r}^{\mu }\otimes \alpha
_{V}\right)  &=&\left( \alpha _{V}\otimes \alpha _{M}\otimes \mu \right)
\circ \left( \alpha _{V}\otimes \alpha _{M}\otimes \mu \otimes \alpha
_{V}\right)  \\
&=&\alpha _{V}^{\otimes 2}\otimes \alpha _{M}^{\otimes 2}\otimes \mu \circ
\left( \mu \otimes \alpha _{V}\right)  \\
&=&\alpha _{V}^{\otimes 2}\otimes \alpha _{M}^{\otimes 2}\otimes \mu \circ
\left( \alpha _{V}\otimes \mu \right) \quad by\text{ (}\ref{(1)}\text{)} \\
\lambda _{r}^{\mu }\circ \left( \alpha _{_{V\otimes M\otimes V}}\otimes \mu
\right)  &=&\left( \alpha _{V}\otimes \alpha _{M}\otimes \mu \right) \circ
\left( \alpha _{V}\otimes \alpha _{M}\otimes \alpha _{V}\otimes \mu \right) 
\\
&=&\alpha _{V}^{\otimes 2}\otimes \alpha _{M}^{\otimes 2}\otimes \mu \circ
\left( \alpha _{V}\otimes \mu \right) ,
\end{eqnarray*}

and%
\begin{eqnarray*}
\lambda _{r}^{\mu }\circ \left( id_{V\otimes M\otimes V}\otimes \eta \right)
&=&\left( \alpha _{V}\otimes \alpha _{M}\otimes \mu \right) \circ \left(
id_{V\otimes M\otimes V}\otimes \eta \right)  \\
&=&\alpha _{V}\otimes \alpha _{M}\otimes \mu \circ \left( id_{V}\otimes \eta
\right)  \\
&\overset{\eqref{(2)}}{=}&\alpha _{V\otimes M\otimes V}.
\end{eqnarray*}

Finally,  compatibility conditions ($\ref{(20)})$ follows from the
following calculation%
\begin{eqnarray*}
\lambda _{r}^{\mu }\circ \left( \lambda _{l}^{\mu }\otimes \alpha
_{V}\right)  &=&\left( \alpha _{V}\otimes \alpha _{M}\otimes \mu \right)
\circ \left( \mu \otimes \alpha _{M}\otimes \alpha _{V}\otimes \alpha
_{V}\right)  \\
&=&\alpha _{V}\circ \mu \otimes \alpha _{M}^{\otimes 2}\otimes \mu \circ
\alpha _{V} \\
&=&\mu \circ \left( \alpha _{V}\otimes \alpha _{V}\right) \otimes \alpha
_{M}^{\otimes 2}\otimes \left( \alpha _{V}\otimes \alpha _{V}\right) \circ
\mu  \\
&=&\lambda _{l}^{\mu }\circ \left( \alpha _{V}\otimes \lambda _{r}^{\mu
}\right) .
\end{eqnarray*}%
We have shown that $\left( V\otimes M\otimes V,\lambda _{l}^{\mu },\lambda
_{r}^{\mu }\right) $ is an exterior $A$-bimodule.
\end{example}

\subsubsection{Comodules over Hom-coassociative coalgebras}

Dualizing actions of Hom-associative algebras on Hom-modules leads to
coactions of Hom-coassociative coalgebras on Hom-comodules. Let $(C,\Delta
_{C},\varepsilon _{C},\beta _{C})$ be a counital Hom-coassociative coalgebra
and $(M,\beta _{M})$ be a Hom-module.

A right coaction of $(C,\Delta _{C},\varepsilon _{C},\beta _{c})$ on a
vector space $M$, called then a \emph{right }$C$\emph{-comodule}, is a
morphism $\rho _{r}$ $:M\longrightarrow M\otimes C$ of Hom-modules,
satisfying the identities%
\begin{equation}
\left( \rho _{r}\otimes \beta _{C}\right) \circ \rho _{r}=\left( \beta
_{M}\otimes \Delta _{C}\right) \circ \rho _{r}\quad and\quad \left(
id_{M}\otimes \varepsilon _{C}\right) \circ \rho _{r}=\beta _{M}.
\label{(21)}
\end{equation}

If $\rho _{r}$ and $\rho _{r}^{\prime }$ are  right comodules of $C$ on
Hom-module $(M,\beta _{M})$ and $(M^{\prime },\beta _{M^{\prime }})$, then a 
\emph{morphism of right }$C$\emph{-comodules} $f:M\longrightarrow
M^{\prime }$ is a morphism of the underlying Hom-comodules such that%
\begin{equation}
\rho _{r}^{\prime }\circ f=\left( id_{C}\otimes f\right) \circ \rho _{r}.
\label{(22)}
\end{equation}

A \emph{left }$C$\emph{-comodule} ( or left comodule over
Hom-coassociative coalgebra $C$) on $(M,\beta _{M})$ is a linear mapping $%
\rho _{l}:M\longrightarrow C\otimes M$ such that%
\begin{equation}
\left( \beta _{C}\otimes \rho _{l}\right) \circ \rho _{l}=\left( \Delta
_{C}\otimes \beta _{M}\right) \circ \rho _{l}\quad and\quad \left(
\varepsilon _{C}\otimes id_{M}\right) \circ \rho _{l}=\beta _{M}.
\label{(23)}
\end{equation}

The two conditions of ($\ref{(21)}$) are equivalent to the requirement that
the two diagrams 
\begin{equation*}
\begin{array}{ccc}
M\otimes C\otimes C & \overset{\ \beta _{M}\otimes \Delta _{C}}{%
\longleftarrow } & \ M\otimes C \\ 
{\small \ \rho _{r}\otimes \beta _{C}}\uparrow \qquad  &  & \uparrow {\small %
\rho _{r}} \\ 
\quad M\otimes C\mathit{\ } & \overset{{\small \rho _{r}}}{\longleftarrow }
& M%
\end{array}%
\text{\qquad }%
\begin{tabular}{lll}
$M\otimes \Bbbk $ & $\cong $ & $M$ \\ 
$id_{M}\otimes \varepsilon \uparrow $ &  & $\uparrow \beta _{M}$ \\ 
$M\otimes C$ & $\overset{{\small \rho _{r}}}{\longleftarrow }$ & $M.$%
\end{tabular}%
\end{equation*}

are commutative. The diagrams of ($\ref{(23)}$) for a right comodule are
obtained from the diagrams of ($\ref{(18)}$) for a right module by reversing
arrows and replacing the multiplication $\mu $ by the comultiplication $%
\Delta $ and the unit $\eta $ by the counit $\varepsilon $. Note also that
the identities of ($\ref{(21)}$) and ($\ref{(23)}$) which characterize right
and left comodules are just the "duals" of the identities  of ($\ref{(16)}$)
and ($\ref{(18)}$) which define right and left modules, respectively.

The map $\rho _{r}$ (resp. $\rho _{l}$) is then called a right (resp. left)
coaction of $(M,\beta _{M})$ on $C$.

Then, a \emph{morphism of right }$C$\emph{-comodule} is $f:M\rightarrow
M^{\prime }$ such that%
\begin{equation}
\rho _{r}^{\prime }\circ f=\left( f\otimes id_{C}\right) \circ \rho _{r}.
\label{(24)}
\end{equation}

If $f$ is invertible, it is an isomorphism of right $C$-modules.

\begin{proposition}
A right $C$-comodule is the same as a left comodule over the coopposite
counital Hom-coassociative coalgebra $C^{cop}.$
\end{proposition}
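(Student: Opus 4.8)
The plan is to use the flip isomorphism $\tau_{M\otimes C}:M\otimes C\longrightarrow C\otimes M$ as a dictionary between the two structures, in exact analogy with the way the opposite algebra $A^{op}$ was used above to turn right $A$-modules into left $A^{op}$-modules. Given a right $C$-comodule $(M,\rho_r,\beta_M)$, I would set
\begin{equation*}
\rho_l:=\tau_{M\otimes C}\circ\rho_r:M\longrightarrow C\otimes M,
\end{equation*}
and claim that $(M,\rho_l,\beta_M)$ is a left comodule over $C^{cop}=(C,\Delta^{cop},\beta)$, and that this assignment is a bijection.

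First I would dispose of the counit axiom. Since $(\varepsilon_C\otimes id_M)\circ\tau_{M\otimes C}=(id_M\otimes\varepsilon_C)$ under the canonical identifications $\Bbbk\otimes M\cong M\cong M\otimes\Bbbk$, the second identity of (\ref{(21)}) gives
\begin{equation*}
(\varepsilon_C\otimes id_M)\circ\rho_l=(id_M\otimes\varepsilon_C)\circ\rho_r=\beta_M,
\end{equation*}
which is the counit condition of (\ref{(23)}). The substance is the coassociativity axiom. Writing $\rho_r(m)=\sum m_{(0)}\otimes m_{(1)}$ in Sweedler-type notation, the coassociativity in (\ref{(21)}) reads
\begin{equation*}
\sum m_{(0)(0)}\otimes m_{(0)(1)}\otimes\beta_C(m_{(1)})=\sum\beta_M(m_{(0)})\otimes m_{(1)(1)}\otimes m_{(1)(2)}
\end{equation*}
in $M\otimes C\otimes C$. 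I would then apply the total reversal $\tau_{1,3}:M\otimes C\otimes C\longrightarrow C\otimes C\otimes M$ to both sides. A direct check shows that the left-hand side becomes $(\beta_C\otimes\rho_l)\circ\rho_l(m)$ while the right-hand side becomes $(\Delta^{cop}\otimes\beta_M)\circ\rho_l(m)$, which is precisely the coassociativity of (\ref{(23)}) for $C^{cop}$ (its comultiplication being $\Delta^{cop}$).

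Finally, bijectivity is immediate: the inverse assignment sends a left $C^{cop}$-comodule to $\rho_r:=\tau_{C\otimes M}\circ\rho_l$, and $\tau_{C\otimes M}\circ\tau_{M\otimes C}=id_{M\otimes C}$ shows the two assignments are mutually inverse; moreover conditions (\ref{(22)}) and (\ref{(24)}) correspond under the flip, so morphisms match as well. The one delicate step, which I would carry out most carefully, is verifying that the single transposition hidden in $\Delta^{cop}=\tau_{C\otimes C}\circ\Delta$ combines with the flip $\tau_{M\otimes C}$ on the inner factor to reproduce precisely the full reversal $\tau_{1,3}$: it is here that passing to the \emph{coopposite} coalgebra (rather than to $C$ itself) is forced, and tracking the two copies of $\beta$ through this permutation is the single place where an index slip could occur.
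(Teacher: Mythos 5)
Your proof is correct, but note that the paper offers no proof of this proposition at all: it is stated as a bare observation, dual to the earlier proposition that a right $A$-module is a left module over $A^{op}$, which the paper does prove by an element computation amounting to precomposing the action with the flip. Your argument is exactly the dual of that computation, namely conjugation of the coaction by the flip, and the delicate step you single out is handled correctly: with $\rho_l=\tau_{M\otimes C}\circ\rho_r$ one has $(\beta_C\otimes\rho_l)\circ\tau_{M\otimes C}=\tau_{1,3}\circ(\rho_r\otimes\beta_C)$ and $(\Delta^{cop}\otimes\beta_M)\circ\tau_{M\otimes C}=\tau_{1,3}\circ(\beta_M\otimes\Delta_C)$, so the total reversal $\tau_{1,3}$ carries the right-comodule axiom (\ref{(21)}) onto the left-comodule axiom (\ref{(23)}) written for $\Delta^{cop}$, and the counit condition transports as you say. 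Two small points would round the argument off. First, the paper requires the right coaction to be a morphism of Hom-modules, i.e. $\rho_r\circ\beta_M=(\beta_M\otimes\beta_C)\circ\rho_r$; naturality of the flip, $\tau_{M\otimes C}\circ(\beta_M\otimes\beta_C)=(\beta_C\otimes\beta_M)\circ\tau_{M\otimes C}$, shows that this compatibility also corresponds under your dictionary, which is needed for the two assignments to be honestly inverse within the paper's definitions. Second, the paper's displayed morphism condition (\ref{(22)}) contains a typo (it should read $\rho_r^{\prime}\circ f=(f\otimes id_C)\circ\rho_r$, as in (\ref{(24)})), so your remark that morphism conditions match under the flip is right once that equation is read as intended.
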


\begin{remark}
The preceding proposition shows that any result that we obtain for right
Hom-comodules has an analogue for left Hom-comodules. This is why we are
going to work generally with right $C$-comodules, without explicitly
mentioning the similar results for left $C$-comodules.
\end{remark}

\begin{remark}
Clearly, $\Delta $ is a right bicomodule and also a left comodule of $A$
on itself. Indeed, in the case $\rho _{r}=\rho _{l}=\Delta $,  equations ($%
\ref{(21)}$) and  ($\ref{(23)}$) become the Hom-coalgebra
axioms ($\ref{(7)}$) and ($\ref{(8)}$).
\end{remark}

A vector space $M$ is called a $C$\emph{-bicomodule} if $M$ is both a
left $C$-comodule and a right $C$-comodule satisfying the compatibility
condition%
\begin{equation}
\left( \beta _{C}\otimes \rho _{r}\right) \circ \rho _{l}=\left( \rho
_{l}\otimes \beta _{C}\right) \circ \rho _{r}  \label{(25)}
\end{equation}

Then, we call $\left( M,\rho _{l},\rho _{r},\alpha _{M}\right) $  a $C$%
-bicomodule.

If $M$ and $M^{\prime }$ are $C$-bicomodules, a map $f$ $%
:M\longrightarrow M^{\prime }$ is a \emph{morphism of }$C$\emph{%
-bicomodules} if it is morphism of left $C$-comodule and it is morphism
of right $C$-comodule.

\begin{example}
Let $N^{\prime }=V\otimes N\otimes V,$ and consider structure maps $\rho
_{l}^{\Delta }=\Delta \otimes \alpha _{N}\otimes \alpha _{V}$ and $\rho
_{r}^{\Delta }=\alpha _{V}\otimes \alpha _{N}\otimes \Delta $. Then $\left(
V\otimes N\otimes V,\rho _{l}^{\Delta },\rho _{r}^{\Delta }\right) $ is an
exterior $A$-bicomodule; $\rho _{l}^{\Delta }$ and $\rho _{r}^{\Delta }$
are called exterior bicomodule structure maps.
\end{example}

\begin{proof}
This proof is completely analogous to that of Example \ref{2}.
\end{proof}\\

Let $C$ be a counital Hom-coassociative coalgebra, and $C^{\ast }$ the dual
unital Hom-associative algebra. If $M$ is a vector space, and $\omega
:M\longrightarrow M\otimes C$ is a linear map, we define $\psi _{\omega }$ $%
:C^{\ast }\otimes M\longrightarrow M$ by%
\begin{equation*}
\psi _{\omega }=\phi \left( \gamma \otimes id_{M}\right) \left( id_{C^{\ast
}}\otimes \tau _{M\otimes C}\right) \left( id_{C^{\ast }}\otimes \omega
\right) ,
\end{equation*}%
where $\gamma :C^{\ast }\otimes C\longrightarrow \Bbbk $ is defined by $%
\gamma \left( x^{\ast }\otimes x\right) =x^{\ast }\left( x\right) ,$ and $%
\phi :\Bbbk \otimes M\longrightarrow M$ is the canonical isomorphism. If $%
\omega \left( m\right) =\sum_{i}m_{i}\otimes x_{i},$ then $\psi _{\omega
}\left( x^{\ast }\otimes m\right) =\sum_{i}x^{\ast }\left( x_{i}\right)
m_{i}.$

\begin{proposition}
Let $\left( C,\Delta ,\varepsilon ,\beta \right) $ be a counital
Hom-coassociative coalgebra. $\left( M,\omega ,\beta _{M}\right) $ is a
right $C$-comodule if and only if $\left( M,\psi _{\omega },\alpha
_{M}\right) $ is a left $C^{\ast }$-module where $\alpha _{M}=\beta _{M}$.
\end{proposition}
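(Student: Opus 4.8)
The plan is to translate both structures into Sweedler-type notation for the coaction and then exploit the fact that the evaluation pairing identifies module identities on $M$ with comodule identities valued in $M\otimes C$ (resp. $M\otimes C\otimes C$). Write $\omega(m)=\sum m_{(0)}\otimes m_{(1)}$ with $m_{(0)}\in M$ and $m_{(1)}\in C$, so that the defining formula for $\psi_\omega$ gives $\psi_\omega(f^{\ast}\otimes m)=\sum f^{\ast}(m_{(1)})\,m_{(0)}$. I will use the explicit dual structure of $C^{\ast}$ from Theorem \ref{H}: the product is $\mu_{C^{\ast}}(f^{\ast}\otimes g^{\ast})=(f^{\ast}\otimes g^{\ast})\circ\Delta$, the unit is $\eta_{C^{\ast}}(1_{\Bbbk})=\varepsilon$, and the twisting map is $\alpha_{C^{\ast}}(f^{\ast})=f^{\ast}\circ\beta$. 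Since the statement is an equivalence, I will set things up so that each computation can be read in both directions.

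First I would dispatch the counit/unit conditions, which match almost verbatim: because $\eta_{C^{\ast}}(1_{\Bbbk})=\varepsilon$, the left-unit axiom $\psi_\omega(\eta_{C^{\ast}}(1_{\Bbbk})\otimes m)=\alpha_M(m)$ reads $\sum\varepsilon(m_{(1)})\,m_{(0)}=\beta_M(m)$, which is exactly the counit axiom $(id_M\otimes\varepsilon)\circ\omega=\beta_M$ of \eqref{(21)}, recalling $\alpha_M=\beta_M$. Next I would match the compatibility of $\omega$ with the self-maps, $(\beta_M\otimes\beta_C)\circ\omega=\omega\circ\beta_M$, against the requirement that $\psi_\omega$ be a morphism of Hom-modules, namely $\alpha_M\circ\psi_\omega=\psi_\omega\circ(\alpha_{C^{\ast}}\otimes\alpha_M)$; unfolding both sides in Sweedler notation and using $\Delta\circ\beta=\beta^{\otimes2}\circ\Delta$ should reduce each to the same identity on $\omega$.

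The core of the argument is the equivalence of the Hom-coassociativity of $\omega$ with the Hom-associativity of $\psi_\omega$. I would expand both sides of the module axiom \eqref{(16)}, that is $\psi_\omega\circ(\alpha_{C^{\ast}}\otimes\psi_\omega)=\psi_\omega\circ(\mu_{C^{\ast}}\otimes\alpha_M)$, on an element $f^{\ast}\otimes g^{\ast}\otimes m$. The left-hand side becomes $\sum f^{\ast}(\beta(m_{(0)(1)}))\,g^{\ast}(m_{(1)})\,m_{(0)(0)}$, i.e. the functional $id_M\otimes(f^{\ast}\circ\beta)\otimes g^{\ast}$ applied to the iterated coaction $(\omega\otimes id_C)\circ\omega(m)$; the right-hand side, after inserting $\mu_{C^{\ast}}(f^{\ast}\otimes g^{\ast})=(f^{\ast}\otimes g^{\ast})\circ\Delta$ and invoking $\Delta\circ\beta=\beta^{\otimes2}\circ\Delta$, becomes a functional of the same shape applied to $(\beta_M\otimes\Delta)\circ\omega(m)$. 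Since $C^{\ast}\otimes C^{\ast}$ separates the points of $C\otimes C$, the equality of these expressions for all $f^{\ast},g^{\ast}$ is equivalent to the tensor identity $(\omega\otimes\beta_C)\circ\omega=(\beta_M\otimes\Delta)\circ\omega$, which is precisely the Hom-coassociativity in \eqref{(21)}. Reading the same computation backwards yields the converse implication.

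The main obstacle I anticipate is the careful bookkeeping of the twisting maps. Evaluating $\psi_\omega$ against $\alpha_{C^{\ast}}(f^{\ast})=f^{\ast}\circ\beta$ and passing through $\mu_{C^{\ast}}$ each introduce a copy of $\beta$ acting on a chosen tensor slot, and these must be reconciled with the single $\beta_C$ appearing in \eqref{(21)} by systematically applying $\Delta\circ\beta=\beta^{\otimes2}\circ\Delta$ together with the compatibility $(\beta_M\otimes\beta_C)\circ\omega=\omega\circ\beta_M$. Aligning the powers of $\beta$ on each factor is the delicate point; once that is done, the separating-pairing argument converts the element-wise module identity into the tensor-level comodule identity and back, closing both directions of the equivalence.
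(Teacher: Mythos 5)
Your proposal follows exactly the paper's own route: Sweedler notation, the dual Hom-algebra structure of Theorem \ref{H}, the verbatim matching of unit against counit, and a separating-functionals argument (which the paper implements with a basis of $C$) to pass between the Hom-associativity of $\psi_{\omega}$ and the Hom-coassociativity of $\omega$. But there is a genuine gap at the decisive step, namely your claim that the right-hand side of the module axiom becomes a functional ``of the same shape'' as the left-hand side. It does not. Your left-hand side is, correctly, $\bigl(id_{M}\otimes(f^{\ast}\circ\beta_{C})\otimes g^{\ast}\bigr)$ applied to $(\omega\otimes id_{C})\circ\omega(m)$. On the right-hand side, coacting on $\alpha_{M}(m)=\beta_{M}(m)$ and using $\omega\circ\beta_{M}=(\beta_{M}\otimes\beta_{C})\circ\omega$ together with $\Delta\circ\beta_{C}=\beta_{C}^{\otimes2}\circ\Delta$ gives
\begin{equation*}
\psi_{\omega}\bigl(\mu_{C^{\ast}}(f^{\ast}\otimes g^{\ast})\otimes\alpha_{M}(m)\bigr)
=\bigl(id_{M}\otimes(f^{\ast}\circ\beta_{C})\otimes(g^{\ast}\circ\beta_{C})\bigr)\bigl((\beta_{M}\otimes\Delta)\circ\omega(m)\bigr),
\end{equation*}
with an unavoidable extra $\beta_{C}$ in the slot paired with $g^{\ast}$. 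Consequently the separation argument does not show the module axiom equivalent to the Hom-coassociativity $(\omega\otimes\beta_{C})\circ\omega=(\beta_{M}\otimes\Delta)\circ\omega$; it shows it equivalent to
\begin{equation*}
(id_{M}\otimes\beta_{C}\otimes id_{C})\circ(\omega\otimes id_{C})\circ\omega
=\bigl(\beta_{M}\otimes(\Delta\circ\beta_{C})\bigr)\circ\omega ,
\end{equation*}
which differs from Hom-coassociativity by one copy of $\beta_{C}$.

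This mismatch cannot be aligned away by finer bookkeeping, because the two identities are genuinely inequivalent. Take $C=(T_{2})_{\lambda}$, the paper's Taft--Sweedler Hom-bialgebra with $\lambda\neq0$ and $\lambda^{2}\neq1$, and the regular comodule $M=C$, $\omega=\Delta$, $\beta_{M}=\alpha$: the axioms of (\ref{(21)}) and the compatibility $\omega\circ\beta_{M}=(\beta_{M}\otimes\beta_{C})\circ\omega$ all hold, yet
\begin{equation*}
\psi_{\Delta}\bigl(\alpha_{C^{\ast}}(e_{2}^{\ast})\otimes\psi_{\Delta}(e_{3}^{\ast}\otimes e_{3})\bigr)=\lambda\, e_{2}
\neq\lambda^{3}e_{2}
=\psi_{\Delta}\bigl(\mu_{C^{\ast}}(e_{2}^{\ast}\otimes e_{3}^{\ast})\otimes\alpha(e_{3})\bigr),
\end{equation*}
so $(M,\psi_{\omega},\alpha_{M})$ fails Hom-associativity and is not a left $C^{\ast}$-module. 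The same stray power of $\beta_{C}$ (here $\beta_{C}^{2}$) refutes your claim that $\alpha_{M}\circ\psi_{\omega}=\psi_{\omega}\circ(\alpha_{C^{\ast}}\otimes\alpha_{M})$ and $\omega\circ\beta_{M}=(\beta_{M}\otimes\beta_{C})\circ\omega$ ``reduce to the same identity'': together they would force $(\beta_{M}\otimes\beta_{C}^{2})\circ\omega=(\beta_{M}\otimes id_{C})\circ\omega$, which fails in the same example. In fairness, the paper's own proof commits the identical unjustified move: around its invocation of (\ref{(21)}) it replaces $\beta_{C}\left(x_{(2)}\right)\otimes x_{(1)}\otimes m_{(0)}$ by $x_{(2)}\otimes x_{(1)}\otimes\beta_{M}\left(m_{(0)}\right)$, which no axiom licenses. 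So you have reproduced the paper's argument faithfully, gap included; but the gap is real, and closing it requires an additional hypothesis (for instance $\beta_{C}^{2}=id_{C}$, or a modified definition of $\alpha_{C^{\ast}}$), not better bookkeeping.
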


\begin{proof}
From the previous Proposition we know that $\left( C^{\ast },\mu _{C^{\ast
}},\eta _{C^{\ast }},\alpha _{C^{\ast }}\right) $ is a unital
Hom-associative algebra when we define $\mu _{C^{\ast }},\eta _{C^{\ast
}},\alpha _{C^{\ast }}$, as in  Theorem \ref{H}.

Assume that $\left( M,\omega ,\beta _{M}\right) $ is a right $C$-comodule.
Denoting  $\omega \left( m\right) =\underset{\left( m\right) }{\sum }%
m_{\left( 0\right) }\otimes x_{\left( 1\right) },$ we have 
\begin{equation*}
\psi _{\omega }\left( x^{\ast }\otimes m\right) =\sum x^{\ast }\left(
x_{\left( 1\right) }\right) m_{\left( 0\right) }.
\end{equation*}

First, we have that 
\begin{eqnarray*}
\psi _{\omega }\left( 1_{C^{\ast }}\otimes m\right)  &=&\psi _{\omega
}\left( \varepsilon _{C}\otimes m\right) =\phi \left( \gamma \otimes
id_{M}\right) \left( id_{C^{\ast }}\otimes \tau _{M\otimes C}\right) \left(
id_{C^{\ast }}\otimes \omega \right) \left( \varepsilon _{C}\otimes m\right) 
\text{ } \\
&=&\sum \varepsilon _{C}\left( x_{\left( 1\right) }\right) m_{\left(
0\right) }=\left( id_{M}\otimes \varepsilon _{C}\right) \circ \omega \left(
m\right) =\beta _{M}\left( m\right) =\alpha _{M}\left( m\right) \text{ by (}%
\ref{(21)}\text{)}
\end{eqnarray*}

from the definition of a right $C$-comodule. Then, for $x^{\ast },$ $y^{\ast
}\in C^{\ast },$ $m\in M$%
\begin{align*}
&\psi _{\omega }\circ \left( \alpha _{C^{\ast }}\otimes \psi _{\omega
}\right) \left( x^{\ast }\otimes y^{\ast }\otimes m\right)  =\psi _{\omega
}\circ \left( \alpha _{C^{\ast }}\left( x^{\ast }\right) \otimes \psi
_{\omega }\left( y^{\ast }\otimes m\right) \right)  \\
&=\psi _{\omega }\circ \left( x^{\ast }\left( \beta _{C}\right) \otimes
\sum y^{\ast }\left( x_{\left( 1\right) }\right) m_{\left( 0\right) }\right) 
=\sum y^{\ast }\left( x_{\left( 1\right) }\right) \psi _{\omega }\left(
x^{\ast }\left( \beta _{C}\right) \otimes m_{\left( 0\right) }\right)  \\
&=\sum y^{\ast }\left( x_{\left( 1\right) }\right) x^{\ast }\left( \beta
_{C}\right) \left( x_{\left( 2\right) }\right) \left( m_{\left( 0\right)
}\right) _{\left( 0\right) } 
=\sum x^{\ast }\circ \beta _{C}\left( x_{\left( 2\right) }\right) y^{\ast
}\left( x_{\left( 1\right) }\right) \left( m_{\left( 0\right) }\right)  \\
&\overset{\eqref{(21)}}{=}\sum \rho \left( x^{\ast }\otimes y^{\ast }\right) \left( \beta
_{C}\left( x_{\left( 2\right) }\right) \otimes x_{\left( 1\right) }\otimes
m_{\left( 0\right) }\right) 
=\sum \rho \left( x^{\ast }\otimes y^{\ast }\right) \left( x_{\left(
2\right) }\otimes x_{\left( 1\right) }\otimes \beta _{M}\left( m_{\left(
0\right) }\right) \right)  \\
&=\sum \Delta ^{\ast }\rho \left( x^{\ast }\otimes y^{\ast }\right)
x_{\left( 1\right) }\otimes \beta _{M}\left( m_{\left( 0\right) }\right)  
=\psi _{\omega }\circ \left( \mu _{C^{\ast }}\otimes \alpha _{M}\right)
\left( x^{\ast }\otimes y^{\ast }\otimes m\right) ,
\end{align*}%
which shows that $\left( M^{\ast },\psi _{\omega },\alpha _{M}\right) $ is a
left $C^{\ast }$-module. Assume now that $\left( M^{\ast },\psi _{\omega
},\alpha _{M}\right) $ is a left $C^{\ast }$-module. From $\psi _{\omega
}\circ \left( \eta _{C^{\ast }}\otimes id_{M^{\ast }}\right) =\alpha
_{M^{\ast }}$, one obtains  
$
\sum \varepsilon _{C}\left( x_{\left( 1\right) }\right) m_{\left( 0\right)
}=\alpha _{M}\left( m\right) .
$ 
It follows %
\begin{equation*}
\left( id_{M}\otimes \varepsilon _{C}\right) \circ \omega \left( m\right)
=\sum \varepsilon _{C}\left( x_{\left( 1\right) }\right) m_{\left( 0\right)
}=\alpha _{M}\left( m\right) =\beta _{M}\left( m\right) .
\end{equation*}%
Hence, the second condition from the definition of a right $C$-comodule is
checked.

If $x^{\ast },$ $y^{\ast }\in C^{\ast },$ $m\in M$, then%
\begin{eqnarray*}
&&\psi _{\omega }\circ \left( \mu _{C^{\ast }}\otimes \alpha _{M}\right)
\left( x^{\ast }\otimes y^{\ast }\otimes m\right)  =\psi _{\omega }\circ
\left( \Delta ^{\ast }\circ \rho \left( x^{\ast }\otimes y^{\ast }\right)
\otimes \alpha _{M}\left( m\right) \right)  \\
&&=\sum \Delta _{C}^{\ast }\circ \rho \left( x^{\ast }\otimes y^{\ast
}\right) \left( x_{\left( 1\right) }\right) \alpha _{M}\left( m\right)
_{\left( 0\right) } 
=\sum \left( x^{\ast }\left( x_{\left( 1\right) }\right) _{\left( 1\right)
}y^{\ast }\left( x_{\left( 1\right) }\right) _{\left( 2\right) }\right)
\alpha _{M}\left( m\right) _{\left( 0\right) } \\
&&=\phi ^{\prime }\left( id_{M}\otimes y^{\ast }\otimes x^{\ast }\right)
\left( \beta _{M}\otimes \Delta _{C}\right) \omega \left( m\right) ,
\end{eqnarray*}%
where $\phi ^{\prime }:M\otimes \Bbbk \otimes \Bbbk \longrightarrow M$ is
the canonical isomorphism, and $\alpha _{M}=\beta _{M}$. Also 
\begin{eqnarray*}
&& \psi _{\omega }\circ \left( \alpha _{C^{\ast }}\otimes \psi _{\omega
}\right) \left( x^{\ast }\otimes y^{\ast }\otimes m\right)  =\psi _{\omega
}\circ \left( \alpha _{C^{\ast }}\left( x^{\ast }\right) \otimes \psi
_{\omega }\left( y^{\ast }\otimes m\right) \right)  \\
&&=
\psi _{\omega }\circ \left( \alpha _{C^{\ast }}\left( x^{\ast }\right)
\otimes \sum y^{\ast }\left( x_{\left( 1\right) }\right) m_{\left( 0\right)
}\right)  
=\sum y^{\ast }\left( x_{\left( 1\right) }\right) \psi _{\omega }\circ
\left( x^{\ast }\left( \beta _{C}\right) \otimes m_{\left( 0\right) }\right) 
\\
&&=\sum y^{\ast }\left( x_{\left( 1\right) }\right) x^{\ast }\left( \beta
_{C}\left( x_{\left( 2\right) }\right) \right) m_{\left( 0\right) } 
=\phi ^{\prime }\left( id_{M}\otimes y^{\ast }\otimes x^{\ast }\right)
\left( \omega \otimes \beta _{C}\right) \omega \left( m\right) .
\end{eqnarray*}%
Denoting 
\begin{equation*}
z=\left( \beta _{M}\otimes \Delta _{C}\right) \omega \left( m\right) -\left(
\omega \otimes \beta _{C}\right) \omega \left( m\right) \in M\otimes
C\otimes C, 
\end{equation*}%
we have $\left( id_{M}\otimes y^{\ast }\otimes x^{\ast }\right) \left(
z\right) =0$ for any $y^{\ast },x^{\ast }\in C^{\ast }$. This shows that $%
z=0.$

Indeed, if we denote by $\left( e_{i}\right) _{i}$ a basis of $C,$ we can
write $y=\sum_{i,j}m_{ij}\otimes e_{i}\otimes e_{j}$ for some $m_{ij}\in M.$
Fix $i_{0}$ and $j_{0}$ and consider the maps $e_{i}^{\ast }\in C^{\ast }$
defined by $e_{i}^{\ast }\left( e_{j}\right) =\delta _{i,j}$ for any $j.$
Then $m_{i_{0}j_{0}}=\left( id_{M}\otimes e_{i_{0}}^{\ast }\otimes
e_{j_{0}}^{\ast }\right) \left( z\right) =0,$ and from this we get $z=0.$

Then $\left( \beta _{M}\otimes \Delta _{C}\right) \omega \left( m\right)
=\left( \omega \otimes \beta _{C}\right) \omega \left( m\right) .$ The proof
is similar to the opposite case.
\end{proof}

\begin{proposition}
Let $\left( A,\mu ,\eta ,\alpha \right) $ be a finite dimensional unital
Hom-associative algebra. If $M$ is a left $A$-module, then $M$ is a right $%
A^{\ast }$-comodule.
\end{proposition}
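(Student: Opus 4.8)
The plan is to derive this from the preceding proposition together with the finite-dimensional duality of Theorem~\ref{E}, rather than to verify the comodule axioms by hand. Since $A$ is finite dimensional, Theorem~\ref{E} makes the dual space into a counital Hom-coassociative coalgebra $C:=(A^{\ast},\Delta_{A^{\ast}},\varepsilon_{A^{\ast}},\beta_{A^{\ast}})$, with $\Delta_{A^{\ast}}=\rho^{-1}\mu^{\ast}$, $\varepsilon_{A^{\ast}}(f)=f(1_{A})$ and $\beta_{A^{\ast}}(f)=f\circ\alpha$. This is the coalgebra to which I will apply the preceding proposition, reading its equivalence in the direction ``left $C^{\ast}$-module $\Rightarrow$ right $C$-comodule''.

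The second step is to identify $A$ with $C^{\ast}=(A^{\ast})^{\ast}$ as unital Hom-associative algebras. Fix a basis $\{e_{i}\}$ of $A$ with dual basis $\{e_{i}^{\ast}\}$ of $A^{\ast}$; because $A$ is finite dimensional, the evaluation map $\theta:A\to(A^{\ast})^{\ast}$, $\theta(a)(f)=f(a)$, is a linear isomorphism. I would check that $\theta$ is an isomorphism of unital Hom-associative algebras for the algebra structure that Theorem~\ref{H} builds on $(A^{\ast})^{\ast}$ from the coalgebra $C$: multiplicativity $\theta(a)\cdot\theta(b)=\theta(ab)$ follows by unwinding $\mu_{(A^{\ast})^{\ast}}=\Delta_{A^{\ast}}^{\ast}\rho$ and using $\sum_{p}e_{p}^{\ast}(x)e_{p}=x$; the unit goes to the unit, since $1_{(A^{\ast})^{\ast}}=\varepsilon_{A^{\ast}}=\theta(1_{A})$; and the twisting maps are intertwined, $\alpha_{(A^{\ast})^{\ast}}\circ\theta=\theta\circ\alpha$, because $\alpha_{(A^{\ast})^{\ast}}(\theta(a))(f)=\theta(a)(\beta_{A^{\ast}}(f))=f(\alpha(a))=\theta(\alpha(a))(f)$.

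With this identification, the given left $A$-module $(M,\lambda_{l},\alpha_{M})$ transports to a left $C^{\ast}$-module via $\xi\cdot m:=\theta^{-1}(\xi)\rhd m$, keeping the same structure map $\alpha_{M}$. The preceding proposition, applied to $C=A^{\ast}$, now says that this left $C^{\ast}$-module corresponds to a right $C$-comodule $(M,\omega,\beta_{M})$ with $\beta_{M}=\alpha_{M}$, which is exactly the assertion that $M$ is a right $A^{\ast}$-comodule. Concretely the coaction is $\omega:M\to M\otimes A^{\ast}$, $\omega(m)=\sum_{i}(e_{i}\rhd m)\otimes e_{i}^{\ast}$; one sees it is the coaction attached to $\lambda_{l}$ by the proposition, since $\psi_{\omega}(\theta(a)\otimes m)=\sum_{i}e_{i}^{\ast}(a)(e_{i}\rhd m)=a\rhd m$ recovers the original action.

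The main obstacle is the identification in the second step, specifically verifying that $\theta$ intertwines the twisting maps $\alpha$ and $\alpha_{(A^{\ast})^{\ast}}$. This is precisely where finite dimensionality is indispensable: it is what makes $\rho:(A\otimes A)^{\ast}\to A^{\ast}\otimes A^{\ast}$ bijective, hence what makes both the dual coalgebra of Theorem~\ref{E} and the double-dual identification available in the first place. Once $\theta$ is known to be an isomorphism of Hom-associative algebras, the comodule axioms require no further computation, being supplied directly by the preceding proposition; alternatively one could verify $(\ref{(21)})$ for $\omega$ by hand, mirroring the proof of that proposition, but the present route avoids repeating that bookkeeping.
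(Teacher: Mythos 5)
The paper gives no proof of this proposition at all, so there is nothing to compare with line by line; it is evidently intended as a consequence of the immediately preceding duality proposition (the correspondence between coactions $\omega$ and actions $\psi_{\omega}$), and your argument supplies exactly the missing details along that intended route. Your three ingredients are all correct: the evaluation map $\theta:A\to (A^{\ast})^{\ast}$ is an isomorphism of unital Hom-associative algebras onto the algebra that Theorem~\ref{H} attaches to the dual coalgebra of Theorem~\ref{E} (the computations you sketch for the multiplication, the unit and the twisting maps do check out); transporting $\lambda_{l}$ along $\theta^{-1}$ gives a left $(A^{\ast})^{\ast}$-module; and this transported action coincides with $\psi_{\omega}$ for $\omega(m)=\sum_{i}(e_{i}\rhd m)\otimes e_{i}^{\ast}$, so the cited proposition, read in the direction ``module $\Rightarrow$ comodule'', yields the claim. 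Finite dimensionality enters exactly where you say it does, in the bijectivity of $\rho$ and of $\theta$.

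One caveat, which is a defect of the paper's framework rather than of your reasoning, but which limits your remark that the comodule axioms ``require no further computation''. The paper's definition of a right $C$-comodule also requires the coaction to be a morphism of Hom-modules, $\omega\circ\beta_{M}=(\beta_{M}\otimes\beta_{C})\circ\omega$, and the proof of the duality proposition you invoke never verifies this condition: it only establishes the two identities (\ref{(21)}). For your explicit $\omega$, the transpose identity $\sum_{i}\alpha(e_{i})\otimes e_{i}^{\ast}=\sum_{i}e_{i}\otimes(e_{i}^{\ast}\circ\alpha)$ converts that condition into $\alpha^{2}(a)\rhd\alpha_{M}(m)=a\rhd\alpha_{M}(m)$ for all $a\in A$, $m\in M$, which is not a consequence of the module axioms: for $M=A$ with $\lambda_{l}=\mu$ in the Hom-bialgebra of Example~\ref{Taft}, taking $a=e_{3}$ and $m=e_{1}$, it reads $\lambda^{3}e_{3}=\lambda e_{3}$ and fails whenever $\lambda\neq 0,\pm 1$. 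So, read strictly, the gap sits inside the proposition you cite (and hence inside the statement itself, since the canonical coaction is essentially the only candidate); your derivation is complete exactly to the extent that the paper's own duality proposition is.
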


\begin{theorem}
Let $\left( C,\Delta ,\varepsilon ,\beta \right) $ be a counital
Hom-coassociative coalgebra. Then for any right $C$-comodule $M$, $M^{\ast }$
is a left $C^{\ast }$module. Conversely, Let $\left( A,\mu ,\eta ,\alpha
\right) $ be a finite dimensional unital Hom-associative algebra. If
\ $N$ is a left $A$-module then  $N^{\ast }$ is a right $A^{\ast }$-comodule.
\end{theorem}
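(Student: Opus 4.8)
The plan is to prove both halves by transposing the defining structure map and then checking the module (resp.\ comodule) axioms directly, in the same spirit as the two propositions that immediately precede this theorem. The two halves are formally dual to one another: in the first I would transpose a right coaction $\rho_r\colon M\to M\otimes C$ into an action of $C^\ast$ on $M^\ast$, and in the second I would transpose a left action $\lambda_l\colon A\otimes N\to N$ into a coaction of $A^\ast$ on $N^\ast$. Throughout, the twisting map on the dual space is the transpose of the original twisting map, and the algebra (resp.\ coalgebra) structure on $C^\ast$ (resp.\ $A^\ast$) is the one supplied by Theorem~\ref{H} (resp.\ Theorem~\ref{E}).

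For the first assertion I would put $\alpha_{M^\ast}(\varphi)=\varphi\circ\beta_M$ and define the action by $(c^\ast\rhd\varphi)(m)=\sum_{(m)}\varphi(m_{(0)})\,c^\ast(m_{(1)})$, writing $\rho_r(m)=\sum_{(m)}m_{(0)}\otimes m_{(1)}$ with $m_{(1)}\in C$; since each $\rho_r(m)$ is a finite sum this needs no finiteness hypothesis. The unit axiom in (\ref{(16)}) is immediate from the counit identity in (\ref{(21)}) together with $\eta_{C^\ast}(1_\Bbbk)=\varepsilon$, and the fact that $\rhd$ is a morphism of Hom-modules follows from $\rho_r\circ\beta_M=(\beta_M\otimes\beta_C)\circ\rho_r$ and $\alpha_{C^\ast}(c^\ast)=c^\ast\circ\beta_C$. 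The heart of the matter is the associativity axiom: expanding $\alpha_{C^\ast}(c^\ast)\rhd(d^\ast\rhd\varphi)$ reassembles $(\rho_r\otimes\beta_C)\circ\rho_r$ applied to $m$, which the comodule coassociativity (\ref{(21)}) replaces by $(\beta_M\otimes\Delta)\circ\rho_r$; identifying the emerging pairing with a convolution product $\mu_{C^\ast}$ of Theorem~\ref{H} (up to the order of its two arguments, fixed below) then produces the right-hand side $\mu_{C^\ast}(\,\cdot\,)\rhd\alpha_{M^\ast}(\varphi)$.

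For the converse the finite-dimensionality of $A$ enters twice: it is what makes $A^\ast$ a coalgebra under $\Delta_{A^\ast}=\rho^{-1}\mu^\ast$, $\varepsilon_{A^\ast}(h)=h(1_A)$, $\beta_{A^\ast}(h)=h\circ\alpha$ (Theorem~\ref{E}), and it is what makes the canonical map $A^\ast\otimes N^\ast\to(A\otimes N)^\ast$ bijective for arbitrary $N$. I would transpose the action to $\lambda_l^\ast\colon N^\ast\to(A\otimes N)^\ast\cong A^\ast\otimes N^\ast$ and, after the flip $\tau$, obtain the candidate right coaction $\omega\colon N^\ast\to N^\ast\otimes A^\ast$ determined by $\varphi(a\rhd n)=\sum\varphi_{(0)}(n)\,\varphi_{(1)}(a)$, with $\beta_{N^\ast}(\varphi)=\varphi\circ\alpha_N$. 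The counit identity of (\ref{(21)}) then transposes the unit axiom $\lambda_l\circ(\eta\otimes id_N)=\alpha_N$, while the comodule coassociativity transposes the module associativity (\ref{(16)}), since under the identification above $\Delta_{A^\ast}$ is exactly the transpose of $\mu$; the compatibility of $\omega$ with $\beta_{N^\ast}$ mirrors that of $\lambda_l$ with $\alpha_N$.

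The main obstacle in both halves is keeping track of handedness, because transposition is contravariant. A direct unravelling of the associativity axiom for $M^\ast$ produces the two copies of $C^\ast$ in the \emph{opposite} convolution order, and likewise the coassociativity computation for $N^\ast$ delivers $\alpha_A(b)\rhd(a\rhd n)$ where $(ab)\rhd\alpha_N(n)$ is wanted; in each case one must insert the flip $\tau$ at the correct stage (equivalently, read the product of $C^\ast$, resp.\ the action of $A$, through the co-opposite, resp.\ opposite) so that the Sweedler expansion coming from (\ref{(21)}) or (\ref{(7)}) matches the convolution order. Once the placement of $\tau$ and the order of the convolution product have been fixed consistently, every remaining step is a routine chase of Sweedler indices using (\ref{(21)}), (\ref{(16)}) and the definitions in Theorems~\ref{H} and~\ref{E}, the finite-dimensionality of $A$ being the only non-formal ingredient.
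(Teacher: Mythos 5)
Your overall strategy is the natural one, and for what it is worth the paper itself states this theorem with no proof at all, so the only thing to measure your outline against is the proof of the proposition immediately preceding it, where the $C^{\ast}$-action is placed on $M$ itself via $\psi_{\omega}$; your proposal is the evident adaptation of that machinery, together with Theorems \ref{H} and \ref{E}, to dual spaces. Your side remarks are also correct: the first half needs no finiteness hypothesis, and finite dimensionality of $A$ enters the second half exactly where you say it does.

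The gap is your treatment of the handedness clash, which you correctly detect but cannot repair the way you claim; the sentence ``once the placement of $\tau$ and the order of the convolution product have been fixed consistently, every remaining step is routine'' is the step that fails. Concretely, the only available pairing on $M^{\ast}$ is $(c^{\ast}\rhd \varphi)(m)=\sum \varphi(m_{(0)})\,c^{\ast}(m_{(1)})$ (up to insertions of $\beta$), and Hom-coassociativity (\ref{(21)}) forces the \emph{inner} functional to land on the first leg of $\Delta (m_{(1)})$ and the outer one on the second leg, so that $\alpha_{C^{\ast}}(c^{\ast})\rhd (d^{\ast}\rhd \varphi)=(d^{\ast}\ast c^{\ast})\rhd \alpha_{M^{\ast}}(\varphi)$, whereas the left-module axiom (\ref{(16)}) demands $(c^{\ast}\ast d^{\ast})\rhd \alpha_{M^{\ast}}(\varphi)$. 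No placement of the flip $\tau$ can exchange which of the two functionals is applied first, so the two sides agree only when $C$ is cocommutative: the discrepancy is structural (dualization is contravariant), not a bookkeeping artifact. What your construction actually proves is that $M^{\ast}$ is a \emph{right} $C^{\ast}$-module, equivalently a left module over $(C^{\ast})^{op}=(C^{cop})^{\ast}$; dually, the transpose of $\lambda_{l}$ is a \emph{left} $A^{\ast}$-coaction $N^{\ast}\rightarrow A^{\ast}\otimes N^{\ast}$ satisfying (\ref{(23)}) on the nose, while forcing it into the form $N^{\ast}\rightarrow N^{\ast}\otimes A^{\ast}$ yields, as your own computation of $\alpha_{A}(b)\rhd (a\rhd n)$ versus $(ab)\rhd \alpha_{N}(n)$ shows, a right comodule only over $(A^{\ast})^{cop}=(A^{op})^{\ast}$. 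Your parenthetical remedy --- ``read the product of $C^{\ast}$, resp.\ the action of $A$, through the co-opposite, resp.\ opposite'' --- concedes exactly this: it replaces $C^{\ast}$ and $A^{\ast}$ by their (co)opposite structures, i.e.\ it proves a side-corrected variant rather than the printed statement. So either prove that corrected variant explicitly (your computations support it, and it is presumably what is intended, the sides in the printed statement having been carried over from the preceding propositions, where no dualization occurs and the orders genuinely do match), or add a cocommutativity/commutativity hypothesis; as written, the flip cannot do the work you assign to it.
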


\subsubsection{ Tensor product of bimodules and bicomodules}

We will need to consider tensor products over $\Bbbk $ of bimodules. The
tensor product of bimodules can be endowed with bimodule structures;
if $M$ and $N$ are bimodules over $A$; we shall consider two bimodule structures on $M\otimes N$ (dual of each other), which we will
denote by $M\overline{\otimes }N$ and $M\underline{\otimes }N$ (see 
for example \cite{R.U 1996} for details).
These notations will also be used for the tensor product of bimodules or
bicomodules, `forgetting' some of the structures.

\begin{proposition}
Let $(H,%
%TCIMACRO{\U{3bc} }%
%BeginExpansion
\mu
%EndExpansion
_{H},\Delta _{H},\alpha _{H})$ be a Hom-bialgebra and $\left( M,\lambda
_{l},\lambda _{r}\right) $, $\left( M^{\prime },\lambda _{l}^{\prime
},\lambda _{r}^{\prime }\right) $ be $H$-bimodules. The internal
(bimodule) tensor product of $M$ with $M^{\prime }$ is the so called
interior $H$-bimodule $M\overline{\otimes }M^{\prime }=\left( M\otimes
M^{\prime },\lambda _{l}\overline{\otimes }\lambda _{l}^{\prime },\lambda
_{r}\overline{\otimes }\lambda _{r}^{\prime }\right) $ with%
\begin{equation}
\lambda _{l}^{2}=\lambda _{l}\overline{\otimes }\lambda _{l}^{\prime
}=\left( \lambda _{l}\otimes \lambda _{l}^{\prime }\right) \circ \left(
id_{H}\otimes \tau _{H,M}\otimes id_{M^{\prime }}\right) \circ \left( \Delta
_{H}\otimes id_{M}\otimes id_{M^{\prime }}\right)   \label{182}
\end{equation}%
and%
\begin{equation}
\lambda _{r}^{2}=\lambda _{r}\overline{\otimes }\lambda _{r}^{\prime
}=\left( \lambda _{r}\otimes \lambda _{r}^{\prime }\right) \circ \left(
id_{M}\otimes \tau _{M^{\prime },H}\otimes id_{H}\right) \circ \left(
id_{M}\otimes id_{M^{\prime }}\otimes \Delta _{H}\right) .
\end{equation}
\end{proposition}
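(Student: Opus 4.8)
The plan is to recognize $\lambda_l^2$ and $\lambda_r^2$ as the diagonal actions induced by $\Delta_H$, and to verify the three bimodule conditions \eqref{(16)}, \eqref{(18)} and \eqref{(20)} on $M\otimes M'$ equipped with the twisting map $\alpha_{M\otimes M'}=\alpha_M\otimes\alpha_{M'}$. In Sweedler notation these structure maps read $h\rhd(m\otimes m')=\sum_{(h)}(h_{(1)}\rhd m)\otimes(h_{(2)}\rhd m')$ and $(m\otimes m')\lhd h=\sum_{(h)}(m\lhd h_{(1)})\otimes(m'\lhd h_{(2)})$; the flip maps $\tau_{H,M}$ and $\tau_{M',H}$ appearing in the statement merely implement this diagonal structure. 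First I would check that $\lambda_l^2$ and $\lambda_r^2$ are morphisms of Hom-modules, which follows from the hypothesis that $\lambda_l,\lambda_l',\lambda_r,\lambda_r'$ are Hom-module morphisms together with the commutation $\Delta_H\circ\alpha_H=\alpha_H^{\otimes2}\circ\Delta_H$ built into the Hom-coalgebra structure.

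For the left-module axiom \eqref{(16)} I would expand $\alpha_H(h)\rhd\bigl(g\rhd(m\otimes m')\bigr)$. Distributing $\alpha_H$ through the comultiplication via $\Delta_H\circ\alpha_H=\alpha_H^{\otimes2}\circ\Delta_H$ gives $\alpha_H(h)_{(i)}=\alpha_H(h_{(i)})$, after which the separate left-module axioms for $M$ and $M'$ convert each tensor factor into $(h_{(i)}g_{(i)})\rhd\alpha(-)$. The crucial step is then to identify $\sum(h_{(1)}g_{(1)})\otimes(h_{(2)}g_{(2)})$ with $\Delta_H(hg)$: this is precisely the Hom-bialgebra compatibility \eqref{(14)}, $\Delta\circ\mu=\mu^{\otimes2}\circ\tau_{2,3}\circ\Delta^{\otimes2}$, and it produces exactly $(hg)\rhd\alpha_{M\otimes M'}(m\otimes m')$. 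The counit part of \eqref{(16)} is immediate from $\Delta_H(1_H)=1_H\otimes1_H$ and $1_H\rhd(-)=\alpha(-)$ on each factor. The right-module axiom \eqref{(18)} is verified symmetrically, replacing $\rhd$ by $\lhd$ and invoking \eqref{(18)} for $M$ and $M'$.

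Finally, for the bimodule compatibility \eqref{(20)} I would expand $\bigl((a\rhd z)\lhd\alpha_H(b)\bigr)$ with $z=m\otimes m'$, again distribute $\alpha_H$ through $\Delta_H$, and apply \eqref{(20)} for $M$ and for $M'$ in each tensor slot to reach $\sum\bigl(\alpha_H(a_{(1)})\rhd(m\lhd b_{(1)})\bigr)\otimes\bigl(\alpha_H(a_{(2)})\rhd(m'\lhd b_{(2)})\bigr)$; comparing this with the analogous expansion of $\alpha_H(a)\rhd(z\lhd b)$ shows the two sides coincide.

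I expect the main obstacle to be bookkeeping rather than conceptual: translating the clean Sweedler computation above into the diagrammatic form of the statement requires carefully tracking the flip maps $\tau_{H,M}$, $\tau_{M',H}$ and the operadic placement of $\Delta_H$, and invoking \eqref{(14)} in the precise position where $\Delta_H\circ\mu_H$ is rewritten as $\mu_H^{\otimes2}\circ\tau_{2,3}\circ\Delta_H^{\otimes2}$. Establishing this single identity — the Hom-associative-algebra morphism property of $\Delta_H$ — is the heart of the argument; everything else reduces to the module axioms for the factors $M,M'$ and the commutation of $\alpha_H$ with $\Delta_H$.
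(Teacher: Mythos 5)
Your verification is correct, but note that the paper itself does not prove this proposition at all: its entire proof is the sentence ``It is already shown in \cite{DY 2010}'', deferring to Yau's paper on Hom-bialgebras and comodule Hom-algebras. Your direct Sweedler-notation argument therefore supplies exactly the content that citation hides, and it is the right argument. Your reading of the structure maps as diagonal actions, $h\rhd (m\otimes m')=\sum_{(h)}(h_{(1)}\rhd m)\otimes (h_{(2)}\rhd m')$ and $(m\otimes m')\lhd h=\sum_{(h)}(m\lhd h_{(1)})\otimes (m'\lhd h_{(2)})$, is what the flips $\tau_{H,M}$, $\tau_{M',H}$ encode; the Hom-module morphism property of $\lambda_l^2,\lambda_r^2$ follows from $\Delta_H\circ\alpha_H=\alpha_H^{\otimes 2}\circ\Delta_H$ and the corresponding property of the factor actions; the left axiom \eqref{(16)} reduces, as you say, to applying \eqref{(16)} in each slot and then invoking the compatibility \eqref{(14)} to rewrite $\sum (h_{(1)}g_{(1)})\otimes (h_{(2)}g_{(2)})$ as $\Delta_H(hg)$ (this is indeed the only place the Hom-bialgebra axiom, rather than just the Hom-coalgebra and Hom-module axioms, is used); the unit part follows from $\Delta_H(1_H)=1_H\otimes 1_H$; and the right axiom \eqref{(18)} and the compatibility \eqref{(20)} go through slotwise exactly as you outline, with no hidden use of Hom-coassociativity (that is only needed for the separate remark in the paper on associativity of $\overline{\otimes}$). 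What your approach buys is self-containedness: a reader of the paper must chase the reference to check the claim, whereas your computation proves it from the axioms stated in the paper itself. The bookkeeping you flag as the main obstacle is genuinely routine here, since every map in sight commutes with the relevant twisting maps.
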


\begin{proof}
It is already shown in \cite{DY 2010}.
\end{proof}

\begin{remark}
Since $\Delta _{H}$ is Hom-coassociative, $$\left( \lambda _{l}\overline{%
\otimes }\lambda _{l}^{\prime }\right) \overline{\otimes }\alpha _{H}\left(
\lambda _{l}^{\prime \prime }\right) =\alpha _{H}\left( \lambda _{l}\right) 
\overline{\otimes }\left( \lambda _{l}^{\prime }\overline{\otimes }\lambda
_{l}^{\prime \prime }\right)  \text{ and }  \left( \lambda _{r}\overline{\otimes }%
\lambda _{r}^{\prime }\right) \overline{\otimes }\alpha _{H}\left( \lambda
_{r}^{\prime \prime }\right) =\alpha _{H}\left( \lambda _{r}\right) 
\overline{\otimes }\left( \lambda _{r}^{\prime }\overline{\otimes }\left(
\lambda _{r}^{\prime \prime }\right) \right) .$$Thus, the internal tensor
product can be Hom-associatively applied to any finite family of $H$-bimodules. 
\end{remark}

\begin{proof}
We use the Hom-coassociativity axiom (\ref{(7)}). We have%
\begin{eqnarray*}
&& \left( \lambda _{l}\overline{\otimes }\lambda _{l}^{\prime }\right) 
\overline{\otimes }\alpha _{H}\left( \lambda _{l}^{\prime \prime }\right) 
=\left( \left( \lambda _{l}\overline{\otimes }\lambda _{l}^{\prime
}\right) \otimes \alpha _{H}\left( \lambda _{l}^{\prime \prime }\right)
\right) \circ \tau _{2,3}\circ \left( \Delta _{H}\otimes id_{M}^{\otimes
3}\right)  \\
&&= \left[ \left( \left( \lambda _{l}\otimes \lambda _{l}^{\prime }\right)
\circ \tau _{2,3}\circ \left( \Delta _{H}\otimes id_{M\otimes M^{\prime
}}\right) \right) \otimes \alpha _{H}\left( \lambda _{l}^{\prime \prime
}\right) \right] \circ \tau _{2,3}\circ \left( \Delta _{H}\otimes
id_{M\otimes M^{\prime }\otimes M^{\prime \prime }}\right)  \\
&&= \left[ \left( \lambda _{l}\otimes \lambda _{l}^{\prime }\right) \otimes
\lambda _{l}^{\prime \prime }\right] \circ \left[ \tau _{2,3}\circ \left(
\Delta _{H}\otimes id_{M\otimes M^{\prime }}\right) \otimes \alpha
_{H\otimes M^{\prime \prime }}\right] \circ \tau _{2,3}\circ \left( \Delta
_{H}\otimes id_{M\otimes M^{\prime }\otimes M^{\prime \prime }}\right)  \\
&&= \left[ \lambda _{l}\otimes \left( \lambda _{l}^{\prime }\otimes \lambda
_{l}^{\prime \prime }\right) \right] \circ \left[ \alpha _{H\otimes
M}\otimes \tau _{2,3}\circ \left( \Delta _{H}\otimes id_{M^{\prime }\otimes
M^{\prime \prime }}\right) \right] \circ \tau _{2,3}\circ \left( \Delta
_{H}\otimes id_{M\otimes M^{\prime }\otimes M^{\prime \prime }}\right)  \\
&&= \left( \alpha _{H}\left( \lambda _{l}\right) \otimes \left( \lambda
_{l}^{\prime }\overline{\otimes }\lambda _{l}^{\prime \prime }\right)
\right) \circ \tau _{2,3}\circ \left( \Delta _{H}\otimes id_{M\otimes
M^{\prime }\otimes M^{\prime \prime }}\right)  \\
&&= \alpha _{H}\left( \lambda _{l}\right) \overline{\otimes }\left( \lambda
_{l}^{\prime }\overline{\otimes }\lambda _{l}^{\prime \prime }\right) .
\end{eqnarray*}

The second assertion is proved similarly.
\end{proof}

\begin{corollary}
Let $(M,\lambda _{l},\lambda _{r})$be an $H$-bimodule. The structure
maps $\overline{\lambda }_{l}^{\mu }$ $=\mu \overline{\otimes }\lambda _{l}%
\overline{\otimes }\mu $ and $\overline{\lambda }_{r}^{\mu }$ $=\mu 
\overline{\otimes }\lambda _{r}\overline{\otimes }\mu $ on the interior $H$%
-bimodule $H\overline{\otimes }M\overline{\otimes }H$ are called the
two-sided interior extensions of $\lambda _{l}$ and $\lambda _{r}$,
respectively,  by $\mu $.
\end{corollary}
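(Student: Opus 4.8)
The plan is to deduce the corollary directly from the preceding Proposition and Remark, together with the fact recorded earlier in this section that every unital Hom-associative algebra is a bimodule over itself. Concretely, $(H,\mu,\mu,\alpha_H)$ is an $H$-bimodule with both left and right actions equal to the multiplication $\mu$, while $(M,\lambda_l,\lambda_r,\alpha_M)$ is an $H$-bimodule by hypothesis. The preceding Proposition shows that the internal tensor product of two $H$-bimodules is again an $H$-bimodule, with left and right structure maps $\lambda_l\overline{\otimes}\lambda_l'$ and $\lambda_r\overline{\otimes}\lambda_r'$ given by \eqref{182} and the display following it; the Remark shows moreover that $\overline{\otimes}$ is Hom-associative, so that it extends unambiguously to any finite family of $H$-bimodules.

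First I would apply the Proposition to the pair $(H,M)$, obtaining the $H$-bimodule $H\overline{\otimes}M$ whose left action is $\mu\overline{\otimes}\lambda_l$ and whose right action is $\mu\overline{\otimes}\lambda_r$. Applying it a second time to the pair $(H\overline{\otimes}M,\,H)$ then yields the $H$-bimodule $(H\overline{\otimes}M)\overline{\otimes}H$, with left action $(\mu\overline{\otimes}\lambda_l)\overline{\otimes}\mu$ and right action $(\mu\overline{\otimes}\lambda_r)\overline{\otimes}\mu$. Invoking the Hom-associativity of the Remark (applied to the triple $H,M,H$), these iterated maps collapse, after the prescribed $\alpha_H$-twist, to $\mu\overline{\otimes}\lambda_l\overline{\otimes}\mu$ and $\mu\overline{\otimes}\lambda_r\overline{\otimes}\mu$, which are exactly the claimed maps $\overline{\lambda}_l^\mu$ and $\overline{\lambda}_r^\mu$. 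Hence $(H\overline{\otimes}M\overline{\otimes}H,\overline{\lambda}_l^\mu,\overline{\lambda}_r^\mu)$ is an interior $H$-bimodule, which is precisely what the statement asserts in naming these the two-sided interior extensions of $\lambda_l$ and $\lambda_r$ by $\mu$.

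The one point that genuinely needs care --- and the step I expect to be the main obstacle --- is the reassociation: the two bracketings $(H\overline{\otimes}M)\overline{\otimes}H$ and $H\overline{\otimes}(M\overline{\otimes}H)$ must be shown to yield the same three-fold action, so that the unbracketed expression $H\overline{\otimes}M\overline{\otimes}H$ is well defined. Because $\overline{\otimes}$ is only Hom-associative rather than strictly associative, this forces the insertions of $\alpha_H$ tracked in the Remark, whose cancellation rests on the Hom-coassociativity \eqref{(7)} of $\Delta_H$. The bimodule axioms \eqref{(16)}, \eqref{(18)} and the compatibility \eqref{(20)} themselves require no fresh computation, since each invocation of the preceding Proposition already guarantees them on the two-fold product; they are inherited automatically once the reassociation identifying the iterated structure maps with $\overline{\lambda}_l^\mu$ and $\overline{\lambda}_r^\mu$ is in place, the only ingredients beyond the Remark being the Hom-associativity \eqref{(1)} of $\mu$ on the outer tensor slots and the module axioms for $(\lambda_l,\lambda_r)$ on the middle slot.
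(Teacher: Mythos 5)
Your proposal is correct and takes essentially the same route as the paper: the paper supplies no proof for this corollary at all, treating it as an immediate consequence of the preceding Proposition (internal tensor product of two $H$-bimodules), the Remark (Hom-associativity of $\overline{\otimes}$ via the Hom-coassociativity of $\Delta_H$), and the earlier observation that $(H,\mu,\mu)$ is an $H$-bimodule over itself, which is exactly the deduction you spell out. Your explicit handling of the $\alpha_H$-twisted reassociation fills in a detail the paper leaves implicit, but introduces nothing beyond the paper's own ingredients.
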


\begin{proposition}
Let $(M,\lambda _{l},\lambda _{r})$be an $H$-bimodule. The interior $H$%
-bimodule $M^{\overline{\otimes }n}=(M^{\otimes n},\lambda
_{l}^{n},\lambda _{r}^{n})$, with%
\begin{equation}
\lambda _{l}^{n}=\left( \lambda _{l}\overline{\otimes }\lambda
_{l}^{n-1}\right) =\left( \lambda _{l}\otimes \lambda _{l}^{n-1}\right)
\circ \tau _{2,3}\circ \left( \Delta \otimes id_{M}^{\otimes n}\right) 
\end{equation}%
and 
\begin{equation}
\lambda _{r}^{n}=\left( \lambda _{r}^{n-1}\overline{\otimes }\lambda
_{r}\right) =\left( \lambda _{r}^{n-1}\otimes \lambda _{r}\right) \circ \tau
_{n,n+1}\circ \left( id_{M}^{\otimes n}\otimes \Delta \right) 
\end{equation}%
is called the $n$-fold interior (bimodule) tensor power of $M.$
\end{proposition}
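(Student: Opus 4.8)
The plan is to argue by induction on $n$, using the preceding Proposition on the binary internal tensor product both as the base case and as the engine of the inductive step. For $n=1$ the assertion is just the hypothesis that $(M,\lambda_l,\lambda_r)$ is an $H$-bimodule, and for $n=2$ it is exactly the internal tensor product $M\overline{\otimes}M$. Assuming that $M^{\overline{\otimes}(n-1)}=(M^{\otimes(n-1)},\lambda_l^{n-1},\lambda_r^{n-1})$ is an $H$-bimodule, I would read the two recursive defining formulas as two instances of the binary construction: the formula $\lambda_l^n=\lambda_l\overline{\otimes}\lambda_l^{n-1}$ exhibits $M^{\otimes n}$ as the internal tensor product $M\overline{\otimes}M^{\overline{\otimes}(n-1)}$, while $\lambda_r^n=\lambda_r^{n-1}\overline{\otimes}\lambda_r$ exhibits the same space as $M^{\overline{\otimes}(n-1)}\overline{\otimes}M$.

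From the first reading, the preceding Proposition applied to the $H$-bimodules $M$ and $M^{\overline{\otimes}(n-1)}$ gives at once that $(M^{\otimes n},\lambda_l^n)$ satisfies the left-module axioms \eqref{(16)}; symmetrically, the second reading, applied to $M^{\overline{\otimes}(n-1)}$ and $M$, yields that $(M^{\otimes n},\lambda_r^n)$ satisfies the right-module axioms \eqref{(18)}. Thus the two one-sided module structures come for free, and only the bimodule compatibility \eqref{(20)}, namely $\lambda_r^n\circ(\lambda_l^n\otimes\alpha_H)=\lambda_l^n\circ(\alpha_H\otimes\lambda_r^n)$, remains to be verified.

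The hard part will be precisely this compatibility, because the left action is bracketed as $M\overline{\otimes}M^{\overline{\otimes}(n-1)}$ whereas the right action is bracketed as $M^{\overline{\otimes}(n-1)}\overline{\otimes}M$, and the two bracketings do not agree on the nose but only up to the $\alpha_H$-twists recorded in the Hom-associativity Remark, $(\lambda\overline{\otimes}\lambda')\overline{\otimes}\alpha_H(\lambda'')=\alpha_H(\lambda)\overline{\otimes}(\lambda'\overline{\otimes}\lambda'')$. The strategy is to note that the bimodule $M\overline{\otimes}M^{\overline{\otimes}(n-1)}$ furnished by the preceding Proposition carries the compatible pair $(\lambda_l^n,\ \lambda_r\overline{\otimes}\lambda_r^{n-1})$, and then to identify its right action $\lambda_r\overline{\otimes}\lambda_r^{n-1}$ with the prescribed $\lambda_r^n=\lambda_r^{n-1}\overline{\otimes}\lambda_r$. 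Expanding both through their defining formulas, the single coproduct $\Delta$ gets distributed across all $n$ tensor slots as an iterated coproduct; the middle copies of $M$ then receive both a left-distributed and a right-distributed piece of $H$, and reconciling the two orders of distribution is exactly the content of Hom-coassociativity \eqref{(7)} together with the iterated identities of Lemma \ref{4}, the $\alpha_H$-twists produced by the rearrangement being precisely compensated by the $\alpha_H$ already present in \eqref{(20)}. Once the two right actions are matched in this way, compatibility \eqref{(20)} for $(\lambda_l^n,\lambda_r^n)$ is inherited from that of $M\overline{\otimes}M^{\overline{\otimes}(n-1)}$, completing the induction. Throughout, the flips $\tau_{2,3}$ and the interchange $\tau\circ\alpha_H^{\otimes 2}=\alpha_H^{\otimes 2}\circ\tau$ must be tracked carefully, but the only genuine content beyond this bookkeeping is the coassociative rearrangement supplied by Lemma \ref{4}.
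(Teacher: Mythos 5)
Your induction skeleton is sound, and it agrees with what the paper intends (the paper's own proof is literally ``By induction''): applying the binary internal tensor product Proposition to $M$ and $M^{\overline{\otimes }(n-1)}$, and then to $M^{\overline{\otimes }(n-1)}$ and $M$, does show that $\lambda _{l}^{n}$ satisfies the left-module axioms and $\lambda _{r}^{n}$ the right-module axioms, and you are right that the whole content lies in the compatibility \eqref{(20)} of the \emph{mixed} pair. The gap is in the mechanism you propose for that step. The two right actions $\lambda _{r}\overline{\otimes }\lambda _{r}^{n-1}$ and $\lambda _{r}^{n-1}\overline{\otimes }\lambda _{r}$ are \emph{not} equal, and cannot be ``matched'' even up to twists compensated by the $\alpha _{H}$ in \eqref{(20)}: they distribute the single coproduct over the $n$ slots according to the two different bracketings of the iterated coproduct, and in the Hom setting these genuinely differ. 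What Hom-coassociativity \eqref{(7)} gives (say for $n=3$) is $\left( id_{H}\otimes id_{H}\otimes \alpha _{H}\right) \circ \left( \Delta _{H}\otimes id_{H}\right) \circ \Delta _{H}=\left( \alpha _{H}\otimes id_{H}\otimes id_{H}\right) \circ \left( id_{H}\otimes \Delta _{H}\right) \circ \Delta _{H}$, i.e.\ the discrepancy is an $\alpha _{H}$ sitting on one specific Sweedler leg versus another; the outer $\alpha _{H}$ in \eqref{(20)} hits \emph{all} legs simultaneously (because $\Delta _{H}\circ \alpha _{H}=\alpha _{H}^{\otimes 2}\circ \Delta _{H}$) and so cannot absorb a per-leg mismatch. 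Concretely, for the Taft--Sweedler Hom-bialgebra $(T_{2})_{\lambda }$ with $\lambda \neq 0,1$ one has $\left( \Delta \otimes id\right) \Delta (e_{3})\neq \left( id\otimes \Delta \right) \Delta (e_{3})$, so the two right actions on $M^{\otimes 3}$ are different maps, and ``inheriting'' \eqref{(20)} by substituting one for the other is not legitimate.

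The repair is that no reconciliation of bracketings is needed at all, so you should drop the identification and verify \eqref{(20)} directly. In $\lambda _{r}^{n}\circ \left( \lambda _{l}^{n}\otimes \alpha _{H}\right) =\lambda _{l}^{n}\circ \left( \alpha _{H}\otimes \lambda _{r}^{n}\right) $ the \emph{same} map $\lambda _{l}^{n}$ occurs on both sides, so the left argument $a$ is distributed by the same (right-leaning) iterated coproduct on both sides, and likewise $b$ is distributed by the same (left-leaning) one through $\lambda _{r}^{n}$. Using $\Delta _{H}\circ \alpha _{H}=\alpha _{H}^{\otimes 2}\circ \Delta _{H}$ to write the Sweedler legs of $\alpha _{H}(a)$ and $\alpha _{H}(b)$ as $\alpha _{H}$ of the legs of $a$ and $b$, both sides split slot by slot, and in the $i$-th slot the required identity is exactly the compatibility of $M$ itself, $\left( x\rhd m\right) \lhd \alpha _{H}\left( y\right) =\alpha _{H}\left( x\right) \rhd \left( m\lhd y\right) $. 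This slot-wise check, together with the two applications of the binary Proposition you already have, closes the induction; Lemma \ref{4} is not needed for this step.
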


\begin{proof}
By induction.
\end{proof}

\begin{example}
Let $(H,%
%TCIMACRO{\U{3bc} }%
%BeginExpansion
\mu
%EndExpansion
_{H},\Delta _{H},\alpha _{H})$ be a Hom-bialgebra; for each $n$ $\geq 1$,
the $n$-fold interior (bimodule) tensor power of $H$ is the interior $H$%
-bimodule $H^{\overline{\otimes }n}=(H^{\otimes n},\lambda
_{l}^{n},\lambda _{r}^{n})$ with%
\begin{equation}
\lambda _{l}^{n}=\mu _{H}^{\otimes n}\circ \left( 135...\left( 2n-1\right)
246...\left( 2n\right) \right) \circ \prod\limits_{i=n}^{2n-2}\left( \Delta
_{H}\otimes id_{M}^{\otimes \left( 3n-i-2\right) }\right)
\end{equation}%
and%
\begin{equation}
\lambda _{r}^{n}=\mu _{H}^{\otimes n}\circ \left( 135...\left( 2n-1\right)
246...\left( 2n\right) \right) \circ \prod\limits_{i=n}^{2n-2}\left(
id_{M}^{\otimes \left( 3n-i-2\right) }\otimes \Delta _{H}\right),
\end{equation}%
where $\lambda _{l}=\lambda _{r}=%
%TCIMACRO{\U{3bc} }%
%BeginExpansion
\mu
%EndExpansion
_{H}$
\end{example}

\begin{proposition}
Let $(H,%
%TCIMACRO{\U{3bc} }%
%BeginExpansion
\mu
%EndExpansion
_{H},\Delta _{H},\alpha _{H})$ be a Hom-bialgebra and $(N,\rho _{l},\rho
_{r})$, $(N^{\prime },\rho _{l}^{\prime },\rho _{r}^{\prime })$ be $H$%
-bicomodules. The internal (bicomodule) tensor product of \ $N$ with 
$N^{\prime }$ is the so-called interior $H$-bicomodule $N\underline{%
\otimes }N^{\prime }=\left( N\otimes N^{\prime },\rho _{l}\underline{\otimes 
}\rho _{l}^{\prime },\rho _{r}\underline{\otimes }\rho _{r}^{\prime }\right) 
$ with%
\begin{equation}
\rho _{l}^{2}=\rho _{l}\underline{\otimes }\rho _{l}^{\prime }=\left( \mu
_{H}\otimes id_{N\otimes N^{\prime }}\right) \circ \tau _{2,3}\circ \left(
\rho _{l}\otimes \rho _{l}^{\prime }\right)   \label{111}
\end{equation}%
and%
\begin{equation}
\rho _{r}^{2}=\rho _{r}\underline{\otimes }\rho _{r}^{\prime }=\left(
id_{N\otimes N^{\prime }}\otimes \mu _{H}\right) \circ \tau _{2,3}\circ
\left( \rho _{r}\otimes \rho _{r}^{\prime }\right) .  \label{110}
\end{equation}
\end{proposition}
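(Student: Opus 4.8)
The plan is to show that the two structure maps $\rho_{l}\underline{\otimes}\rho_{l}^{\prime}$ and $\rho_{r}\underline{\otimes}\rho_{r}^{\prime}$ endow $N\otimes N^{\prime}$ with, respectively, a left and a right $H$-comodule structure, and that together they satisfy the bicomodule compatibility \eqref{(25)}. Since the right coaction is obtained from the left one by passing to the coopposite Hom-coalgebra $H^{cop}$, it suffices to treat $\Phi=\rho_{l}\underline{\otimes}\rho_{l}^{\prime}=(\mu_{H}\otimes id_{N\otimes N^{\prime}})\circ\tau_{2,3}\circ(\rho_{l}\otimes\rho_{l}^{\prime})$ in detail and then invoke this symmetry for $\rho_{r}\underline{\otimes}\rho_{r}^{\prime}$, checking the compatibility last. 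Throughout I write $\alpha_{N\otimes N^{\prime}}=\alpha_{N}\otimes\alpha_{N^{\prime}}$.

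First I would dispatch the counit axiom for $\Phi$. Applying $\varepsilon_{H}\otimes id_{N\otimes N^{\prime}}$ to $\Phi$ and using that $\varepsilon_{H}$ is a morphism of Hom-associative algebras, so that $\varepsilon_{H}\circ\mu_{H}=\varepsilon_{H}\otimes\varepsilon_{H}$ by \eqref{(14)}, the flip $\tau_{2,3}$ aligns the two scalar slots with the counit conditions $(\varepsilon_{H}\otimes id_{N})\circ\rho_{l}=\alpha_{N}$ and $(\varepsilon_{H}\otimes id_{N^{\prime}})\circ\rho_{l}^{\prime}=\alpha_{N^{\prime}}$ coming from \eqref{(23)}; the composite then collapses to $\alpha_{N}\otimes\alpha_{N^{\prime}}=\alpha_{N\otimes N^{\prime}}$, which is the required identity. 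This is a short reshuffling with no substantive content.

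The main step is the Hom-coassociativity identity $(\alpha_{H}\otimes\Phi)\circ\Phi=(\Delta_{H}\otimes\alpha_{N\otimes N^{\prime}})\circ\Phi$. Expanding both sides through the definition of $\Phi$ yields composites involving two copies each of $\rho_{l}$ and $\rho_{l}^{\prime}$, two copies of $\mu_{H}$, and either $\alpha_{H}$ or $\Delta_{H}$. The crucial move occurs on the right-hand side, where $\Delta_{H}$ meets the output of the inner $\mu_{H}$: here I would invoke the Hom-bialgebra compatibility $\Delta_{H}\circ\mu_{H}=\mu_{H}^{\otimes 2}\circ\tau_{2,3}\circ\Delta_{H}^{\otimes 2}$ from \eqref{(14)} to commute $\Delta_{H}$ past $\mu_{H}$. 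After this substitution, the individual left-comodule coassociativities of $\rho_{l}$ and of $\rho_{l}^{\prime}$, namely \eqref{(23)} for each, together with the multiplicativity $\alpha_{H}\circ\mu_{H}=\mu_{H}\circ\alpha_{H}^{\otimes 2}$ and the naturality of the flips, let the two sides be matched term by term. I expect the index bookkeeping --- aligning the tensor positions on spaces of the form $H\otimes H\otimes N\otimes H\otimes N^{\prime}$ so that the iterated $\tau_{2,3}$ intertwines correctly with $\Delta_{H}^{\otimes 2}$ and with the two comodule maps --- to be the only genuine difficulty. Since this is precisely the coalgebra dual of the bimodule computation carried out in the preceding proposition and its remark, a clean alternative to the direct calculation is simply to dualize that proof, reversing all arrows and exchanging $(\mu_{H},\eta_{H},\lambda_{l},\lambda_{r})$ with $(\Delta_{H},\varepsilon_{H},\rho_{l},\rho_{r})$. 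Finally, the compatibility \eqref{(25)} for $N\underline{\otimes}N^{\prime}$ follows from the same termwise matching, feeding in the bicomodule compatibilities of $N$ and of $N^{\prime}$ separately and using Hom-coassociativity \eqref{(7)} once more.
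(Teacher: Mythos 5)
Your argument is correct, and it is worth noting that the paper itself offers no proof of this proposition at all: it is stated immediately after its bimodule analogue, whose proof is just a citation to \cite{DY 2010}, and the bicomodule case is left as the implicit dual. So your direct verification supplies exactly the content the authors leave unwritten, rather than re-deriving an argument in the paper. Your decomposition is the right one and the key mechanism is correctly identified: the counit axiom for $\rho _{l}\underline{\otimes }\rho _{l}^{\prime }$ comes from $\varepsilon _{H}\circ \mu _{H}=\varepsilon _{H}\otimes \varepsilon _{H}$ together with the counit halves of (\ref{(23)}), and Hom-coassociativity comes from pushing $\Delta _{H}$ through $\mu _{H}$ via the compatibility (\ref{(14)}) and then using the coassociativity halves of (\ref{(23)}) for each factor, plus $\alpha _{H}\circ \mu _{H}=\mu _{H}\circ \alpha _{H}^{\otimes 2}$ and flip naturality. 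Three small repairs would make it airtight. (i) A coaction is by definition a morphism of Hom-modules, so you should also record the (immediate) identity $\rho _{l}^{2}\circ \left( \alpha _{N}\otimes \alpha _{N^{\prime }}\right) =\left( \alpha _{H}\otimes \alpha _{N}\otimes \alpha _{N^{\prime }}\right) \circ \rho _{l}^{2}$. (ii) Your reduction of the right coaction to the left one through $H^{cop}$ silently uses that $\left( H,\mu _{H},\eta ,\Delta _{H}^{cop},\varepsilon ,\alpha _{H}\right) $ is again a Hom-bialgebra; the paper only records the combined op--cop twist $\left( B,\mu ^{op},\eta ,\Delta ^{cop},\varepsilon ,\alpha \right) $, so you should check (a one-line computation on elements) that (\ref{(14)}) survives taking cop alone, and that transporting the resulting left structure back along the flip lands precisely on formula (\ref{110}); a symmetric direct computation avoids this detour entirely. (iii) The final bicomodule compatibility (\ref{(25)}) for $N\underline{\otimes }N^{\prime }$ does not use Hom-coassociativity (\ref{(7)}) at all: it follows from the compatibilities (\ref{(25)}) of $N$ and $N^{\prime }$ separately, together with $\alpha _{H}\circ \mu _{H}=\mu _{H}\circ \alpha _{H}^{\otimes 2}$ and naturality of the flips; your appeal to (\ref{(7)}) there is superfluous, though harmless.
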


\begin{remark}
Since $\mu _{H}$ is Hom-associative, we have $$\left( \rho _{l}\underline{\otimes }%
\rho _{l}^{\prime }\right) \underline{\otimes }\alpha \left( \rho
_{l}^{\prime \prime }\right) =\alpha \left( \rho _{l}\right) \underline{%
\otimes }\left( \rho _{l}^{\prime }\underline{\otimes }\rho _{l}^{\prime
\prime }\right) \text{ and } \left( \rho _{r}\underline{\otimes }\rho _{r}^{\prime
}\right) \underline{\otimes }\alpha \left( \rho _{r}^{\prime \prime }\right)
=\alpha \left( \rho _{r}\right) \underline{\otimes }\left( \rho _{r}^{\prime
}\underline{\otimes }\rho _{r}^{\prime \prime }\right) .$$ Thus, the internal
tensor product can be Hom-associatively applied to any finite family of $H$%
-bicomodules.
\end{remark}

\begin{corollary}
Let $(N,\rho _{l},\rho _{r})$ be an $H$-bicomodule. The structure maps $%
\overline{\rho }_{l}^{\Delta }$ $=\Delta \underline{\otimes }\rho _{l}%
\underline{\otimes }\Delta $ and $\overline{\rho }_{r}^{\Delta }$ $=\Delta 
\underline{\otimes }\rho _{r}\underline{\otimes }\Delta $ on the interior $H$%
-bicomodule $H\underline{\otimes }N\underline{\otimes }H$ are called the
two-sided interior extensions of \ $\rho _{l}$ and $\rho _{r}$ by $\Delta $,
respectively.
\end{corollary}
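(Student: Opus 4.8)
The plan is to treat this corollary as the comodule dual of the two-sided bimodule extension stated just before the bicomodule tensor-product proposition, realizing $\overline{\rho}_l^\Delta$ and $\overline{\rho}_r^\Delta$ as the coactions of a threefold internal bicomodule tensor product $H\underline{\otimes}N\underline{\otimes}H$. First I would record that $H$ is itself an $H$-bicomodule with both coactions equal to $\Delta$: by the Remark identifying $\Delta$ as a comodule on itself, the left and right comodule axioms (\ref{(23)}) and (\ref{(21)}) for $\rho_l=\rho_r=\Delta$ are exactly Hom-coassociativity (\ref{(7)}) and counitality (\ref{(8)}), while the bicomodule compatibility (\ref{(25)}) reads $(\beta\otimes\Delta)\circ\Delta=(\Delta\otimes\beta)\circ\Delta$, which is again (\ref{(7)}).

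Next I would apply the Proposition on the internal (bicomodule) tensor product to the family $(H,N,H)$. That proposition endows $N\underline{\otimes}N'$ with an $H$-bicomodule structure via the coactions (\ref{111}) and (\ref{110}), and by the accompanying Remark the construction is Hom-associative; hence $H\underline{\otimes}N\underline{\otimes}H$ carries a well-defined $H$-bicomodule structure, independent of bracketing. Substituting $\rho_l'=\rho_r'=\Delta$ on each outer copy of $H$ into (\ref{111}) and (\ref{110}) and iterating then reproduces exactly $\overline{\rho}_l^\Delta=\Delta\underline{\otimes}\rho_l\underline{\otimes}\Delta$ and $\overline{\rho}_r^\Delta=\Delta\underline{\otimes}\rho_r\underline{\otimes}\Delta$; this identification is a bookkeeping computation tracking the flips $\tau_{2,3}$ and the placement of $\mu_H$.

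The only genuine point to verify is the compatibility (\ref{(25)}) between the extended left and right coactions, which is where the nested flip maps must be threaded carefully. Rather than grind this out, I would dualize the argument of the preceding bimodule corollary verbatim: reversing all arrows and replacing $\mu_H$ by $\Delta$, the actions $\lambda_l,\lambda_r$ by the coactions $\rho_l,\rho_r$, and $\eta,\varepsilon$ accordingly, exactly as the bicomodule tensor-product proposition was obtained from its bimodule counterpart. The hard part here is purely notational, namely keeping the iterated flips and the Hom-coassociativity applications in the correct order, so that no idea beyond Hom-coassociativity of $\Delta$ and the already-established internal-tensor-product proposition is required.
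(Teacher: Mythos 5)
Your proposal is correct and follows exactly the route the paper intends: the corollary carries no proof in the paper precisely because it is a direct consequence of viewing $H$ as an $H$-bicomodule with $\rho_l=\rho_r=\Delta$ (the earlier Remark on $\Delta$ coacting on itself) and applying the internal bicomodule tensor-product proposition, formulas (\ref{111}) and (\ref{110}), twice to the family $(H,N,H)$, with the preceding Remark supplying Hom-associativity of the construction. The only slight imprecision is your phrase ``independent of bracketing'': the Remark gives Hom-associativity, i.e.\ equality of the two bracketings only up to $\alpha$-twists, not strict equality, which matches the paper's own loose phrasing and does not affect the argument.
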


\begin{proposition}
Let $(N,\rho _{l},\rho _{r})$be an $H$-bicomodule.\\ The interior $H$%
-bicomodule $N^{\underline{\otimes }n}=(N^{\otimes n},\rho _{l}^{n},\rho
_{r}^{n})$, with%
\begin{equation}
\rho _{l}^{n}=\rho _{l}\underline{\otimes }\rho _{l}^{n-1}=\left( \mu
\otimes \alpha _{N}^{\otimes n}\right) \circ \tau _{2,3}\circ \left( \rho
_{l}\otimes \rho _{l}^{n-1}\right) 
\end{equation}%
and 
\begin{equation}
\rho _{r}^{n}=\rho _{r}^{n-1}\underline{\otimes }\rho _{r}=\left(
id_{N}^{\otimes n}\otimes \mu \right) \circ \tau _{n,n+1}\circ \left( \rho
_{r}^{n-1}\otimes \rho _{r}\right) 
\end{equation}%
is called the $n$-fold internal (bicomodule) tensor power of $N$.
\end{proposition}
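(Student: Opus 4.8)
The plan is to argue by induction on $n$, dualizing the argument for the $n$-fold interior bimodule tensor power established above (reverse all arrows and exchange $\mu\leftrightarrow\Delta$, $\eta\leftrightarrow\varepsilon$). The base case is the preceding Proposition, which asserts that the binary internal tensor product $N\underline{\otimes}N'$ of two $H$-bicomodules, with left coaction $\rho_l\underline{\otimes}\rho_l'$ of \eqref{111} and right coaction $\rho_r\underline{\otimes}\rho_r'$ of \eqref{110}, is again an $H$-bicomodule. I take $\rho_l^{1}=\rho_l$ and $\rho_r^{1}=\rho_r$, and read the two displayed recursions as adjoining one copy of $(N,\rho_l,\rho_r)$ to $N^{\underline{\otimes}(n-1)}$ --- on the left for the comultiplication direction $\rho_l^{n}=\rho_l\underline{\otimes}\rho_l^{n-1}$, and on the right for $\rho_r^{n}=\rho_r^{n-1}\underline{\otimes}\rho_r$.

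For the inductive step, assume $N^{\underline{\otimes}(n-1)}=(N^{\otimes(n-1)},\rho_l^{n-1},\rho_r^{n-1})$ is an $H$-bicomodule. First I would establish the two one-sided comodule axioms \eqref{(23)} and \eqref{(21)} for $(\rho_l^{n},\rho_r^{n})$: viewing $N^{\underline{\otimes}n}$ as the binary product $N\underline{\otimes}N^{\underline{\otimes}(n-1)}$ lets the preceding Proposition supply the left-comodule axiom for $\rho_l^{n}$, while viewing it as $N^{\underline{\otimes}(n-1)}\underline{\otimes}N$ supplies the right-comodule axiom for $\rho_r^{n}$. That these two bracketings determine the same structure on $N^{\otimes n}$ --- and in particular that the $\alpha_N^{\otimes n}$ twist appearing in the explicit formula is the correct one --- is precisely the content of the Hom-associativity Remark $\bigl(\rho_l\underline{\otimes}\rho_l'\bigr)\underline{\otimes}\alpha(\rho_l'')=\alpha(\rho_l)\underline{\otimes}\bigl(\rho_l'\underline{\otimes}\rho_l''\bigr)$, whose justification rests on the Hom-coassociativity \eqref{(7)} repackaged in Lemma~\ref{4}, together with the Hom-associativity \eqref{(1)} of $\mu_H$.

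The remaining, and main, obstacle is the bicomodule compatibility \eqref{(25)}, namely $(\alpha_H\otimes\rho_r^{n})\circ\rho_l^{n}=(\rho_l^{n}\otimes\alpha_H)\circ\rho_r^{n}$. Here the asymmetry of the recursion bites: $\rho_l^{n}$ grows the new $H$-slot from the first tensor factor while $\rho_r^{n}$ grows it from the last, so the two flips $\tau_{2,3}$ and $\tau_{n,n+1}$ entering the definitions act near opposite ends of the tensor string and can be disentangled. I would unfold both sides through the binary definitions \eqref{111}--\eqref{110}, slide the flips into position, and then invoke the compatibility \eqref{(25)} for the single copy $(N,\rho_l,\rho_r)$ on the extremal factor and the inductive compatibility for $N^{\underline{\otimes}(n-1)}$ on the inner block, using the Hom-coassociativity of $\Delta$ and the Hom-associativity of $\mu$ to reconcile the accumulated $\alpha_H$- and $\alpha_N$-twists. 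The delicate bookkeeping is entirely in matching these flips and twists; the algebraic content is carried by \eqref{(25)}, \eqref{(7)} and \eqref{(1)}, exactly as in the bimodule case.
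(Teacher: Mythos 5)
Your proposal takes essentially the same route as the paper: the paper's entire proof of this proposition is the single phrase ``By induction,'' i.e.\ induction on $n$ with the binary internal (bicomodule) tensor product proposition as the inductive engine, which is exactly the structure of your argument. Your write-up simply makes explicit the bookkeeping (the bracketing via the Hom-associativity remark, the one-sided comodule axioms, and the compatibility check) that the paper leaves entirely to the reader.
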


\begin{proof}
By induction
\end{proof}

\begin{example}
Let $(H,%
%TCIMACRO{\U{3bc} }%
%BeginExpansion
\mu
%EndExpansion
_{H},\Delta _{H},\alpha _{H})$ be a Hom-bialgebra, consider the $H$%
-bicomodule $(H,\rho _{l},\rho _{r})$ where $\rho _{l}=\rho _{r}=\Delta
_{H}$. For each $n\geq 1$, the $n$-fold internal (bicomodule) tensor
power of $H$ is the interior $H$-bicomodule $H^{\underline{\otimes }%
n}=(H^{\otimes n},\rho _{l}^{n},\rho _{r}^{n})$ with%
\begin{equation}
\rho _{l}^{n}=\prod\limits_{i=n}^{2n-2}\left( \mu _{H}\otimes
id_{N}^{\otimes i}\right) \circ \left( 135...\left( 2n-1\right) 246...\left(
2n\right) \right) ^{-1}\circ \Delta _{H}^{\otimes n}
\end{equation}

\begin{equation}
\rho _{r}^{n}=\prod\limits_{i=n}^{2n-2}\left( id_{N}^{\otimes i}\otimes \mu
_{H}\right) \circ \left( 135...\left( 2n-1\right) 246...\left( 2n\right)
\right) ^{-1}\circ \Delta _{H}^{\otimes n}.
\end{equation}
\end{example}

\begin{lemma}
\label{7}Consider the interior $H$-bicomodules, $H^{\overline{\otimes }%
n}=(H^{\otimes n},\lambda _{l}^{n},\lambda _{r}^{n})$ and $H^{\underline{%
\otimes }n}=(H^{\otimes n},\rho _{l}^{n},\rho _{r}^{n})$ with $\lambda
_{l}^{1}=\lambda _{l}^{1}=\mu ,$ $\rho _{l}^{1}=\rho _{l}^{1}=\Delta $ and
let $f:V^{\otimes q}\rightarrow V^{\otimes p}$ be  a linear map which commutes
with $\beta $ and such that  $\alpha ^{\otimes p}\circ f=f\circ \alpha ^{\otimes
q},$ $p,q$ $\in 
%TCIMACRO{\U{2115} }%
%BeginExpansion
\mathbb{N}
%EndExpansion
^{\ast }$. Then

\begin{enumerate}
\item $\lambda _{l}^{p+1}\circ \left( \alpha ^{p-1}\otimes \left( \alpha
^{p-1}\otimes f\right) \circ \rho _{l}^{p}\right) =\left( \alpha
^{p-1}\otimes \lambda _{l}^{p}\circ \left( \alpha ^{p-1}\otimes f\right)
\right) \circ \rho _{l}^{p+1}$

\item $\lambda _{r}^{p+1}\circ \left( \left( f\otimes \alpha ^{p-1}\right)
\circ \rho _{r}^{p}\otimes \alpha ^{p-1}\right) =\left( \lambda
_{r}^{p}\circ \left( f\otimes \alpha ^{p-1}\right) \otimes \alpha
^{p-1}\right) \circ \rho _{r}^{p+1}.$
\end{enumerate}
\end{lemma}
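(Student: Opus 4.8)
The plan is to argue by induction on $p$, using the recursive definitions of the interior tensor powers established in the preceding propositions: $\lambda_{l}^{n}=\lambda_{l}\,\overline{\otimes}\,\lambda_{l}^{n-1}$ with $\lambda_{l}^{1}=\mu$, and $\rho_{l}^{n}=\rho_{l}\,\underline{\otimes}\,\rho_{l}^{n-1}$ with $\rho_{l}^{1}=\Delta$. Because the right-handed identity of part~2 is the mirror image of the left-handed identity of part~1 (reverse the order of the tensor slots and exchange $(\lambda_{l},\rho_{l})$ for $(\lambda_{r},\rho_{r})$), I would prove part~1 carefully and then obtain part~2 by reading the same computation backwards, exactly as every right-module and right-comodule statement in this paper is deduced from its left-handed counterpart.

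For the base case $p=1$ one has $\alpha^{p-1}=id$, $\lambda_{l}^{1}=\mu$ and $\rho_{l}^{1}=\Delta$, so both sides unfold directly from the explicit forms of $\lambda_{l}^{2}$ in \eqref{182} and of $\rho_{l}^{2}$ in \eqref{111}; expressing each composite through $\mu$, $\Delta$ and $\tau_{2,3}$, the two sides agree term by term, the only ingredients being that $f$ commutes with the twist via $\alpha^{\otimes p}\circ f=f\circ\alpha^{\otimes q}$. For the inductive step I would expand the outermost maps by $\lambda_{l}^{p+1}=\mu\,\overline{\otimes}\,\lambda_{l}^{p}$ and $\rho_{l}^{p+1}=\Delta\,\underline{\otimes}\,\rho_{l}^{p}$, so that each side displays a single $\mu$ and a single $\Delta$ wrapped around the lower powers $\lambda_{l}^{p}$ and $\rho_{l}^{p}$. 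The Hom-bialgebra compatibility \eqref{(14)}, namely $\Delta\circ\mu=\mu^{\otimes 2}\circ\tau_{2,3}\circ\Delta^{\otimes 2}$, then lets me slide the outer $\mu$ past the outer $\Delta$; Hom-associativity \eqref{(1)}, Hom-coassociativity \eqref{(7)}, the multiplicativity and comultiplicativity of the twist ($\alpha\circ\mu=\mu\circ\alpha^{\otimes 2}$, $\Delta\circ\beta=\beta^{\otimes 2}\circ\Delta$), and the commutation of the flip with the twisting maps reassociate the resulting factors and carry the powers of $\alpha$ into position. After this rearrangement the inner block is exactly the degree-$p$ instance of the identity, and the induction hypothesis closes the step. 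These manipulations are of the same nature as those carried out in Lemmas~\ref{lemma1} and \ref{4}, which is why they can be recorded succinctly.

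The delicate part will be the bookkeeping of the homomorphism twists. In contrast with the classical (non-Hom) bialgebra situation, every use of \eqref{(14)} and every reassociation inserts an additional factor of $\alpha$, and one must verify that the powers $\alpha^{p-1}$ on the outer leg, together with the $\alpha^{\otimes p}$ produced by $\rho_{l}^{p+1}$, coincide precisely with those generated on the opposite side. This is exactly where the hypothesis on $f$ is indispensable: the relation $\alpha^{\otimes p}\circ f=f\circ\alpha^{\otimes q}$ (with $\beta=\alpha$) allows $f$ to be transported across the twisting maps without disturbing the count, so that the inner expression can be forced into the exact shape $\lambda_{l}^{p}\circ(\alpha^{p-1}\otimes f)\circ\rho_{l}^{p}$ demanded by the induction hypothesis rather than some cousin with mismatched twists. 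Once the tensor degree and the twist exponents are pinned down on both sides, the remaining equality is forced by \eqref{(1)} and \eqref{(7)}, and part~2 follows by the symmetric argument.
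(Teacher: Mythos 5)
The paper contains no proof of this lemma at all --- it is stated bare immediately before Section 3 --- so your proposal has to stand on its own, and as written it does not: the expansion you start with is the right move, but the machinery you claim to run on top of it is not what that expansion produces. Expand once, as you propose, $\lambda_l^{p+1}=\mu\,\overline{\otimes}\,\lambda_l^{p}$ and $\rho_l^{q+1}=\Delta\,\underline{\otimes}\,\rho_l^{q}$ (I write $\rho_l^{q}$ for the coaction on the domain $H^{\otimes q}$; the statement's $\rho_l^{p}$ only parses when $p=q$). For $x\otimes y\in H\otimes H^{\otimes q}$, writing $\rho_l^{q}(y)=\sum\pi\otimes y'$ with $\pi\in H$, $y'\in H^{\otimes q}$, and using only $\Delta\circ\alpha^{p-1}=(\alpha^{p-1})^{\otimes 2}\circ\Delta$, the two sides become
\begin{align*}
\text{LHS}&=\sum \mu\bigl(\alpha^{p-1}(x_{(1)})\otimes\alpha^{p-1}(\pi)\bigr)\otimes\lambda_l^{p}\bigl(\alpha^{p-1}(x_{(2)})\otimes f(y')\bigr),\\
\text{RHS}&=\sum \alpha^{p-1}\bigl(\mu(x_{(1)}\otimes\pi)\bigr)\otimes\lambda_l^{p}\bigl(\alpha^{p-1}(x_{(2)})\otimes f(y')\bigr).
\end{align*}
The blocks in positions $2,\dots,p+1$ are \emph{literally the same expression}, not ``the degree-$p$ instance of the identity'', so there is nothing for an induction hypothesis to close; and the first legs agree by $\alpha^{p-1}\circ\mu=\mu\circ(\alpha^{p-1})^{\otimes 2}$. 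The whole lemma is thus a one-step consequence of the recursive definitions together with multiplicativity and comultiplicativity of the twisting map; no induction, no Hom-associativity \eqref{(1)}, no Hom-coassociativity \eqref{(7)}, and no compatibility \eqref{(14)} enter.

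This exposes two genuine errors in your plan. First, \eqref{(14)} cannot be ``used to slide the outer $\mu$ past the outer $\Delta$'': in the expanded expressions the only $\Delta$'s are applied to the input variable $x$ (never to a product), and the only $\mu$'s multiply $x_{(1)}$ against $\pi$, so no subexpression of the form $\Delta\circ\mu$ ever occurs and \eqref{(14)} has nothing to act on; an inductive step organized around it cannot be executed. Second, the hypothesis $\alpha^{\otimes p}\circ f=f\circ\alpha^{\otimes q}$ is not ``indispensable'' here: $f$ is never transported past any map in the computation above, so the identity holds for an arbitrary linear $f$; that hypothesis is what keeps $f$ and the images of the differentials inside the cochain spaces $\mathcal{C}_{Hom}^{p,q}$, and it is used in the proof of Proposition \ref{S}, not in this lemma (the same correction applies to your base case, where $\alpha^{p-1}=id$ and the equality is pure bookkeeping, with no condition on $f$ required). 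Your reduction of part 2 to part 1 by the left--right mirror symmetry is fine, and the one-step argument above dualizes verbatim.
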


%This Lemma can be used in the proof of Proposition \ref{S}, Proposition \ref%
%{P} and Proposition \ref{3}.

\section{Gerstenhaber-Schack Cohomology for Hom-bialgebras}

Gerstenhaber-Schack cohomology of Hom-bialgebras  is a twisted 
generalization of bialgebras cohomology, which was first discovered by
Gerstenhaber and  Schack  \cite{G.S90,G.S92}, extending  associative algebras cohomology introduced by Hochschild in  \cite{GH 1945} to bialgebra. Deformation theories are intimately related to cohomology. It turns out that, we do not need a    cohomology of (Hom-)Hopf algebra since it is enough to deform 
(Hom-)Hopf algebra as a (Hom-)bialgebra.
%  The cohomology of Hopf algebras was introduced in order to
%study  deformations of Hopf algebras. In fact, the cohomology that we
%are going to study is adapted to deformations of Hom-Hopf algebras as
%Drinfel'd quasi-bialgebras; however, M. Gerstenhaber and S.D. Schack also
%defined a cohomology which studies the deformations of Hopf algebras as Hopf
%algebras.
We refer  to \cite{G.S90,G.S92} for  the definition of bialgebra cohomology and its truncated version due to Gerstenhaber and
Schack. 
We define the bicomplex extending Hochschild  cohomology for horizontal faces and coalgebra Cartier cohomology for the vertical faces.\\ 
Let $B=\left( B,\mu ,\eta ,\Delta
,\varepsilon ,\alpha \right) $ be a Hom-bialgebra on the $\Bbbk $-vector
space $B$.  
We set for the  cochains: 
\begin{equation*}
\mathcal{C}_{Hom}^{p,q}=Hom_{\Bbbk }\left( B^{\otimes q},B^{\otimes
p}\right) ,p,q\geq 1,
\end{equation*}%
\begin{equation*}
\mathcal{C}_{Hom}^{p,q}=\left\{ f:B^{\otimes q}\longrightarrow B^{\otimes
p},f\text{ }\ is\ a\ linear\ map,\ f\circ \alpha ^{\otimes q}=\alpha
^{\otimes p}\circ f\right\} .
\end{equation*}%
We define the horizontal faces $
\delta _{Hom,H}^{p,q}:\mathcal{C}_{Hom}^{p,q}\longrightarrow \mathcal{C}%
_{Hom}^{p,q+1}
$ as 
\begin{equation}
\delta _{Hom,H}^{p,q}\left( f\right) =\lambda _{l}^{p}\circ (\alpha
^{q-1}\otimes f)+\sum\limits_{i=1}^{q}(-1)^{i}f\circ \left( \alpha
^{\otimes (i-1)}\otimes \mu \otimes \alpha ^{\otimes (q-i)}\right)
+(-1)^{q+1}\lambda _{r}^{p}\circ (f\otimes \alpha ^{q-1}).  \label{(38)}
\end{equation}

The vertical faces  
 $
\delta _{Hom,C}^{p,q}:\mathcal{C}_{Hom}^{p,q}\longrightarrow \mathcal{C}%
_{Hom}^{p+1,q}
$ are defined as:
\begin{equation}
\delta _{Hom,C}^{p,q}\left( f\right) =(\alpha ^{p-1}\otimes f)\circ \rho
_{l}^{q}+\sum\limits_{j=1}^{p}(-1)^{j}\left( \alpha ^{\otimes (j-1)}\otimes
\Delta \otimes \alpha ^{\otimes (p-j)}\right) \circ f+(-1)^{p+1}(f\otimes
\alpha ^{p-1})\circ \rho _{r}^{q}.  \label{(39)}
\end{equation}

\begin{proposition}
\label{S}The composite 
\begin{equation*}
\delta _{Hom,C}^{2,1}\circ \delta _{Hom,C}^{1,1}=0,\quad \delta
_{Hom,C}^{1,2}\circ \delta _{Hom,H}^{1,1}=\delta _{Hom,H}^{2,1}\circ \delta
_{Hom,C}^{1,1},\quad \delta _{Hom,H}^{1,2}\circ \delta _{Hom,H}^{1,1}=0.
\end{equation*}
\end{proposition}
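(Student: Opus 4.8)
The plan is to verify the three identities one at a time on an explicit $1$-cochain $f\in\mathcal{C}_{Hom}^{1,1}$, i.e. a linear map $f\colon B\to B$ with $f\circ\alpha=\alpha\circ f$. Since $\lambda_l^1=\lambda_r^1=\mu$, $\rho_l^1=\rho_r^1=\Delta$ and $\alpha^0=id$, formulas \eqref{(38)} and \eqref{(39)} collapse to
$$\delta_{Hom,H}^{1,1}(f)=\mu\circ(id\otimes f)-f\circ\mu+\mu\circ(f\otimes id),\qquad \delta_{Hom,C}^{1,1}(f)=(id\otimes f)\circ\Delta-\Delta\circ f+(f\otimes id)\circ\Delta.$$
These are the Hochschild and the Cartier coboundaries of $f$ for $B$ seen as a (co)module over itself, so the first and third identities are precisely the edge $d^2=0$ relations and the middle one is the bicomplex mixing relation.

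For $\delta_{Hom,H}^{1,2}\circ\delta_{Hom,H}^{1,1}=0$ I would set $g=\delta_{Hom,H}^{1,1}(f)$ and expand $\delta_{Hom,H}^{1,2}(g)=\mu(\alpha\otimes g)-g(\mu\otimes\alpha)+g(\alpha\otimes\mu)-\mu(g\otimes\alpha)$ into its twelve summands. These split into six cancelling pairs: the two terms carrying $f$ in the last slot, the two in the first slot, the two on the inner product, the two where $f$ is applied to a full triple product, and so on. Each pair vanishes using only Hom-associativity \eqref{(1)}, the multiplicativity $\alpha\circ\mu=\mu\circ\alpha^{\otimes2}$ and $f\circ\alpha=\alpha\circ f$; note that $f$ need not be multiplicative. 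The relation $\delta_{Hom,C}^{2,1}\circ\delta_{Hom,C}^{1,1}=0$ is the exact dual: reversing all arrows and replacing $(\mu,\eta)$ by $(\Delta,\varepsilon)$ turns this cancellation into one governed by Hom-coassociativity \eqref{(7)} and $\Delta\circ\alpha=\alpha^{\otimes2}\circ\Delta$, so I would simply transpose the argument rather than recompute it.

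The mixed relation $\delta_{Hom,C}^{1,2}\circ\delta_{Hom,H}^{1,1}=\delta_{Hom,H}^{2,1}\circ\delta_{Hom,C}^{1,1}$ is the substantive step. With $g=\delta_{Hom,H}^{1,1}(f)$ and $h=\delta_{Hom,C}^{1,1}(f)$, the left side equals $(id\otimes g)\circ\rho_l^2-\Delta\circ g+(g\otimes id)\circ\rho_r^2$ and the right side equals $\lambda_l^2\circ(id\otimes h)-h\circ\mu+\lambda_r^2\circ(h\otimes id)$; substituting the three-term expressions for $g$ and $h$ produces nine summands on each side, and the heart of the proof is to match them in a $3\times3$ pattern. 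The plan is: (i) the two outer pairs $\lambda_l^2\circ(id\otimes(id\otimes f)\Delta)$ against $(id\otimes\mu(id\otimes f))\circ\rho_l^2$, and its right-hand mirror, coincide by Lemma \ref{7}(1) and (2) specialised to $p=1$; (ii) the two terms $\Delta\circ f\circ\mu$ appearing in $-\Delta\circ g$ and in $-h\circ\mu$ are literally identical and cancel in the difference; (iii) the four terms containing $\Delta\circ\mu$ are rewritten by the bialgebra compatibility \eqref{(14)}, $\Delta\circ\mu=\mu^{\otimes2}\circ\tau_{2,3}\circ\Delta^{\otimes2}$, after which the two on the left become the diagonal actions $\lambda_l^2,\lambda_r^2$ evaluated on $\Delta f$ (matching terms of $h$) while the two on the right become the comodule terms $(id\otimes f\mu)\circ\rho_l^2$ and $(f\mu\otimes id)\circ\rho_r^2$ (matching terms of $g$); (iv) the remaining crossed pair $(\mu(id\otimes f)\otimes id)\circ\rho_r^2$ versus $\lambda_l^2\circ(id\otimes(f\otimes id)\Delta)$, and its mirror, agree by a direct Sweedler computation of the type carried out in Lemmas \ref{lemma1} and \ref{4}.

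The main obstacle is exactly the bookkeeping in step (iii): one must apply \eqref{(14)} in the correct direction to each $\Delta\circ\mu$ term and track precisely where the maps $\alpha$ and the flips $\tau_{2,3}$ land, so that every expanded term coincides on the nose with the corresponding $\lambda^2$- or $\rho^2$-term. The hypothesis $f\circ\alpha=\alpha\circ f$ built into $\mathcal{C}_{Hom}^{1,1}$ is what legitimises the appeals to Lemma \ref{7}, and because all three identities live in bidegrees with $p,q\le2$ the counit normalisations $\varepsilon\circ\beta=\beta$ and the unit conditions play no role and need not be invoked.
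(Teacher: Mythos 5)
Your proposal is correct, and for the substantive mixed relation $\delta_{Hom,C}^{1,2}\circ\delta_{Hom,H}^{1,1}=\delta_{Hom,H}^{2,1}\circ\delta_{Hom,C}^{1,1}$ it is the same argument as the paper's: expand each side into nine summands and match them using the compatibility condition \eqref{(14)} together with Lemma \ref{7} specialised to $p=1$ (your $3\times 3$ pairing, including the observation that $\Delta\circ\mu$ rewritten via \eqref{(14)} turns the four mixed terms into the $\lambda^2$- and $\rho^2$-terms, is exactly what the paper compresses into its step $(\ast\ast)$). Where you genuinely diverge is in the two ``pure'' identities: the paper computes the coalgebra relation $\delta_{Hom,C}^{2,1}\circ\delta_{Hom,C}^{1,1}=0$ explicitly, invoking Lemmas \ref{lemma1} and \ref{4} for the Hom-coassociativity rearrangements, and then simply cites \cite{F. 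A 2010} for the algebra relation $\delta_{Hom,H}^{1,2}\circ\delta_{Hom,H}^{1,1}=0$; you do the opposite, carrying out the twelve-term cancellation on the algebra side (your six cancelling pairs are right, and indeed only Hom-associativity \eqref{(1)} and $f\circ\alpha=\alpha\circ f$ are needed --- the multiplicativity of $\alpha$ never actually enters) and obtaining the coalgebra relation by formally reversing arrows. Both divisions of labour are legitimate; yours has the advantage of being self-contained (no appeal to the earlier Hom-associative cohomology paper), while the paper's choice lets it reuse the already-proved Lemmas \ref{lemma1} and \ref{4}, whose statements are tailored to the coalgebra side. The duality step you invoke is sound here precisely because every axiom used in your algebra-side cancellation (Hom-associativity, $\alpha$-multiplicativity, $f\alpha=\alpha f$) has a stated coalgebra dual in the paper, so the transposed computation goes through verbatim.
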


\begin{proof}
We prove the first identity. We have $\delta _{Hom,C}^{2,1}\left( f\right) =(\alpha
\otimes f)\circ \rho _{l}^{1}-\left( \Delta \otimes \alpha \right) \circ
f+\left( \alpha \otimes \Delta \right) \circ f-(f\otimes \alpha )\circ \rho
_{r}^{1}$, $\delta _{Hom,C}^{1,1}\left( f\right) =(id_{B}\otimes f)\circ
\rho _{l}^{1}-\Delta \circ f+(f\otimes id_{B})\circ \rho _{r}^{1},$ and $%
\rho _{l}^{1}=\rho _{r}^{1}=\Delta $%
\begin{align*}
\delta _{Hom,C}^{2,1}\circ \delta _{Hom,C}^{1,1}\left( f\right) & =(\alpha
\otimes \delta _{Hom,C}^{1,1}\left( f\right) )\circ \Delta -\left( \Delta
\otimes \alpha \right) \circ \delta _{Hom,C}^{1,1}\left( f\right) +\left(
\alpha \otimes \Delta \right) \circ \delta _{Hom,C}^{1,1}\left( f\right)  \\
& -(\delta _{Hom,C}^{1}\left( f\right) \otimes \alpha )\circ \Delta  \\
& =(\alpha \otimes \left( (id_{B}\otimes f)\circ \Delta -\Delta \circ
f+\left( f\otimes id_{B}\right) \circ \Delta \right) )\circ \Delta  \\
& -\left( \Delta \otimes \alpha \right) \circ \left( (id_{B}\otimes f)\circ
\Delta -\Delta \circ f+\left( f\otimes id_{B}\right) \circ \Delta \right)  \\
& +\left( \alpha \otimes \Delta \right) \circ \left( (id_{B}\otimes f)\circ
\Delta -\Delta \circ f+\left( f\otimes id_{B}\right) \circ \Delta \right)  \\
& -\left( \left( id_{B}\otimes f\right) \circ \Delta -\Delta \circ f+\left(
f\otimes id_{B}\right) \circ \Delta \right) \otimes \alpha )\circ \Delta  \\
& =\left( \alpha \otimes (id_{B}\otimes f)\circ \Delta \right) \circ \Delta
-\left( \alpha \otimes \left( \Delta \circ f\right) \right) \circ \Delta
+\left( \alpha \otimes \left( f\otimes id_{B}\right) \circ \Delta \right)
\circ \Delta  \\
& -\left( \Delta \otimes \alpha \right) (id_{B}\otimes f)\circ \Delta
+\left( \Delta \otimes \alpha \right) (\Delta \circ f)-\left( \Delta \otimes
\alpha \right) \left( f\otimes id_{B}\right) \circ \Delta  \\
& +\left( \alpha \otimes \Delta \right) (id_{B}\otimes f)\circ \Delta
-\left( \alpha \otimes \Delta \right) \left( \Delta \circ f\right) +\left(
\alpha \otimes \Delta \right) \left( f\otimes id_{B}\right) \circ \Delta  \\
& -\left( \left( (id_{B}\otimes f)\circ \Delta \right) \otimes \alpha
\right) \circ \Delta +\left( \left( \Delta \circ f\right) \otimes \alpha
\right) \circ \Delta -\left( \left( (f\otimes id_{B})\circ \Delta \right)
\otimes \alpha \right) \circ \Delta  \\
& \overset{(\ast )}{=}\left( \alpha \otimes (id_{B}\otimes f)\circ \Delta
\right) \circ \Delta +\left( \alpha \otimes \left( f\otimes id_{B}\right)
\circ \Delta \right) \circ \Delta  \\
& -\left( \Delta \otimes \alpha \right) (id_{B}\otimes f)\circ \Delta
+\left( \alpha \otimes \Delta \right) \left( f\otimes id_{B}\right) \circ
\Delta  \\
& -\left( \left( (id_{B}\otimes f)\circ \Delta \right) \otimes \alpha
\right) \circ \Delta -\left( \left( (f\otimes id_{B})\circ \Delta \right)
\otimes \alpha \right) \circ \Delta ,
\end{align*}

where (*) was obtained by the Hom-coassociativity of $\Delta $ and  Lemma %
\ref{lemma1}.

From Lemma \ref{4}, we immediately obtain $\delta
_{Hom,C}^{2,1}\circ \delta _{Hom,C}^{1,1}\left( f\right) =0$.

For the second identity, we have $\delta _{Hom,C}^{1,2}\left( f\right) =(id_{B}\otimes f)\circ \rho
_{l}^{2}-\Delta \circ f+\left( f\otimes id_{B}\right) \circ \rho _{r}^{2},$ $%
\delta _{Hom,C}^{1,1}\left( f\right) =(id_{B}\otimes f)\circ \rho
_{l}^{1}-\Delta \circ f+\left( f\otimes id_{B}\right) \circ \rho _{r}^{1},$ $%
\delta _{Hom,H}^{2,1}\left( f\right) =\lambda _{l}^{2}\circ (id_{B}\otimes
f)-f\circ \mu +\lambda _{r}^{2}\circ (f\otimes id_{B}),$ $\delta
_{Hom,H}^{1,1}\left( f\right) =\lambda _{l}^{1}\circ (id_{B}\otimes
f)-f\circ \mu +\lambda _{r}^{1}\circ (f\otimes id_{B}),$ and $\rho
_{l}^{1}=\rho _{r}^{1}=\Delta ,$ $\lambda _{l}^{1}=\lambda _{r}^{1}=\mu ,$ 
\begin{align*}
& \delta _{Hom,C}^{1,2}\circ \delta _{Hom,H}^{1,1}\left( f\right) 
=(id_{B}\otimes \left( \mu \circ (id_{B}\otimes f)-f\circ \mu +\mu \circ
(f\otimes id_{B})\right) )\circ \rho _{l}^{2} \\ &
-\Delta \circ \left( \mu \circ (id_{B}\otimes f)-f\circ \mu +\mu \circ
(f\otimes id_{B})\right)  \\&
+\left( \left( \mu \circ (id_{B}\otimes f)-f\circ \mu +\mu \circ (f\otimes
id_{B})\right) \otimes id_{B}\right) \circ \rho _{r}^{2} \\
&=(\left( id_{B}\otimes \mu \circ (id_{B}\otimes f)\right) \circ \rho
_{l}^{2}-\left( id_{B}\otimes f\circ \mu \right) \circ \rho _{l}^{2}+ \\
&\left( id_{B}\otimes \mu \circ (f\otimes id_{B})\right) \circ \rho
_{l}^{2}-\Delta \circ \mu \circ (id_{B}\otimes f)-\Delta \circ f\circ \mu  \\
&+\Delta \circ \mu \circ (f\otimes id_{B})+\left( \mu \circ (id_{B}\otimes
f)\otimes id_{B}\right) \circ \rho _{r}^{2} \\
&-\left( f\circ \mu \otimes id_{B}\right) \circ \rho _{r}^{2}+\left( \mu
\circ (f\otimes id_{B})\otimes id_{B}\right) \circ \rho _{r}^{2} \\
& \overset{(\ast \ast )}{=}\lambda _{l}^{2}\circ \left( id_{B}\otimes
(id_{B}\otimes f)\circ \Delta \right) -\left( id_{B}\otimes f\right) \circ
\Delta \circ \mu + \\
&\lambda _{r}^{2}\circ \left( \left( \left( id_{B}\otimes f\right) \circ
\Delta \right) \otimes id_{B})\right) -\lambda _{l}^{2}\circ \left(
id_{B}\otimes \left( \Delta \circ f\right) \right)  \\
&-\Delta \circ f\circ \mu +\lambda _{r}^{2}\circ \left( \left( \Delta \circ
f\right) \otimes id_{B})\right) +\lambda _{l}^{2}\circ \left( id_{B}\otimes
(f\otimes id_{B})\circ \Delta \right)  \\
&-\left( f\otimes id_{B}\right) \circ \Delta \circ \mu +\lambda
_{r}^{2}\circ \left( (f\otimes id_{B})\circ \Delta \otimes id_{B}\right)  \\
&=\delta _{Hom,H}^{2,1}\circ \delta _{Hom,C}^{1,1}\left( f\right) ,
\end{align*}

where ($\ast \ast $) is obtained by the compatibility condition and  Lemma \ref{7}.

For the third identity see (\cite{F. A 2010}).
\end{proof}

\begin{proposition}
\label{P} Let $\ D_{i}^{p,q}:\mathcal{C}_{Hom}^{p,q}\left(
B^{\otimes q},B^{\otimes p}\right) \longrightarrow \mathcal{C}%
_{Hom}^{p,q+1}\left( B^{\otimes q+1},B^{\otimes p}\right) $ be  linear
operators defined for \ $f\in \mathcal{C}_{Hom}^{p,q}\left( B^{\otimes
q},B^{\otimes p}\right) $ by:%
\begin{equation}
D_{i}^{p,q}\left( f\right) =\left\{ 
\begin{array}{c}
-\lambda _{l}^{p}\circ (\alpha ^{q-1}\otimes f)+f\circ \left( \mu \otimes
\alpha ^{\otimes (q-1)}\right) \text{ \ \ }if\text{ \ }i=0, \\ 
f\circ \left( \alpha ^{\otimes i}\otimes \mu \otimes \alpha ^{\otimes
(q-i-1)}\right) \text{ \ }if\text{ \ }\forall 1\leq i\leq q-2, \\ 
f\circ \left( \alpha ^{\otimes q-1}\otimes \mu \right) -\lambda
_{r}^{p}\circ (f\otimes \alpha ^{q-1})\text{ \ \ }if\text{ \ \ }i=q-1.%
\end{array}%
\right. 
\end{equation}%
Then%
\begin{equation}
D_{i}^{1,q+1}\circ D_{j}^{1,q}=D_{j+1}^{1,q+1}\circ D_{i}^{1,q}\ \ \ 0\leq
i<j\leq q \ Êand\ \ \delta
_{Hom,H}^{1,q}=\sum\limits_{i=0}^{q-1}(-1)^{i+1}D_{i}^{1,q}.  \label{(41)}
\end{equation}
\end{proposition}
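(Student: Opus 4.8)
The plan is to prove the two assertions of \eqref{(41)} in turn: the decomposition of $\delta_{Hom,H}^{1,q}$ is a purely formal regrouping, while the commutation relations are a Hom-twisted instance of the pre-simplicial (semi-cosimplicial) identities for the Hochschild faces. Throughout I work with $p=1$, so that $\lambda_l^1=\lambda_r^1=\mu$ as recorded in the proof of Proposition \ref{S}; consequently the left- and right-module axioms \eqref{(16)}, \eqref{(18)} and the bimodule compatibility \eqref{(20)} all collapse to the single Hom-associativity condition \eqref{(1)}, and the only other structural input is the multiplicativity $\alpha\circ\mu=\mu\circ\alpha^{\otimes 2}$.

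First I would establish $\delta_{Hom,H}^{1,q}=\sum_{i=0}^{q-1}(-1)^{i+1}D_i^{1,q}$ by substituting the definition of $D_i^{1,q}$ and collecting terms. The summand $i=0$ produces $\lambda_l^1\circ(\alpha^{q-1}\otimes f)$ together with $-f\circ(\mu\otimes\alpha^{\otimes(q-1)})$; the summand $i=q-1$ produces $(-1)^{q+1}\lambda_r^1\circ(f\otimes\alpha^{q-1})$ together with $(-1)^q f\circ(\alpha^{\otimes(q-1)}\otimes\mu)$; and each interior summand $1\le i\le q-2$ produces $(-1)^{i+1}f\circ(\alpha^{\otimes i}\otimes\mu\otimes\alpha^{\otimes(q-i-1)})$. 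After the reindexing $i\mapsto i-1$ inside the middle sum, these match the three groups of terms in the definition \eqref{(38)} of $\delta_{Hom,H}^{1,q}$ sign by sign, giving the claimed formula.

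Next I would prove $D_i^{1,q+1}\circ D_j^{1,q}=D_{j+1}^{1,q+1}\circ D_i^{1,q}$ in the admissible range $0\le i<j\le q-1$ (so that all four operators are defined) by a case analysis according to whether the indices are interior or boundary. In the generic interior case $1\le i<j\le q-2$ both composites precompose $f$ with two copies of $\mu$ acting on disjoint tensor slots, and the excess $\alpha$ created when one insertion falls under an $\alpha^{\otimes k}$ of the other is absorbed using $\alpha\circ\mu=\mu\circ\alpha^{\otimes2}$; no use of associativity is needed here. The three remaining cases---left-boundary $(i=0,\ 1\le j\le q-2)$, right-boundary $(1\le i\le q-2,\ j=q-1)$, and the mixed case $(i=0,\ j=q-1)$---are handled by expanding the one or two terms of each $D$ and matching them pairwise; the pure merge/merge and action/merge pairs again agree by multiplicativity alone, while the action/action cross term arising only in the mixed case $(i=0,j=q-1)$ is precisely where Hom-associativity \eqref{(1)} (i.e.\ the compatibility \eqref{(20)} for $p=1$) is invoked, to identify $\alpha^{m}(a_1)\cdot\bigl(f(\cdots)\cdot\alpha^{n}(a_{q+2})\bigr)$ with $\bigl(\alpha^{m}(a_1)\cdot f(\cdots)\bigr)\cdot\alpha^{n}(a_{q+2})$ for the appropriate exponents $m,n$.

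The main obstacle will be the $\alpha$-bookkeeping: in each case one must check that the total power of $\alpha$ landing on every untouched tensor factor is the same on both sides after the single rewriting $\alpha\circ\mu=\mu\circ\alpha^{\otimes2}$. I would organize this slot by slot on $B^{\otimes(q+2)}$, exactly as in the verification of the second identity of Proposition \ref{S}, where Lemma \ref{7} plays the same role of aligning the $\alpha$-exponents. Finally, although it is not part of the statement, I would note that feeding the decomposition of the first step into $\delta_{Hom,H}^{1,q+1}\circ\delta_{Hom,H}^{1,q}$ and cancelling the off-diagonal terms in pairs via the proved commutation relations reduces $\delta^2=0$ to the single diagonal family $D_a^{1,q+1}\circ D_a^{1,q}=D_{a+1}^{1,q+1}\circ D_a^{1,q}$, which is the associativity-bearing identity completing the third relation of Proposition \ref{S}.
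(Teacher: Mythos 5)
Your proposal is correct, but there is nothing in the paper to compare it against line by line: this proposition is stated without proof, and the paper's handling of the $p=1$ (Hochschild-type) identities elsewhere --- ``see \cite{F. A 2010}'' in the proofs of Proposition \ref{S} and Theorem \ref{G} --- shows it is implicitly delegating exactly this verification to that reference. Your argument is the standard pre-simplicial one, transplanted to the Hom-setting, and it supplies the missing details accurately. The regrouping giving $\delta _{Hom,H}^{1,q}=\sum_{i=0}^{q-1}(-1)^{i+1}D_{i}^{1,q}$ matches \eqref{(38)} sign by sign after your reindexing, and your case analysis of the commutation relations is complete, with the bookkeeping of structural inputs exactly right: the interior/interior and the two boundary cases use only $\alpha \circ \mu =\mu \circ \alpha ^{\otimes 2}$, while Hom-associativity \eqref{(1)} enters exactly once, in the action/action cross term of the mixed case $(i,j)=(0,q-1)$. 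Two of your side remarks actually improve the statement as printed: the range must be $0\leq i<j\leq q-1$ (for $j=q$ the operator $D_{q}^{1,q}$ is undefined), and the relations for $i<j$ alone do not yield $\delta ^{2}=0$; one also needs the diagonal family $D_{i}^{1,q+1}\circ D_{i}^{1,q}=D_{i+1}^{1,q+1}\circ D_{i}^{1,q}$, which carries the merge/merge instance of Hom-associativity --- a point the paper never makes explicit.

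One imprecision to repair in your write-up of the mixed case: you say Hom-associativity identifies $\alpha ^{m}(a_{1})\cdot \bigl(f(\cdots )\cdot \alpha ^{n}(a_{q+2})\bigr)$ with $\bigl(\alpha ^{m}(a_{1})\cdot f(\cdots )\bigr)\cdot \alpha ^{n}(a_{q+2})$ ``for the appropriate exponents''; with the \emph{same} exponents on both sides this is plain associativity, which fails here. The identity actually needed, and the one the expanded cross terms produce, is $\alpha ^{q}(a_{1})\cdot \bigl(f(\cdots )\cdot \alpha ^{q-1}(a_{q+2})\bigr)=\bigl(\alpha ^{q-1}(a_{1})\cdot f(\cdots )\bigr)\cdot \alpha ^{q}(a_{q+2})$, i.e.\ \eqref{(1)} applied to $\alpha ^{q-1}(a_{1})\otimes f(\cdots )\otimes \alpha ^{q-1}(a_{q+2})$, where the twisting map migrates from one outer factor to the other. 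With that exponent shift made explicit, your proof is complete.
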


\begin{proposition}
$\label{3}$Let $\ S_{i}^{p,q}:\mathcal{C}_{Hom}^{p,q}\left( B^{\otimes
q},B^{\otimes p}\right) \longrightarrow \mathcal{C}_{Hom}^{p+1,q}\left(
B^{\otimes q},B^{\otimes p+1}\right) $ be  linear operators defined for \ 
$g\in \mathcal{C}_{Hom}^{p,q}\left( B^{\otimes q},B^{\otimes p}\right) $ by: 
\begin{equation}
S_{i}^{p,q}\left( g\right) =\left\{ 
\begin{array}{c}
-(\alpha ^{p-1}\otimes g)\circ \rho _{l}^{q}+\left( \Delta \otimes \alpha
^{\otimes (p-1)}\right) \circ g\text{ \ \ }if\text{ \ }i=0, \\ 
\left( \alpha ^{\otimes i}\otimes \Delta \otimes \alpha ^{\otimes
(p-i-1)}\right) \circ g\text{ \ }if\text{ \ }\forall 1\leq i\leq p-2, \\ 
\left( \alpha ^{\otimes p-1}\otimes \Delta \right) \circ g-(g\otimes \alpha
^{p-1})\circ \rho _{r}^{q}\text{ \ \ }if\text{ \ \ }i=p-1.%
\end{array}%
\right. 
\end{equation}%
Then%
\begin{equation}
S_{i}^{p+1,1}\circ S_{j}^{p,1}=S_{j+1}^{p+1,1}\circ S_{i}^{p,1}\ \ \ 0\leq
i\leq j\leq n\ and\ \ \delta
_{Hom,C}^{p,1}=\sum\limits_{i=0}^{p-1}(-1)^{i+1}S_{i}^{p,1}.  \label{(43)}
\end{equation}
\end{proposition}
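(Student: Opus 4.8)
The plan is to prove the two assertions separately, since the decomposition of $\delta_{Hom,C}^{p,1}$ is purely combinatorial while the cosimplicial identities carry the coassociativity content. I would dispose of the decomposition first. Because $q=1$ forces $\rho_l^{1}=\rho_r^{1}=\Delta$, the defining expression for $\delta_{Hom,C}^{p,1}(g)$ consists of $(\alpha^{p-1}\otimes g)\circ\Delta$, the alternating sum $\sum_{j=1}^{p}(-1)^{j}(\alpha^{\otimes(j-1)}\otimes\Delta\otimes\alpha^{\otimes(p-j)})\circ g$, and $(-1)^{p+1}(g\otimes\alpha^{p-1})\circ\Delta$. Reindexing the middle sum by $i=j-1$ and comparing termwise with the definition of $S_i^{p,1}$, one checks that $(-1)^{1}S_0^{p,1}(g)$ reproduces $(\alpha^{p-1}\otimes g)\circ\Delta$ together with the $j=1$ summand $-(\Delta\otimes\alpha^{\otimes(p-1)})\circ g$, that $(-1)^{p}S_{p-1}^{p,1}(g)$ reproduces the $j=p$ summand together with $(-1)^{p+1}(g\otimes\alpha^{p-1})\circ\Delta$, and that each interior $(-1)^{i+1}S_i^{p,1}(g)$ with $1\le i\le p-2$ matches the $j=i+1$ summand exactly. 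No coassociativity is needed; this step is a verification of signs and tensor positions.

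For the identities $S_i^{p+1,1}\circ S_j^{p,1}=S_{j+1}^{p+1,1}\circ S_i^{p,1}$ with $0\le i\le j\le p-1$, I would split into cases according to whether the indices meet the boundary values $0$ and $p-1$. In the interior regime, where both operators act by inserting a single $\Delta$ strictly between the extreme tensor slots, the two sides merely record two comultiplications placed into disjoint slots in opposite orders. When $i<j$ the inserted positions are disjoint, so the equality reduces to the interchange law for $\otimes$, the surplus $\alpha$-factors being moved through the inserted coproducts by the Hom-coalgebra compatibility $\Delta\circ\alpha=\alpha^{\otimes 2}\circ\Delta$, and the index shift $j\mapsto j+1$ on the right accounts for the slot created by $S_i$. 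When $i=j$ the second comultiplication acts on a slot just split by the first, and the identity is exactly Hom-coassociativity \eqref{(7)} with $\beta=\alpha$.

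The real work lies in the boundary cases $i=0$ and $j=p-1$, where one of the composed operators contributes the terms built from $\rho_l^{1}=\Delta$ or $\rho_r^{1}=\Delta$ comultiplying the source $B$ rather than an output factor. Here I would invoke Lemma \ref{4} (with Lemma \ref{lemma1} as its $m=1$ prototype), which packages precisely the Hom-coassociativity relations among the source coproduct, the output coproducts, and the twist. Since any $g\in\mathcal{C}_{Hom}^{p,1}$ satisfies $g\circ\alpha=\alpha^{\otimes p}\circ g$, it plays the role of the map $f$ of Lemma \ref{4} with $m=p$ and $n=p$; identity (2) rewrites the source-comultiplied composite $(\alpha^{p}\otimes(\alpha^{p-1}\otimes g)\circ\Delta)\circ\Delta$ into the nested-output form $(\Delta\otimes\alpha^{\otimes p})(\alpha^{p-1}\otimes g)\circ\Delta$ produced on the right, and identity (3) does the symmetric job at the $\rho_r$ end. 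I expect the principal obstacle to be the corner $i=0,\,j=p-1$, where both boundary operators split into two terms so that each side becomes a sum of four composites: aligning the powers of $\alpha$ across these terms forces the lemma identities to be applied in exactly the configuration in which they are stated, and the bookkeeping must keep the source-side and output-side coproducts separated throughout. Once this corner is settled the remaining boundary subcases follow by the same substitutions. The whole argument is the coalgebra dual of Proposition \ref{P}, with $\mu,\lambda_l,\lambda_r$ replaced by $\Delta,\rho_l,\rho_r$ and the associativity inputs replaced by their coassociative counterparts in Lemmas \ref{lemma1} and \ref{4}.
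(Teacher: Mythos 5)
You should know at the outset that the paper offers no proof of this proposition to compare against: it is stated bare (as is its horizontal companion for the operators $D_i^{p,q}$) and is then invoked, through \eqref{(43)}, in the proof of Theorem \ref{G}; the only instance the authors actually prove is the low-degree computation of Proposition \ref{S}, which rests on Lemmas \ref{lemma1} and \ref{4}. Your argument is therefore not an alternative route but a filling-in of an omission, and it is correct; it is precisely the general-degree extension of the computation the paper does carry out. Your reduction of the decomposition $\delta_{Hom,C}^{p,1}=\sum_{i=0}^{p-1}(-1)^{i+1}S_i^{p,1}$ to bookkeeping is right: $-S_0^{p,1}$ accounts for the $\rho_l^{1}=\Delta$ term of \eqref{(39)} together with its $j=1$ summand, $(-1)^{p}S_{p-1}^{p,1}$ for the $j=p$ summand together with the $\rho_r^{1}=\Delta$ term, and the interior operators for the remaining summands under the shift $j=i+1$. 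Your case split for the commutation identities (with the bound $n$ in \eqref{(43)} read, correctly, as $p-1$) also places the hypotheses exactly where they are needed: $i=j$ interior is Hom-coassociativity \eqref{(7)} with $\beta=\alpha$; $i<j$ interior, and the mixed cases where only one index is extreme, use nothing beyond the multiplicativity $\Delta\circ\alpha=\alpha^{\otimes 2}\circ\Delta$ to trade $\Delta\circ\alpha$ for $\alpha^{\otimes 2}\circ\Delta$ in the two affected slots; $i=j=0$ is Lemma \ref{4}(2) with $f=g$ and $m=n=p$, which makes the two cross terms $(\alpha^{p}\otimes(\alpha^{p-1}\otimes g)\circ\Delta)\circ\Delta$ and $(\Delta\otimes\alpha^{\otimes p})\circ(\alpha^{p-1}\otimes g)\circ\Delta$ cancel (note that both arise on the same side, inside $S_0^{p+1,1}\circ S_0^{p,1}$, and cancel there, a small correction to your phrasing that one of them is ``produced on the right''); $i=j=p-1$ is symmetric via Lemma \ref{4}(3); and in the corner $i=0$, $j=p-1$ each side expands into four composites, two pairs matching on the nose, one pair by $\Delta\circ\alpha=\alpha^{\otimes 2}\circ\Delta$, and the last pair, $(\alpha^{p}\otimes(g\otimes\alpha^{p-1})\circ\Delta)\circ\Delta=((\alpha^{p-1}\otimes g)\circ\Delta\otimes\alpha^{p})\circ\Delta$, being exactly Remark \ref{5}. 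Writing out these displayed verifications would turn your sketch into the complete proof that the paper itself does not supply.
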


\begin{theorem}
\label{G}Let $B=\left( B,\mu ,\eta ,\Delta ,\varepsilon ,\alpha \right) $ be
a Hom-bialgebra and $\delta _{Hom,H}^{p,q}:\mathcal{C}_{Hom}^{p,q}%
\longrightarrow \mathcal{C}_{Hom}^{p,q+1}$, $\delta _{Hom,C}^{p,q}:\mathcal{C%
}_{Hom}^{p,q}\longrightarrow \mathcal{C}_{Hom}^{p+1,q}$ be  the operators
defined in ($\ref{(38)}$), ($\ref{(39)}$).  \\ Then $\left( \mathcal{C}%
_{Hom}^{p,q},\delta _{Hom,H}^{p,q},\delta _{Hom,C}^{p,q}\right) $ is a
bicomplex 
%(see \cite{P.B 90}, and \cite{Y.D 98}),
i.e.%
\begin{equation}
\delta _{Hom,C}^{p+1,q}\circ \delta _{Hom,C}^{p,q}=0 \text{ and  }\delta
_{Hom,H}^{p,q+1}\circ \delta _{Hom,H}^{p,q}=0.
\end{equation}
\end{theorem}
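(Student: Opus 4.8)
The plan is to recognize each of the two differentials as an alternating sum of cosimplicial coface operators and then invoke the standard fact that such a sum squares to zero. For fixed $p$, the row complex $\left(\mathcal{C}_{Hom}^{p,\bullet},\delta_{Hom,H}^{p,\bullet}\right)$ is precisely the Hom-Hochschild cochain complex of the Hom-associative algebra $\left(B,\mu,\eta,\alpha\right)$ with coefficients in the interior $B$-bimodule $\left(B^{\otimes p},\lambda_l^p,\lambda_r^p\right)$ constructed in Section~2; dually, for fixed $q$, the column complex $\left(\mathcal{C}_{Hom}^{\bullet,q},\delta_{Hom,C}^{\bullet,q}\right)$ is the Hom-Cartier cochain complex of the Hom-coassociative coalgebra $\left(B,\Delta,\varepsilon,\alpha\right)$ with coefficients in the interior $B$-bicomodule $\left(B^{\otimes q},\rho_l^q,\rho_r^q\right)$. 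Comparing the defining formula \eqref{(38)} with the operators $D_i^{p,q}$ of Proposition~\ref{P} yields the decomposition $\delta_{Hom,H}^{p,q}=\sum_{i=0}^{q-1}(-1)^{i+1}D_i^{p,q}$ for \emph{every} $p$ (not only $p=1$), and likewise \eqref{(39)} compared with the operators $S_i^{p,q}$ of Proposition~\ref{3} gives $\delta_{Hom,C}^{p,q}=\sum_{i=0}^{p-1}(-1)^{i+1}S_i^{p,q}$ for every $q$.

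First I would treat the horizontal identity $\delta_{Hom,H}^{p,q+1}\circ\delta_{Hom,H}^{p,q}=0$. The key step is to promote the presimplicial relations of Proposition~\ref{P} from the case $p=1$ to arbitrary $p$, that is, to establish
\[
D_i^{p,q+1}\circ D_j^{p,q}=D_{j+1}^{p,q+1}\circ D_i^{p,q}\qquad(0\le i<j),
\]
for all admissible indices. For the purely interior operators the verification uses only Hom-associativity \eqref{(1)} of $\mu$ and is literally the $p=1$ computation, since these operators do not involve $p$. The boundary cases, in which $D_0^{p,q}$ (carrying $\lambda_l^p$) or $D_{q-1}^{p,q}$ (carrying $\lambda_r^p$) occurs, use instead the left- and right-module axioms and the compatibility condition \eqref{(20)} of the interior bimodule $\left(B^{\otimes p},\lambda_l^p,\lambda_r^p\right)$, which hold by the interior-tensor-power construction of Section~2. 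Granting these relations, the conclusion follows from the purely combinatorial lemma that an alternating sum of cofaces obeying presimplicial identities squares to zero: writing $\delta_{Hom,H}^{p,q+1}\circ\delta_{Hom,H}^{p,q}=\sum_{i,j}(-1)^{i+j}\,D_i^{p,q+1}\circ D_j^{p,q}$ and using the relation to match each pair $(i,j)$ with $i<j$ against the pair of opposite sign, all terms cancel.

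Next I would dualize this argument for the vertical identity $\delta_{Hom,C}^{p+1,q}\circ\delta_{Hom,C}^{p,q}=0$. Here the task is to extend the presimplicial relations of Proposition~\ref{3} from $q=1$ to arbitrary $q$, namely $S_i^{p+1,q}\circ S_j^{p,q}=S_{j+1}^{p+1,q}\circ S_i^{p,q}$ for $0\le i<j$. The interior coface operators rely solely on Hom-coassociativity \eqref{(7)} of $\Delta$, for which Lemma~\ref{lemma1} and Lemma~\ref{4} supply exactly the coproduct manipulations needed; these are the same identities already exploited in the corner case $\delta_{Hom,C}^{2,1}\circ\delta_{Hom,C}^{1,1}=0$ of Proposition~\ref{S}. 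The boundary coface operators $S_0^{p,q}$ and $S_{p-1}^{p,q}$ invoke the comodule axioms \eqref{(21)}, \eqref{(23)} and the bicomodule compatibility \eqref{(25)} of the interior $B$-bicomodule $\left(B^{\otimes q},\rho_l^q,\rho_r^q\right)$ from Section~2. The same alternating-sum cancellation then delivers the vertical square-zero.

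The main obstacle is the verification of the boundary presimplicial relations in full generality. The interior--interior relations are immediate from (co)associativity and are independent of $(p,q)$, but the cases involving $D_0/D_{q-1}$ (respectively $S_0/S_{p-1}$), where the module actions $\lambda_l^p,\lambda_r^p$ (respectively coactions $\rho_l^q,\rho_r^q$) meet the interior $\mu$ (respectively $\Delta$) insertions, demand careful use of the interior bimodule and bicomodule axioms together with a correct accounting of how the twistings $\alpha^{\otimes(\cdot)}$ redistribute across the tensor factors. Once these relations are secured, the passage from the presimplicial identities to $\delta^2=0$ is the standard sign-cancellation and presents no further difficulty.
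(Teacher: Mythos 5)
Your proposal is correct and follows essentially the same route as the paper: both decompose the differentials into the coface operators $S_{i}^{p,q}$ and $D_{i}^{p,q}$, promote the presimplicial identities to general bidegree, and conclude by the standard alternating-sum sign cancellation. The only difference is presentational: the paper carries out this cancellation explicitly only for $\delta _{Hom,C}$ and cites the Hom-associative cohomology paper of Ammar--Ejbehi--Makhlouf for $\delta _{Hom,H}$, whereas you verify the horizontal identity directly via the interior bimodule structure, which is precisely what that reference supplies.
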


\begin{proof}
We prove the first identity.%
\begin{align*}
& \delta _{Hom,C}^{p+1,q}\circ \delta _{Hom,C}^{p,q} =\left(
\sum\limits_{i=0}^{p}(-1)^{i+1}S_{i}^{p+1,q}\right) \circ \left(
\sum\limits_{j=0}^{p-1}(-1)^{j+1}S_{j}^{p,q}\right) \\
&=\sum\limits_{i=0}^{p}\sum\limits_{j=0}^{p-1}(-1)^{i+j}S_{i}^{p+1,q}%
\circ S_{j}^{p,q} 
=\sum\limits_{0\leq j<i\leq n}(-1)^{i+j}S_{i}^{p+1,q}\circ
S_{j}^{p,q}+\sum\limits_{0\leq i\leq j\leq n-1}(-1)^{i+j}S_{i}^{p+1,q}\circ
S_{j}^{p,q} \\
&\overset{\eqref{(43)}}{=}\sum\limits_{0\leq j<i\leq
n}(-1)^{i+j}S_{i}^{p+1,q}S_{j}^{p,q}+\sum\limits_{0\leq i\leq j\leq
n-1}(-1)^{i+j}S_{j+1}^{p+1,q}S_{i}^{p,q} \\
&=\sum\limits_{0\leq j<i\leq
n}(-1)^{i+j}S_{i}^{p+1,q}S_{j}^{p,q}+\sum\limits_{0\leq i<k\leq
n}(-1)^{i+k-1}S_{k}^{p+1,q}S_{i}^{p,q}=0.
\end{align*}

For the second identity, see (\cite{F. A 2010}).
\end{proof}

There is a canonical way to construct a complex from a given bicomplex.
The cochains are given by
\begin{equation*}
\mathcal{\hat{C}}_{Hom}=\sum_{n}\oplus \mathcal{\hat{C}}_{Hom}^{n},\quad 
\mathcal{\hat{C}}_{Hom}^{n}=\sum_{p+q=n+1,p,q\geq 1}\oplus \mathcal{C}%
_{Hom}^{p,q},\quad \mathcal{C}_{Hom}^{p,q}=Hom_{\Bbbk }\left( B^{\otimes
q},B^{\otimes p}\right) \text{ }n\geq 1.
\end{equation*}

The coboundary operator is $\delta _{Hom}^{n} :\mathcal{\hat{C}}_{Hom}^{n}\longrightarrow \mathcal{%
\hat{C}}_{Hom}^{n+1}$ defined as
\begin{eqnarray}
{\delta _{Hom}^{n}}_{ \left\vert \mathcal{C}_{Hom}^{n+1-q,q}\right.  }&=&\delta
_{Hom,H}^{p,q}\oplus \left( -1\right) ^{q}\delta _{Hom,C}^{p,q},\quad 1\leq
q\leq n,\text{ }p=n+1-q.
\end{eqnarray}

Hence, for each $n\geq 1$, we get a complex%
\begin{equation*}
\begin{array}{c}
0\longrightarrow \mathcal{\hat{C}}_{Hom}^{1}\longrightarrow ^{\delta
_{Hom}^{1}}\mathcal{\hat{C}}_{Hom}^{2}\longrightarrow ^{\delta _{Hom}^{2}}%
\mathcal{\hat{C}}_{Hom}^{3}\longrightarrow ^{\delta _{Hom}^{3}}\mathcal{\hat{%
C}}_{Hom}^{4}\cdots%
\end{array}%
\end{equation*}

\begin{remark}
The composite $\delta _{Hom}^{2}\circ \delta _{Hom}^{1}=0$,  according to 
Proposition $\ref{S}$.
\end{remark}

We define the $n-th$ cohomology group of the above complex to be the Hom-bialgebra cohomology of $B$, which will be denoted by $H_{Hom}^{n}\left(
B,B\right) $, $n\geq 1$.

The kernel of $\delta _{Hom}^{n}$ in $\mathcal{\hat{C}}_{Hom}^{n}$\ is the
space of $n$-cocycles  defined by:%
\begin{equation}
Z_{Hom}^{n}\left( B,B\right) =\left\{ \rho \in \mathcal{\hat{C}}%
_{Hom}^{n}:\quad \delta _{Hom}^{n}\left( \rho \right) =0\right\} .
\end{equation}

The image of $\delta _{Hom}^{n}$ is the space of $n$-coboundaries  defined
by:%
\begin{equation}
B_{Hom}^{n}\left( B,B\right) =\left\{ \rho \in \mathcal{\hat{C}}%
_{Hom}^{n}:~\quad \rho =\delta _{Hom}^{n-1}\left( \psi \right) ,\quad \psi
\in \mathcal{\hat{C}}_{Hom}^{n-1}\right\} .
\end{equation}

The Gerstenhaber-Shack cohomology group of the Hom-bialgebra $B=\left( B,\mu ,\eta
,\Delta ,\varepsilon ,\alpha \right) $ with coefficient in it self is%
\begin{equation}
H_{Hom}^{n}\left( B,B\right) =Z_{Hom}^{n}\left( B,B\right)
/B_{Hom}^{n}\left( B,B\right) .
\end{equation}

In particular,

\begin{equation*}
 H_{Hom}^{1}\left( B,B\right) =\left\{ f:B\longrightarrow B: \text{ }%
\delta _{Hom,H}^{0,1}\left( f\right) =0\text{ and }\delta
_{Hom,C}^{1,0}\left( f\right) =0\right\} ,
\end{equation*}

where%
\begin{eqnarray*}
\delta _{Hom,H}^{0,1}\left( f\right)  &=&\mu \circ (id_{B}\otimes f)-f\circ
\mu +\mu \circ (g\otimes id_{B}) , \\
\delta _{Hom,C}^{1,0}\left( f\right)  &=&(id_{B}\otimes f)\circ \Delta
-\Delta \circ f+(f\otimes id_{B})\circ \Delta .
\end{eqnarray*}

%It is very useful to write out $H_{Hom}^{2}\left( B,B\right) $ and $%
%H_{Hom}^{3}\left( B,B\right) $ explicitly from the definition.

The  cohomology groups $H_{Hom}^{2}\left( B,B\right) $ and $
H_{Hom}^{3}\left( B,B\right) $ play an important role in deformation theory.
%The cohomology group 
%\begin{equation*}
% H_{Hom}^{2}\left( B,B\right) =Z_{Hom}^{2}\left( B,B\right)
%/B_{Hom}^{2}\left( B,B\right) ,
%\end{equation*}%
%where%
We provide their explicit definitions.
\begin{equation*}
\bullet \ Z_{Hom}^{2}\left( B,B\right) =\left\{ (f,g)\in \mathcal{\hat{C}}%
_{Hom}^{2},\text{ }\delta _{Hom,H}^{1,2}\left( f\right) =0,\ \delta
_{Hom,C}^{1,2}\left( f\right) +\delta _{Hom,H}^{2,1}\left( g\right) =0,\
\delta _{Hom,C}^{2,1}\left( g\right) =0\right\} ,
\end{equation*}%
where for $f:B\otimes B$ $\longrightarrow B$ and $g:
B\longrightarrow B\otimes B$, we have 
\begin{equation*}
\delta _{Hom,H}^{1,2}\left( f\right) =\lambda _{l}^{1}\circ (\alpha \otimes
f)-f\circ \left( \mu \otimes \alpha \right) +f\circ \left( \alpha \otimes
\mu \right) -\lambda _{r}^{1}\circ (f\otimes \alpha );
\end{equation*}%
\begin{eqnarray*}
\delta _{Hom,C}^{1,2}\left( f\right) +\delta _{Hom,H}^{2,1}\left( g\right) 
&=&\left( (id_{B}\otimes f)\circ \rho _{l}^{2}-\Delta \circ f+(f\otimes
id_{B})\circ \rho _{r}^{2}\right) + \\
&&(\lambda _{l}^{2}\circ (id_{B}\otimes g)-g\circ \mu +\lambda _{r}^{2}\circ
(g\otimes id_{B})),
\end{eqnarray*}%
\begin{equation*}
\delta _{Hom,C}^{2,1}\left( g\right) =(\alpha \otimes g)\circ \rho
_{l}^{1}-\left( \Delta \otimes \alpha \right) \circ g+\left( \alpha \otimes
\Delta \right) \circ g-\left( f\otimes \alpha \right) \circ \rho _{r}^{1},
\end{equation*}%
where $\rho _{l}^{1}=\rho _{r}^{1}=\Delta ,$ $\lambda _{l}^{1}=\lambda
_{r}^{1}=\mu .$ 
\begin{equation*}
 \bullet \ B_{Hom}^{2}\left( B,B\right) =\left\{ (f,g)\in \mathcal{\hat{C}}%
_{Hom}^{2},\text{ }\exists h:B\longrightarrow B,\text{ }f=\delta
_{Hom,H}^{0,1}\left( h\right) ,\text{ }g=\delta _{Hom,C}^{1,0}\left(
h\right) \right\} ,
\end{equation*}%
where%
\begin{eqnarray*}
\delta _{Hom,H}^{0,1}\left( h\right)  &=&\mu \circ (id_{B}\otimes h)-h\circ
\mu +\mu \circ (h\otimes id_{B}), \\
\delta _{Hom,C}^{1,0}\left( h\right)  &=&(id_{B}\otimes h)\circ \Delta
-\Delta \circ h+\left( h\otimes id_{B}\right) \circ \Delta .
\end{eqnarray*}%
%The cohomology group 
%\begin{equation*}
% H_{Hom}^{3}\left( B,B\right) =Z_{Hom}^{3}\left( B,B\right)
%/B_{Hom}^{3}\left( B,B\right) ,
%\end{equation*}%
%where%
\begin{equation*}
\bullet \ Z_{Hom}^{3}\left( B,B\right) =\left\{ 
\begin{array}{c}
(F,H,G)\in \mathcal{\hat{C}}_{Hom}^{3},\text{ }\delta _{Hom,H}^{1,3}\left(
F\right) =0,\delta _{Hom,H}^{2,2}\left( H\right) -\delta
_{Hom,C}^{1,3}\left( F\right) =0, \\ 
\delta _{Hom,C}^{2,2}\left( H\right) +\delta _{Hom,H}^{3,1}\left( G\right)
=0,\delta _{Hom,C}^{3,1}\left( G\right) =0%
\end{array}%
\right\},
\end{equation*}
and%
\begin{equation*}
 \bullet \   B_{Hom}^{3}\left( B,B\right) =\left\{ 
\begin{array}{c}
(F,H,G)\in \mathcal{\hat{C}}_{Hom}^{3},\text{ }\exists \left( f,g\right) \in 
\mathcal{\hat{C}}_{Hom}^{2},,\text{ }F=\delta _{Hom,H}^{1,2}\left( f\right) ,
\\ 
H=\delta _{Hom,C}^{1,2}\left( f\right) +\delta _{Hom,H}^{2,1}\left( g\right)
,G=\delta _{Hom,C}^{2,1}\left( g\right) 
\end{array}%
\right\},
\end{equation*}%
where  $F:B\otimes B\otimes B$ $\longrightarrow B$, $H:$ $B\otimes
B\longrightarrow B\otimes B$ and $G:B\longrightarrow B\otimes B\otimes B.$

%\begin{lemma}
%$B_{Hom}^{2}\left( B,B\right) \subset Z_{Hom}^{2}\left( B,B\right) $ because 
%$\delta _{Hom}^{2}\circ \delta _{Hom}^{1}\left( \rho \right) =0.$
%\end{lemma}

\begin{example}
We consider $(T_{2})_{\lambda }$, the $4$-dimensional Taft-Sweedler
Hom-bialgebra defined in Example \ref{Taft} for which we compute for  $\lambda \neq 1$ and $\lambda \neq 0,$  the first cohomology groups.

The space of $1$-cohomology classes of $\ (T_{2})_{\lambda }$ 
\[
H_{Hom}^{1}\left( (T_{2})_{\lambda },(T_{2})_{\lambda }\right) =\left\{ f:(T_{2})_{\lambda }\longrightarrow (T_{2})_{\lambda }\text{: }\delta
_{Hom,H}^{1,1}\left( f\right) =0\text{ and }\delta _{Hom,C}^{1,1}\left(
f\right) =0\right\}
\]%
The elements are defined with respect to a basis $\{e_{1},e_{2},e_{3},e_{4}\}$ by 
\[
f\left( e_{1}\right) =0,\text{ }f\left( e_{2}\right) =0,f\left( e_{3}\right)
=ae_{3},f\left( e_{4}\right) =a e_{4},\text{where }a \text{ is a free parameter.}
\]

The 2-cocycles of the Hom-bialgebras $(T_{2})_{\lambda }$ 
\begin{small}
\[
 Z_{Hom}^{2}\left( (T_{2})_{\lambda },(T_{2})_{\lambda }\right) =\left\{ (f,g)\in \mathcal{\hat{C}}%
_{Hom}^{2} : \text{ }\delta _{Hom,H}^{1,2}\left( f\right) =0,\delta
_{Hom,C}^{2,1}\left( g\right) =0,\ \delta _{Hom,C}^{1,2}\left( f\right)
+\delta _{Hom,H}^{2,1}\left( g\right) =0\right\}.
\]%
\end{small}
They are defined with respect to the basis $\{e_{1},e_{2},e_{3},e_{4}\}$, by the table which 
describes multiplying the $i^{th}$ row elements by the $j^{th}$ column elements
 with respect to the same basis :%
\[
\begin{array}{|c|c|c|c|c|}
\hline
f & e_{1} & e_{2} & e_{3} & e_{4} \\ \hline
e_{1} & a\left( e_{1}+e_{2}\right) & a\left( e_{1}+e_{2}\right) & \lambda
a\left( e_{3}+e_{4}\right) & \lambda a\left( e_{3}+e_{4}\right) \\ \hline
e_{2} & a\left( e_{1}+e_{2}\right) & a\left( e_{1}-3e_{2}\right) & \lambda
\left( ce_{4}-ae_{3}\right) & \lambda \left( \left( 2a-c\right)
e_{3}-ae_{4}\right) \\ \hline
e_{3} & \lambda a\left( e_{3}-e_{4}\right) & -\lambda \left(
ae_{3}+ce_{4}\right) & 0 & 0 \\ \hline
e_{4} & \lambda a\left( e_{4}-e_{3}\right) & -\lambda \left( \left(
2a-c\right) e_{3}+ae_{4}\right) & 0 & 0 \\ \hline
\end{array}%
\]%
and
\begin{align*}
& g\left( e_{1}\right) =-a\left( e_{1}\otimes e_{1}+e_{1}\otimes
e_{2}+e_{2}\otimes e_{1}-e_{2}\otimes e_{2}\right) ,\\
& g\left( e_{2}\right) =-a\left( e_{1}\otimes e_{1}-e_{1}\otimes
e_{2}-e_{2}\otimes e_{1}+e_{2}\otimes e_{2}\right) ,\\
& g\left( e_{3}\right) =\lambda a\left( e_{1}\otimes e_{3}-e_{2}\otimes
e_{3}-e_{3}\otimes e_{1}-e_{3}\otimes e_{2}\right) ,\\
& g\left( e_{4}\right) =-\lambda a\left( e_{1}\otimes e_{4}+e_{2}\otimes
e_{4}-e_{4}\otimes e_{1}+e_{4}\otimes e_{2}\right) .
\end{align*}

The space of $2$-coboundaries of the Hom-bialgebra $(T_{2})_{\lambda }$%
\emph{\ }is defined by 
\[
 B_{Hom}^{2}\left( (T_{2})_{\lambda },(T_{2})_{\lambda }\right) =\left\{ (f,g)\in \mathcal{\hat{C}}%
_{Hom}^{2},\text{ }\exists h:(T_{2})_{\lambda }\longrightarrow (T_{2})_{\lambda },\text{ }f=\delta
_{Hom,H}^{1,1}\left( h\right) ,\text{ }g=\delta _{Hom,C}^{1,1}\left(
h\right) \right\} 
\]%
such that%
\[
\begin{array}{|c|c|c|c|c|}
\hline
f & e_{1} & e_{2} & e_{3} & e_{4} \\ \hline
e_{1} & 0 & 0 & 0 & 0 \\ \hline
e_{2} & 0 & 0 & \lambda ce_{4} & -\lambda ce_{3} \\ \hline
e_{3} & 0 & -\lambda ce_{4} & 0 & 0 \\ \hline
e_{4} & 0 & \lambda ce_{3} & 0 & 0 \\ \hline
\end{array}%
\]%
and $g\left( e_{i}\right) =0$ for $i\in \left\{ 1,2,3,4\right\} ,$ where $%
\lambda ,a,c\in \Bbbk $ are free parameters.

The $2^{th}$ cohomology group of  $(T_{2})_{\lambda }$ is
the quotient $$H_{Hom}^{2}\left( (T_{2})_{\lambda },(T_{2})_{\lambda }\right) =Z_{Hom}^{2}\left( (T_{2})_{\lambda },(T_{2})_{\lambda }\right)
/B_{Hom}^{2}\left( (T_{2})_{\lambda },(T_{2})_{\lambda }\right),$$ which is  defined, with respect to the basis $%
\{e_{1},e_{2},e_{3},e_{4}\}$, by%
\[
\begin{array}{|c|c|c|c|c|}
\hline
f & e_{1} & e_{2} & e_{3} & e_{4} \\ \hline
e_{1} & a\left( e_{1}+e_{2}\right)  & a\left( e_{1}+e_{2}\right)  & \lambda
a\left( e_{3}+e_{4}\right)  & \lambda a\left( e_{3}+e_{4}\right)  \\ \hline
e_{2} & a\left( e_{1}+e_{2}\right)  & a\left( e_{1}-3e_{2}\right)  & 
-\lambda ae_{3} & \lambda a\left( 2e_{3}-e_{4}\right)  \\ \hline
e_{3} & \lambda a\left( e_{3}-e_{4}\right)  & -\lambda ae_{3} & 0 & 0 \\ 
\hline
e_{4} & \lambda a\left( e_{4}-e_{3}\right)  & -\lambda a\left(
2e_{3}+e_{4}\right)  & 0 & 0 \\ \hline
\end{array}%
\]%
and%
\begin{align*}
& g\left( e_{1}\right) =-a\left( e_{1}\otimes e_{1}+e_{1}\otimes
e_{2}+e_{2}\otimes e_{1}-e_{2}\otimes e_{2}\right) ,\\
& g\left( e_{2}\right) =-a\left( e_{1}\otimes e_{1}-e_{1}\otimes
e_{2}-e_{2}\otimes e_{1}+e_{2}\otimes e_{2}\right) ,\\
& g\left( e_{3}\right) =\lambda a\left( e_{1}\otimes e_{3}-e_{2}\otimes
e_{3}-e_{3}\otimes e_{1}-e_{3}\otimes e_{2}\right) ,\\
& g\left( e_{4}\right) =-\lambda a\left( e_{1}\otimes e_{4}+e_{2}\otimes
e_{4}-e_{4}\otimes e_{1}+e_{4}\otimes e_{2}\right) .
\end{align*}
\end{example}

\section{Formal Deformations of Hom-bialgebras}

We discuss, in this section, a deformation theory for Hom-bialgebras following Gerstenhaber's  approach. Let $(B,\mu ,\eta, \Delta ,\varepsilon, \beta )$ be a
Hom-bialgebra and  $\Bbbk \lbrack \lbrack t]]$ be the power series ring in
one variable $t$ and coefficients in $\Bbbk $.  Let $B[[t]]$ be the set of
formal power series whose coefficients are elements of $B$ (note that $B[[t]]
$ is obtained by extending the coefficients domain of $B$ from $\Bbbk $ to $%
\Bbbk \lbrack \lbrack t]]$). Then $B[[t]]$ is a $\Bbbk \lbrack \lbrack t]]$%
-module and when $B$  is finite dimensional, we have $B[[t]]=B\otimes _{\Bbbk
}\Bbbk \lbrack \lbrack t]]$. Notice that $B$ is a submodule of $B[[t]]$.

\begin{definition}
Let $B=\left( B,\mu _{0},\eta _{0},\Delta _{0},\varepsilon _{0},\alpha
_{0}\right) $ be a Hom-bialgebra. A formal Hom-bialgebra deformation of $B$
over $\Bbbk \lbrack \lbrack t]]$ consists of  $\Bbbk \lbrack \lbrack t]]$%
-bilinear maps 
\begin{equation*}
\mu _{t}=\sum\limits_{i\geq 0}\mu _{i}t^{i\text{ }}:B\otimes
B\longrightarrow B[[t]],\quad with\quad \mu _{i}:B\otimes B\longrightarrow B,
\end{equation*}%
\begin{equation*}
\Delta _{t}=\sum\limits_{j\geq 0}\Delta _{j}t^{j\text{ }}:B\longrightarrow
B[[t]]\otimes B[[t]],\quad with\quad \Delta _{j}:B\longrightarrow B\otimes B,
\end{equation*}%
\begin{equation*}
\eta _{t}=\sum\limits_{i\geq 0}\eta _{i}t^{i\text{ }}:B\longrightarrow
\Bbbk \lbrack \lbrack t]],\quad \varepsilon _{t}=\sum\limits_{j\geq
0}\varepsilon _{j}t^{j\text{ }}:B[[t]]\longrightarrow \Bbbk ,
\end{equation*}%
where we  assume 
$\eta _{t}\left( t\right) =t$ and $\varepsilon _{t}\left( t\right) =t,$ 
 and 
\begin{equation*}
\alpha _{t}=\sum\limits_{k\geq \grave{a}}\alpha _{k}t^{k\text{ }%
}:B\longrightarrow B[[t]],\quad with\quad \alpha _{k}:B\longrightarrow B,
\end{equation*}%
such that   $B_{t}=\left( B[[t]],\mu _{t},\eta _{t},\Delta
_{t},\varepsilon _{t},\alpha _{t}\right) $ is  a Hom-bialgebra.
\end{definition}

If we study only deformations of $B=\left( B,\mu _{0},\eta _{0},\Delta
_{0},\varepsilon _{0},\alpha _{0}\right) $ in which the unit and the counit are conserved, that is  $B_{t}=\left( B%
\left[ \left[ t\right] \right] ,\mu _{t},\eta _{0},\Delta _{t},\varepsilon
_{0},\alpha _{t}\right) $, then we as consider a deformation as a triple
 $\left( \mu _{t},\Delta _{t},\alpha _{t}\right) $ 
 satisfying 
 %which endow  $B[[t]]$ with a  Hom-bialgebra structure over $\Bbbk [[t]]$, i.e.%
\begin{equation}
\left\{ 
\begin{array}{c}
 \mu _{t}\circ \left( \alpha_t \otimes \mu _{t}\right) =\mu _{t}\circ
\left( \mu _{t}\otimes \alpha_t \right) \qquad (\text{ formal
Hom-associativity}), \\ 
 \left( \Delta _{t}\otimes \alpha_t \right) \circ \Delta _{t}=\left(
\alpha_t \otimes \Delta _{t}\right) \circ \Delta _{t}\qquad (\text{ formal 
Hom-coassociativity}), \\ 
 \Delta _{t}\circ \mu _{t}=\mu _{t}^{\otimes 2}\circ \left(
id_{B}\otimes \tau \otimes id_{B}\right) \circ \Delta _{t}^{\otimes 2}\qquad
(\text{ formal compatibility}).%
\end{array}%
\right.   \label{(53)}
\end{equation}
The previous system is called deformation equation.
\subsection{Deformations equation}

Now, we discuss the deformation equation in terms of cohomology. The first
problem is to give conditions about $\mu _{i},$ $\Delta _{j}$ and $\alpha
_{k}$\ such that the deformation $\left( \mu _{t},\Delta _{t},\alpha
_{t}\right) $ satisfies Hom-associativity, Hom-coassociativity and compatibility conditions.

We study  equations ($\ref{(53)}$) and thus characterize the deformations
of Hom-bialgebras. The  coefficients of $t^{s}$
yields :

\begin{equation*}
\left\{ 
\begin{array}{c}
\sum\limits_{\substack{ i+j+k=s  \\ i,j,k\geq 0}}\ \left( \mu _{i}\circ
\left( \mu _{j}\otimes \alpha _{k}\right) -\mu _{i}\circ \left( \alpha
_{k}\otimes \mu _{j}\right) \right) =0\text{ \ \ \ \ }s=0,1,2,\cdots \\ 
\sum\limits_{\substack{ i+j+k=s  \\ i,j,k\geq 0}}\ \left( \Delta
_{j}\otimes \alpha _{k}\right) \circ \Delta _{i}-\left( \alpha _{k}\otimes
\Delta _{j}\right) \circ \Delta _{i}=0\text{ \ \ \ \ }s=0,1,2,\cdots \\ 
\sum\limits_{\substack{ i+j=s  \\ i,j\geq 0}}\left( \Delta _{i}\circ \mu
_{j}\right) =\sum\limits_{\substack{ i+j+k+r=s  \\ i,j,k,r\geq 0}}%
\left( \mu _{i}\otimes \mu _{j}\right) \circ \tau _{2,3}\circ \left( \Delta
_{k}\otimes \Delta _{r}\right) \text{ \ \ \ \ }s=0,1,2,\cdots%
\end{array}%
\right.
\end{equation*}

This infinite system, called the deformation equation, gives the necessary
and sufficient conditions for $B_{t}$ to be a  Hom-bialgebra. It may be written%
\begin{equation*}
\left\{ 
\begin{array}{c}
\sum\limits_{i=0}^{s}\sum\limits_{j=0}^{s-i}\ \left( \mu _{i}\circ \left(
\alpha _{j}\otimes \mu _{s-i-j}\right) -\mu _{i}\circ \left( \mu
_{s-i-j}\otimes \alpha _{j}\right) \right) =0\text{ \ \ \ \ }s=0,1,2,\cdots \\ 
\sum\limits_{i=0}^{s}\sum\limits_{j=0}^{s-i}\ \left( \Delta
_{s-i-j}\otimes \alpha _{j}\right) \circ \Delta _{i}-\left( \alpha
_{j}\otimes \Delta _{s-i-j}\right) \circ \Delta _{i}=0\text{ \ \ \ \ }%
s=0,1,2,\cdots\\ 
\sum\limits_{i=0}^{s}\left( \left( \Delta _{i}\circ \mu _{s-i}\right)
-\sum\limits_{j=0}^{s-i}\sum\limits_{k=0}^{s-i-j}\left( \mu _{i}\otimes
\mu _{j}\right) \circ \tau _{2,3}\circ \left( \Delta _{k}\otimes \Delta
_{s-i-j-k}\right) \right) =0\text{ \ \ \ \ }s=0,1,2,\cdots%
\end{array}%
\right.
\end{equation*}

%\begin{definition}
We call\ $\alpha $-associator the map%
\begin{equation*}
Hom\left( B^{\otimes 2},B\right) \times Hom\left( B^{\otimes 2},B\right)
\longrightarrow Hom\left( B^{\otimes 3},B\right) ,\ \left( \mu _{i},\mu
_{j}\right) \longmapsto \mu _{i}\circ _{\alpha }\mu _{j}
\end{equation*}%
defined by%
\begin{equation*}
\mu _{i}\circ _{\alpha }\mu _{j}=\mu _{i}\circ \left( \alpha \otimes \mu
_{j}\right) -\mu _{i}\circ \left( \mu _{j}\otimes \alpha \right) .
\end{equation*}

We call\ $\alpha $-coassociator the map%
\begin{equation*}
Hom\left( B,B^{\otimes 2}\right) \times Hom\left( B,B^{\otimes 2}\right)
\longrightarrow Hom\left( B,B^{\otimes 3}\right) ,\left( \Delta _{i},\Delta
_{j}\right) \longmapsto \Delta _{i}\circ _{\alpha }\Delta _{j}
\end{equation*}%
defined by%
\begin{equation*}
\Delta _{i}\circ _{\alpha }\Delta _{j}=\left( \Delta _{j}\otimes \alpha
\right) \circ \Delta _{i}-\left( \alpha \otimes \Delta _{j}\right) \circ
\Delta _{i}.
\end{equation*}
%\end{definition}

By using $\alpha _{j}$-associators and $\alpha _{j}$-coassociators, the
deformation equation may be written as follows%
\begin{equation*}
\left\{ 
\begin{array}{c}
\sum\limits_{i=0}^{s}\sum\limits_{j=0}^{s-i}\ \mu _{i}\circ _{\alpha
_{j}}\mu _{s-i-j}=0\text{ \ \ \ \ }s=0,1,2,\cdots\\ 
\sum\limits_{i=0}^{s}\sum\limits_{j=0}^{s-i}\ \Delta _{i}\circ _{\alpha
_{j}}\Delta _{s-i-j}=0\text{ \ \ \ \ }s=0,1,2,\cdots \\ 
\sum\limits_{i=0}^{s}\left( \left( \Delta _{i}\circ \mu _{s-i}\right)
-\sum\limits_{j=0}^{s-i}\sum\limits_{k=0}^{s-i-j}\left( \mu _{i}\otimes
\mu _{j}\right) \circ \tau _{2,3}\circ \left( \Delta _{k}\otimes \Delta
_{s-i-j-k}\right) \right) =0\text{ \ \ \ \ }s=0,1,2,\cdots%
\end{array}%
\right.
\end{equation*}
The first equations corresponding to $s=0$, are the Hom-associativity
condition for $\mu _{0},$ the Hom-coassociativity condition for $\Delta _{0}$
and the compatibility condition of $\mu _{0}$ and $\Delta _{0}$.\\
If the structure map is not deformed, then one gets the following system, where $\alpha_0=\alpha$, 
\begin{equation*}
\left\{ 
\begin{array}{c}
\sum\limits_{i=1}^{s-1}\ \mu _{i}\circ _{\alpha
}\mu _{s-i}=-\delta _{Hom,H}^{1,2}\left( \mu _{1}\right)\text{ \ \ \ \ }s=1,2,\cdots \\ 
\sum\limits_{i=0}^{s}\ \Delta _{i}\circ _{\alpha
}\Delta _{s-i}=-\delta _{Hom,C}^{2,1}\left( \Delta
_{1}\right)\text{ \ \ \ \ }s=1,2,\cdots\\ 
\sum\limits_{i=1}^{s-1}\left( \left( \Delta _{i}\circ \mu _{s-i}\right)
-\sum\limits_{j=0}^{s-i}\sum\limits_{k=0}^{s-i-j}\left( \mu _{i}\otimes
\mu _{j}\right) \circ \tau _{2,3}\circ \left( \Delta _{k}\otimes \Delta
_{s-i-j-k}\right) \right) =0\text{ \ \ \ \ }s=1,2,\cdots%
\end{array}%
\right.
\end{equation*}

In particular, for  $s=1$ we have$\ $

$\bullet\  \mu _{0}\circ _{\alpha }\mu _{1}+\mu _{1}\circ _{\alpha }\mu _{0}=0,
$ which is equivalent to $\delta _{Hom,H}^{1,2}\left( \mu _{1}\right) $ $=0$,

$\bullet\  \Delta _{0}\circ _{\alpha }\Delta _{1}+\Delta _{1}\circ _{\alpha
}\Delta _{0}=0,$ which is equivalent to $\delta _{Hom,C}^{2,1}\left( \Delta
_{1}\right) $ $=0$,

$\bullet\  $the compatibility condition which is equivalent to $\delta
_{Hom,C}^{1,2}\left( \mu _{1}\right) +\delta _{Hom,H}^{2,1}\left( \Delta
_{1}\right) =0$. \\Therefore, we have

\begin{proposition}

The first term $\left( \mu _{1},\Delta _{1}\right) $ of a deformation of a Hom-bialgebra, where the structure map is not deformed,   is always a $2$%
-cocycle for the Hom-bialgebra Gerstenhaber-Schack cohomology.

\end{proposition}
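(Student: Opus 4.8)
The plan is to read off the coefficient of $t$ from the three deformation equations and to recognize that the resulting three conditions on $\left(\mu_1,\Delta_1\right)$ are exactly the relations defining the cocycle space $Z_{Hom}^{2}\left(B,B\right)$. Since the structure map is assumed undeformed we have $\alpha_t=\alpha_0=\alpha$, so every sum over the twisting index collapses to its single $\alpha$-term; this is precisely the simplification recorded in the three displayed bullet points preceding the statement, and it is what makes the $s=1$ slice of the infinite system expressible purely through $\mu_1$ and $\Delta_1$. Accordingly I would organize the proof as three separate identifications, one per family of equations, followed by an assembly against the definition of $Z_{Hom}^2(B,B)$.

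First I would dispose of the two ``pure'' equations. Extracting $s=1$ from the Hom-associativity family gives $\mu_0\circ_\alpha\mu_1+\mu_1\circ_\alpha\mu_0=0$. Unfolding the $\alpha$-associator by its definition, substituting $\mu_0=\mu$, and using $\lambda_l^1=\lambda_r^1=\mu$, the four resulting summands regroup termwise into the four terms of $\delta_{Hom,H}^{1,2}\left(\mu_1\right)$ read off from (\ref{(38)}) with $(p,q)=(1,2)$; hence this condition is exactly $\delta_{Hom,H}^{1,2}\left(\mu_1\right)=0$. The dual computation applies verbatim to the Hom-coassociativity family: its $s=1$ slice $\Delta_0\circ_\alpha\Delta_1+\Delta_1\circ_\alpha\Delta_0=0$ expands, via $\Delta_0=\Delta$ and $\rho_l^1=\rho_r^1=\Delta$, into $\delta_{Hom,C}^{2,1}\left(\Delta_1\right)=0$ from (\ref{(39)}) with $(p,q)=(2,1)$. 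Both of these are immediate termwise matchings.

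The remaining, and main, step is the compatibility family, which is the only one coupling $\mu_1$ and $\Delta_1$. Its coefficient of $t$ produces six terms: the two ``diagonal'' contributions $\Delta_0\circ\mu_1$ and $\Delta_1\circ\mu_0$, together with four contributions of the form $(\mu_i\otimes\mu_j)\circ\tau_{2,3}\circ(\Delta_k\otimes\Delta_\ell)$, each carrying the minus sign from the deformation equation and each having exactly one of its four indices equal to $1$. After setting $\mu_0=\mu$, $\Delta_0=\Delta$ and unfolding the interior structure maps $\rho_l^2,\rho_r^2$ (those appearing in $\delta_{Hom,C}^{1,2}\left(\mu_1\right)$) and $\lambda_l^2,\lambda_r^2$ (those appearing in $\delta_{Hom,H}^{2,1}\left(\Delta_1\right)$) through the propositions defining the $n$-fold interior tensor powers, I would identify $(id_B\otimes\mu_1)\circ\rho_l^2$ and $(\mu_1\otimes id_B)\circ\rho_r^2$ with the two non-diagonal terms carrying a $\mu_1$ factor, and $\lambda_l^2\circ(id_B\otimes\Delta_1)$ and $\lambda_r^2\circ(\Delta_1\otimes id_B)$ with the two carrying a $\Delta_1$ factor, while the diagonal terms match $-\Delta\circ\mu_1$ and $-\Delta_1\circ\mu$. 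The delicate point is keeping the $\tau_{2,3}$ flips and the $\Delta\otimes\Delta$ factors in register and tracking the global sign, so that the six terms assemble exactly into $-\bigl(\delta_{Hom,C}^{1,2}\left(\mu_1\right)+\delta_{Hom,H}^{2,1}\left(\Delta_1\right)\bigr)$; this is the same bookkeeping already carried out in the proof of Proposition \ref{S} (second identity) and in Lemma \ref{7}, which I would invoke rather than redo. I expect this compatibility step to be the only real obstacle, precisely because it combines both $\mu_1$ and $\Delta_1$ and requires the explicit form of the interior coactions and actions.

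Finally I would assemble the three equations just obtained, namely $\delta_{Hom,H}^{1,2}\left(\mu_1\right)=0$, $\delta_{Hom,C}^{1,2}\left(\mu_1\right)+\delta_{Hom,H}^{2,1}\left(\Delta_1\right)=0$, and $\delta_{Hom,C}^{2,1}\left(\Delta_1\right)=0$, and observe that these are verbatim the three defining conditions of $Z_{Hom}^{2}\left(B,B\right)$. Therefore $\left(\mu_1,\Delta_1\right)\in Z_{Hom}^{2}\left(B,B\right)$, that is, the first term of the deformation is a $2$-cocycle, which is the claim.
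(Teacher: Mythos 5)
Your proposal is correct and follows essentially the same route as the paper: the paper's own argument is precisely the extraction of the $s=1$ coefficient of the deformation system (with $\alpha_t=\alpha$ fixed), the identification of the three resulting conditions with $\delta_{Hom,H}^{1,2}\left(\mu_1\right)=0$, $\delta_{Hom,C}^{2,1}\left(\Delta_1\right)=0$ and $\delta_{Hom,C}^{1,2}\left(\mu_1\right)+\delta_{Hom,H}^{2,1}\left(\Delta_1\right)=0$, and the observation that these are the defining relations of $Z_{Hom}^{2}\left(B,B\right)$. Your version merely spells out the unfolding of $\rho_l^2,\rho_r^2,\lambda_l^2,\lambda_r^2$ in the compatibility step, which the paper leaves implicit, and your term-by-term matching and sign bookkeeping there are accurate.
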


\begin{definition}
%Let $B=\left( B,\mu ,\eta ,\Delta ,\varepsilon ,\alpha \right) $ be a Hom-bialgebra.
 A  $2$-cocycle $\left( \mu _{1},\Delta
_{1}\right) $ is said to be integrable if there exists a pair $\left( \mu
_{t},\Delta _{t}\right) _{t\geq 0}$ such that $\mu _{t}=\sum\limits_{i\geq
0}\mu _{i}t^{i\text{ }}$ and $\Delta _{t}=\sum\limits_{i\geq 0}\Delta
_{i}t^{i}$ defining a formal Hom-bialgebra deformation $B_{t}=\left( B\left[ \left[ t%
\right] \right] ,\mu _{t},\eta ,\Delta _{t},\varepsilon ,\alpha \right) $ of 
$B.$
\end{definition}

\subsection{Equivalent and trivial deformations}

In this section, we characterize  equivalent and trivial deformations of
Hom-bialgebras.

\begin{definition}
Let $B_{0}=\left( B,\mu _{0},\eta, \Delta _{0},\varepsilon, \alpha \right) $ be a
Hom-bialgebra. Given two deformations of $B_{0}$, $B_{t}=\left(
B,\mu _{t},\eta, \Delta _{t},\varepsilon, \alpha \right) $ and $B_{t}^{\prime }=\left( B,\mu
_{t}^{\prime },\eta, \Delta _{t}^{\prime },\varepsilon, \alpha \right) $, where $\mu
_{t}=\sum\limits_{i\geq 0}\mu _{i}t^{i\text{ }},$ $\Delta
_{t}=\sum\limits_{j\geq 0}\Delta _{j}t^{j\text{ }},$ $\mu _{t}^{\prime
}=\sum\limits_{i\geq 0}\mu _{i}^{\prime }t^{i\text{ }},$ $\Delta
_{t}^{\prime }=\sum\limits_{j\geq 0}\Delta _{j}^{\prime }t^{j\text{ }}$
with $\mu _{0}^{\prime }=\mu _{0},$ and $\Delta _{0}^{\prime }=\Delta _{0}.$

We say that they are equivalent if there is a formal isomorphism $\Phi
_{t}:B\longrightarrow B\left[ \left[ t\right] \right] $ which is a $\Bbbk %
\left[ \left[ t\right] \right] $-linear map that may be written in the form $%
\Phi _{t}=\sum\limits_{i\geq 0}\Phi _{i}t^{i\text{ }}$, where $\Phi _{i}\in
End_{\Bbbk }(B)$ and $\Phi _{0}=id_{B}$, such that%
\begin{equation}
\Phi _{t}\circ \mu _{t}=\mu _{t}^{\prime }\circ \left( \Phi _{t}\otimes \Phi
_{t}\right) ,  \label{(55)}
\end{equation}%
\begin{equation}
\Phi _{t}\otimes \Phi _{t}\circ \Delta _{t}=\Delta _{t}^{\prime }\circ \Phi
_{t} \label{(56)}
\end{equation}%
and%
\begin{equation}
\Phi _{t}\circ \alpha =\alpha \circ \Phi _{t}.  \label{(57)}
\end{equation}
A deformation $B_{t}$ of $B_{0}$ is said to be trivial if and only if $B_{t}$
is equivalent to $B_{0}.$
\end{definition}
We discuss in the following the equivalence of two deformations. 

Equation (\ref{(55)}) is equivalent to%
\begin{equation}
\sum\limits_{i,j\geq 0}\left( \Phi _{i}\circ \mu _{j}\right) t^{i\text{ }%
+j^{\text{ }}}=\sum\limits_{i,j,k\geq 0}\mu _{i}^{\prime }\circ \left( \Phi
_{j}\otimes \Phi _{k}\right) t^{i\text{ }+j+k\text{ }}.
\end{equation}%
By identification of the coefficients, one obtains that the constant
coefficients are identical, i.e.%
\begin{equation*}
\mu _{0}=\mu _{0}^{\prime }\quad and\quad \Phi _{0}=id_{B}.
\end{equation*}%
For the coefficients of $t$, one finds%
\begin{eqnarray*}
\left( \Phi _{0}\circ \mu _{1}\right) +\left( \Phi _{1}\circ \mu _{0}\right)
&=&\mu _{1}^{\prime }\circ \left( \Phi _{0}\otimes \Phi _{0}\right) +\mu
_{0}^{\prime }\circ \left( \Phi _{1}\otimes \Phi _{0}\right) +\mu
_{0}^{\prime }\circ \Phi _{1}\left( \Phi _{0}\otimes \right)  \\
\mu _{1}+\Phi _{1}\circ \mu _{0} &=&\mu _{1}^{\prime }+\mu _{0}\circ \left(
\Phi _{1}\otimes id_{B}\right) +\mu _{0}\circ \left( id_{B}\otimes \Phi
_{1}\right) .
\end{eqnarray*}
Hence
\begin{equation}
\mu _{1}^{\prime }=\mu _{1}-\left( \mu _{0}\circ \left( \Phi _{1}\otimes
id_{B}\right) -\Phi _{1}\circ \mu _{0}+\mu _{0}\circ \left( id_{B}\otimes
\Phi _{1}\right) \right) .  \label{(59)}
\end{equation}

Equation \eqref{(56)} is equivalent to%
\begin{equation*}
\sum\limits_{i,j\geq 0}\left( \Delta _{i}^{\prime }\circ \Phi _{j}\right)
t^{i\text{ }+j^{\text{ }}}=\sum\limits_{i,j,k\geq 0}\left( \Phi _{i}\otimes
\Phi _{j}\right) \circ \Delta _{k}t^{i\text{ }+j+k\text{ }}.
\end{equation*}

Similarly for the comultiplication, setting $\Delta _{0}=\Delta
_{0}^{\prime }$ and $\Phi _{0}=id_{B}$,  the coefficients of $t$ leads to%
\begin{equation*}
\left( \Delta _{0}^{\prime }\circ \Phi _{1}\right) +\left( \Delta
_{1}^{\prime }\circ \Phi _{0}\right) =\left( \Phi _{0}\otimes \Phi
_{0}\right) \circ \Delta _{1}+\left( \Phi _{0}\otimes \Phi _{1}\right) \circ
\Delta _{0}+\left( \Phi _{1}\otimes \Phi _{0}\right) \circ \Delta 
\end{equation*}%
Hence
\begin{equation}
\Delta _{1}^{\prime }=\Delta _{1}+\left( id_{B}\otimes \Phi _{1}\right)
\circ \Delta _{0}-\left( \Delta _{0}\circ \Phi _{1}\right) +\left( \Phi
_{1}\otimes id_{B}\right) \circ \Delta .  \label{(60)}
\end{equation}%
Homomorphisms condition  \eqref{(57)} is equivalent to
$ 
\sum\limits_{i\geq 0}\left( \Phi _{i}\circ \alpha \right) t^{i}
=\sum\limits_{i\geq 0}\left( \alpha \circ \Phi _{i}\right) t^{i}$. Therefore $$
 \Phi _{i}\circ \alpha   = \alpha \circ \Phi
_{i} , \text{ for all } i>0.$$

The first and second order conditions \eqref{(59)},\eqref{(60)} 
may be written%
\begin{equation}
\mu _{1}^{\prime }=\mu _{1}-\delta _{Hom,H}^{1,1}\left( \Phi _{1}\right)
\quad and\quad \Delta _{1}^{\prime }=\Delta _{1}+\delta _{Hom,C}^{1,1}\left(
\Phi _{1}\right) .
\end{equation}

%In general, if the deformations $\left( \mu _{t},\Delta _{t}\right) $ and $%
%\left( \mu _{t}^{\prime },\Delta _{t}^{\prime }\right) $ of $\left( \mu
%_{0},\Delta _{0}\right) $ are equivalent, then 
%\begin{equation*}
%\mu _{1}^{\prime }=\mu _{1}-\delta _{Hom,H}^{1,1}\left( \Phi _{1}\right)
%\quad and\quad \Delta _{1}^{\prime }=\Delta _{1}+\delta _{Hom,C}^{1,1}\left(
%\Phi _{1}\right) .
%\end{equation*}

Therefore, we have the following fundamental observation.

\begin{proposition}
The integrability of $\left( \mu _{1},\Delta _{1}\right) $ depends only on
its cohomology class.
\end{proposition}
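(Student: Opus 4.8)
The plan is to show that two cohomologous $2$-cocycles are exactly the first-order terms of two \emph{equivalent} deformations, and that equivalence of deformations preserves the existence of a deformation. So suppose $(\mu_1,\Delta_1)$ is integrable, say to a formal deformation $B_t=(B[[t]],\mu_t,\eta,\Delta_t,\varepsilon,\alpha)$ with $\mu_t=\sum_{i\geq 0}\mu_i t^i$ and $\Delta_t=\sum_{j\geq 0}\Delta_j t^j$, and let $(\mu_1',\Delta_1')$ be another $2$-cocycle in the same cohomology class. I would construct an equivalent deformation whose first-order term is $(\mu_1',\Delta_1')$; this exhibits $(\mu_1',\Delta_1')$ as integrable, and since the relation is symmetric, integrability is constant on cohomology classes.

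By hypothesis $(\mu_1',\Delta_1')-(\mu_1,\Delta_1)$ is a $2$-coboundary, so there is a $1$-cochain $\Phi_1\in\mathcal{C}_{Hom}^{1,1}$ (in particular $\Phi_1\circ\alpha=\alpha\circ\Phi_1$) with
\[
\mu_1'=\mu_1-\delta_{Hom,H}^{1,1}(\Phi_1),\qquad \Delta_1'=\Delta_1+\delta_{Hom,C}^{1,1}(\Phi_1),
\]
which is precisely the content of \eqref{(59)} and \eqref{(60)}. I would then set $\Phi_t=id_B+t\,\Phi_1$. This is a $\Bbbk[[t]]$-linear endomorphism of $B[[t]]$ with $\Phi_0=id_B$, hence invertible with inverse the formal series $\sum_{n\geq 0}(-t\,\Phi_1)^n$, and it satisfies the homomorphism condition \eqref{(57)} because $\Phi_1$ commutes with $\alpha$. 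Transporting the given structure along $\Phi_t$, I define
\[
\mu_t'=\Phi_t\circ\mu_t\circ(\Phi_t^{-1}\otimes\Phi_t^{-1}),\qquad \Delta_t'=(\Phi_t\otimes\Phi_t)\circ\Delta_t\circ\Phi_t^{-1},
\]
and keep $\eta,\varepsilon,\alpha$ unchanged, forming $B_t'=(B[[t]],\mu_t',\eta,\Delta_t',\varepsilon,\alpha)$.

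Two verifications remain. First, $B_t'$ is again a formal deformation of $B_0$: since $\Phi_t$ is a formal isomorphism commuting with $\alpha$, conjugating $\mu_t$ and $\Delta_t$ by $\Phi_t$ carries the deformation system \eqref{(53)} for $B_t$ into the same system for $B_t'$ (this is the formal analogue of the isomorphism relations \eqref{(5)}), so $\mu_t'$ is formally Hom-associative, $\Delta_t'$ is formally Hom-coassociative, and the compatibility condition is retained; preservation of the unit and counit reduces to $\Phi_1\circ\eta=0$ and $\varepsilon\circ\Phi_1=0$, the normalization we impose on the representative $\Phi_1$. Second, $\Phi_t$ realizes an equivalence between $B_t$ and $B_t'$, since \eqref{(55)} and \eqref{(56)} hold by the very definition of $\mu_t'$ and $\Delta_t'$; consequently the first-order terms of $B_t$ and $B_t'$ are related by \eqref{(59)} and \eqref{(60)}, and reading off these relations returns exactly $(\mu_1',\Delta_1')$ as the first-order term of $B_t'$. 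Therefore $(\mu_1',\Delta_1')$ is integrable.

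The step I expect to be the main obstacle is the first verification, namely confirming that conjugation by the formal automorphism $\Phi_t$ sends a solution of the full deformation system \eqref{(53)} to another solution, while respecting the undeformed structure map $\alpha$ and the conserved unit and counit. Once this transport-of-structure is in place, the matching of first-order terms is immediate from \eqref{(59)} and \eqref{(60)}, and the proposition follows.
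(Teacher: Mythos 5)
Your proof is correct in substance, and it takes a genuinely different route from the paper's. The paper never constructs a deformation integrating the cohomologous pair: its proof is purely cohomological. Starting from the relations $\mu _{1}^{\prime }=\mu _{1}-\delta _{Hom,H}^{1,1}\left( \Phi _{1}\right) $ and $\Delta _{1}^{\prime }=\Delta _{1}+\delta _{Hom,C}^{1,1}\left( \Phi _{1}\right) $ of \eqref{(59)}--\eqref{(60)}, it checks, using $\delta \circ \delta =0$ together with the mixed identity $\delta _{Hom,C}^{1,2}\circ \delta _{Hom,H}^{1,1}=\delta _{Hom,H}^{2,1}\circ \delta _{Hom,C}^{1,1}$ of Proposition \ref{S}, that both the $2$-cocycle condition and the $2$-coboundary condition are invariant under this shift, and then asserts the correspondence between cohomologous cocycles and equivalent deformations. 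You instead perform the transport of structure: from a deformation $B_{t}$ integrating $\left( \mu _{1},\Delta _{1}\right) $ and the formal automorphism $\Phi _{t}=id_{B}+t\Phi _{1}$ you produce an explicit equivalent deformation $B_{t}^{\prime }$ whose first-order term is $\left( \mu _{1}^{\prime },\Delta _{1}^{\prime }\right) $, which is exactly what the literal statement requires. Your conjugation step does go through: since $\Phi _{t}$ and $\Phi _{t}^{-1}$ commute with $\alpha $, and $\tau _{2,3}$ commutes with $\Phi _{t}^{\otimes 4}$, each of the three equations of the system \eqref{(53)} for $\left( \mu _{t}^{\prime },\Delta _{t}^{\prime }\right) $ follows from the corresponding equation for $\left( \mu _{t},\Delta _{t}\right) $, and the first-order terms are read off from \eqref{(59)}--\eqref{(60)}. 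In this respect your argument is more complete than the paper's: yours proves integrability is constant on classes by exhibiting the required deformation, while the paper's computations establish the complementary consistency facts (the cohomology class of the infinitesimal is an invariant of the equivalence class of deformations, and cocycle/coboundary properties are preserved under that shift).

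The one step you have not justified is the normalization claim, namely that the representative $\Phi _{1}$ can be chosen with $\Phi _{1}\circ \eta =0$ and $\varepsilon \circ \Phi _{1}=0$. The cohomologous relation hands you \emph{some} $\Phi _{1}$, and the only freedom is to change it by a $1$-cocycle; nothing guarantees a normalized representative exists, and indeed if $\Phi _{1}\left( 1_{B}\right) \neq 0$ the transported multiplication satisfies $\mu _{1}^{\prime }\left( x\otimes 1_{B}\right) =-\mu _{0}\left( x\otimes \Phi _{1}\left( 1_{B}\right) \right) $, so $\eta $ is in general not a unit for $B_{t}^{\prime }$. Fortunately this is harmless for the proposition as the paper uses it: in Section 4 a deformation with conserved unit and counit is, in effect, a triple $\left( \mu _{t},\Delta _{t},\alpha _{t}\right) $ solving the deformation equation \eqref{(53)}, in which $\eta $ and $\varepsilon $ play no role, and unitality/counitality is treated separately in Proposition \ref{CNSUNit} and Theorem \ref{DefUNitCounit}. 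So you should simply delete the normalization claim; alternatively, if you insist on strict preservation of $\eta $ and $\varepsilon $, replace $B_{t}^{\prime }$ by an equivalent unital and counital deformation via Theorem \ref{DefUNitCounit}, noting that this requires $\alpha $ surjective.
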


\begin{proof}
Recall that two elements are cohomologous if their difference is a
coboundary.

Equation $\delta _{Hom}^{2}\left( \mu _{1},\Delta _{1}\right) =0$
implies that 
\begin{equation*}
\delta _{Hom,H}^{1,2}\left( \mu _{1}\right) =0,\text{ }\delta
_{Hom,C}^{2,1}\left( \Delta _{1}\right) =0,\text{ }\delta
_{Hom,C}^{1,2}\left( \mu _{1}\right) +\delta _{Hom,H}^{2,1}\left( \Delta
_{1}\right) =0.
\end{equation*}%
We have%
\begin{equation*}
%\left\{ 
\begin{array}{c}
\delta _{Hom,H}^{1,2}\left( \mu _{1}^{\prime }\right) =\delta
_{Hom,H}^{1,2}\left( \mu _{1}-\delta _{Hom,H}^{1,1}\left( f\right) \right)
=\delta _{Hom,H}^{1,2}\left( \mu _{1}\right) -\delta _{Hom,H}^{1,2}\circ
\delta _{Hom,H}^{1,1}\left( f\right) =0, \\ 
\delta _{Hom,C}^{2,1}\left( \Delta _{1}^{\prime }\right) =\delta
_{Hom,C}^{2,1}\left( \Delta _{1}+\delta _{Hom,C}^{1,1}\left( g\right)
\right) =\delta _{Hom,C}^{2,1}\left( \Delta _{1}\right) +\delta
_{Hom,C}^{2,1}\circ \delta _{Hom,C}^{1,1}\left( g\right) =0, \\ 
\delta _{Hom,C}^{1,2}\left( \mu _{1}^{\prime }\right) +\delta
_{Hom,H}^{2,1}\left( \Delta _{1}^{\prime }\right) =\delta
_{Hom,C}^{1,2}\left( \mu _{1}-\delta _{Hom,H}^{1,1}\left( f\right) \right)
+\delta _{Hom,H}^{2,1}\left( \Delta _{1}+\delta _{Hom,C}^{1,1}\left(
g\right) \right)  \\ 
=\delta _{Hom,C}^{1,2}\left( \mu _{1}\right) +\delta _{Hom,H}^{2,1}\left(
\Delta _{1}\right) -\left( \delta _{Hom,C}^{1,2}\circ \delta
_{Hom,H}^{1,1}\left( f\right) -\delta _{Hom,H}^{2,1}\circ \delta
_{Hom,C}^{1,1}\left( g\right) \right) =0.%
\end{array}%
%\right. 
\end{equation*}%
Thus $\delta _{Hom}^{2}\left( \mu
_{1}^{\prime },\Delta _{1}^{\prime }\right) =0.$

Equation $\left( \mu _{1},\Delta _{1}\right) =\delta
_{Hom}^{1}\left( f,g\right) $ implies that 
\begin{equation*}
\mu _{1}=\delta _{Hom,H}^{1,1}\left( f\right) ,\Delta _{1}=\delta
_{Hom,C}^{1,1}\left( g\right) 
\end{equation*}%
and 
\begin{equation*}
\mu _{1}^{\prime }=\mu _{1}-\delta _{Hom,H}^{1,1}\left( \Phi _{1}\right)
\quad and\quad \Delta _{1}^{\prime }=\Delta _{1}+\delta _{Hom,C}^{1,1}\left(
\Phi _{1}\right) .
\end{equation*}%
We have%
\begin{eqnarray*}
\mu _{1}^{\prime } &=&\delta _{Hom,H}^{1,1}\left( f\right) -\delta
_{Hom,H}^{1,1}\left( \Phi _{1}\right) =\delta _{Hom,H}^{1,1}\left( f-\Phi
_{1}\right) , \\
\Delta _{1}^{\prime } &=&\delta _{Hom,C}^{1,1}\left( g\right) +\delta
_{Hom,C}^{1,1}\left( \Phi _{1}\right) =\delta _{Hom,C}^{1,1}\left( g+\Phi
_{1}\right) .
\end{eqnarray*}%
Then,  if two integrable $2$-cocycles are cohomologous, then the corresponding
deformations are equivalent.
\end{proof}

\begin{proposition}
Let $B_{0}=\left( B,\mu _{0},\eta ,\Delta _{0},\varepsilon ,\alpha \right) $
be a Hom-bialgebra. There is, over $\Bbbk \lbrack \lbrack t]]/t^{2}$, a one
to-one correspondence between the elements of $H_{Hom}^{2}\left( B,B\right) $
and the infinitesimal deformation of $B_{0}$ defined by%
\begin{equation}
\mu _{t}=\mu _{0}+\mu _{1}t\quad and\quad \Delta _{t}=\Delta _{0}+\Delta
_{1}t.
\end{equation}
\end{proposition}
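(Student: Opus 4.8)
The plan is to exhibit the correspondence explicitly by sending an infinitesimal deformation $B_{t}=\left(B[[t]]/t^{2},\mu_{t},\eta,\Delta_{t},\varepsilon,\alpha\right)$, with $\mu_{t}=\mu_{0}+\mu_{1}t$ and $\Delta_{t}=\Delta_{0}+\Delta_{1}t$, to the cohomology class $[(\mu_{1},\Delta_{1})]\in H_{Hom}^{2}\left(B,B\right)$, and conversely. Strictly speaking the bijection is between $H_{Hom}^{2}\left(B,B\right)$ and the set of \emph{equivalence classes} of such deformations; so I would first build a bijection with the cocycle space $Z_{Hom}^{2}\left(B,B\right)$ and then pass to quotients, dividing the source by coboundaries and the target by the formal isomorphisms $\Phi_{t}$ of \eqref{(55)}--\eqref{(57)}.

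First I would check that the assignment lands in cocycles. Reducing the deformation equations \eqref{(53)} modulo $t^{2}$, the order-$t^{0}$ part merely restates that $B_{0}$ is a Hom-bialgebra, while the order-$t^{1}$ part is exactly the three relations obtained in the $s=1$ analysis, namely $\delta_{Hom,H}^{1,2}\left(\mu_{1}\right)=0$, $\delta_{Hom,C}^{2,1}\left(\Delta_{1}\right)=0$, and $\delta_{Hom,C}^{1,2}\left(\mu_{1}\right)+\delta_{Hom,H}^{2,1}\left(\Delta_{1}\right)=0$. By the definition of $Z_{Hom}^{2}\left(B,B\right)$ this says precisely $\delta_{Hom}^{2}\left(\mu_{1},\Delta_{1}\right)=0$, which is the content of the preceding Proposition. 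Conversely, given any $(\mu_{1},\Delta_{1})\in Z_{Hom}^{2}\left(B,B\right)$, the same computation read backwards shows that $\mu_{t}=\mu_{0}+\mu_{1}t$ and $\Delta_{t}=\Delta_{0}+\Delta_{1}t$ satisfy \eqref{(53)} modulo $t^{2}$, since all higher obstructions vanish automatically in $\Bbbk[[t]]/t^{2}$. This yields a bijection between infinitesimal deformations of $B_{0}$ and $Z_{Hom}^{2}\left(B,B\right)$.

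Next I would descend to cohomology. Two infinitesimal deformations are equivalent when there is a formal isomorphism $\Phi_{t}=id_{B}+\Phi_{1}t$ (mod $t^{2}$) satisfying \eqref{(55)}, \eqref{(56)}, \eqref{(57)}. Extracting the coefficient of $t$ in \eqref{(55)} and \eqref{(56)} gives exactly formulas \eqref{(59)} and \eqref{(60)}, that is $\mu_{1}^{\prime}=\mu_{1}-\delta_{Hom,H}^{1,1}\left(\Phi_{1}\right)$ and $\Delta_{1}^{\prime}=\Delta_{1}+\delta_{Hom,C}^{1,1}\left(\Phi_{1}\right)$, while \eqref{(57)} forces $\Phi_{1}\circ\alpha=\alpha\circ\Phi_{1}$, i.e. $\Phi_{1}\in\mathcal{C}_{Hom}^{1,1}$. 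Hence two cocycles $(\mu_{1},\Delta_{1})$ and $(\mu_{1}^{\prime},\Delta_{1}^{\prime})$ correspond to equivalent deformations if and only if their difference equals $\delta_{Hom}^{1}\left(\Phi_{1}\right)$ for some admissible $\Phi_{1}$, i.e. if and only if they are cohomologous. Passing to quotients, the map $B_{t}\mapsto[(\mu_{1},\Delta_{1})]$ descends to a well-defined bijection between equivalence classes of infinitesimal deformations and $H_{Hom}^{2}\left(B,B\right)=Z_{Hom}^{2}\left(B,B\right)/B_{Hom}^{2}\left(B,B\right)$.

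The main obstacle I expect is purely the bookkeeping of signs: to see that the pair $\left(\mu_{1}^{\prime}-\mu_{1},\ \Delta_{1}^{\prime}-\Delta_{1}\right)=\left(-\delta_{Hom,H}^{1,1}(\Phi_{1}),\ \delta_{Hom,C}^{1,1}(\Phi_{1})\right)$ is genuinely of the form $\delta_{Hom}^{1}\left(\Phi_{1}\right)$, one must track the factor $(-1)^{q}$ built into the total differential $\delta_{Hom}^{n}=\delta_{Hom,H}^{p,q}\oplus(-1)^{q}\delta_{Hom,C}^{p,q}$, so that the horizontal and vertical components carry the correct relative sign dictated by the definition of $B_{Hom}^{2}\left(B,B\right)$. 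The only other point needing care is confirming that the constraint $\Phi_{1}\in\mathcal{C}_{Hom}^{1,1}$ arising from \eqref{(57)} is exactly the cochain condition of the complex, so that admissible equivalences correspond precisely to coboundaries and no class is either lost or spuriously identified.
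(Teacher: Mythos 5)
Your proposal is correct and takes essentially the same route as the paper: the paper's entire proof is the single observation that the deformation equation modulo $t^{2}$ is equivalent to $\delta _{Hom}^{2}\left( \mu _{1},\Delta _{1}\right) =0$, i.e., the bijection with $Z_{Hom}^{2}\left( B,B\right) $. The descent to cohomology that you add --- identifying equivalent infinitesimal deformations with cohomologous cocycles via $\mu _{1}^{\prime }=\mu _{1}-\delta _{Hom,H}^{1,1}\left( \Phi _{1}\right) $, $\Delta _{1}^{\prime }=\Delta _{1}+\delta _{Hom,C}^{1,1}\left( \Phi _{1}\right) $ together with the sign $\left( -1\right) ^{q}$ in the total differential --- is exactly the computation the paper carries out in the preceding subsection on equivalent deformations, and it is indeed what the statement requires for the correspondence to be with $H_{Hom}^{2}\left( B,B\right) $ rather than with $Z_{Hom}^{2}\left( B,B\right) $, so your fuller treatment completes rather than diverges from the paper's argument.
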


\begin{proof}
Deformation equation is equivalent to $\delta _{Hom}^{2}\left( \mu
_{1},\Delta _{1}\right) =0$, i.e.  $\left( \mu _{1},\Delta _{1}\right) \in 
$ $Z_{Hom}^{2}\left( B,B\right).$
\end{proof}\\

In the following,  we assume that $H_{Hom}^{2}\left( B,B\right) \neq 0$, then
one may obtain nontrivial one-parameter formal deformations. 
%We consider the
%problem of extending a one parameter formal deformation of order $m-1$ to a
%deformation of order $m$.
Suppose now that 
\begin{equation*}
\mu _{t}=\mu _{0}+\mu _{1}t+\mu _{2}t^{2}+...\quad and\quad \Delta
_{t}=\Delta _{0}+\Delta _{1}t+\Delta _{2}t^{2}+...
\end{equation*}
are a one parameter family of deformation of $\left( \mu _{0},\Delta
_{0}\right) $ for which 
\begin{equation*}
\mu _{1}=\mu _{2}=...=\mu _{m-1}=0\quad and\quad \Delta _{1}=\Delta
_{2}=...=\Delta _{m-1}=0.
\end{equation*}

The deformation equation implies 
\begin{equation*}
\delta _{Hom}^{1}\left( \mu _{m},\Delta _{m}\right) =0\quad \left( \left(
\mu _{m},\Delta _{m}\right) \in Z_{Hom}^{2}\left( B,B\right) \right) .
\end{equation*}%
If further $\left( \mu _{m},\Delta _{m}\right) \in B_{Hom}^{2}\left(
B,B\right) $, then by considering  the morphism $\Phi _{t}=id_{B}+\Phi _{m}t$,  we
obtain an equivalent deformation of the form
\begin{eqnarray*}
\mu _{t}^{\prime } &=&\Phi _{t}^{-1}\circ \mu _{t}\circ \left( \Phi
_{t}\otimes \Phi _{t}\right) =\mu _{0}+\mu _{m+1}t^{m+1}; \\
\Delta _{t}^{\prime } &=&\left( \Phi _{t}\otimes \Phi _{t}\right) \circ
\Delta _{t}\circ \Phi _{t}^{-1}=\Delta _{0}+\Delta _{m+1}t^{m+1}.
\end{eqnarray*}%
And again $\left( \mu _{m+1},\Delta _{m+1}\right) \in Z_{Hom}^{2}\left(
B,B\right) .$

\begin{theorem}
Let $B=\left( B,\mu ,\eta ,\Delta ,\varepsilon ,\alpha
\right) $ be a Hom-bialgebra. Every  nontrivial formal deformation   is equivalent to a deformation $B_{t}=\left(
B,\mu_{t},\eta, \Delta _{t},\varepsilon, \alpha \right) $ such that  $\mu
_{t}=\mu+\sum\limits_{i\geq p}t^{i} \mu _{i}$  and $\Delta
_{t}=\Delta+\sum\limits_{j\geq p}t^{j}\Delta _{j}$, where $(\mu_p,\Delta_p)$ is a 2-cocycle but not a 2-coboundary.\\ 
Hence, if $H_{Hom}^{2}\left( B,B\right) =0$ then every deformation of the  Hom-bialgebra $%
B$ is equivalent to a trivial deformation.
\end{theorem}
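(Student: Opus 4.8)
The plan is to run the classical order-by-order normalization, using only the equivalence transformations already set up before the statement. First I would take a nontrivial formal deformation $B_{t}=\left( B[[t]],\mu _{t},\eta ,\Delta _{t},\varepsilon ,\alpha \right)$ and let $p\geq 1$ be the least index with $(\mu _{p},\Delta _{p})\neq (0,0)$; such a $p$ exists precisely because the deformation is nontrivial. Since $\mu _{1}=\dots =\mu _{p-1}=0$ and $\Delta _{1}=\dots =\Delta _{p-1}=0$, the coefficient of $t^{p}$ in the deformation equation collapses to $\delta _{Hom,H}^{1,2}\left( \mu _{p}\right) =0$, $\delta _{Hom,C}^{2,1}\left( \Delta _{p}\right) =0$ and $\delta _{Hom,C}^{1,2}\left( \mu _{p}\right) +\delta _{Hom,H}^{2,1}\left( \Delta _{p}\right) =0$, by exactly the computation that produced the $s=1$ case earlier. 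Hence $(\mu _{p},\Delta _{p})\in Z_{Hom}^{2}\left( B,B\right)$, i.e. $\delta _{Hom}^{2}\left( \mu _{p},\Delta _{p}\right) =0$.

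Next I would split into two cases. If $(\mu _{p},\Delta _{p})\notin B_{Hom}^{2}\left( B,B\right)$, the deformation already has the asserted form and there is nothing further to prove. If instead $(\mu _{p},\Delta _{p})\in B_{Hom}^{2}\left( B,B\right)$, choose $\Phi _{p}\in End_{\Bbbk }(B)$ with $(\mu _{p},\Delta _{p})=\delta _{Hom}^{1}\left( \Phi _{p}\right)$, and apply the formal isomorphism $\Phi _{t}=id_{B}+\Phi _{p}t^{p}$. Reading off the coefficient of $t^{p}$ in the equivalence relations $\eqref{(55)}$--$\eqref{(57)}$, exactly as in the derivation of $\eqref{(59)}$--$\eqref{(60)}$ but with $t^{p}$ in place of $t$, the equivalent deformation $B_{t}^{\prime }$ satisfies $\mu _{i}^{\prime }=\Delta _{i}^{\prime }=0$ for all $i\leq p$; its first nonvanishing term therefore lies in degree strictly greater than $p$, and by the previous paragraph that term is again a $2$-cocycle.

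Then I would iterate this reduction. At each stage the leading term is a $2$-cocycle: either it is not a $2$-coboundary, in which case the process halts and yields the required normal form, or it is a coboundary and is pushed up by at least one degree. Composing the successive isomorphisms $id_{B}+\Phi _{p_{k}}t^{p_{k}}$ with strictly increasing $p_{k}$ produces a well-defined formal isomorphism $\Phi _{t}=\cdots \circ \left( id_{B}+\Phi _{p_{2}}t^{p_{2}}\right) \circ \left( id_{B}+\Phi _{p_{1}}t^{p_{1}}\right)$, since each coefficient of $\Phi _{t}$ is altered by only finitely many factors. If the iteration never halts, this limiting $\Phi _{t}$ carries $B_{t}$ to the trivial deformation $B_{0}$, contradicting nontriviality; hence it halts and the first assertion follows. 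The ``Hence'' part is then immediate: if $H_{Hom}^{2}\left( B,B\right) =0$ then $Z_{Hom}^{2}\left( B,B\right) =B_{Hom}^{2}\left( B,B\right)$, so a cocycle that is not a coboundary can never appear, the iteration never halts, and every deformation is equivalent to the trivial one.

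The main obstacle is the last iteration step rather than any single algebraic identity: one must verify that each equivalence again yields a genuine deformation (so that the deformation equation forces its leading term to be a cocycle, closing the induction) and that the infinite composite converges $t$-adically to an honest formal isomorphism. Both are routine in $\Bbbk[[t]]$ because the correction introduced at stage $k$ only affects coefficients of $t^{p_{k}}$ and higher, so each fixed coefficient stabilizes after finitely many steps. The sign bookkeeping in matching $(\mu _{p},\Delta _{p})=\delta _{Hom}^{1}\left( \Phi _{p}\right)$ to the vanishing of both $\mu _{p}^{\prime }$ and $\Delta _{p}^{\prime }$ is handled by the $(-1)^{q}$ twist already built into $\delta _{Hom}^{1}$, and I would treat it as a direct transcription of $\eqref{(59)}$--$\eqref{(60)}$.
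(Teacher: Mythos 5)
Your proposal is correct and follows essentially the same route as the paper: the paper's own argument (given in the discussion immediately preceding the theorem) is exactly this induction — the leading term of a deformation is forced to be a $2$-cocycle by the order-$s$ deformation equation, and a coboundary leading term $(\mu_m,\Delta_m)=\delta_{Hom}^{1}(\Phi_m)$ is removed by the equivalence $\Phi_t=id_B+\Phi_m t^{m}$, pushing the first nonzero term to higher degree. Your write-up is in fact slightly more careful than the paper's, since you make explicit the $t$-adic convergence of the infinite composite of equivalences and the resulting contradiction with nontriviality (and you correctly use $t^{p}$ rather than $t$ in the normalizing isomorphism, and note that the $(-1)^{q}$ sign in $\delta_{Hom}^{1}$ reconciles equations \eqref{(59)} and \eqref{(60)}), points the paper leaves implicit.
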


Hom-bialgebras for which every formal deformation is equivalent to a trivial
deformation are said to be \emph{analytically rigid}. The vanishing  of the second
cohomology group ($H_{Hom}^{2}\left( B,B\right) =0$) gives a sufficient
criterion for rigidity.

\subsection{Unital and Counital Hom-bialgebra Deformations}
We discuss  unitality and  counitality  of Hom-bialgebra  deformations.
\begin{proposition}\label{CNSUNit}
The unit (resp. the counit) of Hom-bialgebra $B$ is also the unit (resp. the
counit) of the formal deformation $B_{t}$ of $B$ if and only if  
\begin{eqnarray*}
\mu _{n}\left( x\otimes 1_{B}\right) &=&\mu _{n}\left( 1_{B}\otimes x\right)
=0\quad \forall n\geq 1,\quad \forall x\in B,\quad \eta \left( 1_{\Bbbk }\right)
=1_{B} \\
(resp.\text{ }\left( id_{B}\otimes \varepsilon \right) \circ \Delta _{n}
&=&\left( \varepsilon \otimes id_{B}\right) \circ \Delta _{n}=0\quad \forall
n\geq 1).
\end{eqnarray*}
\end{proposition}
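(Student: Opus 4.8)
The plan is to expand the defining (co)unit axioms of the deformed Hom-bialgebra $B_{t}$ as formal power series in $t$ and to compare coefficients of $t^{n}$ order by order. Throughout I work in the normalization of this subsection, where the twisting map and the unit/counit are carried along undeformed, so that $\alpha _{t}=\alpha $, $\eta _{t}=\eta $ and $\varepsilon _{t}=\varepsilon $; pinning this down is the one point that must precede any coefficient comparison, since it is exactly what forces the right-hand sides below to vanish rather than to equal $\alpha _{n}$. The unit assertion and the counit assertion are mutually dual, so I would prove the unit part in full and obtain the counit part by reversing arrows, replacing $\mu _{t}$ by $\Delta _{t}$ and $\eta $ by $\varepsilon $ as in the coalgebra axiom \eqref{(8)}.

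For the unit part, recall that $(B[[t]],\mu _{t},\eta ,\alpha )$ is unital with unit $\eta $ precisely when $\eta (1_{\Bbbk })=1_{B}$ together with the axiom \eqref{(2)}, namely $\mu _{t}\circ (\eta \otimes id_{B})=\mu _{t}\circ (id_{B}\otimes \eta )=\alpha $ (the remaining clause $\alpha \circ \eta =\eta $ being inherited from $B$, as $\alpha $ and $\eta $ are undeformed). Writing $\mu _{t}=\sum_{n\geq 0}\mu _{n}t^{n}$ and evaluating on $1_{B}\otimes x$, the first identity becomes $\sum_{n\geq 0}\mu _{n}(1_{B}\otimes x)\,t^{n}=\alpha (x)$. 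The coefficient of $t^{0}$ recovers the undeformed unit axiom $\mu _{0}(1_{B}\otimes x)=\alpha (x)$, which holds since $\mu _{0}=\mu $, while the coefficient of $t^{n}$ for each $n\geq 1$ reads $\mu _{n}(1_{B}\otimes x)=0$; the symmetric computation on $x\otimes 1_{B}$ gives $\mu _{n}(x\otimes 1_{B})=0$. This settles the forward implication. For the converse I would simply reassemble the series: the hypotheses $\mu _{n}(1_{B}\otimes x)=\mu _{n}(x\otimes 1_{B})=0$ for $n\geq 1$ collapse $\mu _{t}(1_{B}\otimes x)$ and $\mu _{t}(x\otimes 1_{B})$ to their degree-zero terms $\alpha (x)$, so that \eqref{(2)} holds for $B_{t}$ and $\eta $ is its unit.

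The counit part proceeds identically on the dual side: $(B[[t]],\Delta _{t},\varepsilon ,\alpha )$ is counital with counit $\varepsilon $ exactly when the analogue of \eqref{(8)} holds for $\Delta _{t}$, that is $(\varepsilon \otimes id_{B})\circ \Delta _{t}=(id_{B}\otimes \varepsilon )\circ \Delta _{t}=\alpha $ (here $\beta =\alpha $). Expanding $\Delta _{t}=\sum_{n\geq 0}\Delta _{n}t^{n}$ and comparing coefficients of $t^{n}$, the order-zero term recovers the original counit axiom for $\Delta _{0}=\Delta $, and the order-$n$ terms for $n\geq 1$ give $(\varepsilon \otimes id_{B})\circ \Delta _{n}=(id_{B}\otimes \varepsilon )\circ \Delta _{n}=0$. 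The converse is again a reassembly of the series.

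The computations are routine; the only genuine obstacle is conceptual bookkeeping. One must make precise what ``the unit of $B$ is also the unit of $B_{t}$'' means, and in particular fix that $\eta $, $\varepsilon $ and $\alpha $ are not deformed. Once that normalization is in place the proposition is an immediate order-by-order reading of \eqref{(2)} and \eqref{(8)}; had $\alpha $ instead been allowed to deform, the order-$n$ equations would read $\mu _{n}(1_{B}\otimes x)=\alpha _{n}(x)$ and $(\varepsilon \otimes id_{B})\circ \Delta _{n}=\alpha _{n}$, and the clean vanishing statement would fail.
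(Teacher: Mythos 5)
Your proposal is correct and takes essentially the same route as the paper: both proofs expand the unit axiom $\mu _{t}\circ (id_{B}\otimes \eta )=\mu _{t}\circ (\eta \otimes id_{B})=\alpha $ and the counit axiom $(\varepsilon \otimes id_{B})\circ \Delta _{t}=(id_{B}\otimes \varepsilon )\circ \Delta _{t}=\alpha $ as formal power series in $t$ and identify coefficients, the order-$n$ terms for $n\geq 1$ giving exactly the stated vanishing conditions. Your only additions are the explicit converse (reassembling the series) and the explicit normalization $\eta _{t}=\eta $, $\varepsilon _{t}=\varepsilon $, $\alpha _{t}=\alpha $, both of which the paper leaves implicit.
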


\begin{proof}
The element $1_{B}$ is a unit for $B_{t}$ if 
$
\mu _{t}\circ \left( \eta _{}\otimes id_{B}\right) =\mu _{t}\circ \left(
id_{B}\otimes \eta _{}\right) =\alpha .
$
\begin{equation*}
\forall x\in B,\text{ }\mu _{t}\left( x\otimes 1_{B}\right) =\alpha \left(
x\right),\ \ \mu _{t}\left( 1_{B}\otimes x\right) =\alpha
\left( x\right) ,\text{ where }\mu _{t}=\sum\limits_{i\geq 1}t^{i}\mu _{i}.
\end{equation*}%
We have%
\begin{align*}
\mu _{t}\left( x\otimes 1_{B}\right) & =\alpha \left( x\right) =\mu
_{0}\left( x\otimes 1_{B}\right) +\sum\limits_{i\geq 1}\mu _{i}\left(
x\otimes 1_{B}\right) t^{i} \\
\alpha \left( x\right) & =\alpha \left( x\right) +\sum\limits_{i\geq 1}\mu
_{i}\left( x\otimes 1_{B}\right) t^{i}
\end{align*}%
By identification, we obtain $\mu _{n}\left( x\otimes 1_{B}\right) =0,\quad \forall n\geq 1$, and similarly 
$\mu _{n}\left(  1_{B}\otimes x\right) =0,\quad \forall n\geq 1$.
%\begin{equation*}
%\sum\limits_{i\geq 1}\mu _{i}\left( x\otimes 1_{V}\right) t^{i}=0,\text{ }%
%then\text{ }\mu _{n}\left( x\otimes 1_{V}\right) =0\qquad \forall n\geq 1
%\end{equation*}%
%and%
%\begin{align*}
%\mu _{t}\left( 1_{V}\otimes x\right) & =\alpha \left( x\right) =\mu
%_{0}\left( 1_{V}\otimes x\right) +\sum\limits_{i\geq 1}\mu _{i}\left(
%1_{V}\otimes x\right) t^{i} \\
%\mu _{n}\left( 1_{V}\otimes x\right) & =0\qquad \forall n\geq 1.
%\end{align*}
%
%If $\mu _{n}\left( x\otimes 1_{V}\right) =\mu _{n}\left( 1_{V}\otimes
%x\right) =0\quad \forall n\geq 1,$for all $x\in V,$ we have%
%\begin{eqnarray*}
%\mu _{t}\left( 1_{V}\otimes x\right) &=&\mu _{0}\left( 1_{V}\otimes x\right)
%+\sum\limits_{i\geq 1}\mu _{i}\left( 1_{V}\otimes x\right) t^{i}=\alpha
%\left( x\right) \\
%\mu _{t}\left( x\otimes 1_{V}\right) &=&\mu _{0}\left( x\otimes 1_{V}\right)
%+\sum\limits_{i\geq 1}\mu _{i}\left( x\otimes 1_{V}\right) t^{i}=\alpha
%\left( x\right)
%\end{eqnarray*}%
%then $\eta _{t}\left( 1_{\Bbbk }\right) =1_{V}=\eta \left( 1_{\Bbbk }\right)
%.$
The map $\varepsilon $ is a counit for $B_{t}$ if $ \left( \varepsilon \otimes id_{B}\right) \circ \Delta _{t} =\left(
id_{B}\otimes \varepsilon \right) \circ \Delta _{t}=\alpha$, where $
\Delta _{t}=\sum\limits_{i\geq 1}t^{i}\Delta _{i}$.
\begin{eqnarray*}
\left( \varepsilon \otimes id_{B}\right) \left( \sum\limits_{i\geq 0}\Delta
_{i}t^{i}\right) &=&\alpha =\left( \varepsilon \otimes id_{V}\right) \circ
\Delta _{0}+\sum\limits_{i\geq 1}\left( \varepsilon \otimes id_{V}\right)
\circ \Delta _{i}t^{i} \\
\alpha &=&\alpha +\sum\limits_{i\geq 1}\left( \varepsilon \otimes
id_{B}\right) \circ \Delta _{i}t^{i}.
\end{eqnarray*}%
By identification, we obtain 
$
\left( \varepsilon \otimes id_{B}\right) \circ \Delta _{i}=0,\ \forall
i\geq 1.
$ Similarly  $\left( id_{B}\otimes \varepsilon
_{j}\right) \circ \Delta _{i}=0,\ \forall i\geq 1$.
\end{proof}\\

\begin{theorem}\label{DefUNitCounit}Let $B=\left( B,\mu ,\eta ,\Delta ,\varepsilon ,\alpha
\right) $ be a Hom-bialgebra with a surjective map $\alpha$. Every  nontrivial formal deformation   $B_{t}=\left(
B,\mu_{t},\eta_t, \Delta _{t},\varepsilon_t, \alpha \right) $ 
 is equivalent to a unital and counital deformation with the same unit $\eta$ and counit $\varepsilon$.
\end{theorem}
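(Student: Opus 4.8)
The plan is to produce a formal isomorphism $\Phi_t = id_B + \sum_{i\geq 1}\Phi_i t^i$ commuting with $\alpha$, as demanded by \eqref{(57)}, carrying $B_t$ to an equivalent deformation $B_t'$ whose unit is $\eta$ and whose counit is $\varepsilon$. Write $1_t = \eta_t(1_\Bbbk) = 1_B + \sum_{i\geq 1}\eta_i(1_\Bbbk)\,t^i$ for the unit element of $B_t$. I would look for $\Phi_t$ satisfying the two normalization conditions
\[
\Phi_t(1_t) = 1_B \qquad\text{and}\qquad \varepsilon\circ\Phi_t = \varepsilon_t,
\]
and recall that the transformed structure maps are $\mu_t' = \Phi_t\circ\mu_t\circ(\Phi_t^{-1}\otimes\Phi_t^{-1})$ and $\Delta_t' = (\Phi_t\otimes\Phi_t)\circ\Delta_t\circ\Phi_t^{-1}$, read off from \eqref{(55)}--\eqref{(56)}.

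First I would verify that these two conditions are sufficient. Since $\Phi_t(1_t)=1_B$ gives $\Phi_t^{-1}(1_B)=1_t$, the weak unit axiom $\mu_t(x\otimes 1_t)=\alpha(x)$ of $B_t$ together with $\Phi_t\circ\alpha=\alpha\circ\Phi_t$ yields $\mu_t'(x\otimes 1_B)=\Phi_t\alpha\Phi_t^{-1}(x)=\alpha(x)$, and symmetrically $\mu_t'(1_B\otimes x)=\alpha(x)$, so $\eta$ is a unit for $B_t'$. Dually, $\varepsilon\circ\Phi_t=\varepsilon_t$ and the counit axiom $(\varepsilon_t\otimes id)\circ\Delta_t=\alpha$ give $(\varepsilon\otimes id)\circ\Delta_t'=\Phi_t\alpha\Phi_t^{-1}=\alpha$, and likewise on the right, so $\varepsilon$ is a counit for $B_t'$. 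This uses the $\alpha$-invariances $\alpha(1_B)=1_B$ and $\varepsilon\circ\alpha=\varepsilon$, and matches the criterion of Proposition \ref{CNSUNit}; the surjectivity of $\alpha$ is what makes the ensuing $\alpha$-equivariant interpolation problems solvable.

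I would then build $\Phi_t$ by induction on the order in $t$, starting from $\Phi_0=id_B$. Expanding the two conditions shows that at order $n\geq 1$ one must have $\Phi_n(1_B)=v_n$ and $\varepsilon\circ\Phi_n=\varepsilon_n$, where $v_n:=-\sum_{i=1}^{n}\Phi_{n-i}\bigl(\eta_i(1_\Bbbk)\bigr)$ is fixed by the lower-order data. Using $\alpha\circ\eta_i=\eta_i$ and the inductive commutation $\Phi_{n-i}\circ\alpha=\alpha\circ\Phi_{n-i}$ one checks that $v_n$ is $\alpha$-fixed, while $\varepsilon_n\circ\alpha=\varepsilon_n$ holds by the Hom-counit axiom for $B_t$. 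One can then take explicitly
\[
\Phi_n(x)=\varepsilon_n(x)\,1_B+\varepsilon(x)\,\bigl(v_n-\varepsilon(v_n)\,1_B\bigr),
\]
which commutes with $\alpha$ and satisfies $\varepsilon\circ\Phi_n=\varepsilon_n$, and which satisfies $\Phi_n(1_B)=v_n$ precisely when $\varepsilon(v_n)=\varepsilon_n(1_B)$.

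The crux, and the step I expect to be the main obstacle, is exactly this last compatibility $\varepsilon(v_n)=\varepsilon_n(1_B)$, because the unit and counit normalizations independently constrain $\Phi_n$ and must be reconciled at every order. Feeding in the inductive identities $\varepsilon\circ\Phi_{n-i}=\varepsilon_{n-i}$ rewrites $\varepsilon(v_n)=\varepsilon_n(1_B)$ as $\sum_{i+j=n}\varepsilon_j\bigl(\eta_i(1_\Bbbk)\bigr)=0$, which is nothing but the order-$n$ term of the Hom-bialgebra identity $\varepsilon_t\circ\eta_t=id$ satisfied by $B_t$; hence the induction closes. Finally $\Phi_t$ is invertible because $\Phi_0=id_B$, and since equivalences preserve all the Hom-bialgebra axioms with the fixed twisting map $\alpha$, the resulting $B_t'$ is a formal deformation of $B$ that is unital with unit $\eta$ and counital with counit $\varepsilon$, as claimed.
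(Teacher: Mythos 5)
Your proof is correct, but it follows a genuinely different route from the paper's. The paper never touches the deformed unit $\eta_t$ or counit $\varepsilon_t$ at all: it works cochain-by-cochain with the multiplication, using the order-$n$ Hom-associativity (2-cocycle) equations evaluated at $y=z=1_B$ to derive $\mu_n(\alpha(x),1_B)=\mu_0(\alpha(x),\mu_n(1_B,1_B))$, then invokes surjectivity of $\alpha$ to strip the twist and get $\mu_n(x,1_B)=\mu_0(x,\mu_n(1_B,1_B))$ for all $x$, and finally normalizes by an infinite product of elementary equivalences $\mathrm{id}+f_nt^n$ with $f_n(1_B)=\mu_n(1_B,1_B)$, killing $\mu_n'(x\otimes 1_B)$ and $\mu_n'(1_B\otimes x)$ order by order (with a separate dual argument for the counit). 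You instead exploit the hypothesis that $B_t$ already carries $\eta_t$ and $\varepsilon_t$, and build a single $\alpha$-equivariant isomorphism $\Phi_t$ normalized by $\Phi_t(1_t)=1_B$ and $\varepsilon\circ\Phi_t=\varepsilon_t$, so that conjugation transports $\eta_t\mapsto\eta$ and $\varepsilon_t\mapsto\varepsilon$ simultaneously; the only obstruction, $\varepsilon(v_n)=\varepsilon_n(1_B)$, collapses to the axiom $\varepsilon_t\circ\eta_t=\mathrm{id}$. Each approach buys something: the paper's argument shows that unitality and counitality are \emph{forced} by the cocycle equations alone, without positing any deformed unit or counit, which is why it matches the criterion of Proposition \ref{CNSUNit} so directly; your argument is shorter, treats unit and counit in one stroke, and — notably — never actually uses surjectivity of $\alpha$ (your closing remark attributing a role to surjectivity is inaccurate about your own construction: all you need are $\alpha(1_B)=1_B$, $\alpha\circ\eta_i=\eta_i$, $\varepsilon\circ\alpha=\varepsilon$, $\varepsilon_n\circ\alpha=\varepsilon_n$, and the inductive commutation of the $\Phi_k$ with $\alpha$). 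This is consistent with, and in effect explains, the paper's own Proposition \ref{Z}, where the analogous statement is proved "similarly" with the explicit note that surjectivity is not required; your proof makes that dispensability transparent for the theorem itself, under its stated hypothesis that $\eta_t$ and $\varepsilon_t$ exist.
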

\begin{proof}We show that the unit is conserved by deformation
%We show that  $\mu_i(x,1)=\mu_i(1,x)=0$ for all $i>0$.
Assume  $\mu
_{t}=\mu+\sum\limits_{i\geq p}t^{i} \mu _{i}$.
Two deformations are equivalent if  there is a
formal isomorphism 
$
\Phi_t=Id+t\Phi_1+t^2\Phi_2+\cdots $,  where $\Phi_i\in End_\K\left(
V\right)
$
such that (\ref{(53)}) holds, which  leads to
\begin{equation}\label{uu1}
\mu'_1(x,y)=\mu_1(x,y)+f_1(\mu_0(x,y))-\mu_0(f_1(x),y)-\mu_0(x,f_1(y)).
\end{equation}
Since $\mu_1$ is a 2-cocycle, then
 $
 \sum_ { i+j=1 }{ \mu _i\left( \mu _j\left(
x,y\right) ,\alpha (z)\right) -\mu _i\left( \alpha(x),\mu _j\left( y,z\right)
\right)} =0.
 $\\
We set $y=z=1$,  respectively  $x=y=1$ and $z=x$. Then, we obtain
 \begin{eqnarray}\label{uu2}
 \mu_1(\alpha(x),1)=\mu_0 (\alpha (x),\mu_1(1,1)), \quad
 \mu_1(1,\alpha(x))=\mu_0 (\mu_1(1,1),\alpha (x)).
 \end{eqnarray}
  If $\alpha$ is surjective, then we have  \begin{eqnarray}\label{uu2bis}
 \mu_1(x,1)=\mu_0 (x,\mu_1(1,1)), \quad
 \mu_1(1,x)=\mu_0 (\mu_1(1,1),x).
 \end{eqnarray}
 We consider the formal isomorphism satisfying
 $f_1(1)=\mu_1(1,1), \  f_n=0 \ \ \text{for } n\geq 2.
 $
 Using (\ref{uu1}) and (\ref{uu2}),  the equivalent multiplication leads
 to a new deformed multiplication satisfying
 \begin{eqnarray*}
 \mu'_1(x,1)&=&\mu_1(x,1)+f_1(\mu_0(x,1))-\mu_0(f_1(x),1)-\mu_0(x,\mu_1(1,1))\\
 \ &=&\mu_1(x,1)+f_1(\alpha(x))-\alpha(f_1(x))-\mu_1(x,1)=0.
 \end{eqnarray*}
 Similarly, we obtain $\mu'_1(1,x)=0$.
 By induction on $n$, we show that for all $n\geq 1$,  $\mu'_n(1,x)=\mu'_n(x,1)=0$. 
 Indeed, we assume $\mu'_k(1,x)=\mu'_k(x,1)=0$ for $k=1,\cdots,n-1$. We consider the isomorphism $f_t$ satisfying $f_n(1)=\mu_n(1,1)$ and $f_k=0 \ \forall k\neq n.$ Then, using (\ref{uu1}) and (\ref{uu2}), we obtain $\mu'_n(1,x)=\mu'_n(x,1)=0$.

 Observe that the product $(1+f_1 t^1)\cdots (1+f_n t^n)$ converge when $n $ tends to infinity.\\
 Therefore, according to Proposition \ref{CNSUNit}, the unit is conserved by deformation.  The proof is similar for the counit.
\end{proof}

%
%AUSSI \\
%Every deformation of a Hom-Hopf algebra as a Hom-bialgebra is a Hom-Hopf algebra.
%
%???

\subsection{Twistings and Deformations}
In this section, we discuss the connection between  twistings of Hom-bialgebras (see  Proposition \ref{TwistBialg}) and their formal deformations.

\begin{proposition}
\label{P}Let $B_{t}=\left( B\left[ \left[ t\right] \right] ,\mu _{t},\eta
_{t},\Delta _{t},\varepsilon _{t},\alpha \right) $ be a formal deformation of a
Hom-bialgebra $B=\left( B,\mu _{0},\eta _{0},\Delta _{0},\varepsilon
_{0},\alpha \right) $ and $\beta :B\longrightarrow B$ be a Hom-bialgebra morphism of
 $B$ and $B_{t}$.  Then $B_{t,\beta }=\left( B\left[ \left[ t%
\right] \right] ,\beta \circ \mu _{t},\beta \circ \eta _{t},\Delta _{t}\circ
\beta ,\varepsilon _{t}\circ \beta ,\beta \circ \alpha \right) $ is a formal
deformation of the Hom-bialgebra $B_{\beta }=\left( B,\beta \circ \mu _{0},\eta
_{0},\Delta _{0}\circ \beta ,\varepsilon _{0},\alpha \right) $.

Hence, for any $n\in 
%TCIMACRO{\U{2115} }%
%BeginExpansion
\mathbb{N}, 
%EndExpansion
$ $B_{t,\beta ^{n}}=\left( B\left[ \left[ t\right] \right] ,\beta ^{n}\circ
\mu _{t},\beta ^{n}\circ \eta _{t},\Delta _{t}\circ \beta ^{n},\varepsilon
_{t}\circ \beta ^{n},\beta ^{n}\circ \alpha \right) $ is a formal
deformation of the Hom-bialgebra $B_{\beta ^{n}}$.
\end{proposition}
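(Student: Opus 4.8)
The plan is to exploit the fact that the twisting construction $B\mapsto B_{\beta}$ of Proposition \ref{TwistBialg} is entirely formal: its proof only rearranges compositions of $\mu,\Delta,\eta,\varepsilon,\alpha,\beta$ and invokes the Hom-bialgebra axioms together with $\tau\circ\beta^{\otimes 2}=\beta^{\otimes 2}\circ\tau$, and so it stays valid after extending scalars from $\Bbbk$ to $\Bbbk[[t]]$. First I would regard the formal deformation $B_t=(B[[t]],\mu_t,\eta_t,\Delta_t,\varepsilon_t,\alpha)$ as a Hom-bialgebra over the ground ring $\Bbbk[[t]]$; this is precisely the content of the definition of a formal deformation, namely the three formal identities \eqref{(53)} together with the unit and counit conditions. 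I would then extend $\beta$ $\Bbbk[[t]]$-linearly to $B[[t]]$ and record that, by hypothesis, $\beta$ is a Hom-bialgebra morphism of $B_t$, i.e. $\beta\circ\mu_t=\mu_t\circ\beta^{\otimes 2}$, $\Delta_t\circ\beta=\beta^{\otimes 2}\circ\Delta_t$, $\beta\circ\alpha=\alpha\circ\beta$, $\beta\circ\eta_t=\eta_t$ and $\varepsilon_t\circ\beta=\varepsilon_t$.

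Next I would apply Proposition \ref{TwistBialg} verbatim to the $\Bbbk[[t]]$-Hom-bialgebra $B_t$ and the morphism $\beta$, obtaining at once that
\[
(B_t)_\beta=\bigl(B[[t]],\,\beta\circ\mu_t,\,\beta\circ\eta_t,\,\Delta_t\circ\beta,\,\varepsilon_t\circ\beta,\,\beta\circ\alpha\bigr)
\]
is a Hom-bialgebra over $\Bbbk[[t]]$. Concretely, the formal Hom-associativity, formal Hom-coassociativity and formal compatibility of the twisted structure are produced exactly as in the proof of Proposition \ref{TwistBialg}, writing $\beta\circ\mu_t=\mu_t\circ\beta^{\otimes 2}$ and $\Delta_t\circ\beta=\beta^{\otimes 2}\circ\Delta_t$ and using $\tau_{2,3}\circ\beta^{\otimes 4}=\beta^{\otimes 4}\circ\tau_{2,3}$. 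This object is exactly $B_{t,\beta}$.

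It then remains to identify the specialization of $B_{t,\beta}$ at $t=0$ with the twisted Hom-bialgebra $B_{\beta}$. Setting $t=0$ and using $\beta\circ\eta_0=\eta_0$ and $\varepsilon_0\circ\beta=\varepsilon_0$ (which hold because a Hom-bialgebra morphism is in particular a morphism of the unit and the counit), the constant term of $B_{t,\beta}$ is $(B,\beta\circ\mu_0,\eta_0,\Delta_0\circ\beta,\varepsilon_0,\beta\circ\alpha)$, which is the twisting $B_{\beta}$ of $B$ furnished by Proposition \ref{TwistBialg}. Hence $B_{t,\beta}$ is a Hom-bialgebra over $\Bbbk[[t]]$ reducing to $B_{\beta}$ at $t=0$, that is, a formal deformation of $B_{\beta}$. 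For the iterated statement I would first note that $\beta^n$ is again a Hom-bialgebra morphism, the identities $\beta^n\circ\mu=\mu\circ(\beta^n)^{\otimes 2}$, $\Delta\circ\beta^n=(\beta^{\otimes 2})^n\circ\Delta$ and $\beta^n\circ\alpha=\alpha\circ\beta^n$ following by the same immediate induction already used in Propositions \ref{C} and \ref{D}, and then apply the case just proved with $\beta^n$ in place of $\beta$.

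The work is essentially conceptual bookkeeping, and the one point to be careful about is the transfer of Proposition \ref{TwistBialg} from $\Bbbk$ to $\Bbbk[[t]]$: this is legitimate precisely because that proof never uses more than $\Bbbk[[t]]$-linearity and the defining axioms. The only genuinely coefficientwise remark is that ``$\beta$ is a morphism of $B_t$'' must be read as the family of identities $\beta\circ\mu_i=\mu_i\circ\beta^{\otimes 2}$ and $\Delta_i\circ\beta=\beta^{\otimes 2}\circ\Delta_i$ for all $i\geq 0$, so that $\beta\circ\mu_t=\sum_i(\beta\circ\mu_i)t^i$ and $\Delta_t\circ\beta=\sum_j(\Delta_j\circ\beta)t^j$ are bona fide formal power series whose coefficients are the required maps $B^{\otimes 2}\to B$ and $B\to B^{\otimes 2}$.
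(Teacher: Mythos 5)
Your proof is correct and is essentially the paper's own argument: the paper's entire proof consists of the remark that it is ``analogous to Proposition \ref{TwistBialg}'', and your packaging of that analogy --- viewing $B_t$ as a Hom-bialgebra over $\Bbbk[[t]]$, applying Proposition \ref{TwistBialg} after this scalar extension, checking the constant term at $t=0$, and iterating for $\beta^n$ --- is exactly what the analogy unpacks to, made explicit. The one point where you silently diverge from the printed statement is your identification of the twisting map of the constant term as $\beta\circ\alpha$ rather than the $\alpha$ appearing in the paper's formula for $B_{\beta}$; your reading is the correct one, since the twisting construction of Proposition \ref{TwistBialg} (and the consistent usage in Proposition \ref{T}, where $B_{\alpha^n}$ carries $\alpha^{n+1}$) yields structure map $\beta\circ\alpha$, so the $\alpha$ in the statement is evidently a typo.
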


\begin{proof}
The proof is  analogous to that of Proposition \ref{TwistBialg}.
\end{proof}

\begin{corollary}
Let $B=\left( B,\mu _{0},\eta _{0},\Delta _{0},\varepsilon _{0}\right) $ \
be a bialgebra and $\alpha :B\longrightarrow B$ be a bialgebra morphism  (i.e. $\alpha \circ \mu _{0}=\mu _{0}\circ \left( \alpha \otimes \alpha
\right) ,$ $\Delta _{0}\circ \alpha =\left( \alpha \otimes \alpha \right)
\circ \Delta _{0},$ $\alpha \circ \eta _{0}=\eta _{0}$ and $\varepsilon
_{0}\circ \alpha =\varepsilon _{0}$).\\  If $B_{t}=\left( B\left[ \left[ t%
\right] \right] ,\mu _{t},\eta_t ,\Delta _{t},\varepsilon _{t}\right) $ is
a formal deformation of the bialgebra $B$ and $\alpha $ is a bialgebra morphism for  $B_{t}$ (i.e. $\alpha \circ \mu _{t}=\mu _{t}\circ \left( \alpha
\otimes \alpha \right) ,$ $\Delta _{t}\circ \alpha =\left( \alpha \otimes
\alpha \right) \circ \Delta _{t},$ $\alpha \circ \eta _{t}=\eta _{t}$ and $%
\varepsilon _{t}\circ \alpha =\varepsilon _{t}$). Then $B_{t,\alpha }=\left(
B\left[ \left[ t\right] \right] ,\alpha \circ \mu _{t},\eta _{t},\Delta
_{t}\circ \alpha ,\varepsilon _{t},\alpha \right) $ is a formal deformation
of the  Hom-bialgebra $B_{\alpha }=\left( B,\alpha \circ \mu _{0},\eta
_{0},\Delta _{0}\circ \alpha ,\varepsilon _{0},\alpha \right) $.
\end{corollary}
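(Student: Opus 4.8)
The plan is to deduce the statement directly from Proposition \ref{P} by specializing the initial twisting map to the identity. First I would note that a classical bialgebra $B=(B,\mu_0,\eta_0,\Delta_0,\varepsilon_0)$ is nothing but a Hom-bialgebra $(B,\mu_0,\eta_0,\Delta_0,\varepsilon_0,id_{B})$ with twisting map $id_{B}$, and that the given $B_t=(B[[t]],\mu_t,\eta_t,\Delta_t,\varepsilon_t)$ is correspondingly a formal Hom-bialgebra deformation whose (undeformed) twisting map is $id_{B}$. The notational point to keep in mind is that the morphism here denoted $\alpha$ plays the role of $\beta$ in Proposition \ref{P}, while the structural twisting map there denoted $\alpha$ is now $id_{B}$.

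Next I would verify that $\alpha$ satisfies the hypotheses demanded of $\beta$ in Proposition \ref{P}, that is, that $\alpha$ is a Hom-bialgebra morphism of both $B$ and $B_t$. For $B$ this is immediate from the bialgebra-morphism relations $\alpha\circ\mu_0=\mu_0\circ(\alpha\otimes\alpha)$, $\Delta_0\circ\alpha=(\alpha\otimes\alpha)\circ\Delta_0$, $\alpha\circ\eta_0=\eta_0$, $\varepsilon_0\circ\alpha=\varepsilon_0$, together with the trivial commutation $\alpha\circ id_{B}=id_{B}\circ\alpha$ with the twisting map; for $B_t$ it is exactly the hypothesis that $\alpha$ is a bialgebra morphism for $B_t$.

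Then I would apply Proposition \ref{P} with $\beta=\alpha$ and initial twisting map $id_{B}$. It yields at once that
\[
B_{t,\alpha}=\bigl(B[[t]],\,\alpha\circ\mu_t,\,\alpha\circ\eta_t,\,\Delta_t\circ\alpha,\,\varepsilon_t\circ\alpha,\,\alpha\circ id_{B}\bigr)
\]
is a formal deformation of
\[
B_{\alpha}=\bigl(B,\,\alpha\circ\mu_0,\,\alpha\circ\eta_0,\,\Delta_0\circ\alpha,\,\varepsilon_0\circ\alpha,\,\alpha\circ id_{B}\bigr).
\]
Finally I would simplify using the unit and counit identities: since $\alpha$ is a bialgebra morphism for both $B$ and $B_t$ one has $\alpha\circ\eta_t=\eta_t$, $\alpha\circ\eta_0=\eta_0$, $\varepsilon_t\circ\alpha=\varepsilon_t$, $\varepsilon_0\circ\alpha=\varepsilon_0$, and trivially $\alpha\circ id_{B}=\alpha$. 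Substituting reduces the two tuples exactly to $B_{t,\alpha}=(B[[t]],\alpha\circ\mu_t,\eta_t,\Delta_t\circ\alpha,\varepsilon_t,\alpha)$ and $B_{\alpha}=(B,\alpha\circ\mu_0,\eta_0,\Delta_0\circ\alpha,\varepsilon_0,\alpha)$, as asserted. Because the argument is a pure specialization of an already-established proposition, there is no real obstacle; the only step needing a moment of care is the bookkeeping that turns $\alpha\circ\eta_t$ into $\eta_t$ and $\varepsilon_t\circ\alpha$ into $\varepsilon_t$, i.e.\ confirming that the unit and counit survive the twist unchanged.
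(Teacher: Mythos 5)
Your proposal is correct and matches the paper's (implicit) treatment: the corollary is stated without a separate proof precisely because it is the specialization of Proposition \ref{P} to a classical bialgebra viewed as a Hom-bialgebra with twisting map $id_{B}$ and with $\beta=\alpha$. Your extra bookkeeping — checking that $\alpha$ commutes with $id_{B}$ and that $\alpha\circ\eta_{t}=\eta_{t}$, $\varepsilon_{t}\circ\alpha=\varepsilon_{t}$ collapse the unit and counit to the stated ones — is exactly the verification the paper leaves to the reader.
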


%\begin{proof}
%This proof is  analogous to that of Example 2.3 in \cite{DY 2009}.
%\end{proof}

%\begin{remark}
%This Lemma can be used in the proof of Proposition \ref{Z}
%\end{remark}

\begin{proposition}
\label{T}Let $B_{t}=\left( B,\mu _{t},\eta _{t},\Delta _{t},\varepsilon
_{t},\alpha \right) $ and $B_{t}^{\prime }=\left( B,\mu _{t}^{\prime },\eta
_{t}^{\prime },\Delta _{t}^{\prime },\varepsilon _{t}^{\prime },\alpha
\right) $ be  two equivalent deformations  of a Hom-bialgebra $B=\left( B,\mu
_{0},\eta _{0},\Delta _{0},\varepsilon _{0},\alpha \right) $.  Then, \\Ê $
B_{t,\alpha ^{n}}=\left( B\left[ \left[ t\right] \right] ,\alpha ^{n}\circ
\mu _{t},\eta _{t},\Delta _{t}\circ \alpha ^{n},\varepsilon _{t},\alpha
^{n+1}\right) $ and $B_{t,\alpha ^{n}}^{\prime }=\left( B\left[ \left[ t%
\right] \right] ,\alpha ^{n}\circ \mu _{t}^{\prime },\eta _{t}^{\prime
},\Delta _{t}^{\prime }\circ \alpha ^{n},\varepsilon _{t}^{\prime },\alpha
^{n+1}\right) $ are equivalent deformations of the Hom-bialgebra $B_{\alpha ^{n}}=\left(
B,\alpha ^{n}\circ \mu _{0},\eta _{0},\Delta _{0}\circ \alpha
^{n},\varepsilon _{0},\alpha ^{n+1}\right) ,$ for any $n\in 
%TCIMACRO{\U{2115} }%
%BeginExpansion
\mathbb{N}
%EndExpansion
$.
\end{proposition}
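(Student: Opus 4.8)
The plan is to transport the equivalence already present between $B_t$ and $B_t'$ through the twist by $\alpha^n$, using the very same formal isomorphism. First I would establish the deformation half of the statement by applying Proposition \ref{P} (twisting of formal deformations) with the Hom-bialgebra morphism $\beta=\alpha^n$. Since $\alpha$ is the undeformed structure map of $B$, $B_t$ and $B_t'$, it commutes with $\mu_t,\Delta_t,\eta_t,\varepsilon_t$ and with the primed data, and it satisfies $\alpha\circ\eta=\eta$ and $\varepsilon\circ\alpha=\varepsilon$; hence $\alpha^n$ is a Hom-bialgebra morphism of each. Proposition \ref{P} then yields at once that $B_{t,\alpha^n}$ and $B_{t,\alpha^n}'$ are formal deformations of $B_{\alpha^n}$. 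The identities $\alpha^n\circ\eta_t=\eta_t$ and $\varepsilon_t\circ\alpha^n=\varepsilon_t$, themselves consequences of the unit and counit axioms for $B_t$, reconcile the unit and counit entries displayed in the statement with the output of Proposition \ref{P}.

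For the equivalence, the key point is that the same formal isomorphism $\Phi_t=\sum_{i\geq 0}\Phi_i t^i$, with $\Phi_0=id_{B}$, that witnesses $B_t\sim B_t'$ also witnesses $B_{t,\alpha^n}\sim B_{t,\alpha^n}'$. Iterating $\eqref{(57)}$ gives $\Phi_t\circ\alpha^n=\alpha^n\circ\Phi_t$, hence $\Phi_t\circ\alpha^{n+1}=\alpha^{n+1}\circ\Phi_t$, which is the twisted form of $\eqref{(57)}$. For the multiplication I would compute $\Phi_t\circ(\alpha^n\circ\mu_t)=\alpha^n\circ(\Phi_t\circ\mu_t)=\alpha^n\circ\mu_t'\circ(\Phi_t\otimes\Phi_t)$, the first equality by commutation with $\alpha^n$ and the second by $\eqref{(55)}$; the result is exactly $(\alpha^n\circ\mu_t')\circ(\Phi_t\otimes\Phi_t)$, i.e.\ the twisted form of $\eqref{(55)}$. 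Dually, $(\Phi_t\otimes\Phi_t)\circ(\Delta_t\circ\alpha^n)=\Delta_t'\circ\Phi_t\circ\alpha^n=\Delta_t'\circ\alpha^n\circ\Phi_t=(\Delta_t'\circ\alpha^n)\circ\Phi_t$, using $\eqref{(56)}$ and then commutation with $\alpha^n$, which is the twisted form of $\eqref{(56)}$.

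Since $\Phi_0=id_{B}$ is unchanged, $\Phi_t$ remains a formal isomorphism, and the three verified conditions show that $B_{t,\alpha^n}$ and $B_{t,\alpha^n}'$ are equivalent deformations of $B_{\alpha^n}$ for every $n\in\mathbb{N}$. I do not anticipate a genuine obstacle: the whole argument rests on sliding $\alpha$ past $\Phi_t$, $\mu_t$ and $\Delta_t$, and every such commutation is supplied either by $\eqref{(57)}$ or by $\alpha$ being the undeformed structure map of both deformations. The only point demanding care is the bookkeeping required to match the unit and counit data of the twisted deformations with those prescribed by the definition of a formal deformation.
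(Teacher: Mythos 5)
Your proposal is correct and follows essentially the same route as the paper: the equivalence is witnessed by the very same formal isomorphism $\Phi_t$, and the verification consists of sliding $\alpha^n$ past $\Phi_t$, $\mu_t$ and $\Delta_t$ using the relations \eqref{(55)}--\eqref{(57)}, which is exactly the paper's computation. Your additional step of invoking Proposition \ref{P} with $\beta=\alpha^n$ to confirm that the twisted objects are indeed formal deformations of $B_{\alpha^n}$ only makes explicit what the paper leaves implicit.
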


\begin{proof}
We know that there exists a formal automorphism $\Phi
_{t}=\sum\limits_{i\geq 0}\Phi _{i}t^{i\text{ }}$, where $\Phi _{i}\in
End_{\Bbbk }(B)$ and $\Phi _{0}=id_{B}$ such that 
$ \Phi _{t}\circ \mu _{t}^{\prime } =\mu _{t}\circ \Phi _{t}^{\otimes 2},%
\text{ }\Phi _{t}^{\otimes 2}\circ \Delta _{t}^{\prime }=\Delta _{t}\circ
\Phi _{t},\text{ }and\ \Phi _{t}\circ \alpha =\alpha \circ \Phi _{t}.$\\ Then, we have 
\begin{eqnarray*}
\alpha ^{n}\circ \Phi _{t}\circ \mu _{t}^{\prime } &=&\alpha ^{n}\circ \mu
_{t}\circ \Phi _{t}^{\otimes 2},\text{ }\Phi _{t}^{\otimes 2}\circ \Delta
_{t}^{\prime }\circ \alpha ^{n}=\Delta _{t}\circ \Phi _{t}\circ \alpha ^{n},%
\text{ }and\ \Phi _{t}\circ \alpha \circ \alpha ^{n}=\alpha \circ \Phi
_{t}\circ \alpha ^{n}, \\
\Phi _{t}\circ \left( \alpha ^{n}\circ \mu _{t}^{\prime }\right) &=&\left(
\alpha ^{n}\circ \mu _{t}\right) \circ \Phi _{t}^{\otimes 2},\text{ }\Phi
_{t}^{\otimes 2}\circ \left( \Delta _{t}^{\prime }\circ \alpha ^{n}\right)
=\left( \Delta _{t}\circ \alpha ^{n}\right) \circ \Phi _{t},\text{ }and\
\Phi _{t}\circ \alpha ^{n+1}=\alpha ^{n+1}\circ \Phi _{t}.
\end{eqnarray*}%
Hence $B_{t,\alpha ^{n}}$ is equivalent to $B_{t,\alpha ^{n}}^{\prime },$ for
any $n\in 
%TCIMACRO{\U{2115} }%
%BeginExpansion
\mathbb{N}
%EndExpansion
.$
\end{proof}

\begin{proposition}
\label{Z}Let  $B_{t}=\left( V,\mu _{t},\eta _{t},\Delta
_{t},\varepsilon _{t},\alpha \right) $  be a formal deformation of a Hom-bialgebra $B=\left( V,\mu
_{0},\eta _{0},\Delta _{0},\varepsilon _{0},\alpha \right) $. 
%The is an equivalent
% formal deformation $B_{t}^{\prime }=\left( V,\mu _{t}^{\prime },\eta
%_{t}^{\prime },\Delta _{t}^{\prime },\varepsilon _{t}^{\prime },\alpha
%\right) $ 
Then $B_{t,\alpha ^{n}}$ is equivalent to a formal deformation $%
B_{t,\alpha ^{n}}^{\prime }$\ of a Hom-bialgebra $B_{\alpha ^{n}}$\ with the
same unit and counit as $B_{\alpha ^{n}}$.
\end{proposition}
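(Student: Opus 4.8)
The plan is to derive Proposition~\ref{Z} by combining the twisting construction for formal deformations (Proposition~\ref{P}) with the unit- and counit-rectification of Theorem~\ref{DefUNitCounit}; the one genuinely new ingredient is that the twist by $\alpha^{n}$ supplies exactly the power of $\alpha$ that forced us to assume surjectivity in Theorem~\ref{DefUNitCounit}, so that here no such hypothesis is needed.

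First I would check that $\alpha$ is a Hom-bialgebra morphism of both $B$ and $B_{t}$. Since the structure map is not deformed, we have $\alpha\circ\mu_{t}=\mu_{t}\circ\alpha^{\otimes2}$, $\Delta_{t}\circ\alpha=\alpha^{\otimes2}\circ\Delta_{t}$, $\alpha\circ\eta_{t}=\eta_{t}$ and $\varepsilon_{t}\circ\alpha=\varepsilon_{t}$, together with the trivial $\alpha\circ\alpha=\alpha\circ\alpha$; the same relations at $t=0$ show $\alpha$ is a morphism of $B$. Hence Proposition~\ref{P}, applied with $\beta=\alpha$ and iterated $n$ times, gives that $B_{t,\alpha^{n}}=\left(B[[t]],\alpha^{n}\circ\mu_{t},\eta_{t},\Delta_{t}\circ\alpha^{n},\varepsilon_{t},\alpha^{n+1}\right)$ is a formal deformation of the Hom-bialgebra $B_{\alpha^{n}}=\left(B,\alpha^{n}\circ\mu_{0},\eta_{0},\Delta_{0}\circ\alpha^{n},\varepsilon_{0},\alpha^{n+1}\right)$, whose unit is $\eta_{0}$ and counit is $\varepsilon_{0}$.

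Next I would rectify the unit and counit following the scheme of Theorem~\ref{DefUNitCounit}. Denote by $\widehat{\mu}_{i}=\alpha^{n}\circ\mu_{i}$ and $\widehat{\Delta}_{i}=\Delta_{i}\circ\alpha^{n}$ the terms of $B_{t,\alpha^{n}}$, and reduce to a deformation beginning at some order $p$, whose leading pair $(\widehat{\mu}_{p},\widehat{\Delta}_{p})$ is a $2$-cocycle of $B_{\alpha^{n}}$. The decisive observation is that $\mu_{p}(1_{B}\otimes1_{B})$ is $\alpha$-invariant: from $\mu_{p}\circ\alpha^{\otimes2}=\alpha\circ\mu_{p}$ and $\alpha(1_{B})=1_{B}$ one gets $\alpha\bigl(\mu_{p}(1_{B}\otimes1_{B})\bigr)=\mu_{p}(1_{B}\otimes1_{B})$, hence $\alpha^{k}\bigl(\mu_{p}(1_{B}\otimes1_{B})\bigr)=\mu_{p}(1_{B}\otimes1_{B})$ for every $k$. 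Evaluating the cocycle relation at the unit exactly as in \eqref{uu2} yields $\alpha\bigl(\mu_{p}(x\otimes1_{B})\bigr)=\mu_{0}\bigl(\alpha(x)\otimes\mu_{p}(1_{B}\otimes1_{B})\bigr)$, and applying $\alpha^{n-1}$ together with the cochain identity for $\mu_{0}$ and the invariance above gives $\alpha^{n}\bigl(\mu_{p}(x\otimes1_{B})\bigr)=\mu_{0}\bigl(\alpha^{n}(x)\otimes\mu_{p}(1_{B}\otimes1_{B})\bigr)=\alpha^{n}\bigl(\mu_{0}(x\otimes\mu_{p}(1_{B}\otimes1_{B}))\bigr)$. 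Taking the formal isomorphism $\Phi_{t}=\mathrm{id}_{B}+\Phi_{p}t^{p}$ with $\Phi_{p}$ commuting with $\alpha$ and $\Phi_{p}(1_{B})=\mu_{p}(1_{B}\otimes1_{B})$, the two terms involving $\Phi_{p}\circ\alpha^{n+1}$ cancel as in \eqref{uu1}, and the displayed identity forces $\widehat{\mu}_{p}'(x\otimes1_{B})=\widehat{\mu}_{p}'(1_{B}\otimes x)=0$. An induction on $p$, with the infinite product of the successive isomorphisms converging as in Theorem~\ref{DefUNitCounit}, produces an equivalent deformation with $\widehat{\mu}_{m}'(\cdot\otimes1_{B})=\widehat{\mu}_{m}'(1_{B}\otimes\cdot)=0$ for all $m\geq1$; by Proposition~\ref{CNSUNit} this means the unit $\eta_{0}$ of $B_{\alpha^{n}}$ is preserved, and the dual computation handles the counit $\varepsilon_{0}$.

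The step I expect to be the main obstacle is precisely the one that needed surjectivity in Theorem~\ref{DefUNitCounit}: the cocycle identity \eqref{uu2} only controls $\mu_{p}(\alpha(x)\otimes1_{B})$, not $\mu_{p}(x\otimes1_{B})$ for arbitrary $x$. The entire gain of the present statement is that, after twisting, every structure map of $B_{\alpha^{n}}$ carries an external factor $\alpha^{n}$; combined with the $\alpha$-invariance of $\mu_{p}(1_{B}\otimes1_{B})$, this lets the variable enter only through $\alpha^{n}(x)$, so that \eqref{uu2} can be applied after transporting one copy of $\alpha$ onto $x$. Verifying that this collapse is uniform in the order $p$ and carrying out the dual, counital version are the only points requiring real care; everything else is a transcription of Theorem~\ref{DefUNitCounit} and Proposition~\ref{P}.
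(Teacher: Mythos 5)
Your proposal is correct and takes essentially the same route as the paper: the paper's entire proof of this proposition is the remark that it is ``similar to Theorem \ref{DefUNitCounit}'' together with the observation that ``surjectivity is not required'', which is exactly the scheme you transcribe. In fact you make explicit the one point the paper leaves unsaid, namely that the external factor $\alpha^{n}$ carried by the twisted structure maps, combined with the $\alpha$-invariance of $\mu_{p}\left(1_{B}\otimes 1_{B}\right)$ and the cocycle identity $\alpha\left(\mu_{p}(x\otimes 1_{B})\right)=\mu_{0}\left(\alpha(x)\otimes\mu_{p}(1_{B}\otimes 1_{B})\right)$, is what replaces the surjectivity hypothesis of Theorem \ref{DefUNitCounit}.
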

\begin{proof}The proof is similar to Theorem \ref{DefUNitCounit}. Notice that surjectivity is not required.
\end{proof}

\textbf{Addresses.}\\
 A. Makhlouf, Universit\'{e} de Haute Alsace,  Laboratoire de Math\'{e}matiques, Informatique et Applications,
4, rue des Fr\`{e}res Lumi\`{e}re F-68093 Mulhouse, France.\\
K. Dekkar, Universit\'{e} de Bordj Bou Arreridj, Algeria.\\
email : Abdenacer.Makhlouf@uha.fr, k.dekkar@univ-bba.dz

\begin{thebibliography}{99}


\bibitem{AizawaSaito} N. Aizawa, H. Sato, \emph{$q$-deformation of the
Virasoro algebra with central extension}, Phys. Lett. B \emph{256}(1),
185--190 (1991). N. Aizawa, H. Sato, Hiroshima University preprint HUPD-9012
(1990).


\bibitem{F. A 2010} F. Ammar, Z. Ejbehi, A. Makhlouf, \emph{Cohomology and
deformations of Hom-algebras}, J. Lie Theory \textbf{21}(4) (2011), 813--836 .


\bibitem{F A 2010} F. Ammar, A. Makhlouf, \emph{Hom-Lie superalgebras and
Hom-Lie admissible superalgebras}, J. Algebra \textbf{324} (2010), 1513--1528.

%\bibitem{D B 1997} D. Balavoine, \emph{Deformations of algebras over a
%quadratic operad}, Contemp. Math. 202 (1997), 207-234.

\bibitem{SC 2009} S. Caenepeel, and I. Goyvaerts, \emph{Monoidal Hom-Hopf algebras},
Comm. Algebra \textbf{39}, no. 6  (2011),  2216--2240.


\bibitem{KD 2008} K. Dekkar, A. Makhlouf, \emph{Bialgebra structures of
2-associative algebras}, The Arabian Journal for Science and Engineering,
Volume \textbf{33}, number 2C (2008).

\bibitem{Y.D 98} Y.  Du, X. Chen, and Y. Ye, \emph{On graded bialgebra
deformations},  Algebra Colloq. \textbf{14}, 301 (2007).

\bibitem{Chen2013}Y. Y. Chen, Z. W. Wang and L. Y. Zhang, \emph{Integrals for monoidal Hom-Hopf algebras
and their applications}, J. Math. Phys. \textbf{54}, 073515, 22 pp (2013). 

%\bibitem{ME 2009} M. Elhamdadi, A. Makhlouf, \emph{Cohomology and Formal
%Deformations of Left Alternative Algebras}, arxiv 0907.1548v1 (2009).

\bibitem{Elhamdadi-Makhlouf} M. Elhamdadi, A. Makhlouf, \emph{%
Hom-quasi-bialgebras}, Contemp. Math. \textbf{\ 585} (Eds. N.
Andruskiewitch, J. Cuadra and B. Torrecillas), Amer. Math. Soc., Providence,
RI (2013).


\bibitem{Y F 2009} Y. Fr\'{e}gier , A. Gohr, S. D. Silvestrov, \emph{Unital
algebras of Hom-associative type and surjective or injective twistings}, J.
Gen. Lie Theory Appl. \textbf{3}(4) (2009), 285--295.

\bibitem{A.F57} A. Frolicher and A. Nijenhuis, \emph{A theorem on stability
of complex structures}, Proc. Nat. Acad. ScL U.S.A. \textbf{43 }(1957), 239--241.

\bibitem{MG 1963} M. Gerstenhaber, \emph{The cohomology structure of an
associative ring}. Ann. Math. (2) \textbf{78} (1963), 267--288.

\bibitem{M.G64} M. Gerstenhaber, \emph{On the deformation of rings and
algebras}, Ann. of Math. \textbf{79} (1964), 59--104.

\bibitem{G.S90} M. Gerstenhaber, S.D. Schack, \emph{Bialgebra Cohomology,
Deformations, and Quantum Groups}, Proc. Natl. Acad. Sci. USA \textbf{87} (1990), 
478--481.

\bibitem{G.S92} M. Gerstenhaber, S.D. Schack, \emph{Algebras, Bialgebras,
Quantum Groups, and Algebraic Deformations}, Contemp. Math. \textbf{134} (1992), 
51--92.

\bibitem{Giaquinto} A. Giaquinto,  \emph{Topics in algebraic deformation theory}. Higher structures in geometry and physics, 1--24, Progr. Math., \textbf{287}, BirkhŠuser/Springer, New York, 2011.

\bibitem{AG 1995} A. Guichardet, \emph{Groups quantiques, Introduction au
point de vue formel}, InterEditions/CNRS Edition  Paris, 1995.

\bibitem{A G 2010} A. Gohr, \emph{On Hom-algebras with surjective twisting},
J. Algebra \textbf{324} (2010), 1483--1491.

\bibitem{HJ 2006} J. T. Hartwig, D. Larsson, S.D. Silvestrov, \emph{%
Deformations of Lie algebras using }$\sigma $\emph{-derivations}, J. Algebra
\textbf{295} (2006), 314--361.

\bibitem{GH 1945} G. Hochschild, \emph{On the cohomology groups of an
associative algebra}, Ann. Math. (2) \textbf{46} (1945), 58-67.

\bibitem{C K 1995} C. Kassel, \emph{Quantum groups}, Graduate Texts
in Mathematics, Vol. \textbf{155}, Springer-Verlag, New York, 1995. 

%\bibitem{A.K 1997} A. Khmyk, K. Schmudgen, \emph{Quantum groups and their
%representations}, Springer-Verlag, Berlin Heidelberg, 1997.

\bibitem{D L 2005} D. Larsson and S.D. Silvestrov, \emph{Quasi-Hom-Lie
algebras, central extensions and 2-cocycle-like identities}, J. Algebra \textbf{288}
(2005) 321--344.

%\bibitem{D L S 2005} D. Larsson, S. D. Silvestrov, \emph{Quasi-Lie algebras}%
%, Contemp. Math. \textbf{391} (2005), 241--248.

%\bibitem{D L 2007} D. Larsson, S. D. Silvestrov, \emph{Quasi-deformations of
%sl}$_{2}$\emph{(F) using twisted derivations}, Comm. Algebra\textbf{ 32}
%(2007), 4303--4318.

%\bibitem{S.M 1995} S. Madjid, \emph{Foundations of Quantum Group Theory},
%Cambridge University Press, 1995.

\bibitem{Mak:Almeria} A. Makhlouf, \emph{Paradigm of nonassociative
Hom-algebras and Hom-superalgebras}, Proceedings of the \textquotedblright
Jordan Structures in Algebra and Analysis\textquotedblright\ Meeting (Eds.
J. Carmona Tapia, A. Morales Campoy, A. M. Peralta Pereira, M. I. Ramirez
Ilvarez), Publishing House: Circulo Rojo, 145--177 (2010).

\bibitem{AM} A. Makhlouf , S. D. Silvestrov, \emph{Hom-algebra structures},
J. Gen. Lie Theory Appl. Vol \textbf{2} (2),  (2008),  51--64.


\bibitem{AMS 2007Hbial} A. Makhlouf , S. D. Silvestrov\emph{, Hom-Lie admissible
Hom-coalgebras and Hom-Hopf algebras}, In \textquotedblright Generalized Lie
theory in Mathematics, Physics and Beyond. S. Silvestrov, E. Paal, V.
Abramov, A. Stolin, Editors\textquotedblright . Springer-Verlag, Berlin,
Heidelberg, Chapter 17, pp 189-206, (2009).

\bibitem{AM 2008HBial} A. Makhlouf, S. Silvestrov, \emph{Hom-algebras and
Hom-coalgebras}, J. Algebra Appl. \emph{9}(4), 553--589 (2010).

\bibitem{AM 2007} A. Makhlouf , S. D. Silvestrov, \emph{Notes on formal
deformations of Hom-associative and Hom-Lie algebras},
Forum Math. \textbf{22}(4) (2010), 715--759.


\bibitem{A M 2014} A. Makhlouf, F. Panaite, \emph{Yetter-Drinfeld modules
for Hom-bialgebras}, J. Math. Phys. \emph{55}, 013501 (2014).

\bibitem{A M 2015} A. Makhlouf, F. Panaite, \emph{Hom-L-R-smash products, Hom-diagonal crossed products and the Drinfeld double of a Hom-Hopf algebra}, J. Algebra \textbf{441} (2015), 314--343. 

\bibitem{M.M 2008} M. Mastnak, S. Witherspoon, \emph{Bialgebra cohomology,
pointed Hopf algebras, and deformations}, Journal of Pure and Applied Algebra
Vol. \textbf{213}, Issue 7  (2009), 1399--1417

%\bibitem{R.T 2001} R. Taillefer, \emph{Th\'{e}ories homologiques des alg\`{e}%
%bres de Hopf}, these de de doctorat, 2001.

\bibitem{P.B 90} B. Parshall, J.P and Wang, \emph{On bialgebra cohomology},
Bulletin de la Soc. Math. de Belgique, t.XLII (1990), 607--641.

\bibitem{Sheng} Y. Sheng, \emph{Representations of {H}om-{L}ie algebras}, 
Algebr. Represent. Theory \textbf{15}(6) (2012), 1081--1098.


\bibitem{M S} M.E. Sweedler, \emph{Hopf algebras}, W.A. Benjamin, Inc.
Publishers, New York, 1969.

\bibitem{Taillefer} R. Taillefer, \emph{Cohomology theories of Hopf bimodules and cup-product,} Algebr. Represent. Theory\textbf{ 7} (5) (2004), 471--490.

\bibitem{R.U 1996} R.N. Umble, \emph{The deformation complex for
differential graded Hopf algebras}, Journal of Pure and Applied Algebra \textbf{106}
(1996), 199--222.

%\bibitem{D Y 2007} D. Yau, \emph{Hom-algebras as deformations and homology},
%arxiv 0712.3515v1 [math.RA] (2007).
%
%\bibitem{DY 2008} D. Yau, \emph{Module Hom-algebras}, arxiv 0812.4695v1
%(2008).

\bibitem{DY2009-1} D. Yau, \emph{Hom-algebras and homology}, J. Lie Theory \textbf{19}
(2009), 409--421.

\bibitem{yauhomyb1} D. Yau, \emph{Hom-Yang-Baxter equation, Hom-Lie algebras
and quasitriangular bialgebras}, J. Phys. A \textbf{42}(16), 165202, 12 pp
(2009).

\bibitem{DY 2010} D. Yau, \emph{Hom-bialgebras and comodule Hom-algebras},
International Electronic Journal of Algebra \textbf{8} (2010), 45--64.


\bibitem{DY 2012} D. Yau, \emph{Hom-quantum groups I: quasi-triangular
Hom-bialgebras}, J. Phys. A \textbf{45}, no. 6, 065203, 23 pp (2012).

\bibitem{DYa 2009} D. Yau, \emph{Hom-quantum groups II: Cobraided
Hom-bialgebras and Hom-quantum geometry}, e-Print arXiv:0907.1880 (2009).

\bibitem{D Y 2009bis} D. Yau, \emph{Hom-quantum groups III: Representations and
module Hom-algebras}, e-Print arXiv:0911.5402 (2009).

%\bibitem{D Y 2010} D. Yau, \emph{Infinitesimal Hom-bialgebras and Hom-Lie
%algebras\ bialgebras}, arXiv 1001.5000 (2010).


\bibitem{yauhomyb2} D. Yau, \emph{The Hom-Yang-Baxter equation and Hom-Lie
algebras}, J. Math. Phys. \textbf{52}, 053502 (2011). %%%



\bibitem{Zhao} X. Zhao, X. Zhang, 
\emph{Lazy 2-cocycles over monoidal Hom-Hopf algebras},  Colloq. Math. \textbf{142}, no. 1,  (2016), 61--81.
%\bibitem{DY 2008} D. Yau, Module Hom-algebras, arxiv 0812.4695v1 (2008).
\end{thebibliography}
\end{document}